\providecommand{\tabularnewline}{\\}
\numberwithin{equation}{section}
\numberwithin{figure}{section}
  \theoremstyle{plain}
  \newtheorem{lem}{\protect\lemmaname}
  \theoremstyle{plain}
  \newtheorem{assumption}{\protect\assumptionname}
\theoremstyle{plain}
\newtheorem{thm}{\protect\theoremname}
  \theoremstyle{remark}
  \newtheorem{rem}{\protect\remarkname}
  \theoremstyle{definition}
  \newtheorem{defn}{\protect\definitionname}
\DeclareFontFamily{OT1}{pzc}{}
\DeclareFontShape{OT1}{pzc}{m}{it}{<-> s * [1.200] pzcmi7t}{}
\DeclareMathAlphabet{\mathpzc}{OT1}{pzc}{m}{it}
\renewcommand\footnotemark{}
\def\scaleint#1{\vcenter{\hbox{\scaleto[4ex]{\displaystyle\int}{#1}}}}
\providecommand{\assumptionname}{Assumption}
\providecommand{\definitionname}{Definition}
\providecommand{\lemmaname}{Lemma}
\providecommand{\remarkname}{Remark}
\providecommand{\theoremname}{Theorem}
\begin{document}

\title{\textbf{Spatially Controlled Relay Beamforming:}\\
\textbf{2-Stage Optimal Policies}}

\author{Dionysios S. Kalogerias\thanks{The Authors are with the Department of Electrical \& Computer Engineering,
Rutgers, The State University of New Jersey, 94 Brett Rd, Piscataway,
NJ 08854, USA. e-mail: \{d.kalogerias, athinap\}@rutgers.edu.}\thanks{This work is supported by the National Science Foundation (NSF) under
Grants CCF-1526908 \& CNS-1239188.}\thanks{Also, this work constitutes an extended preprint of a two part paper
(soon to be) submitted for publication to the IEEE Transactions on
Signal Processing in Spring/Summer 2017.} and Athina P. Petropulu}

\date{May 2017}

\maketitle
\textbf{\vspace{-30pt}
}
\begin{abstract}
The problem of enhancing Quality-of-Service (QoS) in power constrained,
mobile relay beamforming networks, by optimally and dynamically controlling
the motion of the relaying nodes, is considered, in a dynamic channel
environment. We assume a time slotted system, where the relays update
their positions before the beginning of each time slot. Modeling the
wireless channel as a Gaussian spatiotemporal stochastic field, we
propose a novel $2$-stage stochastic programming problem formulation
for optimally specifying the positions of the relays at each time
slot, such that the expected QoS of the network is maximized, based
on causal Channel State Information (CSI) and under a total relay
transmit power budget. This results in a schema where, at each time
slot, the relays, apart from optimally beamforming to the destination,
also optimally, predictively decide their positions at the next time
slot, based on causally accumulated experience. Exploiting either
the Method of Statistical Differentials, or the multidimensional Gauss-Hermite
Quadrature Rule, the stochastic program considered is shown to be
approximately equivalent to a set of simple subproblems, which are
solved in a distributed fashion, one at each relay. Optimality and
performance of the proposed spatially controlled system are also effectively
assessed, under a rigorous technical framework; strict optimality
is rigorously demonstrated via the development of a version of the
Fundamental Lemma of Stochastic Control, and, performance-wise, it
is shown that, quite interestingly, the optimal average network QoS
exhibits an increasing trend across time slots, despite our myopic
problem formulation. Numerical simulations are presented, experimentally
corroborating the success of the proposed approach and the validity
of our theoretical predictions.\textbf{\vspace{15pt}
}
\end{abstract}
\textbf{\textit{$\quad$}}\textbf{Keywords.} Spatially Controlled
Relay Beamforming, Mobile Relay Beamforming, Network Mobility Control,
Network Utility Optimization, QoS Maximization, Motion Control, Distributed
Cooperative Networks, Stochastic Programming.

\newpage{}

\tableofcontents{}

\section{Introduction}

Distributed, networked communication systems, such as relay beamforming
networks \cite{Havary_BEAM_2008,Havary_BEAM_2010,Beamforming_1_2009,Beamforming_2_2009,Beamforming_6_Petropulu2011,Beamforming_7_Petropulu2011,Beamforming_8_Petropulu2012}
(e.g., Amplify \& Forward (AF)) are typically designed without explicitly
considering how the positions of the networking nodes might affect
the quality of the communication. Optimum physical placement of assisting
networking nodes, which could potentially improve the quality of the
communication, does not constitute a clear network design aspect.
However, in most practical settings in physical layer communications,
the Channel State Information (CSI) observed by each networking node,
per channel use, although (modeled as) random, it is both spatially
and temporally correlated. It is, therefore, reasonable to ask if
and how system performance could be improved by controlling the positions
of certain network nodes, based on causal side information,
and exploiting the spatiotemporal dependencies of the wireless medium. 

Recently, autonomous node mobility has been proposed as an effective
means to further enhance performance in various distributed network
settings. In \cite{NikosBeam-2}, optimal transmit AF beamforming
has been combined with potential field based relay mobility control
in multiuser cooperative networks, in order to minimize relay transmit
power, while meeting certain Quality-of-Service (QoS) constraints.
In \cite{KalPet-Jammers-2013}, in the framework of information theoretic
physical layer security, decentralized jammer motion control has been
jointly combined with noise nulling and cooperative jamming, maximizing
the network secrecy rate. In \cite{KalPet-Mobi-2014}, optimal relay
positioning has been studied in systems where multiple relays deliver
information to a destination, in the presence of an eavesdropper,
with a goal of maximizing or achieving a target level of ergodic secrecy.

In the complementary context of communication aware (comm-aware) robotics,
node mobility has been exploited in distributed robotic networks,
in order to enhance system performance, in terms of maintaining reliable,
in-network communication connectivity \cite{Comm_Aware_1,Comm_Aware_3,Comm_Aware_6,Comm_Aware_7},
and optimizing network energy management \cite{Comm_Aware_2}. Networked
node motion control has also been exploited in special purpose applications,
such as networked robotic surveillance \cite{Comm_Aware_4} and target
tracking \cite{Comm_Aware_5}.

In \cite{NikosBeam-2,KalPet-Jammers-2013,KalPet-Mobi-2014}, the links
among the nodes of the network (or the related statistics) are assumed
to be available in the form of static channel maps, during the whole
motion of the jammers/relays. However, this is an oversimplifying
assumption in scenarios where the channels change significantly in
time and space \cite{Giannakis_Spatial1_2011,Giannakis_Spatial2_2011,MostofiSpatial2012}.

In this paper, we try to overcome this major limitation, and we consider
the problem of optimally and dynamically updating relay positions
in one source/destination relay beamforming networks, \textit{in a
dynamic channel environment}. Different from \cite{NikosBeam-2,KalPet-Jammers-2013,KalPet-Mobi-2014},
we model the wireless channel as a \textit{spatiotemporal stochastic
field}; this approach may be seen as a versatile extension of a realistic,
commonly employed ``log-normal'' channel model \cite{MostofiSpatial2012}.
We then propose a $2$-\textit{stage stochastic programming} problem
formulation, optimally specifying the positions of the relays \textit{at
each time slot}, such that the \textit{Signal-to-Interference+Noise
Ratio (SINR) }or \textit{QoS} at the destination, at the same time
slot, is maximized \textit{on average}, based on \textit{causal CSI},
and subject to a \textit{total power constraint at the relays}. At
each time slot, the relays not only beamform to the destination, but
also optimally, \textit{predictively} decide their positions at the
next time slot, based on their experience (causal actions and channel
observations). This novel, cyber-physical system approach to relay
beamforming is termed as \textit{Spatially Controlled Relay Beamforming}.

Exploiting the assumed stochastic channel structure, it is first shown
that the proposed optimal motion control problem is equivalent to
a set of simpler, two dimensional subproblems, which can be solved
in a \textit{distributed fashion}, one at each relay, \textit{without
the need for intermediate exchange of messages }among the relays.
However, each of the objectives of the aforementioned subproblems
involves the evaluation of a conditional expectation of a well defined
ratio of almost surely positive random variables, which is \textit{impossible
to perform analytically}, calling for the development of easily implementable
approximations to each of the original problems. Two such heuristics
are considered. The first is based on the so-called \textit{Method
of Statistical Differentials} \cite{Survival_1990Elandt}, whereas
the second constitutes a brute force approach, based on the \textit{multidimensional
Gauss-Hermite Quadrature Rule}, a readily available routine for numerical
integration. In both cases, the original problem objective is replaced
by the respective approximation, which, in both cases, is shown to
be easily computed via simple, closed form expressions. The computational
complexity of both approaches is also discussed and characterized.
Subsequently, we present an important result, along with the respective
detailed technical development, characterizing the performance of
the proposed system, \textit{across time slots} (Theorems \ref{lem:QoS_INCREASES}
and \ref{prop:QoS-Increases}). In a nutshell, this result states
that, although our \textit{problem objective is itself myopic} at
each time slot, the \textit{expected network QoS exhibits an increasing
trend across time slots} (in other words, the expected QoS increases
in time, within a small positive slack), under optimal decision making
at the relays. Lastly, we present representative numerical simulations,
experimentally confirming both the efficacy and feasibility of the
proposed approach, as well as the validity of our theoretical predictions.

During exposition of the proposed spatially controlled relay beamforming
system, we concurrently develop and utilize a rigorous discussion
concerning the optimality of our approach, and with interesting results
(Section \ref{subsec:Appendix-B:-Measurability} / Appendix B). Clearly,
our problem formulation is challenging; it involves a \textit{variational}
stochastic optimization problem, where, at each time slot, the decision
variable, \textit{a function }of the so far available useful information
in the system (also called a \textit{policy}, or a \textit{decision
rule}), \textit{constitutes itself the spatial coordinates, from which
every network relay will observe the underlying spatiotemporal channel
field, at the next time slot}. In other words, our formulation requires
solving an (\textit{myopic}, in particular) \textit{optimal spatial
field sampling problem, in a dynamic fashion}. Such a problem raises
certain fundamental questions, not only related to our proposed spatially
controlled beamforming formulation, but also to a large class of variational
stochastic programs of similar structure.

In this respect, our contributions are partially driven by assuming
an underlying complete base probability space of otherwise \textit{arbitrary
structure}, generating all random phenomena considered in this work.
Under this general setting, we explicitly identify sufficient conditions,
which guarantee the validity of the so-called \textit{substitution
rule for conditional expectations}, specialized to such expectations
of random spatial (in general) fields/functions \textit{with an also
random spatial parameter}, relative to some $\sigma$-algebra, which
makes the latter parameter measurable (fixed) (Definition \ref{def:SUB}
\& Theorem \ref{thm:REP_EXP}). General validity of the substitution
rule, without imposing additional, special conditions, traces back
to the existence of regular conditional distributions, defined \textit{directly}
on the sample space of the underlying base probability space. Such
regular conditional distributions cannot be guaranteed to exist, unless
the sample space has nice topological properties, for instance, if
it is Polish \cite{Durrett2010probability}. In the context of our
spatially controlled beamforming application, such structural requirements
on the sample space, which, by assumption, is conceived as a model
of ``nature'', and generates the spatiotemporal channel field sampled
by the relays, are simply not reasonable. Considering this, our first
contribution is to show that it is possible to guarantee the validity
of the form of the substitution rule under consideration by imposing
conditions on the topological structure of the involved random field,
rather than that of the sample space (a part of its domain). This
results in a rather generally applicable problem setting (Theorem
\ref{thm:REP_EXP}).

In this work, the validity of the substitution rule is ascertained
by imposing simple continuity assumptions on the random functions
involved, which, in some cases, might be considered somewhat restrictive.
Nevertheless, those assumptions can be significantly weakened, guaranteeing
the validity of the substitution rule for vastly discontinuous random
functions, including, for instance, cases with random discontinuities,
or random jumps. The development of this extended analysis, though,
is out of the scope of this paper, and will be presented elsewhere.

The validity of the substitution rule is vitally important in the
treatment of a wide class of variational stochastic programs, including
that involved in the proposed spatially controlled beamforming approach.
In particular, leveraging the power of the substitution rule, we develop
a version of the so-called \textit{Fundamental Lemma of Stochastic
Control} \textit{(FLSC)} \cite{Speyer2008STOCHASTIC,Astrom1970CONTROL,Rockafellar2009VarAn,Shapiro2009STOCH_PROG,Bertsekas_Vol_2,Bertsekas1978Stochastic}
(Lemma \ref{lem:FUND_Lemma}), which provides sufficient conditions
that permit interchange of integration (expectation) and max/minimization
in general variational (stochastic) programming settings. The FLSC
allows the initial variational problem to be \textit{exchanged} by
a related, though \textit{pointwise} (ordinary) optimization problem,
thus efficiently reducing the search over a class of functions (initial
problem) to searching over constants, which is, of course, a standard
and much more handleable optimization setting. In slightly different
ways, the FLSC is evidently utilized in relevant optimality analysis
both in Stochastic Programming \cite{Rockafellar2009VarAn,Shapiro2009STOCH_PROG},
and in Dynamic Programming \& Stochastic Optimal Control \cite{Speyer2008STOCHASTIC,Astrom1970CONTROL,Bertsekas_Vol_2,Bertsekas1978Stochastic}.

A very general version of the FLSC is given in (\cite{Rockafellar2009VarAn},
Theorem 14.60), where unconstrained variational optimization of integrals
of extended real-valued \textit{random lower semicontinuous functions}
\cite{Shapiro2009STOCH_PROG}, or, by another name, \textit{normal
integrands }\cite{Rockafellar2009VarAn}, with respect to a general
$\sigma$-finite measure, is considered. Our version of the FLSC may
be considered a useful variation of Theorem 14.60 in \cite{Rockafellar2009VarAn},
and considers \textit{constrained} variational optimization problems
involving integrals of random functions, but with respect to some
base \textit{probability} measure (that is, expectations). In our
result, via the tower property of expectations, the role of the normal
integrand in (\cite{Rockafellar2009VarAn}, Theorem 14.60) is played
by the conditional expectation of the random function considered,
relative to a $\sigma$-algebra, which makes the respective decision
variable of the problem (a function(al)) measurable. Assuming a base
probability space of arbitrary structure, this argument is justified
by assuming validity of the substitution rule, which, in turn, is
ascertained under our previously developed sufficient conditions.
Different from (\cite{Rockafellar2009VarAn}, Theorem 14.60), in our
version of the FLSC, apart from natural Borel measurability requirements,
\textit{no continuity assumptions are directly imposed} on the structure
on either the random function, or the respective conditional expectation.
In this respect, our result extends (\cite{Rockafellar2009VarAn},
Theorem 14.60), and is of independent interest. 

On the other hand, from the strongly related perspective of Stochastic
Optimal Control, our version of the FLSC may be considered as the
basic building block for further development of Bellman Equation-type,
Dynamic Programming solutions \cite{Bertsekas1978Stochastic,Bertsekas_Vol_2},
under a strictly Borel measurability framework, sufficient for our
purposes. Quite differently though, in our formulation, the respective
cost (at each stage of the problem) is itself a random function (a
spatial field), whose domain is the Cartesian product of a base space
of arbitrary topology, with another, nicely behaved Borel space, instead
of the usual Cartesian product of two Borel spaces (the spaces of
state and controls), as in the standard dynamic programming setting
\cite{Bertsekas1978Stochastic,Bertsekas_Vol_2}. Essentially, our
formulation is ``one step back'' as compared to the basic dynamic
programming model of \cite{Bertsekas1978Stochastic,Bertsekas_Vol_2},
in the sense that the cost considered herein refers \textit{directly}
back to the base space. As a result, different treatment of the problem
is required; essentially, the validity of the substitution rule for
our cost function bypasses the requirement for existence of conditional
distributions, and exploits potential nice properties of the respective
conditional cost (in our case, joint \textit{Borel} measurability).

\textit{Emphasizing on our particular problem formulation}, our functional
assumptions, which guarantee the validity of the substitution rule,
combined with the FLSC, result in a total of six sufficient conditions,
under which strict optimality via problem exchangeability is guaranteed
(conditions \textbf{C1-C6} in Lemma \ref{lem:FUND_Lemma_FINAL}).
Those conditions are subsequently shown to be satisfied specifically
for the spatially controlled beamforming problem under consideration
(verification Theorem \ref{lem:C1C4_SAT}), ensuring strict optimality
of a solution obtained by exploiting problem exchangeability.

Finally, motivated by the need to provide performance guarantees for
the proposed myopic stochastic decision making scheme (our spatially
controlled beamforming network), we introduce the concept of a \textit{linear
martingale difference generator} spatiotemporal field. We then rigorously
show that, when such fields are involved in the objective of a \textit{myopic}
stochastic sampling scheme, \textit{stagewise myopic stochastic exploration
of the involved field is, under conditions, either monotonic, or quasi-monotonic}
(that is, monotonic within some small positive slack)\textit{, either
under optimal sampling decision making, or} \textit{when retaining
the same policy next}. This result is the basis for providing performance
guarantees for the proposed spatially controlled relay beamforming
system, as briefly stated above.

\textit{Notation} \textit{(some and basic)}: Matrices and vectors
will be denoted by boldface uppercase and boldface lowercase letters,
respectively. Calligraphic letters and formal script letters will
denote sets and $\sigma$-algebras, respectively. The operators $\left(\cdot\right)^{\boldsymbol{T}}$
and $\left(\cdot\right)^{\boldsymbol{H}}$, $\lambda_{min}\left(\cdot\right)$
and $\lambda_{max}\left(\cdot\right)$ will denote transposition,
conjugate transposition, minimum and maximum eigenvalue, respectively.
The $\ell_{p}$-norm of $\boldsymbol{x}\in\mathbb{R}^{n}$ is $\left\Vert \boldsymbol{x}\right\Vert _{p}\triangleq\left(\sum_{i=1}^{n}\left|x\left(i\right)\right|^{p}\right)^{1/p}$,
for all $\mathbb{N}\ni p\ge1$. For any $\mathbb{N}\ni N\ge1$, $\mathbb{S}^{N}$,
$\mathbb{S}_{+}^{N}$, $\mathbb{S}_{++}^{N}$ will denote the sets
of symmetric, symmetric positive semidefinite and symmetric positive
definite matrices, respectively. The finite $N$-dimensional identity
operator will be denoted as ${\bf I}_{N}$. Additionally, we define
$\mathfrak{J}\triangleq\sqrt{-1}$, $\mathbb{N}^{+}\triangleq\left\{ 1,2,\ldots\right\} $,
$\mathbb{N}_{n}^{+}\triangleq\left\{ 1,2,\ldots,n\right\} $, $\mathbb{N}_{n}\triangleq\left\{ 0\right\} \cup\mathbb{N}_{n}^{+}$
and $\mathbb{N}_{n}^{m}\triangleq\mathbb{N}_{n}^{+}\setminus\mathbb{N}_{m-1}^{+}$,
for positive naturals $n>m$.

\section{System Model}

On a compact, square planar region ${\cal W}\subset\mathbb{R}^{2}$,
we consider a wireless cooperative network consisting of one source,
one destination and $R\in\mathbb{N}^{+}$ assistive relays, as shown
in Fig. \ref{fig:System_Model}. Each entity of the network is equipped
with a single antenna, being able for both information reception and
broadcasting/transmission. The source and destination are stationary
and located at ${\bf p}_{S}\in{\cal W}$ and ${\bf p}_{D}\in{\cal W}$,
respectively, whereas the relays are assumed to be mobile; each relay
$i\in\mathbb{N}_{R}^{+}$ moves along a trajectory ${\bf p}_{i}\left(t\right)\in{\cal S}\subset{\cal W}-\left\{ {\bf p}_{S},{\bf p}_{D}\right\} \subset{\cal W}$,
where, in general, $t\in\mathbb{R}_{+}$, and where ${\cal S}$ is
compact. We also define the supervector ${\bf p}\left(t\right)\triangleq\left[{\bf p}_{1}^{\boldsymbol{T}}\left(t\right)\,{\bf p}_{2}^{\boldsymbol{T}}\left(t\right)\,\ldots\,{\bf p}_{R}^{\boldsymbol{T}}\left(t\right)\right]^{\boldsymbol{T}}\in{\cal S}^{R}\subset\mathbb{R}^{2R\times1}$.
Additionally, we assume that the relays can cooperate with each other,
either by exchanging local messages, or by communicating with a local
fusion center, through a dedicated channel. Hereafter, as already
stated above, all probabilistic arguments made below presume the existence
of a complete base probability space of otherwise completely arbitrary
structure, prespecified by a triplet $\left(\Omega,\mathscr{F},{\cal P}\right)$.
This base space models a universal source of randomness, generating
all stochastic phenomena in our considerations.

Assuming that a direct link between the source and the destination
does not exist, the role of the relays is determined to be assistive
to the communication, operating in a classical, two phase AF relaying
mode. Fix a $T>0$, and \textit{divide the time interval $\left[0,T\right]$
into $N_{T}$ time slots}, with $t\in\mathbb{N}_{N_{T}}^{+}$ denoting
the respective time slot. Let $s\left(t\right)\in\mathbb{C}$, with
$\mathbb{E}\left\{ \left|s\left(t\right)\right|^{2}\right\} \equiv1$,
denote the symbol to be transmitted at time slot $t$. Also, assuming
a flat fading channel model, as well as channel reciprocity and quasistaticity
in each time slot, let the sets $\left\{ f_{i}\left(t\right)\in\mathbb{C}\right\} _{i\in\mathbb{N}_{R}^{+}}$
and $\left\{ g_{i}\left(t\right)\in\mathbb{C}\right\} _{i\in\mathbb{N}_{R}^{+}}$
contain the \textit{random, spatiotemporally varying} source-relay
and relay-destination channel gains, respectively. These are further
assumed to be \textit{evaluations} of the \textit{separable random
channel fields} or \textit{maps} $f\left({\bf p},t\right)$ and $g\left({\bf p},t\right)$,
respectively, that is, $f_{i}\left(t\right)\equiv f\left({\bf p}_{i}\left(t\right),t\right)$
and $g_{i}\left(t\right)\equiv g\left({\bf p}_{i}\left(t\right),t\right)$,
for all $i\in\mathbb{N}_{R}^{+}$ and for all $t\in\mathbb{N}_{N_{T}}^{+}$.
Then, if $P_{0}>0$ denotes the transmission power of the source,
during AF phase $1$, the signals received at the relays can be expressed
as
\begin{equation}
r_{i}\hspace{-2pt}\left(t\right)\hspace{-2pt}\triangleq\hspace{-2pt}\sqrt{P_{0}}f_{i}\hspace{-2pt}\left(t\right)s\hspace{-2pt}\left(t\right)+n_{i}\hspace{-2pt}\left(t\right)\in\mathbb{C},
\end{equation}
for all $i\in\mathbb{N}_{R}^{+}$ and for all $t\in\mathbb{N}_{N_{T}}^{+}$,
where $n_{i}\left(t\right)\in\mathbb{C}$, with $\mathbb{E}\left\{ \left|n_{i}\left(t\right)\right|^{2}\right\} \equiv\sigma^{2},$
constitutes a zero mean observation noise process at the $i$-th relay,
independent across relays.
\begin{figure}
\centering\includegraphics[scale=1.77]{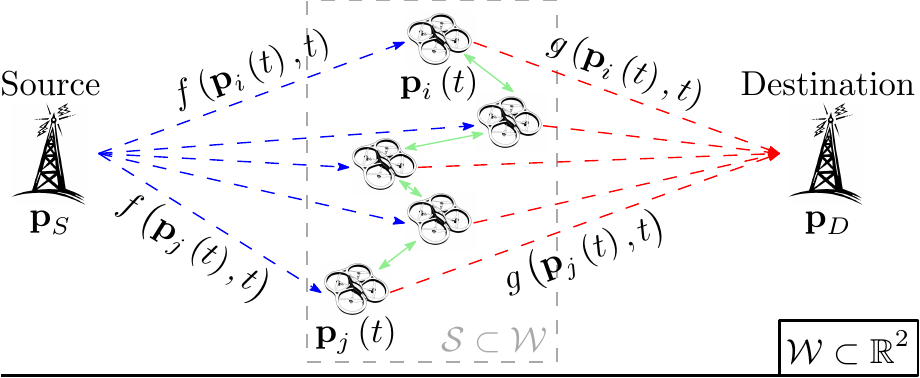}

\caption{\label{fig:System_Model}A schematic of the system model considered.}
\end{figure}
During AF phase $2$, all relays simultaneously retransmit the information
received, each modulating their received signal by a weight $w_{i}\left(t\right)\in\mathbb{C},i\in\mathbb{N}_{R}^{+}$.
The signal received at the destination can be expressed as 
\begin{flalign}
y\left(t\right) & \hspace{-2pt}\triangleq\hspace{-2pt}\sqrt{P_{0}}\hspace{-2pt}\sum_{i\in\mathbb{N}_{R}^{+}}\hspace{-2pt}w_{i}\hspace{-2pt}\left(t\right)g_{i}\hspace{-2pt}\left(t\right)r_{i}\hspace{-2pt}\left(t\right)\nonumber \\
 & \hspace{-2pt}\equiv\hspace{-2pt}\underbrace{\sqrt{P_{0}}\hspace{-2pt}\sum_{i\in\mathbb{N}_{R}^{+}}\hspace{-2pt}w_{i}\hspace{-2pt}\left(t\right)g_{i}\hspace{-2pt}\left(t\right)f_{i}\hspace{-2pt}\left(t\right)s\hspace{-2pt}\left(t\right)}_{\text{signal (transformed)}}+\underbrace{\sum_{i\in\mathbb{N}_{R}^{+}}\hspace{-2pt}w_{i}\hspace{-2pt}\left(t\right)g_{i}\hspace{-2pt}\left(t\right)n_{i}\hspace{-2pt}\left(t\right)+n_{D}\hspace{-2pt}\left(t\right)}_{\text{interference + reception noise}}\in\mathbb{C},\label{eq:MODEL}
\end{flalign}
for all $i\in\mathbb{N}_{R}^{+}$ and $t\in\mathbb{N}_{N_{T}}^{+}$,
where $n_{D}\left(t\right)\in\mathbb{C}$, with $\mathbb{E}\left\{ \left|n_{D}\left(t\right)\right|^{2}\right\} \equiv\sigma_{D}^{2},$
constitutes a zero mean, spatiotemporally white noise process at the
destination.

In the following, it is assumed that the channel fields $f\left({\bf p},t\right)$
and $g\left({\bf p},t\right)$ may be \textit{statistically dependent
both spatially and temporally}, and that, \textit{as usual}, the processes
$s\left(t\right)$, $\left[f\left({\bf p},t\right)\,g\left({\bf p},t\right)\right]$,
$n_{i}\left(t\right)$ for all $i\in\mathbb{N}_{R}^{+}$, and $n_{D}\left(t\right)$
are mutually independent. Also, we will assume that, at each time
slot $t,$ CSI $\left\{ f_{i}\left(t\right)\right\} _{i\in\mathbb{N}_{R}^{+}}$
and $\left\{ g_{i}\left(t\right)\right\} _{i\in\mathbb{N}_{R}^{+}}$
is known \textit{exactly} to all relays. This may be achieved through
pilot based estimation.

\section{\label{sec:Spatiotemporal-Wireless-Channel}Spatiotemporal Wireless
Channel Modeling}

This section introduces a general stochastic model for describing
the spatiotemporal evolution of the wireless channel. For the benefit
of the reader, a more intuitive justification of this general model
is also provided. Additionally, some extensions to the model are briefly
discussed, highlighting its versatility, along with some technical
considerations, which will be of importance later, for analyzing the
theoretical consistency of the subsequently proposed techniques.

\subsection{\label{subsec:Large-Scale-Gaussian}Large Scale Gaussian Channel
Modeling in the $dB$ Domain}

\noindent At each space-time point $\left({\bf p},t\right)\in{\cal S}\times\mathbb{N}_{N_{T}}^{+}$,
the source-relay channel field may be decomposed as the product of
three space-time varying components \cite{Goldsmith2005Wireless},
as
\begin{align}
f\left({\bf p},t\right) & \equiv\underbrace{f^{PL}\hspace{-2pt}\left({\bf p}\right)}_{\text{path loss}}\underbrace{f^{SH}\hspace{-2pt}\left({\bf p},t\right)}_{\text{shadowing}}\underbrace{f^{MF}\hspace{-2pt}\left({\bf p},t\right)}_{\text{fading}}e^{\mathfrak{J}\frac{2\pi\left\Vert {\bf p}-{\bf p}_{S}\right\Vert _{2}}{\lambda}},\label{eq:Channel_1}
\end{align}
where $\mathfrak{J}\triangleq\sqrt{-1}$ denotes the imaginary unit,
$\lambda>0$ denotes the wavelength employed for the communication,
and:
\begin{description}[style =standard, labelindent=0em , labelwidth=0.31cm, leftmargin =!]
\item [{1.}] $f^{PL}\left({\bf p}\right)\triangleq\left\Vert {\bf p}-{\bf p}_{S}\right\Vert _{2}^{-\ell/2}$
is the \textit{path loss field}, a deterministic quantity, with $\ell>0$
being the path loss exponent.\textbf{ }
\item [{2.}] $f^{SH}\left({\bf p},t\right)\in\mathbb{R}$ is the \textit{shadowing
field}, whose square is, for each $\left({\bf p},t\right)\in{\cal S}\times\mathbb{N}_{N_{T}}^{+}$,
a base-$10$ log-normal random variable with zero location. 
\item [{3.}] $f^{MF}\left({\bf p},t\right)\in\mathbb{C}$ constitutes the
\textit{multipath fading field}, a stationary process with known statistics.
\end{description}
The same decomposition holds in direct correspondence for the relay-destination
channel field, $g\left({\bf p},t\right)$. Additionally, if ``$\perp\!\!\!\perp$''
means ``is statistically independent of'', it is assumed that \cite{MostofiSpatial2012}
\begin{flalign}
\left[f^{MF}\left({\bf p},t\right)\,g^{MF}\left({\bf p},t\right)\right] & \perp\!\!\!\perp\left[f^{SH}\left({\bf p},t\right)\,g^{SH}\left({\bf p},t\right)\right]\quad\text{and}\\
f^{MF}\left({\bf p},t\right) & \perp\!\!\!\perp g^{MF}\left({\bf p},t\right).
\end{flalign}
In particular, if the phase of $f^{MF}\left({\bf p},t\right)$ is
denoted as $\phi_{f}\left({\bf p},t\right)\in\left[-\pi,\pi\right]$,
is further assumed that
\begin{equation}
\left|f^{MF}\left({\bf p},t\right)\right|\perp\!\!\!\perp\phi_{f}\left({\bf p},t\right),
\end{equation}
and the same for for $g^{MF}\left({\bf p},t\right)$. It also follows
that
\begin{equation}
\left[\left|f^{MF}\left({\bf p},t\right)\right|\,\left|g^{MF}\left({\bf p},t\right)\right|\right]\perp\!\!\!\perp\left[f^{SH}\left({\bf p},t\right)\,g^{SH}\left({\bf p},t\right)\right].
\end{equation}

We are interested in the magnitudes of both fields $f\left({\bf p},t\right)$
and $g\left({\bf p},t\right)$. Instead of working with the multiplicative
model described by (\ref{eq:Channel_1}), it is much preferable to
work in logarithmic scale. We may define the \textit{log-scale magnitude
field}
\begin{align}
F\left({\bf p},t\right) & \triangleq\alpha_{S}\left({\bf p}\right)\ell+\sigma_{S}\left({\bf p},t\right)+\xi_{S}\left({\bf p},t\right),\label{eq:Amplitude_log}
\end{align}
where we define
\begin{flalign}
-\alpha_{S}\left({\bf p}\right) & \triangleq10\log_{10}\left(\left\Vert {\bf p}-{\bf p}_{S}\right\Vert _{2}\right),\\
\sigma_{S}\left({\bf p},t\right) & \triangleq10\log_{10}\left(f^{SH}\left({\bf p},t\right)\right)^{2}\quad\text{and}\\
\xi_{S}\left({\bf p},t\right) & \triangleq10\log_{10}\left|f^{MF}\left({\bf p},t\right)\right|^{2}-\rho,\quad\text{with}\\
\rho & \triangleq\mathbb{E}\left\{ 10\log_{10}\left|f^{MF}\left({\bf p},t\right)\right|^{2}\right\} ,
\end{flalign}
for all $\left({\bf p},t\right)\in{\cal S}\times\mathbb{N}_{N_{T}}^{+}$.
It is then trivial to show that the magnitude of $f\left({\bf p},t\right)$
may be reconstructed via the \textit{bijective} formula 
\begin{align}
\left|f\left({\bf p},t\right)\right| & \equiv10^{\rho/20}\exp\left(\dfrac{\log\left(10\right)}{20}F\left({\bf p},t\right)\right),\label{eq:CONVERTER}
\end{align}
for all $\left({\bf p},t\right)\in{\cal S}\times\mathbb{N}_{N_{T}}^{+}$,
a ``trick'' that will prove very useful in the next section. Regarding
$g\left({\bf p},t\right)$, the log-scale field $G\left({\bf p},t\right)$
is defined in the same fashion, but replacing the subscript ``$S$''
by ``$D$''.

For each relay $i\in\mathbb{N}_{R}^{+}$, let us define the respective
log-scale channel magnitude processes $F_{i}\left(t\right)\triangleq F\left({\bf p}_{i}\left(t\right),t\right)$
and $G_{i}\left(t\right)\triangleq G\left({\bf p}_{i}\left(t\right),t\right)$,
for all $t\in\mathbb{N}_{N_{T}}^{+}$. Of course, we may stack all
the $F_{i}\left(t\right)$'s defined in (\ref{eq:Amplitude_log}),
resulting in the vector additive model
\begin{equation}
\boldsymbol{F}\left(t\right)\triangleq\boldsymbol{\alpha}_{S}\left({\bf p}\left(t\right)\right)\ell+\boldsymbol{\sigma}_{S}\left(t\right)+\boldsymbol{\xi}_{S}\left(t\right)\in\mathbb{R}^{R\times1},\label{eq:Vector_1}
\end{equation}
where $\boldsymbol{\alpha}_{S}\left(t\right)$, $\boldsymbol{\sigma}_{S}\left(t\right)$
and $\boldsymbol{\xi}_{S}\left(t\right)$ are defined accordingly.
We can also define $\boldsymbol{G}\left(t\right)\triangleq\boldsymbol{\alpha}_{D}\left({\bf p}\left(t\right)\right)\ell+\boldsymbol{\sigma}_{D}\left(t\right)+\boldsymbol{\xi}_{D}\left(t\right)\in\mathbb{R}^{R\times1},$
with each quantity in direct correspondence with (\ref{eq:Vector_1}).
We may also define, in the same manner, the log-scale shadowing and
multipath fading processes $\sigma_{S\left(D\right)}^{i}\left(t\right)\triangleq\sigma_{S\left(D\right)}\left({\bf p}_{i}\left(t\right),t\right)$
and $\xi_{S\left(D\right)}^{i}\left(t\right)\triangleq\xi_{S\left(D\right)}\left({\bf p}_{i}\left(t\right),t\right)$,
for all $t\in\mathbb{N}_{N_{T}}^{+}$, respectively.

Next, let us focus on the spatiotemporal dynamics of $\left\{ \left|f_{i}\left(t\right)\right|\right\} _{i}$
and $\left\{ \left|g_{i}\left(t\right)\right|\right\} _{i}$, which
are modeled through those of the shadowing components of $\left\{ F_{i}\left(t\right)\right\} _{i}$
and $\left\{ G_{i}\left(t\right)\right\} _{i}$. It is assumed that,
for any $N_{T}$ and any \textit{deterministic} ensemble of positions
of the relays in $\mathbb{N}_{N_{T}}^{+}$, say $\left\{ {\bf p}\left(t\right)\right\} _{t\in\mathbb{N}_{N_{T}}^{+}}$,
the random vector
\begin{equation}
\left[\boldsymbol{F}^{\boldsymbol{T}}\left(1\right)\,\boldsymbol{G}^{\boldsymbol{T}}\left(1\right)\,\ldots\,\boldsymbol{F}^{\boldsymbol{T}}\left(N_{T}\right)\,\boldsymbol{G}^{\boldsymbol{T}}\left(N_{T}\right)\right]^{\boldsymbol{T}}\in\mathbb{R}^{2RN_{T}\times1}\label{eq:Log_CSI_Process}
\end{equation}
is \textit{jointly Gaussian} with known means and known covariance
matrix. More specifically, on a per node basis, we let $\xi_{S\left(D\right)}^{i}\left(t\right)\overset{i.i.d.}{\sim}{\cal N}\left(0,\sigma_{\xi}^{2}\right)$
and $\sigma_{S\left(D\right)}^{i}\left(t\right)\overset{i.d.}{\sim}{\cal N}\left(0,\eta^{2}\right)$,
for all $t\in\mathbb{N}_{N_{T}}^{+}$ and $i\in\mathbb{N}_{R}^{+}$
\cite{MostofiSpatial2012,Cotton2007}. In particular, extending Gudmundson's
model \cite{Gudmundson1991} in a straightforward way, we propose
defining the spatiotemporal correlations of the shadowing part of
the channel as
\begin{align}
\mathbb{E}\left\{ \sigma_{S}^{i}\left(k\right)\sigma_{S}^{j}\left(l\right)\right\}  & \triangleq\eta^{2}\exp\left(-\frac{\left\Vert {\bf p}_{i}\left(k\right)-{\bf p}_{j}\left(l\right)\right\Vert _{2}}{\beta}-\frac{\left|k-l\right|}{\gamma}\right),\label{eq:first_EXP}
\end{align}
and correspondingly for $\left\{ \sigma_{D}^{i}\left(t\right)\right\} _{i\in\mathbb{N}_{R}^{+}}$,
and additionally,
\begin{align}
\mathbb{E}\left\{ \sigma_{S}^{i}\left(k\right)\sigma_{D}^{j}\left(l\right)\right\}  & \triangleq\mathbb{E}\left\{ \sigma_{S}^{i}\left(k\right)\sigma_{S}^{j}\left(l\right)\right\} \exp\left(-\frac{\left\Vert {\bf p}_{S}-{\bf p}_{D}\right\Vert _{2}}{\delta}\right),\label{eq:second_EXP}
\end{align}
for all $\left(i,j\right)\in\mathbb{N}_{R}^{+}\times\mathbb{N}_{R}^{+}$
and for all $\left(k,l\right)\in\mathbb{N}_{N_{T}}^{+}\times\mathbb{N}_{N_{T}}^{+}$.
In the above, $\eta^{2}>0$ and $\beta>0$ are called the \textit{shadowing
power} and the \textit{correlation distance}, respectively \cite{Gudmundson1991}.
In this fashion, we will call $\gamma>0$ and $\delta>0$ the \textit{correlation
time} and the\textit{ BS (Base Station) correlation}, respectively.
For later reference, let us define the (cross)covariance matrices
\begin{equation}
\boldsymbol{\Sigma}_{SD}\left(k,l\right)\triangleq\mathbb{E}\left\{ \boldsymbol{\sigma}_{S}\left(k\right)\boldsymbol{\sigma}_{D}^{\boldsymbol{T}}\left(l\right)\right\} +\mathds{1}_{\left\{ S\equiv D\right\} }\mathds{1}_{\left\{ k\equiv l\right\} }\sigma_{\xi}^{2}{\bf I}_{R}\in\mathbb{S}^{R},
\end{equation}
as well as
\begin{equation}
\boldsymbol{\Sigma}\left(k,l\right)\triangleq\begin{bmatrix}\boldsymbol{\Sigma}_{SS}\left(k,l\right) & \boldsymbol{\Sigma}_{SD}\left(k,l\right)\\
\boldsymbol{\Sigma}_{SD}\left(k,l\right) & \boldsymbol{\Sigma}_{DD}\left(k,l\right)
\end{bmatrix}\in\mathbb{S}^{2R},
\end{equation}
for all $\left(k,l\right)\in\mathbb{N}_{N_{T}}^{+}\times\mathbb{N}_{N_{T}}^{+}$.
Using these definitions, the covariance matrix of the joint distribution
describing (\ref{eq:Log_CSI_Process}) can be readily expressed as
\begin{equation}
\boldsymbol{\Sigma}\triangleq\begin{bmatrix}\boldsymbol{\Sigma}\left(1,1\right) & \boldsymbol{\Sigma}\left(1,2\right) & \ldots & \boldsymbol{\Sigma}\left(1,N_{T}\right)\\
\boldsymbol{\Sigma}\left(2,1\right) & \boldsymbol{\Sigma}\left(2,2\right) & \ldots & \boldsymbol{\Sigma}\left(2,N_{T}\right)\\
\vdots & \vdots & \ddots & \vdots\\
\boldsymbol{\Sigma}\left(N_{T},1\right) & \boldsymbol{\Sigma}\left(N_{T},2\right) & \cdots & \boldsymbol{\Sigma}\left(N_{T},N_{T}\right)
\end{bmatrix}\in\mathbb{S}^{2RN_{T}}.\label{eq:COV}
\end{equation}
Of course, in order for $\boldsymbol{\Sigma}$ to be a valid covariance
matrix, it must be at least positive semidefinite, that is, in $\mathbb{S}_{+}^{2RN_{T}}$.
If fact, for nearly all cases of interest, $\boldsymbol{\Sigma}$
is guaranteed to be strictly positive definite (or in $\mathbb{S}_{++}^{2RN_{T}}$),
as the following result suggests.
\begin{lem}
\textbf{\textup{(Positive (Semi)Definiteness of $\boldsymbol{\Sigma}$)\label{lem:NonSingularity}}}
For all possible \uline{deterministic} trajectories of the relays
on ${\cal S}^{R}\times\mathbb{N}_{N_{T}}^{+}$ , it is true that $\boldsymbol{\Sigma}\in\mathbb{S}_{++}^{2RN_{T}}$,
as long as $\sigma_{\xi}^{2}\neq0$. Otherwise, $\boldsymbol{\Sigma}\in\mathbb{S}_{+}^{2RN_{T}}$.
In other words, as long as multipath (small-scale) fading is present
in the channel response, the joint Gaussian distribution of the channel
vector in (\ref{eq:Log_CSI_Process}) is guaranteed to be nonsingular.
\end{lem}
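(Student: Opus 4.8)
The plan is to split $\boldsymbol{\Sigma}$ into the covariance contributed by the shadowing fields and that contributed by the multipath-fading fields, observe that the latter is exactly $\sigma_{\xi}^{2}{\bf I}_{2RN_{T}}$, and then invoke the elementary fact that adding a positive multiple of the identity to a positive semidefinite matrix yields a positive definite one.

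First I would fix an arbitrary deterministic trajectory $\left\{ {\bf p}\left(t\right)\right\} _{t\in\mathbb{N}_{N_{T}}^{+}}$ on ${\cal S}^{R}\times\mathbb{N}_{N_{T}}^{+}$ and, using the additive decompositions of ${\boldsymbol F}\left(t\right)$, ${\boldsymbol G}\left(t\right)$ in (\ref{eq:Vector_1}), the fact that the deterministic path-loss terms contribute nothing to any covariance, the postulated mutual independence of the multipath-fading and shadowing components, and the i.i.d.\ law $\xi_{S\left(D\right)}^{i}\left(t\right)\sim{\cal N}\left(0,\sigma_{\xi}^{2}\right)$ across node, time \emph{and} endpoint, read off from the definitions of $\boldsymbol{\Sigma}\left(k,l\right)$ and of $\boldsymbol{\Sigma}$ in (\ref{eq:COV}) the exact identity
\[
\boldsymbol{\Sigma}=\boldsymbol{\Sigma}^{SH}+\sigma_{\xi}^{2}{\bf I}_{2RN_{T}},
\]
where $\boldsymbol{\Sigma}^{SH}\in\mathbb{S}^{2RN_{T}}$ is the block matrix obtained from (\ref{eq:COV}) by replacing each block $\boldsymbol{\Sigma}_{XY}\left(k,l\right)$ with $\mathbb{E}\left\{ \boldsymbol{\sigma}_{X}\left(k\right)\boldsymbol{\sigma}_{Y}^{\boldsymbol{T}}\left(l\right)\right\} $ (equivalently, it is the covariance matrix of the pure-shadowing supervector), and the $\sigma_{\xi}^{2}{\bf I}_{2RN_{T}}$ piece is simply the assembly over all diagonal sub-blocks of the $\mathds{1}_{\left\{ S\equiv D\right\} }\mathds{1}_{\left\{ k\equiv l\right\} }\sigma_{\xi}^{2}{\bf I}_{R}$ terms.

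Next I would establish $\boldsymbol{\Sigma}^{SH}\in\mathbb{S}_{+}^{2RN_{T}}$. Reordering the $2RN_{T}$ coordinates so that the source/destination label sits outermost --- a congruence by a permutation matrix, hence inertia-preserving --- the Gudmundson-type correlations (\ref{eq:first_EXP})--(\ref{eq:second_EXP}) put $\boldsymbol{\Sigma}^{SH}$ into the Kronecker form $\eta^{2}\left[\begin{smallmatrix}1 & c\\ c & 1\end{smallmatrix}\right]\otimes{\bf K}$, with $c\triangleq\exp\left(-\left\Vert {\bf p}_{S}-{\bf p}_{D}\right\Vert _{2}/\delta\right)\in\left(0,1\right]$ and ${\bf K}\in\mathbb{R}^{RN_{T}\times RN_{T}}$ the matrix whose entry indexed by the pairs $\left(k,i\right),\left(l,j\right)$ equals $\exp\left(-\left\Vert {\bf p}_{i}\left(k\right)-{\bf p}_{j}\left(l\right)\right\Vert _{2}/\beta\right)\exp\left(-\left|k-l\right|/\gamma\right)$. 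Now ${\bf K}$ is the Schur (Hadamard) product of the Gram matrix of the spatial kernel $\left({\bf x},{\bf y}\right)\mapsto\exp\left(-\left\Vert {\bf x}-{\bf y}\right\Vert _{2}/\beta\right)$, which is positive definite on $\mathbb{R}^{2}$ (classically --- up to rescaling it is the characteristic function of a multivariate Cauchy-type distribution, so Bochner's theorem applies), with the Gram matrix of the temporal kernel $\left(k,l\right)\mapsto\exp\left(-\left|k-l\right|/\gamma\right)$, which is positive semidefinite because $t\mapsto\exp\left(-\left|t\right|/\gamma\right)$ is a positive-definite function on $\mathbb{R}$ (a rescaled AR($1$) autocorrelation); hence ${\bf K}\in\mathbb{S}_{+}^{RN_{T}}$ by the Schur product theorem. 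Since also $\left[\begin{smallmatrix}1 & c\\ c & 1\end{smallmatrix}\right]\in\mathbb{S}_{+}^{2}$ (eigenvalues $1\pm c\ge0$), and a Kronecker product of positive semidefinite matrices is positive semidefinite, it follows that $\boldsymbol{\Sigma}^{SH}\in\mathbb{S}_{+}^{2RN_{T}}$.

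Finally, combining these, $\lambda_{min}\left(\boldsymbol{\Sigma}\right)=\lambda_{min}\left(\boldsymbol{\Sigma}^{SH}\right)+\sigma_{\xi}^{2}\ge\sigma_{\xi}^{2}$: if $\sigma_{\xi}^{2}\neq0$ this is strictly positive, so $\boldsymbol{\Sigma}\in\mathbb{S}_{++}^{2RN_{T}}$; if $\sigma_{\xi}^{2}=0$ then $\boldsymbol{\Sigma}=\boldsymbol{\Sigma}^{SH}\in\mathbb{S}_{+}^{2RN_{T}}$. Nothing in the argument used any feature of the particular trajectory, so the conclusion is uniform over all deterministic trajectories on ${\cal S}^{R}\times\mathbb{N}_{N_{T}}^{+}$. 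I do not anticipate a real obstacle: the one step deserving care is the positive definiteness of the exponential (Mat\'{e}rn-$1/2$) spatial kernel behind ${\bf K}$, which is classical and which I would simply cite; and the hypothesis $\sigma_{\xi}^{2}\neq0$ is genuinely needed for the strict conclusion, since ${\bf K}$ --- hence $\boldsymbol{\Sigma}^{SH}$ --- becomes singular as soon as two of the space--time sampling points ${\bf p}_{i}\left(k\right)$ coincide (e.g.\ two relays momentarily colocated, or a relay not moving between two consecutive slots).
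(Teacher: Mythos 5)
Your proof is correct and follows essentially the same route as the paper's: both decompose $\boldsymbol{\Sigma}$ into the shadowing covariance plus $\sigma_{\xi}^{2}{\bf I}_{2RN_{T}}$, establish positive semidefiniteness of the shadowing part from the positive (semi)definiteness of the spatial exponential and temporal Laplacian kernels via the Schur product theorem together with the Kronecker (Tracy--Singh) structure induced by the $2\times2$ BS-correlation matrix, and then conclude by the eigenvalue shift. The only difference is the order in which the three kernel factors are combined (the paper applies the Tracy--Singh product with the BS-correlation matrix before the Hadamard product with the temporal kernel), which is cosmetic.
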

\begin{proof}[Proof of Lemma \ref{lem:NonSingularity}]
See Appendix A.
\end{proof}

\subsection{Model Justification}

\begin{figure}
\centering\includegraphics[scale=1.8]{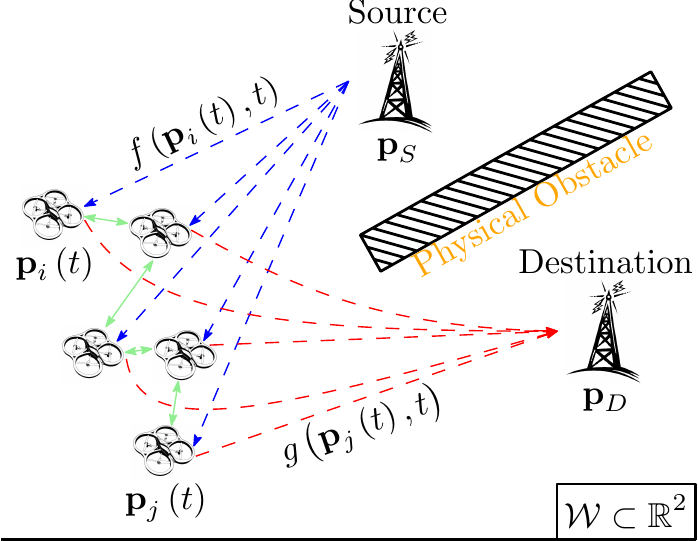}

\caption{\label{fig:System_Model_OBS-1}A case where source-relay and relay-destination
links are likely to be correlated.}
\end{figure}
As already mentioned, the spatial dependence among the source-relay
and relay-destination channel magnitudes (due to shadowing) is described
via Gudmundson's model \cite{Gudmundson1991} (position related component
in (\ref{eq:first_EXP})), which has been very popular in the literature
and also experimentally verified \cite{Gudmundson1991,Mostofi2_2012,MostofiSpatial2012}.
Second, the Laplacian type of temporal dependence among the same groups
of channel magnitudes also constitutes a reasonable choice, in the
sense that channel magnitudes are expected to be significantly correlated
only for small time lags, whereas, for larger time lags, such dependence
should decay at a fast rate. For an experimental justification of
the adopted model, see, for instance, \cite{Trappe2_2009,Channel_AR_2010}.
Also note that, this exponential temporal correlation model may result
as a reformulation of Gudmundson's model, as well. Of course, one
could use any other positive (semi)definite kernel, multiplying the
spatial correlation exponential kernel, without changing the statement
and proof of Lemma \ref{lem:NonSingularity}. Third, the incorporation
of the spherical/isotropic BS correlation term in our proposed general
model (in (\ref{eq:second_EXP})) can be justified by the the existence
of important cases where the source and destination might be close
to each other and yet no direct link may exist between them. See,
for instance, Fig. (\ref{fig:System_Model_OBS-1}), where a ``large''
physical obstacle makes the direct communication between the source
and the destination impossible. Then, relay beamforming can be exploited
in order to enable efficient communication between the source and
the destination, making intelligent use of the available resources,
in order to improve or maintain a certain QoS in the network. In such
cases, however, it is very likely that the shadowing parts of the
source-relay and relay-destination links will be spatially and/or
temporally correlated among each other, since shadowing is very much
affected by the spatial characteristics of the terrain, which, in
such cases, is common for both beamforming phases. Of course, by taking
the BS station correlation $\delta\rightarrow0$, one recovers the
generic/trivial case where the source-relay and relay-destination
links are \textit{mutually} \textit{independent}.

\subsection{\label{subsec:TECHNICAL_1}Extensions \& Some Technical Considerations}

It should be also mentioned that our general description of the wireless
channel as a spatiotemporal Gaussian field, does not limit the covariance
matrix $\boldsymbol{\Sigma}$ to be formed as in (\ref{eq:COV});
other choices for $\boldsymbol{\Sigma}$ will work fine in our subsequent
developments, as long as, \textit{for each fixed} $t\in\mathbb{N}_{N_{T}}^{+}$,
some mild conditions on the \textit{spatial interactions} of the fields\textit{
}$\sigma_{S\left(D\right)}\left({\bf p},t\right)$ and $\xi_{S\left(D\right)}\left({\bf p},t\right)$,
are satisfied. In what follows, we consider only the source-relay
fields $\sigma_{S}\left({\bf p},t\right)$ and $\xi_{S}\left({\bf p},t\right)$.
The same arguments hold for the relay-destination fields $\sigma_{D}\left({\bf p},t\right)$
and $\xi_{D}\left({\bf p},t\right)$, in direct correspondence.

Fix $t\in\mathbb{N}_{N_{T}}^{+}$. Recall that, so far, we have defined
the statistical behavior of both $\sigma_{S}\left({\bf p},t\right)$
and $\xi_{S}\left({\bf p},t\right)$ \textit{only on a per-node basis}.
However, since the spatiotemporal statistical model introduced in
Section \ref{subsec:Large-Scale-Gaussian} is assumed to be valid
for any possible trajectory of the relays in ${\cal S}^{R}\times\mathbb{N}_{N_{T}}^{+}$,
each relay is allowed to be anywhere in ${\cal S}$, at each time
slot $t$. This statistical construction \textit{induces} the statistical
structure (the \textit{laws}) of both fields $\sigma_{S}\left({\bf p},t\right)$
and $\xi_{S}\left({\bf p},t\right)$ on ${\cal S}$.

As far as $\sigma_{S}\left({\bf p},t\right)$ is concerned, it is
straightforward to see that it constitutes a Gaussian process with
zero mean, and a continuous and \textit{isotropic} covariance kernel
$\boldsymbol{\Sigma}_{\sigma}:\mathbb{R}^{2}\rightarrow\mathbb{R}$,
defined as
\begin{equation}
\boldsymbol{\Sigma}_{\sigma}\left(\boldsymbol{\tau}\right)\triangleq\eta^{2}\exp\left(-\frac{\left\Vert \boldsymbol{\tau}\right\Vert _{2}}{\beta}\right),\label{eq:S_sigma}
\end{equation}
where $\boldsymbol{\tau}\triangleq{\bf p}-{\bf q}\ge0$, for all $\left({\bf p},{\bf q}\right)\in{\cal S}^{2}$,
which agrees with the model introduced in (\ref{eq:first_EXP}), for
$k\equiv l$ (Gudmundson's model). Thus, $\sigma_{S}\left({\bf p},t\right)$
is a well defined random field.

However, this is not the case with $\xi_{S}\left({\bf p},t\right)$.
Under no additional restrictions, $\xi_{S}\left({\bf p},t\right)$
and $\xi_{S}\left({\bf q},t\right)$ are implicitly assumed to be
independent for all $\left({\bf p},{\bf q}\right)\in{\cal S}^{2}$,
such that ${\bf p}\neq{\bf q}$. Thus, we are led to consider $\xi_{S}\left({\bf p},t\right)$
as a zero-mean white process in continuous space. However, it is well
known that such a process is technically problematic in a measure
theoretic framework. Nevertheless, we may observe that it is \textit{not}
actually essential to characterize the covariance structure of $\xi_{S}\left({\bf p},t\right)$
\textit{for all} $\left({\bf p},{\bf q}\right)\in{\cal S}^{2}$, with
${\bf p}\neq{\bf q}$. This is due to the fact that, at each time
slot $t\in\mathbb{N}_{N_{T}}^{+}$, it is \textit{physically impossible}
for any two relays to be arbitrarily close to each other. We may thus
make the following simple assumption on the positions of the relays,
at each time slot $t\in\mathbb{N}_{N_{T}}^{+}$.
\begin{assumption}
\textbf{\textup{(Relay Separation)\label{assu:(Relay-Separation)}}}
There exists an $\varepsilon_{MF}>0$, such that, for all $t\in\mathbb{N}_{N_{T}}^{+}$
and any ensemble of relay positions at time slot $t$, $\left\{ {\bf p}_{i}\left(t\right)\right\} _{i\in\mathbb{N}_{R}^{+}}$,
it is true that
\begin{equation}
\inf_{\substack{\left(i,j\right)\in\mathbb{N}_{R}^{+}\times\mathbb{N}_{R}^{+}\\
\text{with }i\neq j
}
}\left\Vert {\bf p}_{i}\left(t\right)-{\bf p}_{j}\left(t\right)\right\Vert _{2}>\varepsilon_{MF}.\label{eq:=0003B5_constraint-1}
\end{equation}
\end{assumption}
Assumption \ref{assu:(Relay-Separation)} simply states that, at each
$t\in\mathbb{N}_{N_{T}}^{+}$, all relays are at least $\varepsilon_{MF}$
distance units apart from each other. If this constraint is satisfied,
then, without any loss of generality, we may define $\xi_{S}\left({\bf p},t\right)$
as a Gaussian field with zero mean, and with \textit{any} continuous,
isotropic (say) covariance kernel $\boldsymbol{\Sigma}_{\xi}:\mathbb{R}^{2}\rightarrow\mathbb{R}$,
which satisfies
\begin{equation}
\boldsymbol{\Sigma}_{\xi}\left(\boldsymbol{\tau}\right)\triangleq\begin{cases}
\sigma_{\xi}^{2}, & \text{if }\boldsymbol{\tau}\equiv{\bf 0}\\
0, & \text{if }\left\Vert \boldsymbol{\tau}\right\Vert _{2}\ge\varepsilon_{MF}
\end{cases},\label{eq:S_xi}
\end{equation}
and is arbitrarily defined otherwise. A simple example is the spherical,
compactly supported kernel with width $\varepsilon_{MF}$, defined
as \cite{Genton2001_Classes}
\begin{equation}
\dfrac{\boldsymbol{\Sigma}_{o}\left(\boldsymbol{\tau}\right)}{\sigma_{\xi}^{2}}\triangleq\begin{cases}
1-\dfrac{3}{2}\dfrac{\left\Vert \boldsymbol{\tau}\right\Vert _{2}}{\varepsilon_{MF}}+\dfrac{1}{2}\left(\dfrac{\left\Vert \boldsymbol{\tau}\right\Vert _{2}}{\varepsilon_{MF}}\right)^{3}, & \text{if }\left\Vert \boldsymbol{\tau}\right\Vert _{2}<\varepsilon_{MF}\\
0, & \text{if }\left\Vert \boldsymbol{\tau}\right\Vert _{2}\ge\varepsilon_{MF}
\end{cases}.\label{eq:S_o}
\end{equation}
Of course, across (discrete) time slots, $\xi_{S}\left({\bf p},t\right)$
inherits whiteness without any technical issue. 

We should stress that the above assumptions are made for technical
reasons and will be transparent in the subsequent analysis, as long
as the mild constraint (\ref{eq:=0003B5_constraint-1}) is satisfied;
from the perspective of the relays, all evaluations of $\xi_{S}\left({\bf p},t\right)$,
at each time slot, will be independent to each other. And, of course,
$\varepsilon_{MF}$ may be chosen small enough, such that (\ref{eq:=0003B5_constraint-1})
is satisfied virtually always, assuming that the relays are sufficiently
far apart from each other, and/or that, at each time slot $t$, their
new positions are relatively close to their old positions, at time
slot $t-1$.

Based on the explicit statistical description of $\sigma_{S}\left({\bf p},t\right)$
and $\xi_{S}\left({\bf p},t\right)$ presented above, we now additionally
demand that both are spatial fields with \textit{(everywhere) continuous
sample paths}. Equivalently, we demand that, for every $\omega\in\Omega$,
$\sigma_{S}\left(\omega,{\bf p},t\right)\in\mathsf{C}\left({\cal S}\right)$
and $\xi_{S}\left(\omega,{\bf p},t\right)\in\mathsf{C}\left({\cal S}\right)$
, where $\mathsf{C}\left({\cal A}\right)$ denotes the set of continuous
functions on some qualifying set ${\cal A}$. Sample path continuity
of stationary Gaussian fields may be guaranteed under mild conditions
on the respective lag-dependent covariance kernel, as the following
result suggests, however in a, slightly weaker, \textit{almost everywhere}
sense.
\begin{thm}
\textbf{\textup{($a.e.$-Continuity of Gaussian Fields \cite{Adler2010_Geometry,Abrahamsen1997_Review,Adler2009_Random})
\label{thm:Continuity-of-Gaussian}}}Let $X\left(\boldsymbol{s}\right)$,
$\boldsymbol{s}\in\mathbb{R}^{N}$, be a real-valued, zero-mean, stationary
Gaussian random field with a continuous covariance kernel $\boldsymbol{\Sigma}_{X}:\mathbb{R}^{N}\rightarrow\mathbb{R}$.
Suppose that there exist constants $0<c<+\infty$ and $\varepsilon,\zeta>0$,
such that
\begin{equation}
1-\dfrac{\boldsymbol{\Sigma}_{X}\left(\boldsymbol{\tau}\right)}{\boldsymbol{\Sigma}_{X}\left({\bf 0}\right)}\le\dfrac{c}{\left|\log\left(\left\Vert \boldsymbol{\tau}\right\Vert _{2}\right)\right|^{1+\varepsilon}},
\end{equation}
for all $\boldsymbol{\tau}\in\left\{ \left.\boldsymbol{x}\in\mathbb{R}^{N}\right|\left\Vert \boldsymbol{x}\right\Vert _{2}<\zeta\right\} $.
Then, $X\left(\boldsymbol{s}\right)$ is ${\cal P}$-almost everywhere
sample path continuous, or, equivalently, ${\cal P}-a.e.$-continuous,
on every compact subset ${\cal K}\subset\mathbb{R}^{N}$ and, therefore,
on $\mathbb{R}^{N}$ itself. Additionally, $X\left(\boldsymbol{s}\right)$
is bounded, ${\cal P}$-almost everywhere, as well.
\end{thm}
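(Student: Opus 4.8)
The plan is to reduce the statement to a metric-entropy (Dudley-type) criterion for sample-path boundedness and continuity of Gaussian processes, and then to verify that the hypothesized logarithmic decay of $1-\boldsymbol{\Sigma}_X(\boldsymbol{\tau})/\boldsymbol{\Sigma}_X({\bf 0})$ near the origin forces the associated entropy integral to converge \emph{precisely} because $\varepsilon>0$. First I fix a compact ${\cal K}\subset\mathbb{R}^N$ and introduce the canonical pseudometric induced by $X$ on ${\cal K}$, namely $d_X(\boldsymbol{s},\boldsymbol{t})\triangleq\left(\mathbb{E}\left|X(\boldsymbol{s})-X(\boldsymbol{t})\right|^{2}\right)^{1/2}$. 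By zero-meanness and stationarity, $d_X^2(\boldsymbol{s},\boldsymbol{t})=2\left(\boldsymbol{\Sigma}_X({\bf 0})-\boldsymbol{\Sigma}_X(\boldsymbol{s}-\boldsymbol{t})\right)=2\boldsymbol{\Sigma}_X({\bf 0})\left(1-\boldsymbol{\Sigma}_X(\boldsymbol{s}-\boldsymbol{t})/\boldsymbol{\Sigma}_X({\bf 0})\right)$, so the hypothesis yields, for all $\boldsymbol{s},\boldsymbol{t}$ with $\left\Vert \boldsymbol{s}-\boldsymbol{t}\right\Vert_2<\zeta$,
\[
d_X(\boldsymbol{s},\boldsymbol{t})\le\frac{\sqrt{2c\,\boldsymbol{\Sigma}_X({\bf 0})}}{\left|\log\left\Vert \boldsymbol{s}-\boldsymbol{t}\right\Vert_2\right|^{(1+\varepsilon)/2}}.
\]
Thus small Euclidean balls have tiny $d_X$-diameter; continuity of $\boldsymbol{\Sigma}_X$ also makes $d_X$ jointly continuous, hence bounded, on the compact ${\cal K}\times{\cal K}$.

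The second step is the covering-number estimate. Let $N({\cal K},d_X,u)$ be the least number of $d_X$-balls of radius $u$ that cover ${\cal K}$. From the display above, a Euclidean ball of radius $r<\zeta$ has $d_X$-diameter at most $\sqrt{2c\,\boldsymbol{\Sigma}_X({\bf 0})}\left|\log r\right|^{-(1+\varepsilon)/2}$; requiring this to be $\le u$ forces $r\le\exp\left(-\left(2c\,\boldsymbol{\Sigma}_X({\bf 0})/u^{2}\right)^{1/(1+\varepsilon)}\right)=:r(u)$. Since ${\cal K}\subset\mathbb{R}^N$ is bounded, it is covered by at most $C_{\cal K}\,r^{-N}$ Euclidean balls of radius $r$ for a constant $C_{\cal K}$ depending only on ${\cal K}$ and $N$; therefore, for all $u$ below a suitable threshold,
\[
\log N({\cal K},d_X,u)\le\log\left(C_{\cal K}\,r(u)^{-N}\right)=\log C_{\cal K}+N\left(2c\,\boldsymbol{\Sigma}_X({\bf 0})\right)^{1/(1+\varepsilon)}u^{-2/(1+\varepsilon)}\le A\,u^{-2/(1+\varepsilon)},
\]
for an appropriate constant $A=A\left(N,c,\boldsymbol{\Sigma}_X({\bf 0}),C_{\cal K},\varepsilon\right)$, while $\log N({\cal K},d_X,u)=0$ once $u$ exceeds the (finite) $d_X$-diameter of ${\cal K}$.

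The third step is Dudley's entropy integral. I check that $\int_{0}^{\infty}\sqrt{\log N({\cal K},d_X,u)}\,\mathrm{d}u<\infty$: the integrand vanishes for $u>\mathrm{diam}_{d_X}({\cal K})<\infty$, and near $u=0$ the bound above gives $\sqrt{\log N({\cal K},d_X,u)}\le\sqrt{A}\,u^{-1/(1+\varepsilon)}$, which is integrable at $0$ exactly because $1/(1+\varepsilon)<1\iff\varepsilon>0$. Hence the entropy integral is finite. By the classical Dudley/Fernique theorem for Gaussian processes (contained in the cited references), finiteness of this integral implies that $X$ admits a version whose sample paths are, ${\cal P}$-a.e., bounded and uniformly $d_X$-continuous on ${\cal K}$; since $d_X$-continuity together with continuity of $\boldsymbol{\Sigma}_X$ gives ordinary Euclidean sample-path continuity on ${\cal K}$, this yields ${\cal P}$-a.e.-continuity and ${\cal P}$-a.e. boundedness on ${\cal K}$. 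Finally, writing $\mathbb{R}^N=\bigcup_{m\in\mathbb{N}^{+}}\left\{\left\Vert \boldsymbol{s}\right\Vert_2\le m\right\}$ as a countable union of compacts and intersecting the corresponding ${\cal P}$-full events (a countable intersection of ${\cal P}$-full sets is ${\cal P}$-full) upgrades continuity to all of $\mathbb{R}^N$.

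I expect the main obstacle to be the covering-number estimate of the second step: one must convert the logarithmic modulus of continuity of $\boldsymbol{\Sigma}_X$ at the origin into the correct super-polynomial growth rate $u^{-2/(1+\varepsilon)}$ of the metric entropy, while being careful that (i) $d_X$ is merely a pseudometric when $\boldsymbol{\Sigma}_X$ fails to be injective, which is harmless for Dudley's bound, and (ii) the estimate is only needed for small $u$, the large-$u$ range contributing a finite amount because ${\cal K}$ is $d_X$-bounded. Once this entropy bound is secured, convergence of Dudley's integral is a one-line computation, and the passage from "a version" to the stated a.e.-continuity and boundedness, and then from ${\cal K}$ to $\mathbb{R}^N$, is routine.
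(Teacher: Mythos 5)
The paper does not prove this theorem at all: it is quoted as a known classical result, with the proof deferred to the cited references (Adler--Taylor and Abrahamsen's review). Your argument is the standard entropy-integral proof of exactly that classical result, and it is correct: the reduction of the hypothesis to a logarithmic bound on the canonical pseudometric $d_X$, the covering-number estimate $\log N\left({\cal K},d_X,u\right)\lesssim u^{-2/\left(1+\varepsilon\right)}$, and the observation that Dudley's integral converges precisely because $1/\left(1+\varepsilon\right)<1$ are all sound, and the passage from a single compact ${\cal K}$ to $\mathbb{R}^{N}$ by countable exhaustion is routine. The one point to state carefully is the one you already flag: Dudley's criterion yields a \emph{version} (a separable modification) with ${\cal P}$-a.e.\ bounded, uniformly $d_X$-continuous paths, not continuity of an arbitrary prescribed version; your final sentence of the third step silently identifies the two. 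This is harmless here, both because the theorem as stated is customarily read for a separable version and because the paper itself invokes indistinguishability immediately after the theorem to replace the a.e.-continuous fields by everywhere-continuous ones, but you should say explicitly that the conclusion is asserted for such a version. Also note the trivial constant slippage in your covering step (a Euclidean ball of radius $r$ has diameter $2r$, so the $d_X$-diameter bound should carry $\left|\log\left(2r\right)\right|$ rather than $\left|\log r\right|$), which affects nothing.
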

Utilizing Theorem \ref{thm:Continuity-of-Gaussian} and generically
assuming that $\boldsymbol{\Sigma}_{\xi}\triangleq\boldsymbol{\Sigma}_{o}$,
it is possible to show that both fields $\sigma_{S}\left({\bf p},t\right)$
and $\xi_{S}\left({\bf p},t\right)$ satisfy the respective conditions
and thus, that both fields are $a.e.$-continuous on ${\cal S}$.
For $\sigma_{S}\left({\bf p},t\right)$, the reader is referred to
(\cite{Abrahamsen1997_Review}, Example 2.2). Of course, instead of
$\boldsymbol{\Sigma}_{\sigma}$, any other kernel may be considered,
as long as the condition Theorem \ref{thm:Continuity-of-Gaussian}
is satisfied.

As far as $\xi_{S}\left({\bf p},t\right)$ is concerned, let us choose
$\varepsilon\equiv1$ and $\zeta\equiv1$ . We thus need to show that,
for every $\tau\triangleq\left\Vert \boldsymbol{\tau}\right\Vert _{2}\in\left[0,1\right)$,
it holds that
\begin{equation}
1-\dfrac{\boldsymbol{\Sigma}_{o}\left(\boldsymbol{\tau}\right)}{\boldsymbol{\Sigma}_{o}\left({\bf 0}\right)}\le\dfrac{c}{\left(\log\left(\left\Vert \boldsymbol{\tau}\right\Vert _{2}\right)\right)^{2}},
\end{equation}
or, equivalently,
\begin{equation}
1-\left(1-\dfrac{3}{2}\dfrac{\tau}{\varepsilon_{MF}}+\dfrac{1}{2}\left(\dfrac{\tau}{\varepsilon_{MF}}\right)^{3}\right)\mathds{1}_{\left\{ \tau<\varepsilon_{MF}\right\} }\le\dfrac{c}{\left(\log\left(\tau\right)\right)^{2}},\label{eq:SPH_C}
\end{equation}
for some finite, positive constant $c$. We first consider the case
where $1>\tau\ge\varepsilon_{MF}>0$ (whenever $\varepsilon_{MF}<1$,
of course). We then have
\begin{equation}
1\le\dfrac{\left(\log\left(\varepsilon_{MF}\right)\right)^{2}}{\left(\log\left(\tau\right)\right)^{2}}\triangleq\dfrac{c_{1}}{\left(\log\left(\tau\right)\right)^{2}},
\end{equation}
easily verifying the condition required by Theorem \ref{thm:Continuity-of-Gaussian}.
Now, when $0\le\tau<\min\left\{ \varepsilon_{MF},1\right\} $, it
is easy to see that there exists a finite $c_{2}>0$, such that
\begin{equation}
\tau\le\dfrac{c_{2}}{\left(\log\left(\tau\right)\right)^{2}}.\label{eq:tautau}
\end{equation}
If $\tau\equiv0$, then the inequality above holds for any choice
of $c_{2}$. If $\tau>0$, define a function $h:\left(0,1\right)\rightarrow\mathbb{R}_{+}$,
as
\begin{equation}
h\left(\tau\right)\triangleq\tau\left(\log\left(\tau\right)\right)^{2}.
\end{equation}
By a simple first derivative test, it follows that
\begin{flalign}
h\left(\tau\right) & \le\max_{\tau\in\left(0,1\right)}h\left(\tau\right)\nonumber \\
 & \equiv h\left(\exp\left(-2\right)\right)\nonumber \\
 & \equiv4\exp\left(-2\right),\quad\forall\tau\in\left(0,1\right).
\end{flalign}
Consequently, (\ref{eq:tautau}) is (loosely) satisfied for all $\tau\in\left[0,\min\left\{ \varepsilon_{MF},1\right\} \right)\subseteq\left(0,1\right)$,
by choosing $c_{2}\equiv4\exp\left(-2\right)$. Now, observe that
\begin{equation}
\dfrac{3}{2}\dfrac{\tau}{\varepsilon_{MF}}-\dfrac{1}{2}\left(\dfrac{\tau}{\varepsilon_{MF}}\right)^{3}<\dfrac{3}{2}\dfrac{\tau}{\varepsilon_{MF}}\le\dfrac{3c_{2}}{2\varepsilon_{MF}\left(\log\left(\tau\right)\right)^{2}}.
\end{equation}
Finally, simply choose 
\begin{flalign}
c & \equiv\max\left\{ c_{1},\dfrac{3c_{2}}{2\varepsilon_{MF}}\right\} \nonumber \\
 & \equiv\max\left\{ \left(\log\left(\varepsilon_{MF}\right)\right)^{2},\dfrac{6\exp\left(-2\right)}{\varepsilon_{MF}}\right\} <+\infty,
\end{flalign}
which immediately implies (\ref{eq:SPH_C}). Therefore, we have shown
that, if we choose $\boldsymbol{\Sigma}_{\xi}\equiv\boldsymbol{\Sigma}_{o}$
, then, for any fixed, but \textit{arbitrarily small} $\varepsilon_{MF}>0$,
the spatial field $\xi_{S}\left({\bf p},t\right)$ will also be almost
everywhere sample path continuous.

Observe that, via the analysis above, sample path continuity of the
involved fields can be ascertained, but only in the only almost everywhere
sense. Nevertheless, it easy to show that there always exist everywhere
sample path continuous fields $\widetilde{\sigma}_{S}\left({\bf p},t\right)$
and $\widetilde{\xi}_{S}\left({\bf p},t\right)$, which are \textit{indistinguishable}
from $\sigma_{S}\left({\bf p},t\right)$ and $\xi_{S}\left({\bf p},t\right)$,
respectively \cite{Elliott_2004Measure}. Therefore, there is absolutely
no loss of generality if we take both $\sigma_{S}\left({\bf p},t\right)$
and $\xi_{S}\left({\bf p},t\right)$ to be sample path continuous,
\textit{everywhere} in $\Omega$, and we will do so, hereafter.

Sample path continuity of all fields $\sigma_{S\left(D\right)}\left({\bf p},t\right)$
and $\xi_{S\left(D\right)}\left({\bf p},t\right)$ will be essential
in Section \ref{sec:MobRelBeam}, where we rigorously discuss optimality
of the proposed relay motion control framework, with special focus
on the relay beamforming problem. 

We close this section by discussing, in some more detail, the temporal
properties of the \textit{evaluations }of the fields $\sigma_{S}\left({\bf p},t\right)$
and $\sigma_{D}\left({\bf p},t\right)$ at any \textit{deterministic}
set of $N$ (say) positions $\left\{ {\bf p}_{i}\in{\cal S}\right\} _{i\in\mathbb{N}_{N}^{+}}$,
\textit{same} across all $N_{T}$ time slots. This results in the
zero-mean, stationary temporal Gaussian process
\begin{equation}
\boldsymbol{C}\left(t\right)\triangleq\left[\left\{ \sigma_{S}\left({\bf p}_{i},t\right)\right\} _{i\in\mathbb{N}_{N}^{+}}\,\left\{ \sigma_{D}\left({\bf p}_{i},t\right)\right\} _{i\in\mathbb{N}_{N}^{+}}\right]^{\boldsymbol{T}}\in\mathbb{R}^{2N\times1},\quad t\in\mathbb{N}_{N_{T}}^{+},\label{eq:Vector_Process}
\end{equation}
with \textit{matrix covariance kernel} $\boldsymbol{\Sigma}_{\boldsymbol{C}}:\mathbb{Z}\rightarrow\mathbb{S}_{+}^{2N}$,
defined, under the specific spatiotemporal model considered, as
\begin{equation}
\boldsymbol{\Sigma}_{\boldsymbol{C}}\left(\nu\right)\triangleq\exp\left(-\dfrac{\left|\nu\right|}{\gamma}\right)\widetilde{\boldsymbol{\Sigma}}_{\boldsymbol{C}}\in\text{\ensuremath{\mathbb{S}}}_{+}^{2N},
\end{equation}
where $\nu\triangleq t-s,$ for all $\left(t,s\right)\in\mathbb{N}_{N_{T}}^{+}\times\mathbb{N}_{N_{T}}^{+}$,
\begin{flalign}
\widetilde{\boldsymbol{\Sigma}}_{\boldsymbol{C}} & \triangleq\begin{bmatrix}1 & \kappa\\
\kappa & 1
\end{bmatrix}\varolessthan\widehat{\boldsymbol{\Sigma}}_{\boldsymbol{C}}\in\text{\ensuremath{\mathbb{S}}}_{+}^{2N},\\
\kappa & \triangleq\exp\left(-\dfrac{\left\Vert {\bf p}_{S}-{\bf p}_{D}\right\Vert _{2}}{\delta}\right)<1,\\
\widehat{\boldsymbol{\Sigma}}_{\boldsymbol{C}}\left(i,j\right) & \triangleq\boldsymbol{\Sigma}_{\sigma}\left({\bf p}_{i}-{\bf p}_{j}\right),\quad\forall\left(i,j\right)\in\mathbb{N}_{N}^{+}\times\mathbb{N}_{N}^{+},\label{eq:Vector_end}
\end{flalign}
and with ``$\varolessthan$'' denoting the operator of the Kronecker
product. Then, the following result is true. 
\begin{thm}
\textbf{\textup{($\boldsymbol{C}\left(t\right)$ is Markov)}}\label{thm:(Markov)}
For any deterministic, time invariant set of points $\left\{ {\bf p}_{i}\in{\cal S}\right\} _{i\in\mathbb{N}_{N}^{+}}$,
the vector process $\boldsymbol{C}\left(t\right)\in\mathbb{R}^{2N\times1}$,
$t\in\mathbb{N}_{N_{T}}^{+}$, as defined in (\ref{eq:Vector_Process})-(\ref{eq:Vector_end}),
may be represented as a stable order-$1$ vector autoregression, satisfying
the linear stochastic difference equation
\begin{flalign}
\boldsymbol{X}\left(t\right) & \equiv\varphi\boldsymbol{X}\left(t-1\right)+\boldsymbol{W}\left(t\right),\quad t\in\mathbb{N}_{N_{T}}^{+},\label{eq:AUTO_1}
\end{flalign}
where
\begin{flalign}
\varphi & \triangleq\exp\left(-1/\gamma\right)<1,\\
\boldsymbol{X}\left(0\right) & \sim{\cal N}\left({\bf 0},\widetilde{\boldsymbol{\Sigma}}_{\boldsymbol{C}}\right)\quad\text{and}\\
\boldsymbol{W}\left(t\right)\hspace{2pt}\hspace{2.35pt} & \hspace{-2pt}\hspace{-2.35pt}\overset{i.i.d.}{\sim}{\cal N}\left({\bf 0},\left(1-\varphi^{2}\right)\widetilde{\boldsymbol{\Sigma}}_{\boldsymbol{C}}\right),\quad\forall t\in\mathbb{N}_{N_{T}}^{+}.
\end{flalign}
In particular, $\boldsymbol{C}\left(t\right)$ is Markov.
\end{thm}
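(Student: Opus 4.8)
The plan is to exhibit the autoregressive representation \emph{directly}, by constructing the innovation sequence from $\boldsymbol{C}\left(t\right)$ itself and verifying its claimed law through second-moment computations, using throughout that the whole family $\left\{\boldsymbol{C}\left(t\right)\right\}_{t}$ is jointly Gaussian. First I would record, from the spatiotemporal model, in particular (\ref{eq:first_EXP})--(\ref{eq:second_EXP}), together with the definitions (\ref{eq:Vector_Process})--(\ref{eq:Vector_end}), that $\boldsymbol{C}\left(t\right)$ is a zero-mean Gaussian process with $\mathbb{E}\left\{\boldsymbol{C}\left(t\right)\boldsymbol{C}^{\boldsymbol{T}}\left(s\right)\right\}=\boldsymbol{\Sigma}_{\boldsymbol{C}}\left(t-s\right)=\exp\left(-\left|t-s\right|/\gamma\right)\widetilde{\boldsymbol{\Sigma}}_{\boldsymbol{C}}$; the key structural fact is that the Laplacian temporal kernel $\exp\left(-\left|t-s\right|/\gamma\right)$ multiplies the whole spatial/BS block uniformly, which is exactly what produces the Kronecker form of $\widetilde{\boldsymbol{\Sigma}}_{\boldsymbol{C}}$. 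I would also note at the outset that $\widetilde{\boldsymbol{\Sigma}}_{\boldsymbol{C}}\in\mathbb{S}_{+}^{2N}$, being the Kronecker product of $\left[\begin{smallmatrix}1 & \kappa\\\kappa & 1\end{smallmatrix}\right]$ (positive semidefinite since $\left|\kappa\right|<1$) with $\widehat{\boldsymbol{\Sigma}}_{\boldsymbol{C}}$ (positive semidefinite since $\boldsymbol{\Sigma}_{\sigma}$ is a valid isotropic covariance kernel), so that $\left(1-\varphi^{2}\right)\widetilde{\boldsymbol{\Sigma}}_{\boldsymbol{C}}$ is an admissible covariance, with $\varphi\triangleq\exp\left(-1/\gamma\right)\in\left(0,1\right)$ already giving the required stability.

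The heart of the argument is then a short moment calculation. Define $\boldsymbol{W}\left(t\right)\triangleq\boldsymbol{C}\left(t\right)-\varphi\boldsymbol{C}\left(t-1\right)$, so that (\ref{eq:AUTO_1}) holds tautologically with $\boldsymbol{X}\equiv\boldsymbol{C}$, and verify the three claimed properties. First, $\boldsymbol{W}\left(t\right)$ is zero-mean Gaussian, being a linear image of a Gaussian vector. Second, expanding and using $\mathbb{E}\left\{\boldsymbol{C}\left(t\right)\boldsymbol{C}^{\boldsymbol{T}}\left(t\right)\right\}=\widetilde{\boldsymbol{\Sigma}}_{\boldsymbol{C}}$ and $\mathbb{E}\left\{\boldsymbol{C}\left(t\right)\boldsymbol{C}^{\boldsymbol{T}}\left(t-1\right)\right\}=\varphi\widetilde{\boldsymbol{\Sigma}}_{\boldsymbol{C}}$ gives $\mathbb{E}\left\{\boldsymbol{W}\left(t\right)\boldsymbol{W}^{\boldsymbol{T}}\left(t\right)\right\}=\left(1-2\varphi^{2}+\varphi^{2}\right)\widetilde{\boldsymbol{\Sigma}}_{\boldsymbol{C}}=\left(1-\varphi^{2}\right)\widetilde{\boldsymbol{\Sigma}}_{\boldsymbol{C}}$. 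Third, for every $s\le t-1$ the geometric decay of the temporal kernel telescopes, $\mathbb{E}\left\{\boldsymbol{W}\left(t\right)\boldsymbol{C}^{\boldsymbol{T}}\left(s\right)\right\}=\varphi^{t-s}\widetilde{\boldsymbol{\Sigma}}_{\boldsymbol{C}}-\varphi\,\varphi^{t-1-s}\widetilde{\boldsymbol{\Sigma}}_{\boldsymbol{C}}=\boldsymbol{0}$. Since the joint law of $\boldsymbol{W}\left(t\right)$ and $\left(\boldsymbol{C}\left(0\right),\ldots,\boldsymbol{C}\left(t-1\right)\right)$ is Gaussian, this vanishing cross-covariance upgrades to genuine (joint) independence of $\boldsymbol{W}\left(t\right)$ from $\left(\boldsymbol{C}\left(0\right),\ldots,\boldsymbol{C}\left(t-1\right)\right)$; since each $\boldsymbol{W}\left(s\right)$ with $s<t$ is a linear function of $\boldsymbol{C}\left(s\right)$ and $\boldsymbol{C}\left(s-1\right)$, the innovations are in particular mutually independent, i.e. $\boldsymbol{W}\left(t\right)\overset{i.i.d.}{\sim}\mathcal{N}\left(\boldsymbol{0},\left(1-\varphi^{2}\right)\widetilde{\boldsymbol{\Sigma}}_{\boldsymbol{C}}\right)$. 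The initialization is immediate, $\boldsymbol{C}\left(0\right)\sim\mathcal{N}\left(\boldsymbol{0},\boldsymbol{\Sigma}_{\boldsymbol{C}}\left(0\right)\right)=\mathcal{N}\left(\boldsymbol{0},\widetilde{\boldsymbol{\Sigma}}_{\boldsymbol{C}}\right)$, which completes the stable order-$1$ vector autoregression (\ref{eq:AUTO_1}).

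The Markov property then follows by the standard argument for noise-driven recursions: for any bounded Borel $h$, $\mathbb{E}\left\{h\left(\boldsymbol{C}\left(t\right)\right)\,\middle|\,\boldsymbol{C}\left(0\right),\ldots,\boldsymbol{C}\left(t-1\right)\right\}=\psi\left(\boldsymbol{C}\left(t-1\right)\right)$ with $\psi\left(\boldsymbol{x}\right)\triangleq\mathbb{E}\left\{h\left(\varphi\boldsymbol{x}+\boldsymbol{W}\left(t\right)\right)\right\}$, because $\boldsymbol{W}\left(t\right)$ is independent of $\sigma\left(\boldsymbol{C}\left(0\right),\ldots,\boldsymbol{C}\left(t-1\right)\right)$; the right-hand side depends on the past only through $\boldsymbol{C}\left(t-1\right)$, so $\boldsymbol{C}\left(t\right)$ is Markov. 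I do not expect a real obstacle here, as the whole proof is just the lifting of the elementary scalar Ornstein--Uhlenbeck / AR$\left(1\right)$ correspondence to the present separable vector setting; the two points that genuinely need care are (a) the passage from pairwise uncorrelatedness to \emph{joint} independence of the innovation and the past, which relies essentially on Gaussianity of the joint law, and (b) the preliminary check that $\widetilde{\boldsymbol{\Sigma}}_{\boldsymbol{C}}$ is positive semidefinite so that $\left(1-\varphi^{2}\right)\widetilde{\boldsymbol{\Sigma}}_{\boldsymbol{C}}$ is a legitimate noise covariance.
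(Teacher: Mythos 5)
Your proof is correct, but it runs in the opposite direction from the paper's. The paper defines a fresh process $\boldsymbol{X}\left(t\right)$ via the recursion (\ref{eq:AUTO_1}) with the stated initialization and noise, computes its autocovariance by unrolling the recursion (first verifying that $\widetilde{\boldsymbol{\Sigma}}_{\boldsymbol{C}}$ is a fixed point of the Lyapunov-type recursion for $\mathbb{E}\left\{ \boldsymbol{X}\left(t\right)\boldsymbol{X}^{\boldsymbol{T}}\left(t\right)\right\} $, then telescoping the cross terms), and concludes that $\mathbb{E}\left\{ \boldsymbol{X}\left(s\right)\boldsymbol{X}^{\boldsymbol{T}}\left(t\right)\right\} \equiv\varphi^{\left|t-s\right|}\widetilde{\boldsymbol{\Sigma}}_{\boldsymbol{C}}\equiv\boldsymbol{\Sigma}_{\boldsymbol{C}}\left(t-s\right)$, so that $\boldsymbol{X}$ and $\boldsymbol{C}$ agree in law by zero-mean Gaussianity. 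You instead define the innovation $\boldsymbol{W}\left(t\right)\triangleq\boldsymbol{C}\left(t\right)-\varphi\boldsymbol{C}\left(t-1\right)$ directly from $\boldsymbol{C}$ and verify that it is white with covariance $\left(1-\varphi^{2}\right)\widetilde{\boldsymbol{\Sigma}}_{\boldsymbol{C}}$ and (by joint Gaussianity) independent of the past. The two arguments are dual and rest on the same telescoping identity $\varphi^{t-s}-\varphi\cdot\varphi^{t-1-s}\equiv0$; what your version buys is that the representation is exhibited pathwise for $\boldsymbol{C}$ itself rather than for a distributional copy, and the independence of $\boldsymbol{W}\left(t\right)$ from $\sigma\left\{ \boldsymbol{C}\left(0\right),\ldots,\boldsymbol{C}\left(t-1\right)\right\} $ that drives the Markov property is made explicit, whereas the paper leaves that last step implicit. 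The only cosmetic point to tidy is the index $t\equiv0$: the process $\boldsymbol{C}\left(t\right)$ is defined only for $t\in\mathbb{N}_{N_{T}}^{+}$, so you should either extend it stationarily to $t\equiv0$ (trivially consistent, since $\boldsymbol{\Sigma}_{\boldsymbol{C}}$ is defined on all of $\mathbb{Z}$) or start your innovation construction at $t\equiv2$ with $\boldsymbol{X}\left(1\right)\equiv\boldsymbol{C}\left(1\right)$; your two flagged points of care, joint (not merely pairwise) independence via Gaussianity and positive semidefiniteness of $\widetilde{\boldsymbol{\Sigma}}_{\boldsymbol{C}}$, are both handled adequately.
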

\begin{proof}[Proof of Theorem \ref{thm:(Markov)}]
The proof is a standard exercise in time series; see Appendix A.
\end{proof}
From a practical point of view, Theorem \ref{thm:(Markov)} is extremely
valuable. Specifically, the Markovian representation of $\boldsymbol{C}\left(t\right)$
may be employed in order to \textit{efficiently simulate} the spatiotemporal
paths of the communication channel on any finite, but arbitrarily
fine grid. This is important, since it allows detailed \textit{numerical
evaluation} of all methods developed in this work. Theorem \ref{thm:(Markov)}
also reveals that the channel model we have considered actually agrees
with experimental results presented in, for instance, \cite{Trappe2_2009,Channel_AR_2010},
which show that autoregressive processes constitute an adequate model
for stochastically describing temporal correlations among wireless
communication links.
\begin{rem}
Unfortunately, to the best of our knowledge, the channel process along
a specific relay trajectory, presented in Section \ref{subsec:Large-Scale-Gaussian},
where the positions of the relays are allowed to vary across time
slots is no longer stationary and may not be shown to satisfy the
Markov Property. Therefore, in our analysis presented hereafter, we
regard the aforementioned process as a general, nonstationary Gaussian
process. All inference results presented below are based on this generic
representation.\hfill{}\ensuremath{\blacksquare}
\end{rem}
\begin{rem}
For simplicity, all motion control problems in this paper are formulated
on the plane (some subset of $\mathbb{R}^{2}$). This means that any
motion of the relays of the network along the third dimension of the
space is indifferent to our channel model. Nevertheless, under appropriate
(based on the requirements discussed above) assumptions concerning
3D wireless channel modeling, all subsequent arguments would hold
in exactly the same fashion when fully unconstrained motion in $\mathbb{R}^{3}$
is assumed to affect the quality of the wireless channel.\hfill{}\ensuremath{\blacksquare}
\end{rem}

\section{\label{sec:MobRelBeam}Spatially Controlled Relay Beamforming}

In this section, we formulate and solve the spatially controlled relay
beamforming problem, advocated in this paper. The beamforming objective
adopted will be \textit{maximization of the Signal-to-Interference+Noise
Ratio (SINR) at the destination (measuring network QoS)}, under a
total power budget at the relays. For the single-source single-destination
setting considered herein, the aforementioned beamforming problem
admits a closed form solution, a fact which will be important in deriving
optimal relay motion control policies, in a tractable fashion. But
first, let us present the general scheduling schema of the proposed
mobile beamforming system, as well as some technical preliminaries
on stochastic programming and optimal control, which will be used
repeatedly in the analysis to follow.

\subsection{Joint Scheduling of Communications \& Controls}

\begin{figure}
\centering\includegraphics[scale=1.4]{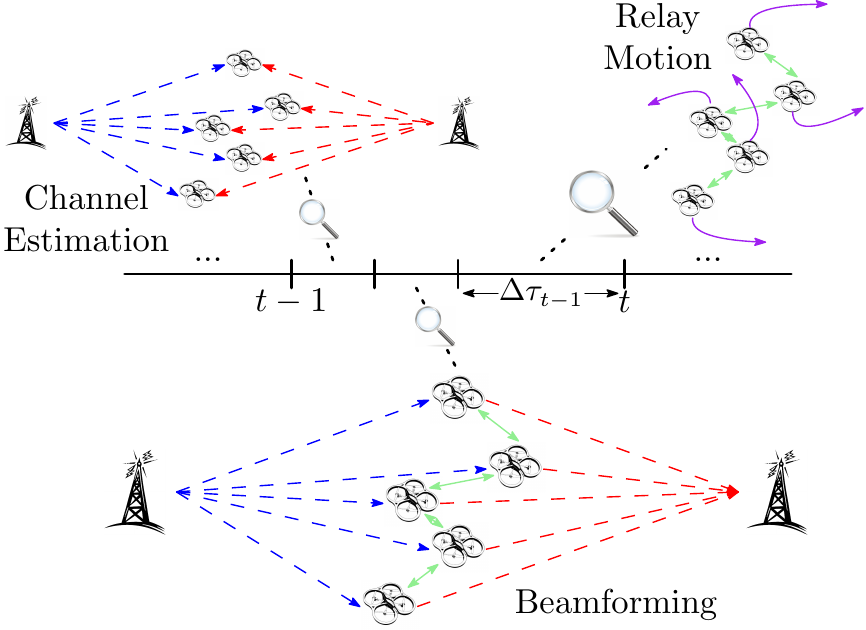}

\caption{\label{fig:Scheduling}Proposed TDMA-like joint scheduling protocol
for communications and controls.}
\end{figure}
At each time slot $t\in\mathbb{N}_{N_{T}}^{+}$ and assuming the same
carrier for all communication tasks, we employ a basic joint communication/decision
making TDMA-like protocol, as follows:
\begin{description}[style =standard, labelindent=0em , labelwidth=0.31cm, leftmargin =!]
\item [{1.}] The source broadcasts a pilot signal to the relays, which
then estimate their respective channels relative to the source.
\item [{2.}] The same procedure is carried out for the channels relative
to the destination.\textbf{ }
\item [{3.}] Then, based on the estimated CSI, the relays beamform in AF
mode (assume perfect CSI estimation).
\item [{4.}] Based on the CSI received \textit{so far}, strategic decision
making is implemented, motion controllers of the relays are determined
and relays are steered to their updated positions.
\end{description}
The above sequence of actions is repeated for all $N_{T}$ time slots,
corresponding to the total operational horizon of the system. This
simple scheduling protocol is graphically depicted in Fig. \ref{fig:Scheduling}.

Concerning relay kinematics, it is assumed that the relays obey the
differential equation
\begin{equation}
\dot{{\bf p}}\left(\tau\right)\equiv{\bf u}\left(\tau\right),\quad\forall\tau\in\left[0,T\right],
\end{equation}
where ${\bf u}\triangleq\left[{\bf u}_{1}\,\ldots\,{\bf u}_{R}\right]^{\boldsymbol{T}}\in{\cal S}^{R}$,
with ${\bf u}_{i}:\left[0,T\right]\rightarrow{\cal S}$ being the
motion controller of relay $i\in\mathbb{N}_{R}^{+}$. Apparently,
relay motion is in continuous time. However, assuming the relays may
move \textit{only after their controls have been determined and up
to the start of the next time slot}, we can write
\begin{equation}
{\bf p}\left(t\right)\equiv{\bf p}\left(t-1\right)+\int_{\Delta\tau_{t-1}}{\bf u}_{t-1}\left(\tau\right)\text{d}\tau,\quad\forall t\in\mathbb{N}_{N_{T}}^{2},\label{eq:motion_model_2}
\end{equation}
with ${\bf p}\left(1\right)\equiv{\bf p}{}_{init}$, and where $\Delta\tau_{t}\subset\mathbb{R}$
and ${\bf u}_{t}:\Delta\tau_{t}\rightarrow{\cal S}^{R}$ denote the
time interval that the relays are allowed to move in and the respective
relay controller, in each time slot $t\in\mathbb{N}_{N_{T}-1}^{+}$.
It holds that ${\bf u}\left(\tau\right)\equiv\sum_{t\in\mathbb{N}_{N_{T}-1}^{+}}{\bf u}_{t}\left(\tau\right)\mathds{1}_{\Delta\tau_{t}}\left(\tau\right)$,
\textit{where $\tau$ belongs in the first} $N_{T}-1$ time slots.Of
course, at each time slot $t$, the length of $\Delta\tau_{t}$, $\left|\Delta\tau_{t}\right|$,
must be sufficiently small such that the temporal correlations of
the CSI at adjacent time slots are sufficiently strong. These correlations
are controlled by the correlation time parameter $\gamma$, which
can be a function of the slot width. Therefore, the velocity of the
relays must be of the order of $\left(\left|\Delta\tau_{t}\right|\right)^{-1}$.
In this work, though, we assume that the relays are not explicitly
resource constrained, in terms of their motion.

Now, regarding the form of the relay motion controllers ${\bf u}_{t-1}\left(\tau\right),\tau\in\Delta\tau_{t-1}$,
\textit{given a goal position vector at time slot} $t$, ${\bf p}^{o}\left(t\right),$
it suffices to fix a path in ${\cal S}^{R}$, such that the points
${\bf p}^{o}\left(t\right)$ and ${\bf p}\left(t-1\right)$ are connected
in at most time $\left|\Delta\tau_{t-1}\right|$. A generic choice
for such a path is the straight line\footnote{Caution is needed here, due to the possibility of physical collisions
among relays themselves, or among relays and other physical obstacles
in the workspace, ${\cal S}$. Nevertheless, for simplicity, we assume
that either such events never occur, or that, if they do, there exists
some transparent collision avoidance mechanism implemented at each
relay, which is out of our direct control.} connecting ${\bf p}_{i}^{o}\left(t\right)$ and ${\bf p}_{i}\left(t-1\right)$,
for all $i\in\mathbb{N}_{R}^{+}$. Therefore, we may choose the relay
controllers at time slot $t-1\in\mathbb{N}_{N_{T}-1}^{+}$ as
\begin{equation}
{\bf u}_{t-1}^{o}\left(\tau\right)\triangleq\dfrac{1}{\Delta\tau_{t-1}}\left({\bf p}^{o}\left(t\right)-{\bf p}\left(t-1\right)\right),\quad\forall\tau\in\Delta\tau_{t-1}.
\end{equation}
As a result, any motion control problem considered hereafter can now
be formulated in terms of specifying the goal relay positions at the
next time slot, given their positions at the current time slot (and
the observed CSI).

In the following, let $\mathscr{C}\left({\cal T}_{t}\right)$ denote
the set of channel gains observed by the relays, \textit{along the
paths of their point trajectories} ${\cal T}_{t}\triangleq\left\{ {\bf p}\left(1\right)\,\ldots\,{\bf p}\left(t\right)\right\} $,
$t\in\mathbb{N}_{N_{T}}^{+}$. Then, ${\cal T}_{t}$ may be recursively
updated as ${\cal T}_{t}\equiv{\cal T}_{t-1}\cup\left\{ {\bf p}\left(t\right)\right\} $,
for all $t\in\mathbb{N}_{N_{T}}^{+}$, with ${\cal T}_{0}\triangleq\varnothing$.
In a technically precise sense, $\left\{ \mathscr{C}\left({\cal T}_{t}\right)\right\} _{t\in\mathbb{N}_{N_{T}}^{+}}$
will also denote the filtration generated by the CSI observed at the
relays, \textit{along} ${\cal T}_{t}$, interchangeably. In other
words, in case the trajectories of the relays are themselves random,
then $\mathscr{C}\left({\cal T}_{t}\right)$ denotes the $\sigma$-algebra
generated by \textit{both} the CSI observed up to and including time
slot $t$ \textit{and} ${\bf p}\left(1\right)\,\ldots\,{\bf p}\left(t\right)$,
for all $t\in\mathbb{N}_{N_{T}}^{+}$. Additionally, we define $\mathscr{C}\left({\cal T}_{0}\right)\equiv\mathscr{C}\left(\left\{ \varnothing\right\} \right)$
as $\mathscr{C}\left({\cal T}_{0}\right)\triangleq\left\{ \varnothing,\Omega\right\} $,
that is, as the trivial $\sigma$-algebra, and we may occasionally
refer to time $t\equiv0$, as a \textit{dummy time slot}, by convention. 

\subsection{\label{subsec:2-Stage}$2$-Stage Stochastic Optimization of Beamforming
Weights and Relay Positions: Base Formulation \& Methodology}

At each time slot $t\in\mathbb{N}_{N_{T}}^{+}$, given the current
CSI encoded in $\mathscr{C}\left({\cal T}_{t}\right)$, we are interested
in determining $\boldsymbol{w}^{o}\left(t\right)\triangleq\left[w_{1}\left(t\right)\,w_{2}\left(t\right)\,\ldots\,w_{R}\left(t\right)\right]^{\boldsymbol{T}}$,
as an optimal solution to a beamforming optimization problem, as a
functional of $\mathscr{C}\left({\cal T}_{t}\right)$. Let the \textit{optimal
value} (say infimum) of this problem be the process $V_{t}\equiv V\left({\bf p}\left(t\right),t\right)$,
a functional of the CSI encoded in $\mathscr{C}\left({\cal T}_{t}\right)$,
depending on the positions of the relays at time slot $t$.

Suppose that, \textit{at time slot} $t-1$, an oracle reveals $\mathscr{C}\left({\cal T}_{t}\equiv{\cal T}_{t-1}\cup\left\{ {\bf p}\left(t\right)\right\} \right)$,
which also determines the channels corresponding to the new positions
of the relays at the next time slot $t$. Then, we could further consider
optimizing $V_{t}$ with respect to ${\bf p}\left(t\right)$, representing
the new position of the relays. But note that, $\mathscr{C}\left({\cal T}_{t}\right)$
is not physically observable and in the absence of the oracle, optimizing
$V_{t}$ with respect to ${\bf p}\left(t\right)$ is impossible, since,
given $\mathscr{C}\left({\cal T}_{t-1}\right)$, the channels at any
position of the relays are nontrivial random variables. However, it
is reasonable to search for the best decision on the positions of
the relays at time slot $t$, as a functional of the available information
encoded in $\mathscr{C}\left({\cal T}_{t-1}\right)$, such that $V_{t}$
is optimized \textit{on average}. This procedure may be formally formulated
as a \textit{$2$-stage stochastic program} \cite{Shapiro2009STOCH_PROG},
\begin{equation}
\begin{array}{rl}
\underset{{\bf p}\left(t\right)}{\mathrm{minimize}} & \mathbb{E}\left\{ V\left({\bf p}\left(t\right),t\right)\right\} \\
\mathrm{subject\,to} & {\bf p}\left(t\right)\equiv{\cal M}\left(\mathscr{C}\left({\cal T}_{t-1}\right)\right)\in{\cal C}\left({\bf p}^{o}\left(t\hspace{-2pt}-\hspace{-2pt}1\right)\right),\\
 & \text{for some }{\cal M}:\mathbb{R}^{4R\left(t-1\right)}\rightarrow\mathbb{R}^{2R}
\end{array},\label{eq:2STAGE_PRE}
\end{equation}
to be solved at each $t-1\in\mathbb{N}_{N_{T}-1}^{+}$, where ${\cal C}:\mathbb{R}^{2R}\rightrightarrows\mathbb{R}^{2R}$
is a multifunction,
\begin{figure}
\centering\includegraphics[scale=1.3]{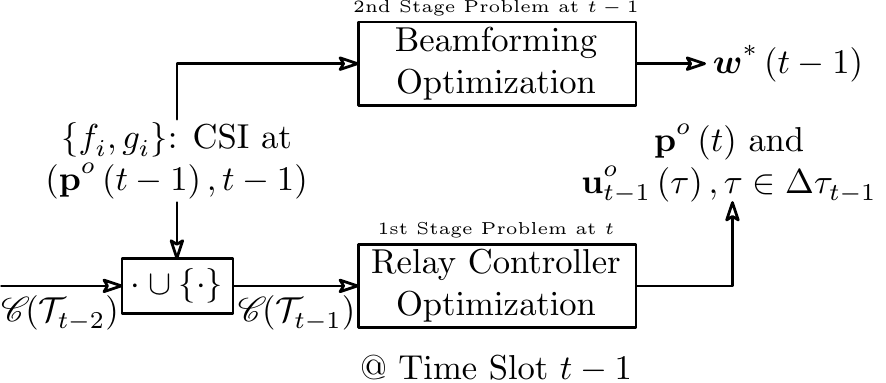}

\caption{\label{fig:Proposed_System}2-Stage optimization of beamforming weights
and spatial relay controllers. The variables $\boldsymbol{w}^{o}\left(t-1\right)$,
${\bf u}_{t-1}^{o}$ and ${\bf p}^{o}\left(t\right)$ denote the optimal
beamforming weights and relay controllers at time slot $t-1$, and
the optimal relay positions at time slot $t$, respectively.}
\end{figure}
 with ${\cal C}\left({\bf p}^{o}\left(t-1\right)\right)\subseteq{\cal S}^{R}$
representing a physically feasible spatial neighborhood around the
point ${\bf p}^{o}\left(t-1\right)\in{\cal S}^{R}$, the decision
vector \textit{selected} at time $t-2\in\mathbb{N}_{N_{T}-2}$ (recall
that $t\equiv0$ denotes a dummy time slot). Note that, in general,
the decision \textit{selected} at $t-2$, ${\bf p}^{o}\left(t-1\right)$,
may not be an optimal decision for the respective problem solved at
$t-2$ and implemented at $t-1$. To distinguish ${\bf p}^{o}\left(t-1\right)$
from an optimal decision at $t-2$, the latter will be denoted as
${\bf p}^{*}\left(t-1\right)$, for all $t\in\mathbb{N}_{N_{T}}^{2}$.
Also note that, in order for (\ref{eq:2STAGE_PRE}) to be well defined,
important technical issues, such as measurability of $V_{t}$ and
existence of its expectation at least for each feasible decision ${\bf p}\left(t\right)$,
should be precisely resolved. Problem (\ref{eq:2STAGE-1}), together
with the respective beamforming problem with optimal value $V_{t}$
(which will focus on shortly) are referred to as the \textit{first-stage
problem} and the \textit{second-stage problem}, respectively \cite{Shapiro2009STOCH_PROG}.
Hereafter, aligned with the literature, any feasible choice for the
decision variable ${\bf p}\left(t\right)$ in (\ref{eq:2STAGE-1}),
will be interchangeably called an \textit{(admissible)} \textit{policy}.
A generic block representation of the proposed $2$-stage stochastic
programming approach is depicted in Fig. \ref{fig:Proposed_System}.

Mainly due to the arbitrary structure of the function ${\cal M}$,
(\ref{eq:2STAGE_PRE}) is too general to consider, within a reasonable
analytical framework. Thus, let us slightly constrain the decision
set of (\ref{eq:2STAGE_PRE}) to include \textit{only measurable decisions},
resulting in the formulation

\begin{equation}
\begin{array}{rl}
\underset{{\bf p}\left(t\right)}{\mathrm{minimize}} & \mathbb{E}\left\{ V\left({\bf p}\left(t\right),t\right)\right\} \\
\mathrm{subject\,to} & {\bf p}\left(t\right)\equiv{\cal M}\left(\mathscr{C}\left({\cal T}_{t-1}\right)\right)\in{\cal C}\left({\bf p}^{o}\left(t\hspace{-2pt}-\hspace{-2pt}1\right)\right),\\
 & {\cal M}^{-1}\left({\cal A}\right)\in\mathscr{B}\left(\mathbb{R}^{4R\left(t-1\right)}\right),\,\forall{\cal A}\in\mathscr{B}\left(\mathbb{R}^{2R}\right)
\end{array},\label{eq:2STAGE-1}
\end{equation}
provided, of course, that the stochastic program (\ref{eq:2STAGE-1})
is well defined. The second constraint in (\ref{eq:2STAGE-1}) is
equivalent to ${\cal M}$ being Borel measurable, instead of being
any arbitrary function, as in (\ref{eq:2STAGE_PRE}).

Provided its well definiteness, the stochastic program (\ref{eq:2STAGE-1})
is difficult to solve, most importantly because of its \textit{variational
character}; the decision variable ${\bf p}\left(t\right)$ is constrained
to be a functional of the CSI observed up to and including time $t-1$.
A very powerful tool, which will enable us to both make (\ref{eq:2STAGE-1})
meaningful and overcome the aforementioned difficulty, is the \textit{Fundamental
Lemma of Stochastic Control} \cite{Speyer2008STOCHASTIC,Astrom1970CONTROL,Rockafellar2009VarAn,Shapiro2009STOCH_PROG,Bertsekas_Vol_2,Bertsekas1978Stochastic},
which in fact refers to a family of technical results related to the
\textit{interchangeability} of integration (expectation) and minimization
in general stochastic programming. Under the framework of the Fundamental
Lemma, in Appendix B, we present a detailed discussion, best suited
for the purposes of this paper, which is related to the important
technical issues, arising when one wishes to meaningfully define and
tractably simplify ``hard'', variational problems of the form of
(\ref{eq:2STAGE-1}).

In particular, Lemma \ref{lem:FUND_Lemma_FINAL}, presented in Section
\ref{subsec:FINAL} (Appendix B), identifies six sufficient technical
conditions (conditions \textbf{C1-C6}, see statement of Lemma \ref{lem:FUND_Lemma_FINAL}),
under which the variational problem (\ref{eq:2STAGE-1}) is \textit{exchangeable}
by the structurally simpler, \textit{pointwise} optimization problem
\begin{equation}
\begin{array}{rl}
\underset{{\bf p}\left(t\right)}{\mathrm{minimize}} & \mathbb{E}\left\{ V\left({\bf p}\left(t\right),t\right)\left|\mathscr{C}\left({\cal T}_{t-1}\right)\right.\right\} \\
\mathrm{subject\,to} & {\bf p}\left(t\right)\in{\cal C}\left({\bf p}^{o}\left(t\hspace{-2pt}-\hspace{-2pt}1\right)\right)
\end{array},\label{eq:2STAGE-2}
\end{equation}
to be solved at each $t-1\in\mathbb{N}_{N_{T}-1}^{+}$. Observe that,
in (\ref{eq:2STAGE-2}), the decision variable ${\bf p}\left(t\right)$
is constant, as opposed to (\ref{eq:2STAGE-1}), where the decision
variable ${\bf p}\left(t\right)$ is itself a functional of the observed
information at time slot $t-1$, that is, a policy. Provided that
CSI $\mathscr{C}\left({\cal T}_{t-1}\right)$ and ${\bf p}^{o}\left(t\hspace{-2pt}-\hspace{-2pt}1\right)$
are known and that the involved conditional expectation can be somehow
evaluated, (\ref{eq:2STAGE-2}) constitutes an ordinary, nonlinear
optimization problem.

If Lemma \ref{lem:FUND_Lemma_FINAL} is in power, \textit{exchangeability}
of (\ref{eq:2STAGE-1}) by (\ref{eq:2STAGE-2}) is understood in the
sense that the optimal value of (\ref{eq:2STAGE-1}), which is a number,
coincides with the \textit{expectation }of optimal value of (\ref{eq:2STAGE-2}),
which turns out to be a measurable function of $\mathscr{C}\left({\cal T}_{t-1}\right)$.
In other words, minimization is \textit{interchangeable} with integration,
in the sense that 
\begin{equation}
\inf_{{\bf p}\left(t\right)\in{\cal D}_{t}}\mathbb{E}\left\{ V\left({\bf p}\left(t\right),t\right)\right\} \equiv\mathbb{E}\left\{ \inf_{{\bf p}\left(t\right)\in{\cal C}\left({\bf p}^{o}\left(t-1\right)\right)}\mathbb{E}\left\{ V\left({\bf p}\left(t\right),t\right)\left|\mathscr{C}\left({\cal T}_{t-1}\right)\right.\right\} \right\} ,
\end{equation}
for all $t\in\mathbb{N}_{N_{T}}^{2}$, where ${\cal D}_{t}$ denotes
the set of feasible decisions for (\ref{eq:2STAGE-1}). What is more,
under the aforementioned technical conditions of Lemma \ref{lem:FUND_Lemma_FINAL},
exchangeability implies that, if there exists an admissible policy
of (\ref{eq:2STAGE-1}), say ${\bf p}^{*}\left(t\right)$, which solves
(\ref{eq:2STAGE-2}), then ${\bf p}^{*}\left(t\right)$ is also optimal
for (\ref{eq:2STAGE-1}). Additionally, Lemma \ref{lem:FUND_Lemma_FINAL}
implies existence of at least one optimal solution to (\ref{eq:2STAGE-2}),
which is simultaneously feasible and, thus, optimal, for the original
stochastic program (\ref{eq:2STAGE-1}). If, further, (\ref{eq:2STAGE-2})
features a unique optimal solution, say ${\bf p}^{*}\left(t\right)$,
then ${\bf p}^{*}\left(t\right)$ must be an optimal solution to (\ref{eq:2STAGE-1}).

In the next subsection, we will specify the optimal value of the second-stage
subproblem, $V_{t}$, for each time $t\in\mathbb{N}_{N_{T}}^{+}$.
That is, we will consider a fixed criterion for implementing relay
beamforming (recourse actions) at each $t$,\textit{ after} the predictive
decisions on the positions of the relays have been made (\textit{at
time $t-1$}) and the relays have moved to their new positions, implying
that the CSI at time at time $t$ has been revealed. Of course, one
of the involved challenges will be to explicitly show that Conditions
\textbf{C1-C6 }are satisfied for each case considered, so that we
can focus on solving the ordinary nonlinear optimization problem (\ref{eq:2STAGE-2}),
instead of the much more difficult variational problem (\ref{eq:2STAGE-1}).
The other challenge we will face is actually solving (\ref{eq:2STAGE-2}).
\begin{rem}
It would be important to note that the pointwise problem (\ref{eq:2STAGE-1})
admits a reasonable and intuitive interpretation: At each time slot
$t-1$, instead of (deterministically) optimizing $V_{t}$ with respect
to ${\bf p}\left(t\right)$ in ${\cal C}\left({\bf p}^{o}\left(t\hspace{-2pt}-\hspace{-2pt}1\right)\right)$,
which is, of course, impossible, one considers optimizing \textit{a
projection of $V\left({\bf p},t\right)$, ${\bf p}\in{\cal S}^{R}$
onto the space of all measurable functionals of }$\mathscr{C}\left({\cal T}_{t-1}\right)$,
which corresponds to the information observed by the relays, up to
$t-1$. Provided that, for every ${\bf p}\in{\cal S}^{R}$, $V\left({\bf p},t\right)$
is in the Hilbert space of square-integrable, real-valued functions
relative to ${\cal P}$, ${\cal L}_{2}\left(\Omega,\mathscr{F},{\cal P};\mathbb{R}\right)$,
it is then reasonable to consider orthogonal projections, that is,
the Minimum Mean Square Error (MMSE) estimate, or, more accurately,
prediction of $V\left({\bf p},t\right)$ given $\mathscr{C}\left({\cal T}_{t-1}\right)$.
This, of course, coincides with the conditional expectation $\mathbb{E}\left\{ V\left({\bf p},t\right)\left|\mathscr{C}\left({\cal T}_{t-1}\right)\right.\right\} $.
One then optimizes the \textit{random utility} $\mathbb{E}\left\{ V\left({\bf p},t\right)\left|\mathscr{C}\left({\cal T}_{t-1}\right)\right.\right\} $,
with respect to ${\bf p}$ in the random set ${\cal C}\left({\bf p}^{o}\left(t\hspace{-2pt}-\hspace{-2pt}1\right)\right)$,
as in (\ref{eq:2STAGE-2}).

Although there is nothing technically wrong with actually starting
with (\ref{eq:2STAGE-2}) as our \textit{initial} problem formulation,
and essentially bypassing the technical difficulties of (\ref{eq:2STAGE-1}),
the fact that the objective of (\ref{eq:2STAGE-2}) depends on $\mathscr{C}\left({\cal T}_{t-1}\right)$
does not render it a useful optimality criterion. This is because
the objective of (\ref{eq:2STAGE-2}) quantifies the performance of
a \textit{single} decision, \textit{only conditioned} on $\mathscr{C}\left({\cal T}_{t-1}\right)$,
\textit{despite} the fact that an optimal solution to (\ref{eq:2STAGE-2})
(provided it exists) constitutes itself a functional of $\mathscr{C}\left({\cal T}_{t-1}\right)$.
In other words, the objective of (\ref{eq:2STAGE-2}) \textit{does
not} quantify the performance of \textit{a} \textit{policy (a decision
rule)}; in order to do that, any reasonable performance criterion
should assign \textit{a number} to each policy, ranking its quality,
and \textit{not} a function depending on $\mathscr{C}\left({\cal T}_{t-1}\right)$.
The expected utility $\mathbb{E}\left\{ V_{t}\right\} $ of the variational
problem (\ref{eq:2STAGE-1}) constitutes a suitable such criterion.
And by the Fundamental Lemma, (\ref{eq:2STAGE-1}) may be indeed reduced
to (\ref{eq:2STAGE-2}), which can thus be regarded as a proxy for
solving the former.

There are two main reasons justifying our interest in policies, rather
than individual decisions. First, one should be interested in the
\textit{long-term behavior} of the beamforming (in our case) system,
in the sense that it should be possible to assess system performance
if the system is used repeatedly over time, e.g., periodically (every
hour, day) or on demand. For example, consider a beamforming system
(the ``experiment''), which operates for $N_{T}$ time slots and
dependently \textit{restarts} its operation at time slots $kN_{T}+1$,
for $k$ in some subset of $\mathbb{N}^{+}$. This might be practically
essential for maintaining system stability over time, saving on resources,
etc. It is then clear that merely quantifying the performance of individual
decisions is meaningless, from an operational point of view; simply,
the random utility approach quantifies performance \textit{only along
a specific path of the observed information}, $\mathscr{C}\left({\cal T}_{t-1}\right)$,
for $t\in\mathbb{N}_{N_{T}}^{+}$. This issue is more profound when
channel observations taking specific values correspond to events of
zero measure (this is actually the case with the Gaussian channel
model introduced in Section \ref{sec:Spatiotemporal-Wireless-Channel}).
On the contrary, it is of interest to jointly quantify system performance
when decisions are made for different outcomes of the sample space
$\Omega$. This immediately results in the need for quantifying the
performance of different policies (decision rules), and this is only
possible by considering variational optimization problems, such as
(\ref{eq:2STAGE-1}).

Additionally, because decisions are made \textit{in stages}, it is
of great interest to consider how the system performs \textit{across
time slots}, or, in other words, to discover \textit{temporal trends}
in performance, if such trends exist. In particular, for the beamforming
problem considered in this paper, we will be able to theoretically
characterize system behavior under both suboptimal and optimal decision
making, in the average (expected) sense (see Section \ref{subsec:Theoretical-Guarantees}),
\textit{across all time slots}; this is impossible to do for each
possible outcome of the sample space, individually, when the random
utility approach is considered.

The second main reason for considering the variational program (\ref{eq:2STAGE-1})
as our main objective, instead of (\ref{eq:2STAGE-2}), is practical,
and extremely important from an engineering point of view. The expected
utility approach assigns, at each time slot, a number to each policy,
quantifying its quality. Simulating repeatedly the system and invoking
the Law of Large Numbers, one may obtain excellent estimates of the
expected performance of the system, quantified by the chosen utility.
Therefore, the systematic experimental assessment of a particular
sequence of policies (one for each time slot) is readily possible.
Apparently, such experimental validation approach is impossible to
perform by adopting the random (conditional) utility approach, since
the performance of the system will be quantified via a real valued
(in general) random quantity.\hfill{}\ensuremath{\blacksquare}
\end{rem}
\begin{rem}
The stochastic programming methodology presented in this subsection
is very general and can support lots of choices in regard to the structure
of the second-stage subproblem, $V_{t}$. As shown in the discussion
developed in Appendix B, the key to showing the validity of the Fundamental
Lemma is the set of conditions \textbf{C1-C6}. If these are satisfied,
it is then possible to convert the original, variational problem into
a pointwise one, while strictly preserving optimality.\hfill{}\ensuremath{\blacksquare}
\end{rem}

\subsection{\label{subsec:SINR-Maximization}SINR Maximization at the Destination}

The basic and fundamentally important beamforming criterion considered
in this paper is that of enhancing network QoS, or, in other words,
maximizing the respective SINR at the destination, subject to a total
power budget at the relays. At each time slot $t\in\mathbb{N}_{N_{T}}^{+}$,
\textit{given} CSI encoded in $\mathscr{C}\left({\cal T}_{t}\right)$
and with $\boldsymbol{w}\left(t\right)\triangleq\left[w_{1}\left(t\right)\,\ldots\,w_{R}\left(t\right)\right]^{\boldsymbol{T}}$,
this may be achieved by formulating the constrained optimization problem
\cite{Havary_BEAM_2008,Beamforming_2_2009}\renewcommand{\arraystretch}{1.8}
\begin{equation}
\begin{array}{rl}
\underset{\boldsymbol{w}\left(t\right)}{\mathrm{maximize}} & \dfrac{\mathbb{E}\left\{ \left.P_{S}\left(t\right)\right|\mathscr{C}\left({\cal T}_{t}\right)\right\} }{\mathbb{E}\left\{ \left.P_{I+N}\left(t\right)\right|\mathscr{C}\left({\cal T}_{t}\right)\right\} }\\
\mathrm{subject\,to} & \mathbb{E}\left\{ \left.P_{R}\left(t\right)\right|\mathscr{C}\left({\cal T}_{t}\right)\right\} \le P_{c}
\end{array},\label{eq:Beamforming}
\end{equation}
\renewcommand{\arraystretch}{1}where $P_{R}\left(t\right)$, $P_{S}\left(t\right)$
and $P_{I+N}\left(t\right)$ denote the random instantaneous power
at the relays, that of the signal component and that of the interference
plus noise component at the destination (see (\ref{eq:MODEL})), respectively
and where $P_{c}>0$ denotes the total available relay transmission
power. Using the mutual independence assumptions regarding CSI related
to the source and destination, respectively, (\ref{eq:Beamforming})
can be reexpressed analytically as \cite{Havary_BEAM_2008} \renewcommand{\arraystretch}{1.8}
\begin{equation}
\begin{array}{rl}
\underset{\boldsymbol{w}\left(t\right)}{\mathrm{maximize}} & \dfrac{\boldsymbol{w}^{\boldsymbol{H}}\left(t\right){\bf R}\left({\bf p}\left(t\right),t\right)\boldsymbol{w}\left(t\right)}{\sigma_{D}^{2}+\boldsymbol{w}^{\boldsymbol{H}}\left(t\right){\bf Q}\left({\bf p}\left(t\right),t\right)\boldsymbol{w}\left(t\right)}\\
\mathrm{subject\,to} & \boldsymbol{w}^{\boldsymbol{H}}\left(t\right){\bf D}\left({\bf p}\left(t\right),t\right)\boldsymbol{w}\left(t\right)\le P_{c}
\end{array},\label{eq:Beamforming_2}
\end{equation}
\renewcommand{\arraystretch}{1}where, dropping the dependence on
$\left({\bf p}\left(t\right),t\right)$ or $t$ for brevity, 
\begin{flalign}
{\bf D} & \triangleq P_{0}\text{diag}\left(\left[\left|f_{1}\right|^{2}\,\left|f_{2}\right|^{2}\,\ldots\,\left|f_{R}\right|^{2}\right]^{\boldsymbol{T}}\right)+\sigma^{2}{\bf I}_{R}\in\mathbb{S}_{++}^{R},\label{eq:MAT_1}\\
{\bf R} & \triangleq P_{0}{\bf h}{\bf h}^{\boldsymbol{H}}\in\mathbb{S}_{+}^{R},\text{ with }{\bf h}\triangleq\left[f_{1}g_{1}\,f_{2}g_{2}\,\ldots\,f_{R}g_{R}\right]^{\boldsymbol{T}}\text{ and}\label{eq:MAT_2}\\
{\bf Q} & \triangleq\sigma^{2}\text{diag}\left(\left[\left|g_{1}\right|^{2}\,\left|g_{2}\right|^{2}\,\ldots\,\left|g_{R}\right|^{2}\right]^{\boldsymbol{T}}\right)\in\mathbb{S}_{++}^{R}.\label{eq:MAT_3}
\end{flalign}
Note that the program (\ref{eq:Beamforming_2}) is \textit{always
feasible, as long as $P_{c}$ is nonnegative}. It is well known that
the optimal value of (\ref{eq:Beamforming_2}) can be expressed in
closed form as \cite{Havary_BEAM_2008}
\begin{equation}
V_{t}\equiv V\left({\bf p}\left(t\right),t\right)\triangleq P_{c}\lambda_{max}\left(\left(\sigma_{D}^{2}{\bf I}_{R}+P_{c}{\bf D}^{-1/2}{\bf Q}{\bf D}^{-1/2}\right)^{-1}{\bf D}^{-1/2}{\bf R}{\bf D}^{-1/2}\right),
\end{equation}
for all $t\in\mathbb{N}_{N_{T}}^{+}$. Exploitting the structure of
the matrices involved, $V_{t}$ may also be expressed \textit{analytically}
as \cite{Beamforming_2_2009}
\begin{flalign}
V_{t} & \equiv\sum_{i\in\mathbb{N}_{R}^{+}}\dfrac{P_{c}P_{0}\left|f\left({\bf p}_{i}\left(t\right),t\right)\right|^{2}\left|g\left({\bf p}_{i}\left(t\right),t\right)\right|^{2}}{P_{0}\sigma_{D}^{2}\left|f\left({\bf p}_{i}\left(t\right),t\right)\right|^{2}+P_{c}\sigma^{2}\left|g\left({\bf p}_{i}\left(t\right),t\right)\right|^{2}+\sigma^{2}\sigma_{D}^{2}}\nonumber \\
 & \triangleq\sum_{i\in\mathbb{N}_{R}^{+}}V_{I}\left({\bf p}_{i}\left(t\right),t\right),\quad\forall t\in\mathbb{N}_{N_{T}}^{+}.
\end{flalign}
Adopting the $2$-stage stochastic optimization framework presented
and discussed in Section \ref{subsec:2-Stage}, we are now interested,
at each time slot $t-1\in\mathbb{N}_{N_{T}-1}^{+}$, in the program
\begin{equation}
\begin{array}{rl}
\underset{{\bf p}\left(t\right)}{\mathrm{maximize}} & {\displaystyle \mathbb{E}\left\{ \sum_{i\in\mathbb{N}_{R}^{+}}V_{I}\left({\bf p}_{i}\left(t\right),t\right)\right\} }\\
\mathrm{subject\,to} & {\bf p}\left(t\right)\equiv{\cal M}\left(\mathscr{C}\left({\cal T}_{t-1}\right)\right)\in{\cal C}\left({\bf p}^{o}\left(t\hspace{-2pt}-\hspace{-2pt}1\right)\right),\\
 & {\cal M}^{-1}\left({\cal A}\right)\in\mathscr{B}\left(\mathbb{R}^{4R\left(t-1\right)}\right),\,\forall{\cal A}\in\mathscr{B}\left(\mathbb{R}^{2R}\right)
\end{array},\label{eq:2STAGE-SINR-2}
\end{equation}
where ${\bf p}^{o}\left(1\right)\in{\cal S}^{R}$ is a known constant,
representing the initial positions of the relays. But in order to
be able to formulate (\ref{eq:2STAGE-SINR-2}) in a well defined manner
fully and and simplify it by exploitting the Fundamental Lemma, we
have to explicitly verify Conditions \textbf{C1-C6 }of Lemma \ref{lem:FUND_Lemma_FINAL}
in Section \ref{subsec:FINAL} of Appendix B. To this end, let us
present a definition.
\begin{defn}
\textbf{(Translated Multifunctions)}\label{def:(Translated-Multifunctions)}
Given nonempty sets ${\cal H}\subset\mathbb{R}^{N}$, ${\cal A}\subseteq\mathbb{R}^{N}$
and any fixed $\boldsymbol{h}\in{\cal H}$, ${\cal D}:\mathbb{R}^{N}\rightrightarrows\mathbb{R}^{N}$
is called the $\left({\cal H},\boldsymbol{h}\right)$-translated multifunction
in ${\cal A}$, if and only if ${\cal D}\left(\boldsymbol{y}\right)\triangleq\left\{ \left.\boldsymbol{x}\in{\cal A}\right|\boldsymbol{x}-\boldsymbol{y}\in{\cal H}\right\} $,
for all $\boldsymbol{y}\in{\cal A}-\boldsymbol{h}\triangleq\left\{ \left.\boldsymbol{x}\in\mathbb{R}^{N}\right|\boldsymbol{x}+\boldsymbol{h}\in{\cal A}\right\} $.
\end{defn}

Note that translated multifunctions, in the sense of Definition \ref{def:(Translated-Multifunctions)},
are always unique and nonempty, whenever $\boldsymbol{y}\in{\cal A}-\boldsymbol{h}$.
We also observe that, if $\boldsymbol{y}\notin{\cal A}-\boldsymbol{h}$,
${\cal D}\left(\boldsymbol{y}\right)$ is undefined; in fact, outside
${\cal A}-\boldsymbol{h}$, ${\cal D}$ may be defined arbitrarily,
and this will be irrelevant in our analysis. The following assumption
on the structure of the compact-valued multifunction ${\cal C}:\mathbb{R}^{2R}\rightrightarrows\mathbb{R}^{2R}$
is adopted hereafter, and for the rest of this paper.
\begin{assumption}
\textbf{\textup{(${\cal C}$ is Translated)}}\label{assu:AS_(TransMultifunctions)}
Given any arbitrary compact set ${\bf 0}\in{\cal G}$,
${\cal C}$ constitutes the corresponding $\left({\cal G},{\bf 0}\right)$-translated,
compact-valued multifunction in ${\cal S}^{R}$.
\end{assumption}

Then, the following important result is true.
\begin{thm}
\textbf{\textup{(Verification Theorem / SINR Maximization)\label{lem:C1C4_SAT}}}
Suppose that, at time slot $t-1\in\mathbb{N}_{N_{T}-1}^{+}$, the
\uline{selected} decision at $t-2$, ${\bf p}^{o}\left(t-1\right)\equiv{\bf p}^{o}\left(\omega,t-1\right)$,
is measurable relative to $\mathscr{C}\left({\cal T}_{t-2}\right)$.
Then, the stochastic program (\ref{eq:2STAGE-SINR-2}) satisfies conditions
\textbf{\textup{C1-C6 }}and the Fundamental Lemma applies (see Appendix
B, Section \ref{subsec:FINAL}, Lemma \ref{lem:FUND_Lemma_FINAL}).
Additionally, as long as the pointwise program\renewcommand{\arraystretch}{1.3}
\begin{equation}
\begin{array}{rl}
\underset{{\bf p}}{\mathrm{maximize}} & {\displaystyle \sum_{i\in\mathbb{N}_{R}^{+}}\mathbb{E}\left\{ \left.V_{I}\left({\bf p}_{i},t\right)\right|\mathscr{C}\left({\cal T}_{t-1}\right)\right\} }\\
\mathrm{subject\,to} & {\bf p}\in{\cal C}\left({\bf p}^{o}\left(t\hspace{-2pt}-\hspace{-2pt}1\right)\right)
\end{array}\label{eq:SINR_SIMPLIFIED}
\end{equation}
\renewcommand{\arraystretch}{1}has a unique maximizer ${\bf p}^{*}\left(t\right)$,
and ${\bf p}^{o}\left(t\right)\equiv{\bf p}^{*}\left(t\right)$, then
${\bf p}^{o}\left(t\right)$ is $\mathscr{C}\left({\cal T}_{t-1}\right)$-measurable
and the condition of the theorem is automatically satisfied at time
slot $t$.
\end{thm}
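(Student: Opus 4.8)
The plan is to establish the two assertions in turn: first, that the six conditions \textbf{C1}--\textbf{C6} of Lemma \ref{lem:FUND_Lemma_FINAL} hold for the cost $V_{t}=\sum_{i\in\mathbb{N}_{R}^{+}}V_{I}\left({\bf p}_{i}\left(t\right),t\right)$ together with the translated, compact-valued multifunction ${\cal C}$ of Assumption \ref{assu:AS_(TransMultifunctions)}; and second, that, under uniqueness of the maximizer of (\ref{eq:SINR_SIMPLIFIED}), the ensuing $\mathscr{C}\left({\cal T}_{t-1}\right)$-measurability of ${\bf p}^{o}\left(t\right)$ closes the stagewise induction. The starting point for the first part is that, via the bijective converter (\ref{eq:CONVERTER}), $\left|f\left({\bf p},t\right)\right|^{2}$ and $\left|g\left({\bf p},t\right)\right|^{2}$ are strictly positive, smooth transformations of the log-scale Gaussian fields $F\left({\bf p},t\right)$ and $G\left({\bf p},t\right)$, and that the denominator in $V_{I}$ is bounded below by $\sigma^{2}\sigma_{D}^{2}>0$; hence $V_{I}\left(\cdot,t\right)$ is a smooth function of $\left(F\left(\cdot,t\right),G\left(\cdot,t\right)\right)$ and obeys the pointwise bound $0\le V_{I}\left({\bf p},t\right)\le\min\left\{ P_{0}\left|f\left({\bf p},t\right)\right|^{2}/\sigma^{2},\,P_{c}\left|g\left({\bf p},t\right)\right|^{2}/\sigma_{D}^{2}\right\}$, whose right-hand side is log-normal in ${\bf p}$.

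I would then verify the ``substitution-rule side'' of \textbf{C1}--\textbf{C6}. Sample-path continuity: by the construction of Section \ref{subsec:TECHNICAL_1} the fields $\sigma_{S\left(D\right)}\left({\bf p},t\right)$ and $\xi_{S\left(D\right)}\left({\bf p},t\right)$ may be taken to have \emph{everywhere} continuous sample paths on the compact ${\cal S}$, and, since ${\cal S}$ is disjoint from $\left\{ {\bf p}_{S},{\bf p}_{D}\right\}$, the path-loss terms $\alpha_{S\left(D\right)}\left({\bf p}\right)=-10\log_{10}\left\Vert {\bf p}-{\bf p}_{S\left(D\right)}\right\Vert _{2}$ are continuous there; so $F\left(\cdot,t\right),G\left(\cdot,t\right)$, hence $V_{I}\left(\cdot,t\right)$, and hence ${\bf p}\mapsto\sum_{i}V_{I}\left({\bf p}_{i},t\right)$ on ${\cal S}^{R}$, are continuous for every $\omega$. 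Joint measurability: for each fixed ${\bf p}$ the fields are Gaussian, hence $\mathscr{F}$-measurable, which with spatial continuity gives (Carath\'eodory) that $\left(\omega,{\bf p}\right)\mapsto\sum_{i}V_{I}\left({\bf p}_{i},t\right)$ is $\mathscr{F}\otimes\mathscr{B}\left({\cal S}^{R}\right)$-measurable. Integrability: from the envelope above, plus a.s. boundedness of continuous Gaussian fields (Theorem \ref{thm:Continuity-of-Gaussian}) upgraded to $L_{1}$ by a Gaussian concentration inequality, $\sum_{i}V_{I}\left({\bf p}_{i},t\right)$ admits an integrable majorant $\Theta_{t}$ (uniform over ${\cal S}^{R}$), so in particular $\mathbb{E}\{\,\cdot\,|\mathscr{C}\left({\cal T}_{t-1}\right)\}$ is a.s. finite. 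These are exactly the hypotheses of Theorem \ref{thm:REP_EXP}, so the substitution rule applies; and continuity in ${\bf p}$ together with conditional dominated convergence (via $\Theta_{t}$) yields a version of ${\bf p}\mapsto\mathbb{E}\{V_{I}\left({\bf p},t\right)|\mathscr{C}\left({\cal T}_{t-1}\right)\}$ that is a.s. continuous on ${\cal S}$ and jointly measurable in $\left(\omega,{\bf p}\right)$ (construct it on a countable dense subset of ${\cal S}$ and extend by continuity).

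For the remaining (``FLSC-side'') conditions, Assumption \ref{assu:AS_(TransMultifunctions)} makes ${\cal C}$ the $\left({\cal G},{\bf 0}\right)$-translated compact-valued multifunction in ${\cal S}^{R}$ (Definition \ref{def:(Translated-Multifunctions)}), so ${\cal C}\left({\bf p}^{o}\left(t-1\right)\right)$ is nonempty (it contains ${\bf p}^{o}\left(t-1\right)$, as ${\bf 0}\in{\cal G}$) and compact; and the hypothesis that ${\bf p}^{o}\left(t-1\right)$ is $\mathscr{C}\left({\cal T}_{t-2}\right)$-measurable, with $\mathscr{C}\left({\cal T}_{t-2}\right)\subseteq\mathscr{C}\left({\cal T}_{t-1}\right)$, makes $\omega\mapsto{\cal C}\left({\bf p}^{o}\left(\omega,t-1\right)\right)$ a $\mathscr{C}\left({\cal T}_{t-1}\right)$-measurable, compact-valued multifunction admitting nonempty measurable selections, so the feasible set of (\ref{eq:2STAGE-SINR-2}) is nonempty. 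Collecting these with the previous paragraph furnishes \textbf{C1}--\textbf{C6}, so Lemma \ref{lem:FUND_Lemma_FINAL} applies; with the obvious sign change (maximization) and pushing the conditional expectation through the finite sum by linearity, (\ref{eq:2STAGE-SINR-2}) is exchangeable with (\ref{eq:SINR_SIMPLIFIED}).

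Finally, for the inductive closure, assume (\ref{eq:SINR_SIMPLIFIED}) has a unique maximizer ${\bf p}^{*}\left(t\right)$. The objective of (\ref{eq:SINR_SIMPLIFIED}) is then a Carath\'eodory integrand (a.s. continuous in ${\bf p}$, $\mathscr{C}\left({\cal T}_{t-1}\right)$-measurable in $\omega$, from the second paragraph) and the constraint is a $\mathscr{C}\left({\cal T}_{t-1}\right)$-measurable compact-valued multifunction, so by the measurable maximum theorem the argmax correspondence admits a $\mathscr{C}\left({\cal T}_{t-1}\right)$-measurable selection, which uniqueness forces to equal ${\bf p}^{*}\left(t\right)$ almost surely; hence ${\bf p}^{o}\left(t\right)\equiv{\bf p}^{*}\left(t\right)$ is $\mathscr{C}\left({\cal T}_{t-1}\right)$-measurable, i.e. the hypothesis reappears at slot $t$, the recursion being anchored at $t\equiv1$, where ${\bf p}^{o}\left(1\right)$ is a known constant and $\mathscr{C}\left({\cal T}_{0}\right)=\left\{ \varnothing,\Omega\right\}$. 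The step I expect to be the main obstacle is the one in the second paragraph: producing a \emph{single} version of the conditional cost ${\bf p}\mapsto\mathbb{E}\{V_{I}\left({\bf p},t\right)|\mathscr{C}\left({\cal T}_{t-1}\right)\}$ that is simultaneously a.s. continuous in ${\bf p}$ and jointly Borel measurable in $\left(\omega,{\bf p}\right)$ --- conditional expectations are defined only up to null sets and must be glued coherently over ${\bf p}$; this is precisely where the everywhere sample-path continuity secured in Section \ref{subsec:TECHNICAL_1}, the integrable majorant $\Theta_{t}$, and conditional dominated convergence are indispensable, with the $L_{1}$-envelope claim itself a secondary technical point requiring a Gaussian concentration inequality on top of Theorem \ref{thm:Continuity-of-Gaussian}.
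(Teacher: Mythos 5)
Your overall architecture coincides with the paper's: verify \textbf{C1}-\textbf{C6} one by one, invoke Lemma \ref{lem:FUND_Lemma_FINAL}, and close the stagewise induction via uniqueness of the maximizer plus a measurable-selection argument (the paper packages the latter inside Theorem \ref{thm:Measurability-Preservation} and the final clause of Lemma \ref{lem:FUND_Lemma_FINAL}; you cite the measurable maximum theorem directly, which is the same substance). Your treatment of \textbf{C1}, \textbf{C2}, \textbf{C3}, \textbf{C5} matches the paper's, and your \textbf{C4} argument (a log-normal envelope for $V_{I}$ plus a Gaussian concentration bound on the supremum of the shadowing-plus-fading field over the compact ${\cal S}$) is exactly the paper's Borell-TIS computation, just not named as such.

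The one place you genuinely diverge is \textbf{C6}, and it is also where your argument has a gap. You propose to build a continuous version of ${\bf p}\mapsto\mathbb{E}\left\{ \left.V_{I}\left({\bf p},t\right)\right|\mathscr{C}\left({\cal T}_{t-1}\right)\right\} $ abstractly, by conditional dominated convergence on a countable dense subset of ${\cal S}$ followed by extension by continuity. As written this does not go through: conditional dominated convergence controls the almost sure limit along each \emph{fixed} sequence, and the exceptional null set depends on the sequence; since uncountably many sequences converge to each point of ${\cal S}$, you cannot take a countable union of null sets to conclude that the restriction to the dense set is almost surely uniformly continuous, which is what extension by continuity requires. (The step is repairable by instead conditioning the modulus of continuity $\sup_{\left\Vert {\bf p}-{\bf q}\right\Vert _{2}\le\delta}\left|V_{I}\left({\bf p},t\right)-V_{I}\left({\bf q},t\right)\right|$, which is dominated by twice your majorant and tends to zero everywhere by compactness of ${\cal S}$ --- but even then you only obtain almost sure continuity, whereas \textbf{C6} is a Carath\'eodory requirement and asks for continuity at \emph{every} $\omega$, forcing a further modification on a null set and a check that the modification remains a valid version at every ${\bf p}$.) The paper avoids all of this: it writes $H_{I}\left(\omega,{\bf p}\right)$ explicitly as a Lebesgue integral of a fixed continuous function $r$ against the Gaussian conditional density of Lemma \ref{thm:(Big-Expectations)}, whose mean $\boldsymbol{\mu}_{2}\left(\omega,\cdot\right)$ and covariance $\boldsymbol{\Sigma}_{2}\left(\omega,\cdot\right)$ are continuous in ${\bf p}$ for every $\omega$, and then applies ordinary deterministic dominated convergence inside the integral. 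This produces a canonical version that is everywhere continuous in ${\bf p}$ and requires no gluing. You already identified this as the main obstacle and you already have Lemma \ref{thm:(Big-Expectations)} at your disposal; using it here is both shorter and strictly stronger than the abstract route you sketch.
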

\begin{proof}[Proof of Theorem \ref{lem:C1C4_SAT}]
See Appendix C.
\end{proof}

As Theorem \ref{lem:C1C4_SAT} suggests, in order for conditions \textbf{C1-C6
}to be simultaneously satisfied for all $t\in\mathbb{N}_{N_{T}}^{2}$,
it is sufficient that the program (\ref{eq:SINR_SIMPLIFIED}) has
a \textit{unique} optimal solution, for each $t$. Although, in general,
such requirement might not be particularly appealing, for the problems
of interest in this paper, the event where (\ref{eq:SINR_SIMPLIFIED})
does not have a unique optimizer is extremely rare, almost never occurring
in practice. Nevertheless, uniqueness of the optimal solution to (\ref{eq:SINR_SIMPLIFIED})
does not constitute a necessary condition for $\mathscr{C}\left({\cal T}_{t-1}\right)$-measurability
of the optimal decision at time slot $t-1$. For instance, \textbf{${\bf p}^{*}\left(t\right)$
}will always be $\mathscr{C}\left({\cal T}_{t-1}\right)$-measurable
when the compact-valued, closed multifunction ${\cal C}:\mathbb{R}^{2R}\rightrightarrows\mathbb{R}^{2R}$
is additionally \textit{finite-valued}, and ${\bf p}^{o}\left(t\right)\equiv{\bf p}^{*}\left(t\right)$.
This choice for ${\cal C}$ is particularly useful for practical implementations.
In any case, as long as conditions \textbf{C1-C6 }are guaranteed to
be satisfied, we may focus exclusively on the pointwise program (\ref{eq:SINR_SIMPLIFIED}),
whose expected optimal value, via the Fundamental Lemma, coincides
with the optimal value of the original problem (\ref{eq:2STAGE-SINR-2}).

By definition, we readily observe that the problem (\ref{eq:SINR_SIMPLIFIED})
is separable. In fact, given that, \textit{for each $t\in\mathbb{N}_{N_{T}-1}^{+}$,
decisions taken and CSI collected so far are available to all relays},
(\ref{eq:SINR_SIMPLIFIED}) can be solved in a \textit{completely
distributed fashion at the relays}, with the $i$-th relay being responsible
for solving the program\renewcommand{\arraystretch}{1.3}
\begin{equation}
\begin{array}{rl}
\underset{{\bf p}}{\mathrm{maximize}} & {\displaystyle \mathbb{E}\left\{ \left.V_{I}\left({\bf p},t\right)\right|\mathscr{C}\left({\cal T}_{t-1}\right)\right\} }\\
\mathrm{subject\,to} & {\bf p}\in{\cal C}_{i}\left({\bf p}^{o}\left(t\hspace{-2pt}-\hspace{-2pt}1\right)\right)
\end{array},\label{eq:SINR_DIST}
\end{equation}
\renewcommand{\arraystretch}{1}at each $t-1\in\mathbb{N}_{N_{T}-1}^{+}$,
where ${\cal C}_{i}:\mathbb{R}^{2}\rightrightarrows\mathbb{R}^{2}$
denotes the corresponding part of ${\cal C}$, for each \textit{$i\in\mathbb{N}_{R}^{+}$.
}Note that no local exchange of intermediate results is required among
relays; given the available information, each relay independently
solves its own subproblem. It is also evident that apart from the
obvious difference in the feasible set, the optimization problems
at each of the relays are identical. The problem, however, with (\ref{eq:SINR_DIST})
is that its objective involves the evaluation of a conditional expectation
of a well defined ratio of almost surely positive random variables,
which is \textit{impossible to perform analytically}. For this reason,
it is imperative to resort to the development of well behaved approximations
to (\ref{eq:SINR_DIST}), which, at the same time, would facilitate
implementation. In the following, we present two such heuristic approaches.

\subsubsection{\label{subsec:SINR_Taylor}Approximation by the Method of Statistical
Differentials}

The first idea we are going to explore is that of approximating the
objective of (\ref{eq:SINR_DIST}) by truncated Taylor expansions.
Observe that $V_{I}$ can be equivalently expressed as 
\begin{align}
V_{I}\left({\bf p},t\right) & \equiv\dfrac{1}{\dfrac{\sigma_{D}^{2}}{P_{c}}\left|g\left({\bf p},t\right)\right|^{-2}+\dfrac{\sigma^{2}}{P_{0}}\left|f\left({\bf p},t\right)\right|^{-2}+\dfrac{\sigma^{2}\sigma_{D}^{2}}{P_{c}P_{0}}\left|f\left({\bf p},t\right)\right|^{-2}\left|g\left({\bf p},t\right)\right|^{-2}}\triangleq\dfrac{1}{V_{II}\left({\bf p},t\right)},
\end{align}
for all $\left({\bf p},t\right)\in{\cal S}\times\mathbb{N}_{N_{T}}^{+}$.
Then, for $t\in\mathbb{N}_{N_{T}}^{2}$, we may locally approximate
${\displaystyle \mathbb{E}\left\{ \left.V_{I}\left({\bf p},t\right)\right|\mathscr{C}\left({\cal T}_{t-1}\right)\right\} }$
around the point ${\displaystyle \mathbb{E}\left\{ \left.V_{II}\left({\bf p},t\right)\right|\mathscr{C}\left({\cal T}_{t-1}\right)\right\} }$
(see Section 3.14.2 in \cite{Survival_1990Elandt}; also known as
the \textit{Method of Statistical Differentials}) via a first order
Taylor expansion as 
\begin{equation}
{\displaystyle \mathbb{E}\left\{ \left.V_{I}\left({\bf p},t\right)\right|\mathscr{C}\left({\cal T}_{t-1}\right)\right\} }\approx\dfrac{1}{\mathbb{E}\left\{ \left.V_{II}\left({\bf p},t\right)\right|\mathscr{C}\left({\cal T}_{t-1}\right)\right\} },\label{eq:SINR_APPROX_1}
\end{equation}
or via a second order Taylor expansion as
\begin{align}
{\displaystyle \mathbb{E}\left\{ \left.V_{I}\left({\bf p},t\right)\right|\mathscr{C}\left({\cal T}_{t-1}\right)\right\} } & \approx\dfrac{\mathbb{E}\left\{ \left.\left(V_{II}\left({\bf p},t\right)\right)^{2}\right|\mathscr{C}\left({\cal T}_{t-1}\right)\right\} }{\left(\mathbb{E}\left\{ \left.V_{II}\left({\bf p},t\right)\right|\mathscr{C}\left({\cal T}_{t-1}\right)\right\} \right)^{3}},\label{eq:SINR_APPROX_2}
\end{align}
where it is straightforward to show that the square on the numerator
can be expanded as
\begin{flalign}
\left(V_{II}\left({\bf p},t\right)\right)^{2} & \equiv\left(\dfrac{\sigma^{2}\sigma_{D}^{2}}{P_{c}P_{0}}\right)^{2}\left|f\left({\bf p},t\right)\right|^{-4}\left|g\left({\bf p},t\right)\right|^{-4}+2\dfrac{\sigma^{2}\sigma_{D}^{2}}{P_{c}P_{0}}\left|f\left({\bf p},t\right)\right|^{-2}\left|g\left({\bf p},t\right)\right|^{-2}\nonumber \\
 & \quad+2\left(\dfrac{\sigma^{2}}{P_{0}}\right)^{2}\dfrac{\sigma_{D}^{2}}{P_{c}}\left|f\left({\bf p},t\right)\right|^{-4}\left|g\left({\bf p},t\right)\right|^{-2}+2\dfrac{\sigma^{2}}{P_{0}}\left(\dfrac{\sigma_{D}^{2}}{P_{c}}\right)^{2}\left|f\left({\bf p},t\right)\right|^{-2}\left|g\left({\bf p},t\right)\right|^{-4}\nonumber \\
 & \quad+\left(\dfrac{\sigma^{2}}{P_{0}}\right)^{2}\left|f\left({\bf p},t\right)\right|^{-4}+\left(\dfrac{\sigma_{D}^{2}}{P_{c}}\right)^{2}\left|g\left({\bf p},t\right)\right|^{-4}.\label{eq:SINR_EXPANSE}
\end{flalign}
The approximate formula (\ref{eq:SINR_APPROX_2}) may be in fact computed
in closed form at any point ${\bf p}\in{\cal S}$, thanks to the following
technical, but simple, result.
\begin{lem}
\textbf{\textup{(Big Expectations)\label{thm:(Big-Expectations)}}}
Under the wireless channel model introduced in Section \ref{sec:Spatiotemporal-Wireless-Channel},
it is true that, at any ${\bf p}\in{\cal S}$,
\begin{equation}
\left.\left[F\left({\bf p},t\right)\,G\left({\bf p},t\right)\right]^{\boldsymbol{T}}\right|\mathscr{C}\left({\cal T}_{t-1}\right)\sim{\cal N}\left(\boldsymbol{\mu}_{\left.t\right|t-1}^{F,G}\hspace{-2pt}\left({\bf p}\right),\boldsymbol{\Sigma}_{\left.t\right|t-1}^{F,G}\hspace{-2pt}\left({\bf p}\right)\right),\label{eq:JOINT_COND}
\end{equation}
for all $t\in\mathbb{N}_{N_{T}}^{2}$, and where we define
\begin{flalign}
\boldsymbol{\mu}_{\left.t\right|t-1}^{F,G}\hspace{-2pt}\left({\bf p}\right) & \hspace{-2pt}\triangleq\hspace{-2pt}\left[\mu_{\left.t\right|t-1}^{F}\left({\bf p}\right)\,\mu_{\left.t\right|t-1}^{G}\left({\bf p}\right)\right]^{\boldsymbol{T}},\\
\mu_{\left.t\right|t-1}^{F}\left({\bf p}\right) & \hspace{-2pt}\triangleq\hspace{-2pt}\alpha_{S}\left({\bf p}\right)\ell+\boldsymbol{c}_{1:t-1}^{F}\left({\bf p}\right)\boldsymbol{\Sigma}_{1:t-1}^{-1}\hspace{-2pt}\left(\boldsymbol{m}_{1:t-1}\hspace{-2pt}-\hspace{-2pt}\boldsymbol{\mu}_{1:t-1}\right)\in\mathbb{R},\\
\mu_{\left.t\right|t-1}^{G}\left({\bf p}\right) & \hspace{-2pt}\triangleq\hspace{-2pt}\alpha_{D}\left({\bf p}\right)\ell+\boldsymbol{c}_{1:t-1}^{G}\left({\bf p}\right)\boldsymbol{\Sigma}_{1:t-1}^{-1}\hspace{-2pt}\left(\boldsymbol{m}_{1:t-1}\hspace{-2pt}-\hspace{-2pt}\boldsymbol{\mu}_{1:t-1}\right)\in\mathbb{R}\quad\text{and}\\
\boldsymbol{\Sigma}_{\left.t\right|t-1}^{F,G}\left({\bf p}\right) & \hspace{-2pt}\triangleq\hspace{-2pt}\begin{bmatrix}\eta^{2}+\sigma_{\xi}^{2} & \eta^{2}e^{-\frac{\left\Vert {\bf p}_{S}-{\bf p}_{D}\right\Vert _{2}}{\delta}}\\
\eta^{2}e^{-\frac{\left\Vert {\bf p}_{S}-{\bf p}_{D}\right\Vert _{2}}{\delta}} & \eta^{2}+\sigma_{\xi}^{2}
\end{bmatrix}-\begin{bmatrix}\boldsymbol{c}_{1:t-1}^{F}\left({\bf p}\right)\\
\boldsymbol{c}_{1:t-1}^{G}\left({\bf p}\right)
\end{bmatrix}\boldsymbol{\Sigma}_{1:t-1}^{-1}\begin{bmatrix}\boldsymbol{c}_{1:t-1}^{F}\left({\bf p}\right)\\
\boldsymbol{c}_{1:t-1}^{G}\left({\bf p}\right)
\end{bmatrix}^{\boldsymbol{T}}\in\mathbb{S}_{++}^{2},
\end{flalign}
with 
\begin{flalign}
\boldsymbol{m}_{1:t-1} & \hspace{-2pt}\triangleq\hspace{-2pt}\left[\boldsymbol{F}^{\boldsymbol{T}}\left(1\right)\,\boldsymbol{G}^{\boldsymbol{T}}\left(1\right)\,\ldots\,\boldsymbol{F}^{\boldsymbol{T}}\left(t-1\right)\,\boldsymbol{G}^{\boldsymbol{T}}\left(t-1\right)\right]^{\boldsymbol{T}}\in\mathbb{R}^{2R\left(t-1\right)\times1},\\
\boldsymbol{\mu}_{1:t-1} & \hspace{-2pt}\triangleq\hspace{-2pt}\left[\boldsymbol{\alpha}_{S}\left({\bf p}\left(1\right)\right)\,\boldsymbol{\alpha}_{D}\left({\bf p}\left(1\right)\right)\,\ldots\,\boldsymbol{\alpha}_{S}\left({\bf p}\left(t-1\right)\right)\,\boldsymbol{\alpha}_{D}\left({\bf p}\left(t-1\right)\right)\right]^{\boldsymbol{T}}\ell\in\mathbb{R}^{2R\left(t-1\right)\times1},\\
\boldsymbol{c}_{1:t-1}^{F}\left({\bf p}\right) & \hspace{-2pt}\triangleq\hspace{-2pt}\left[\boldsymbol{c}_{1}^{F}\left({\bf p}\right)\,\ldots\,\boldsymbol{c}_{t-1}^{F}\left({\bf p}\right)\right]\in\mathbb{R}^{1\times2R\left(t-1\right)},\\
\boldsymbol{c}_{1:t-1}^{G}\left({\bf p}\right) & \hspace{-2pt}\triangleq\hspace{-2pt}\left[\boldsymbol{c}_{1}^{G}\left({\bf p}\right)\,\ldots\,\boldsymbol{c}_{t-1}^{G}\left({\bf p}\right)\right]\in\mathbb{R}^{1\times2R\left(t-1\right)},\\
\boldsymbol{c}_{k}^{F}\left({\bf p}\right) & \hspace{-2pt}\triangleq\hspace{-2pt}\left[\left\{ \mathbb{E}\left\{ \sigma_{S}\left({\bf p},t\right)\sigma_{S}^{j}\left(k\right)\right\} \hspace{-2pt}\right\} _{j\in\mathbb{N}_{R}^{+}}\,\left\{ \mathbb{E}\left\{ \sigma_{S}\left({\bf p},t\right)\sigma_{D}^{j}\left(k\right)\right\} \hspace{-2pt}\right\} _{j\in\mathbb{N}_{R}^{+}}\right]\hspace{-2pt},\;\forall k\in\mathbb{N}_{t-1}^{+}\\
\boldsymbol{c}_{k}^{G}\left({\bf p}\right) & \hspace{-2pt}\triangleq\hspace{-2pt}\left[\left\{ \mathbb{E}\left\{ \sigma_{D}\left({\bf p},t\right)\sigma_{S}^{j}\left(k\right)\right\} \hspace{-2pt}\right\} _{j\in\mathbb{N}_{R}^{+}}\,\left\{ \mathbb{E}\left\{ \sigma_{D}\left({\bf p},t\right)\sigma_{D}^{j}\left(k\right)\right\} \hspace{-2pt}\right\} _{j\in\mathbb{N}_{R}^{+}}\right]\hspace{-2pt},\;\forall k\in\mathbb{N}_{t-1}^{+}\;\text{and}\\
\boldsymbol{\Sigma}_{1:t-1} & \hspace{-2pt}\triangleq\hspace{-2pt}\begin{bmatrix}\boldsymbol{\Sigma}\left(1,1\right) & \cdots & \boldsymbol{\Sigma}\left(1,t-1\right)\\
\vdots & \ddots & \vdots\\
\boldsymbol{\Sigma}\left(t-1,1\right) & \cdots & \boldsymbol{\Sigma}\left(t-1,t-1\right)
\end{bmatrix}\in\mathbb{S}_{++}^{2R\left(t-1\right)},
\end{flalign}
for all $\left({\bf p},t\right)\in{\cal S}\times\mathbb{N}_{N_{T}}^{2}$.
Further, for any choice of $\left(m,n\right)\in\mathbb{Z}\times\mathbb{Z}$,
the conditional correlation of the fields $\left|f\left({\bf p},t\right)\right|^{m}$
and $\left|g\left({\bf p},t\right)\right|^{n}$ relative to $\mathscr{C}\left({\cal T}_{t-1}\right)$
may be expressed in closed form as 
\begin{multline}
\mathbb{E}\left\{ \left.\left|f\left({\bf p},t\right)\right|^{m}\left|g\left({\bf p},t\right)\right|^{n}\right|\mathscr{C}\left({\cal T}_{t-1}\right)\right\} \\
\equiv\hspace{-2pt}10^{\left(m+n\right)\rho/20}\exp\hspace{-2pt}\left(\hspace{-2pt}\dfrac{\log\left(10\right)}{20}\hspace{-2pt}\begin{bmatrix}m\\
n
\end{bmatrix}^{\boldsymbol{T}}\hspace{-2pt}\boldsymbol{\mu}_{\left.t\right|t-1}^{F,G}\hspace{-2pt}\left({\bf p}\right)\hspace{-2pt}+\hspace{-2pt}\left(\dfrac{\log\left(10\right)}{20}\right)^{2}\hspace{-2pt}\begin{bmatrix}m\\
n
\end{bmatrix}^{\boldsymbol{T}}\hspace{-2pt}\boldsymbol{\Sigma}_{\left.t\right|t-1}^{F,G}\hspace{-2pt}\left({\bf p}\right)\hspace{-2pt}\begin{bmatrix}m\\
n
\end{bmatrix}\hspace{-2pt}\right)\hspace{-2pt},\label{eq:BOX_1}
\end{multline}
at any ${\bf p}\in{\cal S}$ and for all $t\in\mathbb{N}_{N_{T}}^{2}$.
\end{lem}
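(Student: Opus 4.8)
The plan is to read off the conditional law~(\ref{eq:JOINT_COND}) from the jointly Gaussian structure of the log-scale channel set up in Section~\ref{sec:Spatiotemporal-Wireless-Channel}, and then obtain the correlation formula~(\ref{eq:BOX_1}) by feeding that law into the conversion identity~(\ref{eq:CONVERTER}) and invoking the Gaussian moment generating function. Throughout, ${\bf p}\in{\cal S}$ is a fixed, \emph{deterministic} point (so the only randomness in the conditioning event comes from the past), and $t\in\mathbb{N}_{N_{T}}^{2}$. First I would pretend that the past positions ${\bf p}\left(1\right),\ldots,{\bf p}\left(t-1\right)$ are deterministic. Since all the fields involved are Gaussian, any finite collection of their evaluations is jointly Gaussian; in particular so is $\left[\boldsymbol{m}_{1:t-1}^{\boldsymbol{T}}\ F\left({\bf p},t\right)\ G\left({\bf p},t\right)\right]^{\boldsymbol{T}}$ (this is~(\ref{eq:Log_CSI_Process}) augmented by one evaluation point, the required kernels having been pinned down in Section~\ref{subsec:TECHNICAL_1}). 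I would then read off its mean and covariance blocks directly from the additive decomposition $F\equiv\alpha_{S}\ell+\sigma_{S}+\xi_{S}$ and its $G$-analogue: the mean of $\left[F\left({\bf p},t\right)\ G\left({\bf p},t\right)\right]$ is $\left[\alpha_{S}\left({\bf p}\right)\ell\ \alpha_{D}\left({\bf p}\right)\ell\right]$ (shadowing and fading are zero-mean); the $2\times2$ marginal covariance has diagonal $\eta^{2}+\sigma_{\xi}^{2}$ (from $\boldsymbol{\Sigma}_{\sigma}\left({\bf 0}\right)=\eta^{2}$, $\boldsymbol{\Sigma}_{\xi}\left({\bf 0}\right)=\sigma_{\xi}^{2}$ and shadowing--fading independence) and off-diagonal $\eta^{2}e^{-\left\Vert {\bf p}_{S}-{\bf p}_{D}\right\Vert _{2}/\delta}$ (from~(\ref{eq:second_EXP}); the three remaining cross terms vanish since $\xi_{S}\perp\!\!\!\perp\xi_{D}$, $\xi_{S}\perp\!\!\!\perp\sigma_{D}$, $\sigma_{S}\perp\!\!\!\perp\xi_{D}$); the cross-covariance with $\boldsymbol{m}_{1:t-1}$ equals $\left[\boldsymbol{c}_{1:t-1}^{F}\left({\bf p}\right)^{\boldsymbol{T}}\ \boldsymbol{c}_{1:t-1}^{G}\left({\bf p}\right)^{\boldsymbol{T}}\right]^{\boldsymbol{T}}$, the fading contribution dropping out because $t>k$ for every time index $k$ appearing in $\boldsymbol{m}_{1:t-1}$ and $\xi$ is temporally white; and $\mathrm{Cov}\left(\boldsymbol{m}_{1:t-1}\right)=\boldsymbol{\Sigma}_{1:t-1}\in\mathbb{S}_{++}^{2R\left(t-1\right)}$ by Lemma~\ref{lem:NonSingularity} (here $\sigma_{\xi}^{2}\neq0$), pathwise. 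The standard Gaussian conditioning formula then returns precisely $\boldsymbol{\mu}_{\left.t\right|t-1}^{F,G}\left({\bf p}\right)$ together with the Schur complement $\boldsymbol{\Sigma}_{\left.t\right|t-1}^{F,G}\left({\bf p}\right)$; the latter is positive definite since $\left[\xi_{S}\left({\bf p},t\right)\ \xi_{D}\left({\bf p},t\right)\right]$ is independent of $\boldsymbol{m}_{1:t-1}$ and so survives the conditioning intact, giving $\boldsymbol{\Sigma}_{\left.t\right|t-1}^{F,G}\left({\bf p}\right)\succeq\sigma_{\xi}^{2}{\bf I}_{2}\succ{\bf 0}$.

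Next I would remove the pretence that the past positions are deterministic: they are random, but $\mathscr{C}\left({\cal T}_{t-1}\right)$-measurable, and they enter the conditioning only through the $\mathscr{C}\left({\cal T}_{t-1}\right)$-measurable data $\boldsymbol{m}_{1:t-1},\boldsymbol{\mu}_{1:t-1},\boldsymbol{\Sigma}_{1:t-1},\boldsymbol{c}_{1:t-1}^{F},\boldsymbol{c}_{1:t-1}^{G}$. Promoting the deterministic computation above to a genuine conditional statement relative to $\mathscr{C}\left({\cal T}_{t-1}\right)$ is then exactly an application of the substitution rule for conditional expectations (Definition~\ref{def:SUB} and Theorem~\ref{thm:REP_EXP}), whose hypotheses are met because $\sigma_{S(D)}\left({\bf p},t\right)$ and $\xi_{S(D)}\left({\bf p},t\right)$ were arranged in Section~\ref{subsec:TECHNICAL_1} to be jointly Borel measurable with everywhere-continuous sample paths. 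This yields~(\ref{eq:JOINT_COND}). For the final assertion, the conversion identity~(\ref{eq:CONVERTER}) and its $G$-analogue give $\left|f\left({\bf p},t\right)\right|^{m}\left|g\left({\bf p},t\right)\right|^{n}\equiv10^{\left(m+n\right)\rho/20}\exp\left(\frac{\log\left(10\right)}{20}\left(mF\left({\bf p},t\right)+nG\left({\bf p},t\right)\right)\right)$ for every $\left(m,n\right)\in\mathbb{Z}\times\mathbb{Z}$ (well defined and positive even for negative exponents, since $\left|f\right|,\left|g\right|>0$ surely); conditionally on $\mathscr{C}\left({\cal T}_{t-1}\right)$ the scalar $mF\left({\bf p},t\right)+nG\left({\bf p},t\right)$ is Gaussian by~(\ref{eq:JOINT_COND}) with mean $\left[m\ n\right]\boldsymbol{\mu}_{\left.t\right|t-1}^{F,G}\left({\bf p}\right)$ and variance $\left[m\ n\right]\boldsymbol{\Sigma}_{\left.t\right|t-1}^{F,G}\left({\bf p}\right)\left[m\ n\right]^{\boldsymbol{T}}$, so, Gaussians having finite exponential moments of all orders, its conditional moment generating function evaluated at $\log\left(10\right)/20$ delivers the closed form~(\ref{eq:BOX_1}).

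I expect the genuinely delicate step to be the second one: turning the deterministic-position conditional law into a valid conditional law relative to $\mathscr{C}\left({\cal T}_{t-1}\right)$ when the positions are themselves random. This is precisely the purpose of the substitution rule developed in Appendix~B, and the real work lies in checking its measurability/continuity hypotheses for the fields at hand --- which is why the sample-path-continuity reductions of Section~\ref{subsec:TECHNICAL_1} were set up beforehand. Everything else (the Gaussian conditioning formula, the Schur-complement positivity bound, and the moment generating function evaluation) is routine linear algebra and Gaussian calculus.
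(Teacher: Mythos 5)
Your proposal is correct and follows essentially the same route as the paper's proof: joint Gaussianity of the augmented log-scale channel vector, the standard Gaussian conditioning formula to identify $\boldsymbol{\mu}_{\left.t\right|t-1}^{F,G}$ and the Schur complement $\boldsymbol{\Sigma}_{\left.t\right|t-1}^{F,G}$, and the Gaussian moment generating function for (\ref{eq:BOX_1}). The paper leaves the conditioning computation as a "typical exercise," whereas you spell out the block covariances, the nonsingularity of $\boldsymbol{\Sigma}_{1:t-1}$ via Lemma \ref{lem:NonSingularity}, the lower bound $\boldsymbol{\Sigma}_{\left.t\right|t-1}^{F,G}\succeq\sigma_{\xi}^{2}{\bf I}_{2}$, and the passage from deterministic to $\mathscr{C}\left({\cal T}_{t-1}\right)$-measurable past positions via the substitution rule — all consistent with the machinery the paper develops in Appendix B and invokes elsewhere.
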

\begin{proof}[Proof of Lemma \ref{thm:(Big-Expectations)}]
See Appendix C.
\end{proof}

Since, by exploitting Lemma \ref{thm:(Big-Expectations)} and (\ref{eq:SINR_EXPANSE}),
formula (\ref{eq:SINR_APPROX_2}) can be evaluated without any particular
difficulty, we now propose the replacement of the original pointwise
problem of interest, (\ref{eq:SINR_DIST}), with either of the heuristics\renewcommand{\arraystretch}{1.4}
\begin{equation}
\hspace{-2pt}\hspace{-2pt}\hspace{-2pt}\hspace{-2pt}\hspace{-2pt}\hspace{-2pt}\hspace{-2pt}\hspace{-2pt}\hspace{-2pt}\hspace{-2.6pt}\begin{array}{rl}
\underset{{\bf p}}{\mathrm{maximize}} & \dfrac{1}{\mathbb{E}\left\{ \left.V_{II}\left({\bf p},t\right)\right|\mathscr{C}\left({\cal T}_{t-1}\right)\right\} }\\
\mathrm{subject\,to} & {\bf p}\in{\cal C}_{i}\left({\bf p}^{o}\left(t\hspace{-2pt}-\hspace{-2pt}1\right)\right)
\end{array}\label{eq:SINR_APPROX_PROG_1}
\end{equation}
\renewcommand{\arraystretch}{1}and\renewcommand{\arraystretch}{1.4}
\begin{equation}
\begin{array}{rl}
\underset{{\bf p}}{\mathrm{maximize}} & \dfrac{\mathbb{E}\left\{ \left.\left(V_{II}\left({\bf p},t\right)\right)^{2}\right|\mathscr{C}\left({\cal T}_{t-1}\right)\right\} }{\left(\mathbb{E}\left\{ \left.V_{II}\left({\bf p},t\right)\right|\mathscr{C}\left({\cal T}_{t-1}\right)\right\} \right)^{3}}\\
\mathrm{subject\,to} & {\bf p}\in{\cal C}_{i}\left({\bf p}^{o}\left(t\hspace{-2pt}-\hspace{-2pt}1\right)\right)
\end{array},\label{eq:SINR_APPROX_PROG_2}
\end{equation}
\renewcommand{\arraystretch}{1}to be solved at relay $i\in\mathbb{N}_{R}^{+}$,
at each time $t-1\in\mathbb{N}_{N_{T}-1}^{+}$, depending on the order
of approximation employed, respectively. Observe that Jensen's Inequality
directly implies that the objective of (\ref{eq:SINR_APPROX_PROG_1})
is always \textit{lower than or equal} than that of (\ref{eq:SINR_APPROX_PROG_2})
and that of the original program (\ref{eq:SINR_DIST}), conditioned,
of course, on identical information. As a result, (\ref{eq:SINR_APPROX_PROG_1})
is also a \textit{lower bound relaxation} to (\ref{eq:SINR_DIST}).
On the other hand, the objective of (\ref{eq:SINR_APPROX_PROG_1})
might be desirable in practice, since it is easier to compute. Both
approximations are technically well behaved, though, as made precise
by the next theorem.

\begin{thm}
\textbf{\textup{(Behavior of Approximation Chains I / SINR Maximization)\label{lem:WELL_Behaved}}}
Both heuristics (\ref{eq:SINR_APPROX_PROG_1}) and (\ref{eq:SINR_APPROX_PROG_2})
each feature at least one measurable maximizer. Therefore, provided
that any of the two heuristics is solved at each time slot $t-1\in\mathbb{N}_{N_{T}-1}^{+}$,
that the selected one features a unique maximizer, $\widetilde{{\bf p}}^{*}\left(t\right)$,
and that $\widetilde{{\bf p}}^{*}\left(t\right)\equiv{\bf p}^{o}\left(t\right)$,
for all $t\in\mathbb{N}_{N_{T}}^{2}$, the produced decision chain
is measurable and condition \textbf{\textup{C2}} is satisfied at all
times.
\end{thm}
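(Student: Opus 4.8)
The plan is to show that each of the two approximate programs has an objective which, after replacing the conditional expectations by the explicit closed-form versions furnished by Lemma \ref{thm:(Big-Expectations)}, is a Carath\'{e}odory function of the pair (past information, current decision ${\bf p}$), whereas the feasible correspondence $\omega\mapsto{\cal C}_{i}({\bf p}^{o}(t-1))$ is a nonempty, compact-valued, measurable multifunction; a measurable maximizer will then follow from the Measurable Maximum Theorem (the measurable form of Berge's maximum theorem; see, e.g., \cite{Rockafellar2009VarAn}). First I would fix bona fide versions of the two objectives. By Lemma \ref{thm:(Big-Expectations)}, $\mathbb{E}\{V_{II}({\bf p},t)\,|\,\mathscr{C}({\cal T}_{t-1})\}$ is a linear combination, with the strictly positive coefficients $\sigma_{D}^{2}/P_{c}$, $\sigma^{2}/P_{0}$ and $\sigma^{2}\sigma_{D}^{2}/(P_{c}P_{0})$, of the quantities $\mathbb{E}\{|f({\bf p},t)|^{m}|g({\bf p},t)|^{n}\,|\,\mathscr{C}({\cal T}_{t-1})\}$ evaluated in closed form through (\ref{eq:BOX_1}), and $\mathbb{E}\{(V_{II}({\bf p},t))^{2}\,|\,\mathscr{C}({\cal T}_{t-1})\}$ is the analogous combination dictated by the expansion (\ref{eq:SINR_EXPANSE}); every such term is finite and strictly positive, so both objectives are everywhere well defined, with no vanishing denominators.

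Next I would verify the Carath\'{e}odory structure. In (\ref{eq:BOX_1}) the dependence on the current decision ${\bf p}$ enters only through $\boldsymbol{\mu}_{t|t-1}^{F,G}({\bf p})$ and $\boldsymbol{\Sigma}_{t|t-1}^{F,G}({\bf p})$, which are assembled from $\alpha_{S}({\bf p})$ and $\alpha_{D}({\bf p})$---continuous on ${\cal S}$, because ${\bf p}_{S},{\bf p}_{D}\notin{\cal S}$ and ${\cal S}$ is compact---and from the cross-covariance vectors $\boldsymbol{c}_{1:t-1}^{F}({\bf p})$ and $\boldsymbol{c}_{1:t-1}^{G}({\bf p})$, whose entries are the continuous shadowing kernels $\eta^{2}\exp(-\|\cdot\|_{2}/\beta-|\cdot|/\gamma)$, modulated by the isotropic BS-correlation factor on the cross blocks; the remaining blocks $\boldsymbol{m}_{1:t-1}$, $\boldsymbol{\mu}_{1:t-1}$ and $\boldsymbol{\Sigma}_{1:t-1}^{-1}$ do not involve the current ${\bf p}$, and $\boldsymbol{\Sigma}_{1:t-1}$ is invertible by Lemma \ref{lem:NonSingularity}. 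Hence each term (\ref{eq:BOX_1}), and therefore each of the two objectives, is continuous and strictly positive on ${\cal S}$ for every fixed outcome $\omega$. For measurability in $\omega$, for each fixed ${\bf p}$ the closed-form expression is a Borel function of the past CSI and of the past relay positions ${\bf p}(1),\ldots,{\bf p}(t-1)$, all of which are $\mathscr{C}({\cal T}_{t-1})$-measurable---the positions because ${\bf p}^{o}(t-1)$ is $\mathscr{C}({\cal T}_{t-2})$-measurable by the hypothesis of Theorem \ref{lem:C1C4_SAT}. Consequently each objective is $\mathscr{C}({\cal T}_{t-1})$-measurable for fixed ${\bf p}$ and continuous in ${\bf p}$, that is, Carath\'{e}odory, and hence jointly $\mathscr{C}({\cal T}_{t-1})\otimes\mathscr{B}(\mathbb{R}^{2})$-measurable. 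As for the feasible set, ${\cal C}_{i}$ is compact-valued by Assumption \ref{assu:AS_(TransMultifunctions)}, and, ${\bf p}^{o}(t-1)$ being measurable, $\omega\mapsto{\cal C}_{i}({\bf p}^{o}(t-1))$ is a nonempty, compact-valued, measurable multifunction---its graph is measurable, since ${\bf p}^{o}(t-1)$ is measurable and the translated multifunction ${\cal C}_{i}$ of Definition \ref{def:(Translated-Multifunctions)}---the intersection of the fixed compact set ${\cal S}$ with a continuously translated copy of a fixed compact set---has Borel graph. The Measurable Maximum Theorem then provides a $\mathscr{C}({\cal T}_{t-1})$-measurable selection of the set-valued map of maximizers for each of (\ref{eq:SINR_APPROX_PROG_1}) and (\ref{eq:SINR_APPROX_PROG_2}), which establishes the first assertion.

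For the statement on the decision chain, I would argue by induction on $t\in\mathbb{N}_{N_{T}}^{2}$, starting from the deterministic ${\bf p}^{o}(1)$. Given that ${\bf p}^{o}(t-1)$ is $\mathscr{C}({\cal T}_{t-2})$-measurable, the preceding argument yields a $\mathscr{C}({\cal T}_{t-1})$-measurable maximizer of the selected heuristic; if that heuristic has a \textit{unique} maximizer $\widetilde{{\bf p}}^{*}(t)$, then $\widetilde{{\bf p}}^{*}(t)$ coincides with this measurable selection and is therefore itself $\mathscr{C}({\cal T}_{t-1})$-measurable, so the choice ${\bf p}^{o}(t)\equiv\widetilde{{\bf p}}^{*}(t)$ carries the measurability hypothesis forward to time slot $t$. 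Iterating over all $t\in\mathbb{N}_{N_{T}}^{2}$, the entire chain $\{{\bf p}^{o}(t)\}$ is measurable, with ${\bf p}^{o}(t)$ being $\mathscr{C}({\cal T}_{t-1})$-measurable for each $t$, which is exactly condition \textbf{C2}. I expect the main obstacle to be the careful verification that the substituted objectives are genuinely Carath\'{e}odory---in particular the continuity in ${\bf p}$, which rests on $\alpha_{S}$ and $\alpha_{D}$ being continuous on ${\cal S}$---together with confirming measurability of the translated feasible correspondence; once these are in hand, the Measurable Maximum Theorem and the induction are routine.
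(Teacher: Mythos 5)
Your proof is correct and follows essentially the same route as the paper's: substitute the closed-form expressions from Lemma \ref{thm:(Big-Expectations)} to exhibit both objectives as Carath\'eodory functions on $\Omega\times{\cal S}$, apply a measurable-maximum/measurable-selection result over the compact-valued, measurable feasible correspondence (the paper packages this as its Theorem \ref{thm:Measurability-Preservation}, which you invoke directly in its Berge/Rockafellar form), and then induct on $t$ using uniqueness to identify $\widetilde{{\bf p}}^{*}(t)$ with the measurable selection. Your write-up simply supplies the continuity and measurability details that the paper's one-line "may be easily shown" elides.
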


\begin{proof}[Proof of Theorem \ref{lem:WELL_Behaved}]
See Appendix C.
\end{proof}

Theorem \ref{lem:WELL_Behaved} implies that, at each time slot $t\in\mathbb{N}_{N_{T}-1}^{+}$
and under the respective conditions, the chosen heuristic constitutes
a well defined approximation to the original problem, (\ref{eq:SINR_DIST})
and, in turn, to (\ref{eq:2STAGE-SINR-2}), in the sense that all
conditions \textbf{C1-C4} are satisfied.

At this point, it will be important to note that, for \textit{each}
${\bf p}\in{\cal S}$, computation of the conditional mean and covariance
in (\ref{eq:JOINT_COND}) of Lemma \ref{thm:(Big-Expectations)} require
execution of matrix operations, which are of \textit{expanding dimension
in} $t\in\mathbb{N}_{N_{T}}^{2}$; observe that, for instance, the
covariance matrix $\boldsymbol{\Sigma}_{1:t-1}$ is of size $2R\left(t-1\right)$,
which is increasing in $t\in\mathbb{N}_{N_{T}}^{2}$. Fortunately,
however, the increase is linear in $t$. Additionally, the reader
may readily observe that the inversion of the covariance matrix $\boldsymbol{\Sigma}_{1:t-1}$
constitutes the computationally dominant operation in the long formulas
of Lemma \ref{thm:(Big-Expectations)}. The computational complexity
of this matrix inversion, which takes place at each time slot $t-1\in\mathbb{N}_{N_{T}-1}^{+}$,
is, in general, of the order of ${\cal O}\left(R^{3}t^{3}\right)$
elementary operations. Fortunately though, we may exploit the Matrix
Inversion Lemma, in order to reduce the computational complexity of
the aforementioned matrix inversion to the order of ${\cal O}\left(R^{3}t^{2}\right)$.
Indeed, by construction, $\boldsymbol{\Sigma}_{1:t-1}$ may be expressed
as
\begin{equation}
\boldsymbol{\Sigma}_{1:t-1}\equiv\begin{bmatrix}\boldsymbol{\Sigma}_{1:t-2} & \boldsymbol{\Sigma}_{1:t-2}^{c}\\
\left(\boldsymbol{\Sigma}_{1:t-2}^{c}\right)^{\boldsymbol{T}} & \boldsymbol{\Sigma}\left(t-1,t-1\right)
\end{bmatrix},
\end{equation}
where
\begin{equation}
\boldsymbol{\Sigma}_{1:t-2}^{c}\triangleq\left[\boldsymbol{\Sigma}\left(1,t-1\right)\,\ldots\,\boldsymbol{\Sigma}\left(t-2,t-1\right)\right]^{\boldsymbol{T}}\in\mathbb{R}^{2R\left(t-2\right)\times2R}.
\end{equation}
Invoking the Matrix Inversion Lemma, we obtain the \textit{recursive}
expression\renewcommand{\arraystretch}{1.4}
\begin{flalign}
\boldsymbol{\Sigma}_{1:t-1}^{-1} & =\begin{bmatrix}\boldsymbol{\Sigma}_{1:t-2}^{-1}+\boldsymbol{\Sigma}_{1:t-2}^{-1}\boldsymbol{\Sigma}_{1:t-2}^{c}\boldsymbol{\mathsf{S}}_{t-1}^{-1}\left(\boldsymbol{\Sigma}_{1:t-2}^{c}\right)^{\boldsymbol{T}}\boldsymbol{\Sigma}_{1:t-2}^{-1} & -\boldsymbol{\Sigma}_{1:t-2}^{-1}\boldsymbol{\Sigma}_{1:t-2}^{c}\boldsymbol{\mathsf{S}}_{t-1}^{-1}\\
-\boldsymbol{\mathsf{S}}_{t-1}^{-1}\left(\boldsymbol{\Sigma}_{1:t-2}^{c}\right)^{\boldsymbol{T}}\boldsymbol{\Sigma}_{1:t-2}^{-1} & \boldsymbol{\mathsf{S}}_{t-1}^{-1}
\end{bmatrix},\quad\text{with}\label{eq:MIL_1}\\
\boldsymbol{\mathsf{S}}_{t-1} & \triangleq\boldsymbol{\Sigma}\left(t-1,t-1\right)-\left(\boldsymbol{\Sigma}_{1:t-2}^{c}\right)^{\boldsymbol{T}}\boldsymbol{\Sigma}_{1:t-2}^{-1}\boldsymbol{\Sigma}_{1:t-2}^{c}\in\mathbb{S}_{++}^{2R},\label{eq:MIL_2}
\end{flalign}
\renewcommand{\arraystretch}{1}where $\boldsymbol{\mathsf{S}}_{t-1}$
is the respective Schur complement. From (\ref{eq:MIL_1}) and (\ref{eq:MIL_2}),
it can be easily verified that the most computationally demanding
operation involved is $\boldsymbol{\Sigma}_{1:t-2}^{-1}\boldsymbol{\Sigma}_{1:t-2}^{c}$,
of order ${\cal O}\left(R^{3}t^{2}\right)$. Since the inversion of
$\boldsymbol{\mathsf{S}}_{t-1}$ is of the order of ${\cal O}\left(R^{3}\right)$,
we arrive at a total complexity of ${\cal O}\left(R^{3}t^{2}\right)$
elementary operations of the recursive scheme presented above, and
implemented at each time slot $t-1$. The achieved reduction in complexity
is important. In most scenarios, $R$, the number of relays, will
be relatively small and fixed for the whole operation of the system,
whereas $t$, the time slot index, might generally take large values,
since it is common for the operational horizon of the system, $N_{T}$,
to be large. Additionally, the reader may readily observe that the
aforementioned covariance matrix is independent of the position at
which the channel is predicted, ${\bf p}$. As a result, its inversion
may be performed just once in each time slot, for all evaluations
of the mean and covariance of the Gaussian density in (\ref{eq:JOINT_COND}),
for all different choices of ${\bf p}$ on a fixed grid (say). Consequently,
if the total number of such evaluations is $P\in\mathbb{N}^{+}$,
and recalling that the complexity for a matrix-vector multiplication
is quadratic in the dimension of the quantities involved, then, \textit{at
worst}, the total computational complexity for channel prediction
is of the order of ${\cal O}\left(PR^{2}t^{2}+R^{3}t^{2}\right)$,
\textit{at each $t-1\in\mathbb{N}_{N_{T}-1}^{+}$. }This means that
a potential actual computational system would have to be able to execute
matrix operations with complexity at most of the order of ${\cal O}\left(PR^{2}N_{T}^{2}+R^{3}N_{T}^{2}\right)$,
which constitutes the worst case complexity, \textit{over all $N_{T}$
time slots}. The analysis above characterizes the complexity for solving
either of the heuristics (\ref{eq:SINR_APPROX_PROG_1}) and (\ref{eq:SINR_APPROX_PROG_2}),
if the feasible set ${\cal C}_{i}$ is assumed to be finite, for all
$i\in\mathbb{N}_{R}^{+}$. Of course, if the quantity $RN_{T}$ is
considered a fixed constant, implying that computation of the mean
and covariance in (\ref{eq:JOINT_COND}) is considered the result
of a black box with fixed (worst) execution time and with input ${\bf p}$,
then, at each $t-1\in\mathbb{N}_{N_{T}-1}^{+}$, the total computational
complexity for channel prediction is of the order of ${\cal O}\left(P\right)$
function evaluations, that is, linear in $P$.

\subsubsection{\label{subsec:Brute-Force}Brute Force}

The second approach to the solution of (\ref{eq:SINR_DIST}), considered
in this section, is based on the fact that the objective of the aforementioned
program can be evaluated rather efficiently, relying on the \textit{multidimensional
Gauss-Hermite Quadrature Rule} \cite{NumericalRecipes1996}, which
constitutes a readily available routine for numerical integration.
It is particularly effective for computing expectations of complicated
functions of Gaussian random variables \cite{Elliott_GHQ_2007}. This
is indeed the case here, as shown below.

Leveraging Lemma \ref{thm:(Big-Expectations)} and as it can also
be seen in the proof of Theorem \ref{lem:C1C4_SAT} (condition \textbf{C6}),
the objective of (\ref{eq:SINR_DIST}) can be equivalently represented,
for all $t\in\mathbb{N}_{N_{T}}^{2}$, via a Lebesgue integral as
\begin{equation}
{\displaystyle \mathbb{E}\left\{ \left.V_{I}\left({\bf p},t\right)\right|\mathscr{C}\left({\cal T}_{t-1}\right)\right\} }=\int_{\mathbb{R}^{2}}r\left(\boldsymbol{x}\right){\cal N}\left(\boldsymbol{x};\boldsymbol{\mu}_{\left.t\right|t-1}^{F,G}\hspace{-2pt}\left({\bf p}\right),\boldsymbol{\Sigma}_{\left.t\right|t-1}^{F,G}\hspace{-2pt}\left({\bf p}\right)\right)\text{d}\boldsymbol{x},\label{eq:Cond_Density_1}
\end{equation}
for any choice of ${\bf p}\in{\cal S}$, where ${\cal N}\left(\cdot;\boldsymbol{\mu},\boldsymbol{\Sigma}\right):\mathbb{R}^{2}\rightarrow\mathbb{R}_{++}$
denotes the bivariate Gaussian density, with mean $\boldsymbol{\mu}\in\mathbb{R}^{2\times1}$
and covariance $\boldsymbol{\Sigma}\in\mathbb{S}_{+}^{2\times2}$,
and the function $r:\mathbb{R}^{2}\rightarrow\mathbb{R}_{++}$ is
defined exploitting the trick (\ref{eq:CONVERTER}) as 
\begin{equation}
r\left(\boldsymbol{x}\right)\equiv r\left(x_{1},x_{2}\right)\triangleq\dfrac{P_{c}P_{0}10^{2\rho/10}\left[\exp\left(x_{1}+x_{2}\right)\right]^{\frac{\log\left(10\right)}{10}}}{P_{0}\sigma_{D}^{2}\left[\exp\left(x_{1}\right)\right]^{\frac{\log\left(10\right)}{10}}+P_{c}\sigma^{2}\left[\exp\left(x_{2}\right)\right]^{\frac{\log\left(10\right)}{10}}+10^{-\rho/10}\sigma^{2}\sigma_{D}^{2}},
\end{equation}
for all $\boldsymbol{x}\equiv\left(x_{1},x_{2}\right)\in\mathbb{R}^{2}$.
Exploitting the Lebesgue integral representation (\ref{eq:Cond_Density_1}),
it can be easily shown that the conditional expectation may be closely
approximated by the double summation formula (see Section IV in \cite{Elliott_GHQ_2007})
\begin{equation}
{\displaystyle \mathbb{E}\left\{ \left.V_{I}\left({\bf p},t\right)\right|\mathscr{C}\left({\cal T}_{t-1}\right)\right\} }\approx\sum_{l_{1}\in\mathbb{N}_{M}^{+}}\varpi_{l_{1}}\sum_{l_{2}\in\mathbb{N}_{M}^{+}}\varpi_{l_{2}}r\left(\sqrt{\boldsymbol{\Sigma}_{\left.t\right|t-1}^{F,G}\hspace{-2pt}\left({\bf p}\right)}\boldsymbol{q}_{\left(l_{1},l_{2}\right)}+\boldsymbol{\mu}_{\left.t\right|t-1}^{F,G}\hspace{-2pt}\left({\bf p}\right)\right),\label{eq:Quadrature_1}
\end{equation}
where $M\in\mathbb{N}^{+}$ denotes the quadrature resolution, $\boldsymbol{q}_{\left(l_{1},l_{2}\right)}\triangleq\left[q_{l_{1}}\,q_{l_{2}}\right]^{\boldsymbol{T}}\in\mathbb{R}^{2\times1}$
denotes the $\left(l_{1},l_{2}\right)$-th quadrature point and $\boldsymbol{\varpi}_{\left(l_{1},l_{2}\right)}\triangleq\left[\varpi_{l_{1}}\,\varpi_{l_{2}}\right]^{\boldsymbol{T}}\in\mathbb{R}^{2\times1}$
denotes respective weighting coefficient, for all $\left(l_{1},l_{2}\right)\in\mathbb{N}_{M}^{+}\times\mathbb{N}_{M}^{+}$.
Both sets of quadrature points and weighting coefficients are automatically
selected \textit{apriori and independently in each dimension}, via
the following simple procedure \cite{Golub1969Calculation,Elliott_GHQ_2007}.
Let us define a matrix $\boldsymbol{J}\in\mathbb{R}^{M\times M}$,
such that
\begin{equation}
\boldsymbol{J}\left(i,j\right)\triangleq\begin{cases}
\sqrt{\dfrac{\min\left\{ i,j\right\} }{2}}, & \left|j-i\right|\equiv1\\
0, & \text{otherwise}
\end{cases},\quad\forall\left(i,j\right)\in\mathbb{N}_{M}^{+}\times\mathbb{N}_{M}^{+}.
\end{equation}
That is, $\boldsymbol{J}$ constitutes a hollow, tridiagonal, symmetric
matrix. Let the sets $\left\{ \lambda_{i}\left(\boldsymbol{J}\right)\in\mathbb{R}\right\} _{i\in\mathbb{N}_{M}^{+}}$
and $\left\{ \boldsymbol{v}_{i}\left(\boldsymbol{J}\right)\in\mathbb{R}^{M\times1}\right\} _{i\in\mathbb{N}_{M}^{+}}$
contain the eigenvalues and \textit{normalized} eigenvectors of $\boldsymbol{J}$,
respectively. Then, simply, quadrature points and the respective weighting
coefficients are selected independently in each dimension $j\in\left\{ 1,2\right\} $
as
\begin{flalign}
q_{l_{j}} & \equiv\sqrt{2}\lambda_{l_{j}}\left(\boldsymbol{J}\right)\quad\text{and}\label{eq:TRIG_1}\\
\varpi_{l_{j}} & \equiv\left(\boldsymbol{v}_{l_{j}}\left(\boldsymbol{J}\right)\left(1\right)\right)^{2},\quad\forall l_{j}\in\mathbb{N}_{M}^{+}.\label{eq:TRIG_2}
\end{flalign}
In (\ref{eq:TRIG_2}), $\boldsymbol{v}_{l_{j}}\left(\boldsymbol{J}\right)\left(1\right)$
denotes the first entry of the involved vector.

Under the above considerations, in this subsection, we propose, for
a sufficiently large number of quadrature points $M$, the replacement
of the original pointwise problem (\ref{eq:SINR_DIST}) with the heuristic\renewcommand{\arraystretch}{1.4}
\begin{equation}
\begin{array}{rl}
\underset{{\bf p}}{\mathrm{maximize}} & {\displaystyle \sum_{\left(l_{1},l_{2}\right)\in\mathbb{N}_{M}^{+}\times\mathbb{N}_{M}^{+}}\varpi_{l_{1}}\varpi_{l_{2}}r\left(\sqrt{\boldsymbol{\Sigma}_{\left.t\right|t-1}^{F,G}\hspace{-2pt}\left({\bf p}\right)}\boldsymbol{q}_{\left(l_{1},l_{2}\right)}+\boldsymbol{\mu}_{\left.t\right|t-1}^{F,G}\hspace{-2pt}\left({\bf p}\right)\right)}\\
\mathrm{subject\,to} & {\bf p}\in{\cal C}_{i}\left({\bf p}^{o}\left(t\hspace{-2pt}-\hspace{-2pt}1\right)\right)
\end{array},\label{eq:SINR_APPROX_QUADR}
\end{equation}
\renewcommand{\arraystretch}{1}to be solved at relay $i\in\mathbb{N}_{R}^{+}$,
at each time $t-1\in\mathbb{N}_{N_{T}-1}^{+}$. As in Section \ref{subsec:SINR_Taylor}
above, the following result is in power, concerning the technical
consistency of the decision chain produced by considering the approximate
program (\ref{eq:SINR_APPROX_QUADR}), for all $t\in\mathbb{N}_{N_{T}}^{2}$.
Proof is omitted, as it is essentially identical to that of Theorem
\ref{lem:WELL_Behaved}.

\begin{thm}
\textbf{\textup{(Behavior of Approximation Chains II / SINR Maximization)\label{lem:WELL_Behaved-1}}}
Consider the the heuristic (\ref{eq:SINR_APPROX_QUADR}). Then, under
the same circumstances, all conclusions of Theorem \ref{lem:WELL_Behaved}
hold true.
\end{thm}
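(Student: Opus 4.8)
The plan is to follow, almost verbatim, the reasoning used to establish Theorem \ref{lem:WELL_Behaved}, since the only structural difference between the heuristics (\ref{eq:SINR_APPROX_PROG_1})--(\ref{eq:SINR_APPROX_PROG_2}) and (\ref{eq:SINR_APPROX_QUADR}) lies in the particular deterministic, continuous functional form of the objective; all of the stochastic ingredients -- the conditional mean $\boldsymbol{\mu}_{\left.t\right|t-1}^{F,G}(\cdot)$ and conditional covariance $\boldsymbol{\Sigma}_{\left.t\right|t-1}^{F,G}(\cdot)$ provided by Lemma \ref{thm:(Big-Expectations)}, together with the feasible multifunction ${\cal C}_{i}\left({\bf p}^{o}\left(t-1\right)\right)$ -- are exactly the same. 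First I would argue that the objective of (\ref{eq:SINR_APPROX_QUADR}), regarded as a function of $\left(\omega,{\bf p}\right)$, is a Carath\'eodory integrand relative to $\mathscr{C}\left({\cal T}_{t-1}\right)$: it is $\mathscr{C}\left({\cal T}_{t-1}\right)$-measurable in $\omega$ for each fixed ${\bf p}$, because $\boldsymbol{\mu}_{\left.t\right|t-1}^{F,G}\left({\bf p}\right)$ and $\boldsymbol{\Sigma}_{\left.t\right|t-1}^{F,G}\left({\bf p}\right)$ are measurable functions of the conditioning CSI by the closed-form expressions of Lemma \ref{thm:(Big-Expectations)}; and it is continuous in ${\bf p}$ for each fixed $\omega$, because $\boldsymbol{\mu}_{\left.t\right|t-1}^{F,G}\left(\cdot\right)$ and $\boldsymbol{\Sigma}_{\left.t\right|t-1}^{F,G}\left(\cdot\right)$ are continuous in ${\bf p}$, the principal matrix square root is continuous on $\mathbb{S}_{++}^{2}$, the quadrature nodes $\boldsymbol{q}_{\left(l_{1},l_{2}\right)}$ and weights $\varpi_{l_{j}}$ are fixed constants, and $r:\mathbb{R}^{2}\rightarrow\mathbb{R}_{++}$ is continuous (indeed smooth). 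The finite double sum over $\left(l_{1},l_{2}\right)\in\mathbb{N}_{M}^{+}\times\mathbb{N}_{M}^{+}$ inherits both properties.

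Next I would invoke the same measurable-maximum / measurable-selection argument already carried out in the proof of Theorem \ref{lem:WELL_Behaved}. Under Assumption \ref{assu:AS_(TransMultifunctions)} and the standing hypothesis that the selected decision ${\bf p}^{o}\left(t-1\right)$ is $\mathscr{C}\left({\cal T}_{t-2}\right)$-measurable (hence, a fortiori, $\mathscr{C}\left({\cal T}_{t-1}\right)$-measurable), the mapping ${\bf p}^{o}\left(t-1\right)\mapsto{\cal C}_{i}\left({\bf p}^{o}\left(t-1\right)\right)$ is a compact-valued measurable multifunction. Applying a measurable maximum theorem for Carath\'eodory integrands over compact-valued measurable multifunctions (of Berge / Rockafellar--Wets type), the optimal value of (\ref{eq:SINR_APPROX_QUADR}) is $\mathscr{C}\left({\cal T}_{t-1}\right)$-measurable and the solution set admits at least one $\mathscr{C}\left({\cal T}_{t-1}\right)$-measurable selection; this yields the first assertion, that (\ref{eq:SINR_APPROX_QUADR}) features at least one measurable maximizer. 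If, in addition, the maximizer $\widetilde{{\bf p}}^{*}\left(t\right)$ is unique and ${\bf p}^{o}\left(t\right)\equiv\widetilde{{\bf p}}^{*}\left(t\right)$, it necessarily coincides with the measurable selection, so it is $\mathscr{C}\left({\cal T}_{t-1}\right)$-measurable; an induction over $t\in\mathbb{N}_{N_{T}}^{2}$, started from the known constant ${\bf p}^{o}\left(1\right)$, then shows the whole decision chain is adapted, i.e.\ condition \textbf{C2} of Lemma \ref{lem:FUND_Lemma_FINAL} holds at all times. These are precisely the conclusions of Theorem \ref{lem:WELL_Behaved}.

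The only point requiring genuine care -- and the reason the proof is ``essentially identical'' rather than wholly trivial -- is the Carath\'eodory (joint-measurability-plus-continuity) verification for the Gauss--Hermite objective, specifically the continuous and measurable dependence of $\sqrt{\boldsymbol{\Sigma}_{\left.t\right|t-1}^{F,G}\left({\bf p}\right)}$ on ${\bf p}$ and $\omega$. Since $\boldsymbol{\Sigma}_{\left.t\right|t-1}^{F,G}\left({\bf p}\right)\in\mathbb{S}_{++}^{2}$ by Lemma \ref{thm:(Big-Expectations)}, its principal square root is a smooth function of its entries, and those entries are themselves continuous in ${\bf p}$ and measurable in $\omega$; hence this obstruction is dispatched exactly as in the proof of Theorem \ref{lem:WELL_Behaved}, and no new ideas are needed. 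Once it is in place, the remainder of the argument is a direct reuse of the measurable-selection machinery already established there, which is why we omit the formal details.
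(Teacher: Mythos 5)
Your proposal is correct and follows essentially the same route as the paper: the paper's (omitted) proof likewise reduces everything to checking that the Gauss--Hermite objective is Carath\'eodory on $\Omega\times{\cal S}$ -- via the closed forms of Lemma \ref{thm:(Big-Expectations)}, the continuity of the explicit $2\times2$ matrix square root on $\mathbb{S}_{++}^{2}$, and the continuity of $r$ -- and then invokes Theorem \ref{thm:Measurability-Preservation} inductively over $t$ to extract a $\mathscr{C}\left({\cal T}_{t-1}\right)$-measurable maximizer over the compact-valued, closed multifunction ${\cal C}_{i}$. Your explicit treatment of the matrix square root is exactly the one point where this argument differs from that of Theorem \ref{lem:WELL_Behaved}, so nothing is missing.
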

Since the computations in (\ref{eq:TRIG_1}) and (\ref{eq:TRIG_2})
do not depend on ${\bf p}$ or the information collected so far, encoded
in $\mathscr{C}\left({\cal T}_{t-1}\right)$, for $t\in\mathbb{N}_{N_{T}}^{2}$,
quadrature points and the respective weights can be determined offline
and stored in memory. Therefore, the computational burden of (\ref{eq:Quadrature_1})
concentrates solely on the computation of an inner product, whose
computational complexity is of the order of ${\cal O}\left(M^{2}\right)$,
as well as a total of $M^{2}$ evaluations of $r\hspace{-2pt}\left(\hspace{-2pt}\sqrt{\boldsymbol{\Sigma}_{\left.t\right|t-1}^{F,G}\hspace{-2pt}\left({\bf p}\right)}\boldsymbol{q}_{\left(l_{1},l_{2}\right)}\hspace{-2pt}+\hspace{-2pt}\boldsymbol{\mu}_{\left.t\right|t-1}^{F,G}\hspace{-2pt}\left({\bf p}\right)\hspace{-2pt}\right)$,
for \textit{each} value of ${\bf p}$. Excluding temporarily the computational
burden of $\boldsymbol{\mu}_{\left.t\right|t-1}^{F,G}\hspace{-2pt}\left({\bf p}\right)$
and $\boldsymbol{\Sigma}_{\left.t\right|t-1}^{F,G}\hspace{-2pt}\left({\bf p}\right)$,
each of the latter evaluations is of fixed complexity, since each
involves elementary operations among matrices and vectors in $\mathbb{R}^{2\times2}$
and $\mathbb{R}^{2\times1}$, respectively and, additionally, the
involved matrix square root can be evaluated in closed form, via the
formula \cite{SQRT_1980Levinger} 
\begin{equation}
\sqrt{\boldsymbol{\Sigma}_{\left.t\right|t-1}^{F,G}\hspace{-2pt}\left({\bf p}\right)}\equiv\dfrac{\boldsymbol{\Sigma}_{\left.t\right|t-1}^{F,G}\hspace{-2pt}\left({\bf p}\right)+\sqrt{\det\left(\boldsymbol{\Sigma}_{\left.t\right|t-1}^{F,G}\hspace{-2pt}\left({\bf p}\right)\right)}{\bf I}_{2}}{\sqrt{\text{tr}\left(\boldsymbol{\Sigma}_{\left.t\right|t-1}^{F,G}\hspace{-2pt}\left({\bf p}\right)\right)+2\sqrt{\det\left(\boldsymbol{\Sigma}_{\left.t\right|t-1}^{F,G}\hspace{-2pt}\left({\bf p}\right)\right)}}}\in\mathbb{S}_{+}^{2\times2},
\end{equation}
where we have taken into account that $\boldsymbol{\Sigma}_{\left.t\right|t-1}^{F,G}\hspace{-2pt}\left({\bf p}\right)$
is always a (conditional) covariance matrix and, thus, (conditionally)
positive semidefinite. As a result and considering the last paragraph
of Section \ref{subsec:SINR_Taylor}, if (\ref{eq:Quadrature_1})
is evaluated on a finite grid of possible locations, say $P\in\mathbb{N}^{+}$,
then, at each $t-1\in\mathbb{N}_{N_{T}-1}^{+}$, the total computational
complexity of the Gauss-Hermite Quadrature Rule outlined above is
of the order of ${\cal O}\left(PM^{2}+PR^{2}t^{2}+R^{3}t^{2}\right)$
elementary operations / function evaluations. This will be the total,
worst case computational complexity for solving (\ref{eq:SINR_APPROX_QUADR}),
if the feasible set ${\cal C}_{i}$ is assumed to be finite, for all
$i\in\mathbb{N}_{R}^{+}$. As noted above, a finite feasible set greatly
simplifies implementation, since a trial-and-error approach may be
employed for solving the respective optimization problem. If $M$
is considered a fixed constant (e.g., $M\equiv10^{3}$), and the same
holds for $Rt\le RN_{T}$, then, in each time slot, the total complexity
of the Gauss-Hermite Quadrature Rule is of the order of ${\cal O}\left(P\right)$
evaluations of (\ref{eq:Quadrature_1}), that is, linear in $P$.
In that case, the whole numerical integration routine is considered
a black box of fixed computational load, which, in each time slot,
takes ${\bf p}$ as its input. Observe that, whenever $M\approx RN_{T}$,
the worst case complexity of the brute force method, described in
this subsection, \textit{over all $N_{T}$ time slots}, is essentially
the same as that of the Taylor approximation method, presented earlier
in Section \ref{subsec:SINR_Taylor}.

\subsection{\label{subsec:Theoretical-Guarantees}Theoretical Guarantees: Network
QoS Increases Across Time Slots}

The proposed relay position selection approach presented in Section
\ref{subsec:SINR-Maximization} enjoys a very important and useful
feature, initially observed via numerical simulations: Although a
$2$-stage stochastic programming procedure is utilized \textit{independently}
at each time slot for determining optimal relay positioning and beamforming
weights at the next time slot, \textit{the average network QoS (that
is, the achieved SINR) actually increases, as a function of time (the
time slot)}. Then, it was somewhat surprising to discover that, additionally,
this behavior of the achieved SINR can be predicted \textit{theoretically},
in an indeed elegant manner and, as it will be clear below, under
mild and reasonable assumptions on the structure of the spatially
controlled beamforming problem under consideration. But first, it
would be necessary to introduce the following definition.
\begin{defn}
\textbf{($\mathbf{L.MD.G}$ Fields)}\label{def:MD} On $\left(\Omega,\mathscr{F},{\cal P}\right)$,
an integrable stochastic field $\Xi:\Omega\times\mathbb{R}^{N}\times\mathbb{N}\rightarrow\mathbb{R}$
is said to be a \textit{Linear Martingale Difference (MD) Generator,
relative to a filtration }$\left\{ \mathscr{H}_{t}\subseteq\mathscr{F}\right\} _{t\in\mathbb{N}}$,
and \textit{with scaling factor} $\mu\in\mathbb{R}$, or, equivalently,
$\mathbf{L.MD.G}\diamondsuit\left(\mathscr{H}_{t},\mu\right)$, if
and only if, for each $t\in\mathbb{N}^{+}$, there exists a measurable
set $\Omega_{t}\subseteq\Omega$, with ${\cal P}\left(\Omega_{t}\right)\equiv1$,
such that, for every $\boldsymbol{x}\in\mathbb{R}^{N}$, it is true
that
\begin{equation}
\mathbb{E}\left\{ \left.\Xi\left(\boldsymbol{x},t\right)\right|\mathscr{\mathscr{H}}_{t-1}\right\} \left(\omega\right)\equiv\mu\mathbb{E}\left\{ \left.\Xi\left(\boldsymbol{x},t-1\right)\right|\mathscr{\mathscr{H}}_{t-1}\right\} \left(\omega\right),\label{eq:MD_Property}
\end{equation}
for all $\omega\in\Omega_{t}$.
\end{defn}
\begin{rem}
A fine detail in the definition of a $\mathbf{L.MD.G}\diamondsuit\text{\ensuremath{\left(\mathscr{\mathscr{H}}_{t},\mu\right)}}$
field is that, for each $t\in\mathbb{N}$, the event $\Omega_{t}$
\textit{does not depend} on the choice of point $\boldsymbol{x}\in\mathbb{R}^{N}$.
Nevertheless, even if the event where (\ref{eq:MD_Property}) is satisfied
is indeed dependent on the particular $\boldsymbol{x}\in\mathbb{R}^{N}$,
let us denote it as $\Omega_{\boldsymbol{x},t}$, we may leverage
the fact that conditional expectations are unique almost everywhere,
and \textit{arbitrarily define
\begin{equation}
\mathbb{E}\left\{ \left.\Xi\left(\boldsymbol{x},t\right)\right|\mathscr{\mathscr{H}}_{t-1}\right\} \left(\omega\right)\triangleq\mu\mathbb{E}\left\{ \left.\Xi\left(\boldsymbol{x},t-1\right)\right|\mathscr{\mathscr{H}}_{t-1}\right\} \left(\omega\right),
\end{equation}
}for all $\omega\in\Omega_{\boldsymbol{x},t}^{c}$, where ${\cal P}\left(\Omega_{\boldsymbol{x},t}^{c}\right)\equiv0$.
That is, we modify \textit{both, or either of }the random elements
$\mathbb{E}\left\{ \left.\Xi\left(\boldsymbol{x},t-1\right)\right|\mathscr{\mathscr{H}}_{t-1}\right\} $
and $\mathbb{E}\left\{ \left.\Xi\left(\boldsymbol{x},t\right)\right|\mathscr{\mathscr{H}}_{t-1}\right\} $,
on the null set $\Omega_{\boldsymbol{x},t}^{c}$, such that (\ref{eq:MD_Property})
is satisfied. Then, it may be easily verified that both such modifications
result in valid versions of the conditional expectations of $\Xi\left(\boldsymbol{x},t-1\right)$
and $\Xi\left(\boldsymbol{x},t\right)$ relative to $\mathscr{\mathscr{H}}_{t-1}$,
respectively and satisfy property (\ref{eq:MD_Property}), \textit{everywhere
with respect to} $\omega\in\Omega$.

In Definition \ref{def:MD}, invariance of $\Omega_{t}$ with respect
to $\boldsymbol{x}\in\mathbb{R}^{N}$, in conjunction with the power
of the substitution rule for conditional expectations (Section \ref{subsec:SP}),
will allow the development of strong conditional arguments, \textit{when
$\boldsymbol{x}$ is replaced by a random element}, \textit{measurable
relative to} $\mathscr{H}_{t-1}$.\hfill{}\ensuremath{\blacksquare}
\end{rem}
\begin{rem}
\label{rem:MD_Examples}There are lots of examples of $\mathbf{L.MD.G}$
stochastic fields, satisfying the technical properties of Definition
\ref{def:MD}. For completeness, let us present two such examples.
Employing generic notation, consider an integrable real-valued stochastic
field $Y\left(\boldsymbol{x},t\right)$, $\boldsymbol{x}\in\mathbb{R}^{N}$,
$t\in\mathbb{N}$. Let the natural filtration associated with $Y\left(\boldsymbol{x},t\right)$
be $\left\{ \mathscr{Y}_{t}\right\} _{t\in\mathbb{N}}$, with $\mathscr{Y}_{t}\triangleq\sigma\left\{ Y\left(\boldsymbol{x},t\right),\boldsymbol{x}\in\mathbb{R}^{N}\right\} $,
for all $t\in\mathbb{N}$. Also, consider another, for simplicity
temporal, integrable real-valued process $W\left(t\right),t\in\mathbb{N}$.
Suppose, further, that $Y\left(\boldsymbol{x},t\right)$ is a martingale
with respect to $t\in\mathbb{N}$ (relative to $\left\{ \mathscr{Y}_{t}\right\} _{t\in\mathbb{N}}$),
and that $W\left(t\right)$ is a zero mean process, independent of
$Y\left(\boldsymbol{x},t\right)$. In particular, we assume that,
for every $t\in\mathbb{N}^{+}$, there exist events $\Omega_{t}^{Y}\subseteq\Omega$
and $\Omega_{t}^{W}\subseteq\Omega$, satisfying ${\cal P}\left(\Omega_{t}^{Y}\right)\equiv1$
and ${\cal P}\left(\Omega_{t}^{W}\right)\equiv1$, such that, for
all $\boldsymbol{x}\in\mathbb{R}^{N}$,
\begin{flalign}
\mathbb{E}\left\{ \left.Y\left(\boldsymbol{x},t\right)\right|\mathscr{\mathscr{Y}}_{t-1}\right\} \left(\omega\right) & \equiv Y\left(\omega,\boldsymbol{x},t-1\right)\quad\text{and}\\
\mathbb{E}\left\{ \left.W\left(t\right)\right|\mathscr{\mathscr{Y}}_{t-1}\right\} \left(\omega\right) & \equiv0,
\end{flalign}
for all $\omega\in\Omega_{t}^{Y}\bigcap\Omega_{t}^{W}$, where, apparently,
${\cal P}\left(\Omega_{t}^{Y}\bigcap\Omega_{t}^{W}\right)\equiv1$.

Our first, probably most basic example of a $\mathbf{L.MD.G}$ field
is simply the martingale $Y\left(\boldsymbol{x},t\right)$ itself.
Of course, in order to verify this statement, we need to show that
it satisfies the technical requirements of Definition \ref{def:MD},
relative to a given filtration; in particular, let us choose $\left\{ \mathscr{Y}_{t}\right\} _{t\in\mathbb{N}}$
to be that filtration. Then, for every $\left(\boldsymbol{x},t\right)\in\mathbb{R}^{N}\times\mathbb{N}^{+}$,
it is trivial to see that 
\begin{flalign}
\mathbb{E}\left\{ \left.Y\left(\boldsymbol{x},t\right)\right|\mathscr{\mathscr{Y}}_{t-1}\right\} \left(\omega\right) & \equiv Y\left(\omega,\boldsymbol{x},t-1\right)\equiv\mathbb{E}\left\{ \left.Y\left(\boldsymbol{x},t-1\right)\right|\mathscr{\mathscr{Y}}_{t-1}\right\} \left(\omega\right),
\end{flalign}
for all $\omega\in\Omega_{t}^{Y}$, where $Y\left(\boldsymbol{x},t-1\right)$\textit{
is chosen as our version of} $\mathbb{E}\left\{ \left.Y\left(\boldsymbol{x},t-1\right)\right|\mathscr{\mathscr{Y}}_{t-1}\right\} $,
\textit{everywhere} in $\Omega$. As a result, the martingale $Y\left(\boldsymbol{x},t\right)$
is itself a $\mathbf{L.MD.G}\diamondsuit\left(\mathscr{\mathscr{Y}}_{t},1\right)$,
as expected.

The second, somewhat more interesting example of a $\mathbf{L.MD.G}$
field is defined as
\begin{equation}
X\left(\boldsymbol{x},t\right)\triangleq\varrho Y\left(\boldsymbol{x},t\right)+W\left(t\right),
\end{equation}
for all $\left(\boldsymbol{x},t\right)\in\mathbb{R}^{N}\times\mathbb{N}$,
where, say, $0<\varrho\le1$. In order to verify the technical requirements
of Definition \ref{def:MD}, let us again choose $\left\{ \mathscr{Y}_{t}\right\} _{t\in\mathbb{N}}$
as our filtration. Then, for every $\left(\boldsymbol{x},t\right)\in\mathbb{R}^{N}\times\mathbb{N}^{+}$,
there exists a measurable set $\Omega_{\boldsymbol{x},t}^{Y,W}\subseteq\Omega$,
with ${\cal P}\left(\Omega_{\boldsymbol{x},t}^{Y,W}\right)\equiv1$,
such that, for all $\omega\in\Omega_{\boldsymbol{x},t}^{Y,W}$,
\begin{flalign}
\mathbb{E}\left\{ \left.X\left(\boldsymbol{x},t\right)\right|\mathscr{\mathscr{Y}}_{t-1}\right\} \left(\omega\right) & \equiv\varrho Y\left(\omega,\boldsymbol{x},t-1\right)+\mathbb{E}\left\{ W\left(t\right)\right\} \nonumber \\
 & \equiv\varrho Y\left(\omega,\boldsymbol{x},t-1\right).
\end{flalign}
Therefore, we may choose our version for $\mathbb{E}\left\{ \left.X\left(\boldsymbol{x},t\right)\right|\mathscr{\mathscr{Y}}_{t-1}\right\} $
as 
\begin{equation}
\mathbb{E}\left\{ \left.X\left(\boldsymbol{x},t\right)\right|\mathscr{\mathscr{Y}}_{t-1}\right\} \left(\omega\right)\equiv\varrho Y\left(\omega,\boldsymbol{x},t-1\right),\quad\forall\omega\in\Omega.
\end{equation}
In exactly the same fashion, we may choose, for every $\left(\boldsymbol{x},t\right)\in\mathbb{R}^{N}\times\mathbb{N}^{+}$,
\begin{align}
\mathbb{E}\left\{ \left.X\left(\boldsymbol{x},t-1\right)\right|\mathscr{\mathscr{Y}}_{t-1}\right\} \left(\omega\right) & \equiv\varrho Y\left(\omega,\boldsymbol{x},t-1\right),\quad\forall\omega\in\Omega.
\end{align}
Consequently, for every $\left(\boldsymbol{x},t\right)\in\mathbb{R}^{N}\times\mathbb{N}^{+}$,
it will be true that
\begin{equation}
\mathbb{E}\left\{ \left.X\left(\boldsymbol{x},t\right)\right|\mathscr{\mathscr{Y}}_{t-1}\right\} \left(\omega\right)\equiv\varrho Y\left(\omega,\boldsymbol{x},t-1\right)\equiv\mathbb{E}\left\{ \left.X\left(\boldsymbol{x},t-1\right)\right|\mathscr{\mathscr{Y}}_{t-1}\right\} \left(\omega\right),
\end{equation}
for all $\omega\in\Omega$, showing that the field $X\left(\boldsymbol{x},t\right)$
is also $\mathbf{L.MD.G}\diamondsuit\left(\mathscr{\mathscr{Y}}_{t},1\right)$.\hfill{}\ensuremath{\blacksquare}
\end{rem}
Leveraging the notion of a $\mathbf{L.MD.G}$ field, the following
result may be proven, characterizing the temporal (in discrete time)
evolution of the objective of myopic stochastic programs of the form
of (\ref{eq:2STAGE-1}). In order to introduce the result, let us
consider the family $\left\{ \mathscr{P}_{t}^{\uparrow}\right\} _{t\in\mathbb{N}_{N_{T}}^{+}}$,
with $\mathscr{P}_{t}^{\uparrow}$ being the\textit{ limit $\sigma$-algebra
generated by all admissible policies at time slot $t$, }defined as\textit{
\begin{flalign}
\mathscr{P}_{t}^{\uparrow} & \triangleq\sigma\left\{ \bigcup_{{\bf p}\left(t\right)\in{\cal D}_{t}}\sigma\left\{ {\bf p}\left(t\right)\right\} \right\} \subseteq\mathscr{C}\left({\cal T}_{t-1}\right),\quad\forall t\in\mathbb{N}_{N_{T}}^{+},
\end{flalign}
}with $\mathscr{P}_{1}^{\uparrow}$ being the trivial $\sigma$-algebra;
recall that ${\bf p}\left(1\right)\in{\cal S}^{R}$ is assumed to
be a constant. Also, for every $t\in\mathbb{N}_{N_{T}}^{+}$, let
us define the class
\begin{equation}
\overline{{\cal D}}_{t}\equiv\left\{ {\bf p}:\Omega\rightarrow{\cal S}^{R}\left|{\bf p}^{-1}\left({\cal A}\right)\in\mathscr{P}_{t}^{\uparrow},\text{ for all }{\cal A}\in\mathscr{B}\left({\cal S}^{R}\right)\right.\hspace{-2pt}\right\} .
\end{equation}
The result now follows.
\begin{thm}
\textbf{\textup{($\mathbf{L.MD.G}$ Objectives Increase over Time)\label{lem:QoS_INCREASES}}}
Consider, for each $t\in\mathbb{N}_{N_{T}}^{2}$, the maximization
version of the $2$-stage stochastic program (\ref{eq:2STAGE-1}),
for some choice of the second-stage optimal value $V\left({\bf p},t\right)$,
${\bf p}\in{\cal S}^{R}$, $t\in\mathbb{N}_{N_{T}}^{2}$. Suppose
that conditions \textbf{\textup{C1-C6}} are satisfied at all times
and let ${\bf p}^{*}\left(t\right)$ denote an optimal solution to
(\ref{eq:2STAGE-1}), decided at $t-1\in\mathbb{N}_{N_{T}-1}^{+}$.
Suppose, further, that, for every $t\in\mathbb{N}_{N_{T}}^{+}$,
\begin{itemize}
\item $V\left({\bf p},t\right)$ is $\mathbf{L.MD.G}\diamondsuit\left(\mathscr{H}_{t},\mu\right)$,
for a filtration $\left\{ \mathscr{H}_{t}\supseteq\mathscr{P}_{t}^{\uparrow}\right\} _{t\in\mathbb{N}_{N_{T}}^{+}}$
and some $\mu\in\mathbb{R}$, and that
\item $V\left(\cdot,\cdot,t\right)$ is both $\boldsymbol{SP}\diamondsuit\mathfrak{C}_{\mathscr{H}_{t}}$
and $\boldsymbol{SP}\diamondsuit\mathfrak{C}_{\mathscr{H}_{t-1}}$,
with $\overline{{\cal D}}_{t}\subseteq\mathfrak{C}_{\mathscr{H}_{t}}\subseteq\mathfrak{I}_{\mathscr{H}_{t}}$
(Remark \ref{rem:FULLSP_1} / Section \ref{subsec:SP}).
\end{itemize}
Then, for any admissible policy ${\bf p}^{o}\left(t-1\right)$, it
is true that
\begin{flalign}
\mu\mathbb{E}\left\{ V\left({\bf p}^{o}\left(t-1\right),t-1\right)\right\}  & \equiv\mathbb{E}\left\{ V\left({\bf p}^{o}\left(t-1\right),t\right)\right\} \quad\text{and}\\
\mu\mathbb{E}\left\{ V\left({\bf p}^{*}\left(t-1\right),t-1\right)\right\}  & \le\mathbb{E}\left\{ V\left({\bf p}^{*}\left(t\right),t\right)\right\} ,\quad\forall t\in\mathbb{N}_{N_{T}}^{2}.
\end{flalign}
In particular, if $\mu\equiv1$, the objective: $\bullet$ does not
decrease by not updating the decision variable, and $\bullet$ is
nondecreasing over time, under optimal decision making.
\end{thm}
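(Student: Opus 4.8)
The plan is to establish the equality first and then read the inequality off it. Fix $t\in\mathbb{N}_{N_{T}}^{2}$ and an admissible policy ${\bf p}^{o}(t-1)$. Being feasible for the stage-$(t-2)$ instance of (\ref{eq:2STAGE-1}), it is $\mathscr{C}\left({\cal T}_{t-2}\right)$-measurable, so $\sigma\left\{{\bf p}^{o}(t-1)\right\}\subseteq\mathscr{P}_{t-1}^{\uparrow}$, hence ${\bf p}^{o}(t-1)\in\overline{{\cal D}}_{t-1}\subseteq\mathfrak{C}_{\mathscr{H}_{t-1}}$, and since $\mathscr{H}_{t-1}\supseteq\mathscr{P}_{t-1}^{\uparrow}$ it is also $\mathscr{H}_{t-1}$-measurable. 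By the hypothesis that $V(\cdot,\cdot,t)$ and $V(\cdot,\cdot,t-1)$ are each $\boldsymbol{SP}\diamondsuit\mathfrak{C}_{\mathscr{H}_{t-1}}$, the substitution rule (Definition \ref{def:SUB}, Theorem \ref{thm:REP_EXP}) applies to both $\mathbb{E}\left\{\left.V({\bf x},t)\right|\mathscr{H}_{t-1}\right\}$ and $\mathbb{E}\left\{\left.V({\bf x},t-1)\right|\mathscr{H}_{t-1}\right\}$ (viewed as functions of ${\bf x}$) with ${\bf x}$ replaced by ${\bf p}^{o}(t-1)$.

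Next I would couple this with the $\mathbf{L.MD.G}\diamondsuit\left(\mathscr{H}_{t},\mu\right)$ property of $V$. Let $\Omega_{t}$ be the full-measure event furnished by Definition \ref{def:MD}, on which
\[
\mathbb{E}\left\{\left.V({\bf x},t)\right|\mathscr{H}_{t-1}\right\}(\omega)\equiv\mu\,\mathbb{E}\left\{\left.V({\bf x},t-1)\right|\mathscr{H}_{t-1}\right\}(\omega)
\]
holds \emph{simultaneously for every} ${\bf x}\in{\cal S}^{R}$; it is exactly this ${\bf x}$-independence of $\Omega_{t}$ that makes the next step legitimate. Evaluating both sides at ${\bf x}={\bf p}^{o}(t-1)(\omega)$ for $\omega\in\Omega_{t}$ and then applying the substitution rule on each side (re-expressing the frozen conditional expectations, off a further ${\cal P}$-null set) yields $\mathbb{E}\left\{\left.V({\bf p}^{o}(t-1),t)\right|\mathscr{H}_{t-1}\right\}\equiv\mu\,\mathbb{E}\left\{\left.V({\bf p}^{o}(t-1),t-1)\right|\mathscr{H}_{t-1}\right\}$ almost everywhere. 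Taking expectations and using the tower property (valid since $V$ is integrable and, under the present hypotheses, so is $\omega\mapsto V(\omega,{\bf p}^{o}(t-1)(\omega),t)$) gives the first assertion $\mu\,\mathbb{E}\left\{V({\bf p}^{o}(t-1),t-1)\right\}\equiv\mathbb{E}\left\{V({\bf p}^{o}(t-1),t)\right\}$.

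For the inequality I would specialize to the regime of optimal decision making, ${\bf p}^{o}(s)\equiv{\bf p}^{*}(s)$, and apply the equality just obtained with ${\bf p}^{o}(t-1)\equiv{\bf p}^{*}(t-1)$, which is admissible. The crucial structural point is that ${\bf p}^{*}(t-1)$ is in addition a \emph{feasible} policy for the stage-$(t-1)$ instance of (\ref{eq:2STAGE-1}), the one deciding ${\bf p}(t)$: it is $\mathscr{C}\left({\cal T}_{t-2}\right)$- and hence $\mathscr{C}\left({\cal T}_{t-1}\right)$-measurable, and, since ${\bf 0}\in{\cal G}$ in Assumption \ref{assu:AS_(TransMultifunctions)}, the translated multifunction ${\cal C}$ is reflexive on ${\cal S}^{R}$, so ${\bf p}^{*}(t-1)\in{\cal C}({\bf p}^{*}(t-1))={\cal C}({\bf p}^{o}(t-1))$ --- informally, ``leave the relays where they are'' is always a feasible competitor at the next stage. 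Because conditions \textbf{C1-C6} and the Fundamental Lemma (Lemma \ref{lem:FUND_Lemma_FINAL}) guarantee that ${\bf p}^{*}(t)$ is an optimal solution of that maximization, its objective value dominates that of every feasible policy, in particular $\mathbb{E}\left\{V({\bf p}^{*}(t-1),t)\right\}\le\mathbb{E}\left\{V({\bf p}^{*}(t),t)\right\}$. Chaining this with the equality gives $\mu\,\mathbb{E}\left\{V({\bf p}^{*}(t-1),t-1)\right\}\le\mathbb{E}\left\{V({\bf p}^{*}(t),t)\right\}$, and the choice $\mu\equiv1$ immediately delivers the two bulleted monotonicity statements.

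The main obstacle I anticipate is bookkeeping rather than any deep estimate: one must check that the single $\sigma$-algebra $\mathscr{H}_{t-1}$ used throughout simultaneously (i) contains $\mathscr{P}_{t-1}^{\uparrow}$, so that every admissible policy is $\mathscr{H}_{t-1}$-measurable; (ii) lies in the domain of the substitution rule for \emph{both} $V(\cdot,\cdot,t)$ and $V(\cdot,\cdot,t-1)$ --- which is exactly why two $\boldsymbol{SP}$ hypotheses at shifted time indices, together with the chain $\overline{{\cal D}}_{t}\subseteq\mathfrak{C}_{\mathscr{H}_{t}}\subseteq\mathfrak{I}_{\mathscr{H}_{t}}$, are imposed; and (iii) is the conditioning $\sigma$-algebra for which the $\mathbf{L.MD.G}$ recursion holds --- and then that every ``$\equiv$'' in the resulting chain is an almost-everywhere identity whose exceptional null set may be absorbed into a single one. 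The only genuinely non-measure-theoretic ingredient is the reflexivity ${\bf p}\in{\cal C}({\bf p})$ of the translated feasibility multifunction, which converts ``retain the current positions'' into a feasible option at the next stage and is ultimately what produces the monotone trend, despite the myopic formulation.
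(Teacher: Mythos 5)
Your proposal is correct and follows essentially the same route as the paper's own proof: exploit the ${\bf x}$-uniform null set in the $\mathbf{L.MD.G}$ definition together with the two $\boldsymbol{SP}$ hypotheses relative to $\mathscr{H}_{t-1}$ to substitute the ($\mathscr{H}_{t-1}$-measurable) policy ${\bf p}^{o}\left(t-1\right)$ into the martingale-difference identity, take expectations via the tower property to get the equality, and then obtain the inequality from the feasibility of the ``stay still'' policy (reflexivity of the translated multifunction ${\cal C}$) combined with optimality of ${\bf p}^{*}\left(t\right)$. The paper phrases the feasibility of retaining the current positions directly from the definition of the $2$-stage problem rather than citing ${\bf 0}\in{\cal G}$, but this is the same observation.
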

\begin{proof}[Proof of Theorem \ref{lem:QoS_INCREASES}]
See Appendix C.
\end{proof}
Of all possible choices for $\mu$, the one where $\mu\equiv1$ is
of special importance and practical relevance, as we will see in the
next. In particular, in this case, and provided that the respective
assumptions are fulfilled, Theorem \ref{lem:QoS_INCREASES} implies
that \textit{optimal myopic exploration of the random field $V\left({\bf p},t\right)$
is monotonic}, either under optimal decision making, or by retaining
the same policy next.

In the case where $\mu\neq1$, things can be quite interesting as
well. For instance, suppose that one focuses on the maximization counterpart
of the stochastic program (\ref{eq:2STAGE-1}). In this case, it is
of interest to sequentially, myopically and feasibly sample the field
$V\left({\bf p},t\right)$, such that it is \textit{maximized on average}.
Let us also refer to $V\left({\bf p},t\right)$ as the \textit{reward}
of the sampling process. Additionally, suppose that $V\left({\bf p},t\right)$
is a $\mathbf{L.MD.G}$ field, with parameter $\mu\equiv0.9<1$. Assuming
that the respective assumptions are satisfied, Theorem \ref{lem:QoS_INCREASES}
implies that, for any admissible sampling policy ${\bf p}^{o}\left(t-1\right)$,
\begin{flalign}
\mathbb{E}\hspace{-2pt}\left\{ V\left({\bf p}^{o}\left(t-1\right),t\right)\right\}  & \hspace{-2pt}\equiv\hspace{-2pt}0.9\mathbb{E}\hspace{-2pt}\left\{ V\left({\bf p}^{o}\left(t-1\right),t-1\right)\right\} \,\text{and}\hspace{-2pt}\hspace{-2pt}\\
\mathbb{E}\hspace{-2pt}\left\{ V\left({\bf p}^{*}\left(t\right),t\right)\right\}  & \hspace{-2pt}\ge\hspace{-2pt}0.9\mathbb{E}\hspace{-2pt}\left\{ V\left({\bf p}^{*}\left(t-1\right),t-1\right)\right\} ,\label{eq:INC-1}
\end{flalign}
for all $t\in\mathbb{N}_{N_{T}}^{2}$. In other words, either performing
optimal decision making, or retaining the same policy next, will result
in an \textit{at most $10\%$ loss of performance}. This means that
the performance of optimal sampling at the next time step cannot be
worse than $90\%$ of that at the current time slot. Of course, this
is important, because, in a sense, \textit{the risk of (non-)maintaining
the average reward achieved up to the current time slot is meaningfully
quantified}.
\begin{rem}
\label{rem:MD_Separable}When the stochastic program under study is
\textit{separable}, that is, when the objective is of the form
\begin{equation}
V\left({\bf p}\left(t\right),t\right)\equiv\sum_{i\in\mathbb{N}_{M}^{+}}V_{i}\left({\bf p}_{i}\left(t\right),t\right)
\end{equation}
(and the respective constraints of the problem decoupled), then, in
order to reach the conclusions of Theorem \ref{lem:QoS_INCREASES}
for $V$, it \textit{suffices} for Theorem \ref{lem:QoS_INCREASES}
to hold \textit{individually} for each $V_{i}$, $i\in\mathbb{N}_{M}^{+}$.
This is true, for instance, for the spatially controlled beamforming
problem (\ref{eq:2STAGE-SINR-2}).\hfill{}\ensuremath{\blacksquare}
\end{rem}
We may now return to the beamforming problem under consideration,
namely (\ref{eq:2STAGE-SINR-2}). By Remark \ref{rem:MD_Separable}
and Theorem \ref{lem:QoS_INCREASES}, it would suffice if we could
show that the field $V_{I}\left({\bf p},t\right)$ is a linear MD
generator, relative to a properly chosen filtration. Unfortunately,
though, this does not seem to be the case; the statistical structure
of $V_{I}\left({\bf p},t\right)$ does not match that of a linear
MD generator \textit{exactly}, relative to any reasonably chosen filtration.
Nevertheless, under the channel model of Section \ref{sec:Spatiotemporal-Wireless-Channel},
it is indeed possible to show that $V_{I}\left({\bf p},t\right)$
is \textit{approximately} $\mathbf{L.MD.G}\diamondsuit\left(\mathscr{C}\left({\cal T}_{t-1}\right),1\right)$,
a fact that explains, in an elegant manner, why our proposed spatially
controlled beamforming framework is expected to work so well, both
under optimal and suboptimal decision making.

To show that $V_{I}\left({\bf p},t\right)$ is \textit{approximately}
$\mathbf{L.MD.G}\diamondsuit\left(\mathscr{C}\left({\cal T}_{t-1}\right),1\right)$,
simply consider projecting $V_{I}\left({\bf p},t-1\right)$ onto $\mathscr{C}\left({\cal T}_{t-2}\right)$,
via the conditional expectation $\mathbb{E}\left\{ \left.V_{I}\left({\bf p},t-1\right)\right|\mathscr{C}\left({\cal T}_{t-2}\right)\right\} $.
Of course, and based on what we have seen so far, $\mathbb{E}\left\{ \left.V_{I}\left({\bf p},t-1\right)\right|\mathscr{C}\left({\cal T}_{t-2}\right)\right\} $
can be written as a Lebesgue integral of $V_{I}\left({\bf p},t-1\right)$
expressed in terms of the vector field $\left[F\left({\bf p},t-1\right)\,G\left({\bf p},t-1\right)\right]^{\boldsymbol{T}}$,
times its conditional density relative to $\mathscr{C}\left({\cal T}_{t-2}\right)$.
It then easy to see that this conditional density will be, of course,
Gaussian, and will be of exactly the same form as the conditional
density of $\left[F\left({\bf p},t\right)\,G\left({\bf p},t\right)\right]^{\boldsymbol{T}}$
relative to $\mathscr{C}\left({\cal T}_{t-1}\right)$, as presented
in Lemma \ref{thm:(Big-Expectations)}, but with $t$ replaced by
$t-1$. Likewise, $\mathbb{E}\left\{ \left.V_{I}\left({\bf p},t\right)\right|\mathscr{C}\left({\cal T}_{t-2}\right)\right\} $
is of the same form as $\mathbb{E}\left\{ \left.V_{I}\left({\bf p},t-1\right)\right|\mathscr{C}\left({\cal T}_{t-2}\right)\right\} $,
but with all terms
\begin{equation}
\exp\left(-\dfrac{1}{\gamma}\right),\exp\left(-\dfrac{2}{\gamma}\right),\ldots,\exp\left(-\dfrac{t-2}{\gamma}\right)
\end{equation}
simply replaced by 
\begin{equation}
\exp\left(-\dfrac{2}{\gamma}\right),\exp\left(-\dfrac{3}{\gamma}\right),\ldots,\exp\left(-\dfrac{t-1}{\gamma}\right),
\end{equation}
for all $t\in\mathbb{N}_{N_{T}}^{3}$. Of course, if $t\equiv2$,
we have
\begin{flalign}
\mathbb{E}\left\{ \left.V_{I}\left({\bf p},2\right)\right|\mathscr{C}\left({\cal T}_{0}\right)\right\}  & \equiv\mathbb{E}\left\{ V_{I}\left({\bf p},2\right)\right\} \nonumber \\
 & \equiv\mathbb{E}\left\{ V_{I}\left({\bf p},1\right)\right\} \equiv\mathbb{E}\left\{ \left.V_{I}\left({\bf p},1\right)\right|\mathscr{C}\left({\cal T}_{0}\right)\right\} .
\end{flalign}
Now, for $\gamma$ \textit{sufficiently large}, we may approximately
write
\begin{equation}
\exp\left(-\dfrac{x+1}{\gamma}\right)\approx\exp\left(-\dfrac{x}{\gamma}\right),\quad\forall x>1,
\end{equation}
and, therefore, due to continuity, it should be true that
\begin{equation}
\mathbb{E}\left\{ \left.V_{I}\left({\bf p},t\right)\right|\mathscr{C}\left({\cal T}_{t-2}\right)\right\} \approx\mathbb{E}\left\{ \left.V_{I}\left({\bf p},t-1\right)\right|\mathscr{C}\left({\cal T}_{t-2}\right)\right\} ,
\end{equation}
for all $t\in\mathbb{N}_{N_{T}}^{2}$ (and everywhere with respect
to $\omega\in\Omega$). As a result, we have shown that, \textit{at
least approximately}, $V_{I}\left({\bf p},t\right)$ is $\mathbf{L.MD.G}\diamondsuit\left(\mathscr{C}\left({\cal T}_{t-1}\right),1\right)$.
We may then invoke Theorem \ref{lem:QoS_INCREASES} in an approximate
manner, leading to the following important result. Hereafter, for
$x\in\mathbb{R}$ and $y\in\mathbb{R}$, $x\lesssim y$ will imply
that $x$ \textit{is approximately smaller or equal than} $y$, in
the sense that $x\le y+\varepsilon$, where $\varepsilon>0$ is some
small slack.
\begin{thm}
\textbf{\textup{(QoS Increases over Time Slots)\label{prop:QoS-Increases}}}
Consider the separable stochastic program (\ref{eq:2STAGE-SINR-2}).
For $\gamma$ sufficiently large, and for any admissible policy ${\bf p}^{o}\left(t-1\right)$,
it is true that
\begin{flalign}
\mathbb{E}\left\{ V_{I}\hspace{-2pt}\left({\bf p}_{i}^{o}\left(t-1\right),t-1\right)\right\}  & \approx\mathbb{E}\left\{ V_{I}\hspace{-2pt}\left({\bf p}_{i}^{o}\left(t-1\right),t\right)\right\} ,\\
\mathbb{E}\left\{ V_{I}\hspace{-2pt}\left({\bf p}_{i}^{*}\left(t-1\right),t-1\right)\right\}  & \lesssim\mathbb{E}\left\{ V_{I}\hspace{-2pt}\left({\bf p}_{i}^{*}\left(t\right),t\right)\right\} ,\quad\forall i\in\mathbb{N}_{R}^{+}\\
\mathbb{E}\left\{ V\hspace{-2pt}\left({\bf p}^{o}\left(t-1\right),t-1\right)\right\}  & \approx\mathbb{E}\left\{ V\hspace{-2pt}\left({\bf p}^{o}\left(t-1\right),t\right)\right\} \quad\text{and}\\
\mathbb{E}\left\{ V\hspace{-2pt}\left({\bf p}^{*}\left(t-1\right),t-1\right)\right\}  & \lesssim\mathbb{E}\left\{ V\hspace{-2pt}\left({\bf p}^{*}\left(t\right),t\right)\right\} ,
\end{flalign}
for all $t\in\mathbb{N}_{N_{T}}^{2}$. In other words, \textbf{approximately},
the average network QoS: $\bullet$ does not decrease by not updating
the positions of the relays and $\bullet$ is nondecreasing across
time slots, under (per relay) optimal decision making.
\end{thm}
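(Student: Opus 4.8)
The plan is to reduce the statement to Theorem \ref{lem:QoS_INCREASES} via the separability reduction of Remark \ref{rem:MD_Separable}, after verifying that the per-relay reward $V_{I}\left({\bf p},t\right)$ is, for $\gamma$ large, an \emph{approximate} $\mathbf{L.MD.G}\diamondsuit\left(\mathscr{C}\left({\cal T}_{t-1}\right),1\right)$ field. First I would recall that Theorem \ref{lem:C1C4_SAT} already guarantees that conditions \textbf{C1}--\textbf{C6} hold for the spatially controlled beamforming program (\ref{eq:2STAGE-SINR-2}) at every time slot, so the structural hypotheses of Theorem \ref{lem:QoS_INCREASES} are in force; what remains is to check its two bulleted assumptions for each summand $V_{I}\left({\bf p}_{i},\cdot\right)$, namely the linear martingale-difference property and the substitution-rule ($\boldsymbol{SP}$) property relative to both $\mathscr{C}\left({\cal T}_{t-1}\right)$ and $\mathscr{C}\left({\cal T}_{t-2}\right)$. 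The $\boldsymbol{SP}$ property follows from sample-path continuity of the channel fields (Section \ref{subsec:TECHNICAL_1}) together with Theorem \ref{thm:REP_EXP}, exactly as in the proof of Theorem \ref{lem:C1C4_SAT}; so the crux is the (approximate) $\mathbf{L.MD.G}$ property, which is precisely the content of the informal discussion immediately preceding the theorem, and which I would now make rigorous.

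To establish it, I would apply Lemma \ref{thm:(Big-Expectations)} with $t-1$ in the role of $t$, so that $\mathbb{E}\left\{ \left.V_{I}\left({\bf p},t-1\right)\right|\mathscr{C}\left({\cal T}_{t-2}\right)\right\}$ and $\mathbb{E}\left\{ \left.V_{I}\left({\bf p},t\right)\right|\mathscr{C}\left({\cal T}_{t-2}\right)\right\}$ are both written (via the trick (\ref{eq:CONVERTER})) as Lebesgue integrals of $r\left(\cdot\right)$ against bivariate Gaussian densities whose parameters are given by the closed forms of that Lemma. The two densities differ only through the cross-covariance blocks ${\bf c}^{F,G}_{1:t-2}\left({\bf p}\right)$: the temporal factor attached to the observation at slot $k\le t-2$ is $\exp\left(-\left(t-k\right)/\gamma\right)$ in one case and $\exp\left(-\left(t-1-k\right)/\gamma\right)$ in the other, and since $t-k\ge t-1-k\ge 1$ we have the uniform bound $\left|\exp\left(-\left(t-1-k\right)/\gamma\right)-\exp\left(-\left(t-k\right)/\gamma\right)\right|\le 1-\exp\left(-1/\gamma\right)\to 0$ as $\gamma\to\infty$. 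Because ${\bf p}\mapsto\left(\boldsymbol{\mu}_{\left.t\right|t-2}^{F,G}\left({\bf p}\right),\boldsymbol{\Sigma}_{\left.t\right|t-2}^{F,G}\left({\bf p}\right)\right)$ is continuous on the compact set ${\cal S}$, and $r$ is integrable against every such Gaussian -- indeed $V_{I}\le\min\left\{ P_{c}\left|g\right|^{2}/\sigma_{D}^{2},\,P_{0}\left|f\right|^{2}/\sigma^{2}\right\}$, which is integrable, consistently with conditions \textbf{C1}--\textbf{C6} -- the map from Gaussian parameters to the integral is continuous, so the two conditional expectations differ by some $\varepsilon\left(\gamma\right)$ with $\varepsilon\left(\gamma\right)\downarrow 0$, uniformly in ${\bf p}\in{\cal S}$ in an $L^{1}\left({\cal P}\right)$ sense (the conditioning data ${\bf m}_{1:t-2}$ is Gaussian, hence not uniformly bounded, so the control is in mean rather than almost surely -- enough for the expected-value statements). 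The degenerate slot $t\equiv 2$ is exact. This is exactly the approximate $\mathbf{L.MD.G}\diamondsuit\left(\mathscr{C}\left({\cal T}_{t-1}\right),1\right)$ property, with slack $\varepsilon\left(\gamma\right)$.

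With the approximate property in hand, I would re-run the argument of Theorem \ref{lem:QoS_INCREASES} carrying this additive slack. For the ``no update'' policy ${\bf p}^{o}\left(t-1\right)$, applying the substitution rule to the approximate identity at ${\bf p}\equiv{\bf p}^{o}\left(t-1\right)$ and then the tower property gives $\mathbb{E}\left\{ V_{I}\left({\bf p}_{i}^{o}\left(t-1\right),t\right)\right\}\approx\mathbb{E}\left\{ V_{I}\left({\bf p}_{i}^{o}\left(t-1\right),t-1\right)\right\}$; for the optimal-decision statement, ${\bf p}^{*}\left(t\right)$ achieves at least what the feasible, $\mathscr{C}\left({\cal T}_{t-1}\right)$-measurable policy ${\bf p}^{*}\left(t-1\right)$ achieves for the conditional objective, and combining with the previous approximate identity yields $\mathbb{E}\left\{ V_{I}\left({\bf p}_{i}^{*}\left(t-1\right),t-1\right)\right\}\lesssim\mathbb{E}\left\{ V_{I}\left({\bf p}_{i}^{*}\left(t\right),t\right)\right\}$ within slack $\varepsilon\left(\gamma\right)$. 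Summing over $i\in\mathbb{N}_{R}^{+}$ (Remark \ref{rem:MD_Separable}, using that the feasible sets ${\cal C}_{i}$ decouple, so the aggregate optimizer is the tuple of per-relay optimizers) produces the two remaining statements for $V\equiv\sum_{i}V_{I}\left({\bf p}_{i},\cdot\right)$, with slack $R\,\varepsilon\left(\gamma\right)$. The main obstacle I anticipate is making the ``approximate'' steps genuinely rigorous: the slack $\varepsilon\left(\gamma\right)$ must be propagated through the conditional maximization and through both expectations in a way that is uniform over ${\bf p}\in{\cal S}$ and over the unbounded conditioning data -- compactness of ${\cal S}$ and continuity of the Gaussian parameters handle the former, but the latter forces the error to be controlled in mean rather than almost surely, and one must verify that this still suffices once the outer expectation and the supremum over admissible policies are taken. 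That is the delicate point around which the whole argument turns.
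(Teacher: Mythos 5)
Your proposal is correct and follows essentially the same route as the paper: the paper gives no separate formal proof of this theorem, but derives it exactly as you do, from the informal argument immediately preceding the statement (comparing the two conditional Gaussian densities via Lemma \ref{thm:(Big-Expectations)} and noting that the temporal factors $\exp\left(-k/\gamma\right)$ and $\exp\left(-\left(k+1\right)/\gamma\right)$ coincide for large $\gamma$), combined with an approximate invocation of Theorem \ref{lem:QoS_INCREASES} and the separability reduction of Remark \ref{rem:MD_Separable}. If anything, your treatment is more careful than the paper's, since you explicitly flag the need to control the slack $\varepsilon\left(\gamma\right)$ uniformly over ${\bf p}\in{\cal S}$ and in mean over the unbounded conditioning data — a point the paper glosses over with ``due to continuity, it should be true that.''
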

Theorem \ref{prop:QoS-Increases} is very important from a practical
point of view, and has the following additional implications. Roughly
speaking, under the conditions of Theorem \ref{prop:QoS-Increases},
that is, if the temporal interactions of the channel are sufficiently
strong, the average network QoS is not (approximately) expected to,
at least \textit{abruptly,} decrease if one or more relays stop moving
at some point. Such event might indeed happen in an actual autonomous
network, possibly due to power limitations, or a failure in the motion
mechanisms of some network nodes. In the same framework, Theorem \ref{prop:QoS-Increases}
implies that the relays which continue moving contribute (approximately)
positively to increasing the average network QoS, across time slots.
Such behavior of the proposed spatially controlled beamforming system
may be also confirmed numerically, as discussed in Section \ref{sec:Numerical-Simulations}.
For the record, and as it will be also shown in Section \ref{sec:Numerical-Simulations},
relatively small values for the correlation time $\gamma$, such as
$\gamma\equiv5$, are sufficient in order to practically observe the
nice system behavior promised by Theorem \ref{prop:QoS-Increases}.
This fact makes the proposed spatially controlled beamforming system
attractive in terms of practical feasibility, and shows that such
an approach could actually enhance system performance in a well-behaved,
real world situation. 

\section{\label{sec:Numerical-Simulations}Numerical Simulations \& Experimental
Validation}

In this section, we present synthetic numerical simulations, which
essentially confirm that the proposed approach, previously presented
in Section \ref{sec:MobRelBeam}, actually works, and results in relay
motion control policies, which yield improved beamforming performance.
All synthetic experiments were conducted on an imaginary square terrain
of dimensions $30\times30$ squared units of length, with ${\cal W}\equiv\left[0,30\right]^{2}$,
uniformly divided into $30\times30\equiv900$ square regions. The
locations of the source and destination are fixed as ${\bf p}_{S}\equiv\left[15\,0\right]^{\boldsymbol{T}}$
and ${\bf p}_{D}\equiv\left[15\,30\right]^{\boldsymbol{T}}$. The
beamforming temporal horizon is chosen as $T\equiv40$ and the number
of relays is fixed at $R\equiv8$. The wavelength is chosen as $\lambda\equiv0.125$,
corresponding to a carrier frequency of $2.4\,GHz$. The various parameters
of the assumed channel model are set as $\ell\equiv3$, $\rho\equiv20$,
$\sigma_{\xi}^{2}\equiv20$, $\eta^{2}\equiv50$, $\beta\equiv10$,
$\gamma\equiv5$ and $\delta\equiv1$. The variances of the reception
noises at the relays and the destination are fixed as $\sigma^{2}\equiv\sigma_{D}^{2}\equiv1$.
Lastly, both the transmission power of the source and the \textit{total}
transmission power budget of the relays are chosen as $P\equiv P_{c}\equiv25$
($\approx14dB$) units of power.
\begin{figure}
\centering\includegraphics[bb=10bp 0bp 420bp 315bp,scale=0.8]{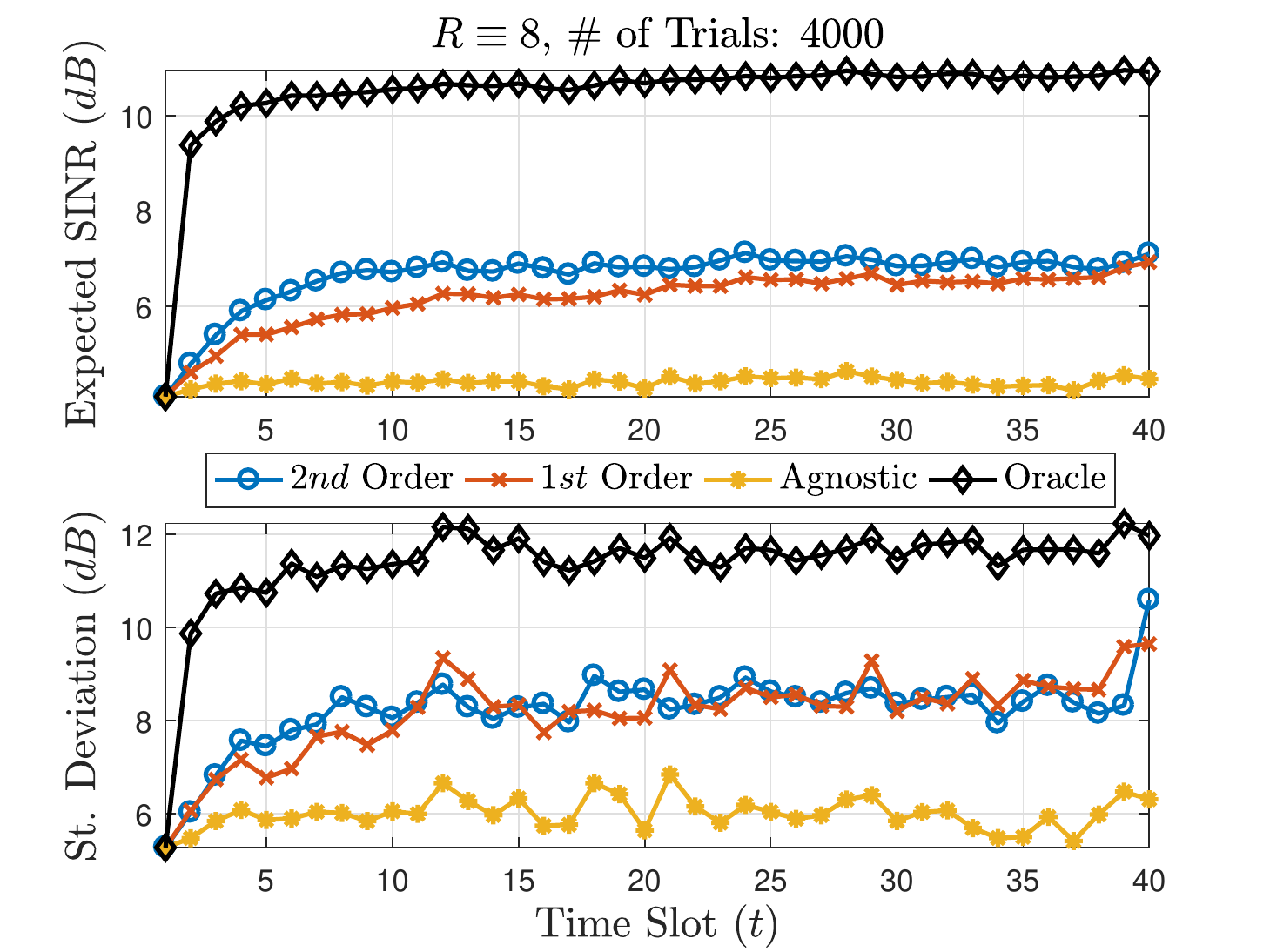}

\caption{\label{fig:Sim_1}Comparison of the proposed strategic relay planning
schemes, versus an agnostic, randomized relay motion policy.}
\end{figure}

The relays are restricted to the rectangular region ${\cal S}\equiv\left[0,30\right]\times\left[12,18\right]$.
Further, at each time instant, each of the relays is allowed to move
inside a $9$-region area, centered at each current position, thus
defining its closed set of feasible directions ${\cal C}_{i}$, for
each relay $i\in\mathbb{N}_{R}^{+}$. Basic collision and out-of-bounds
control was also considered and implemented.

In order to assess the effectiveness of our proposed approach, we
compare both heuristics (\ref{eq:SINR_APPROX_PROG_1}) and (\ref{eq:SINR_APPROX_PROG_2})
against the case where an \textit{agnostic}, \textit{purely randomized}
relay control policy is adopted; in this case, at each time slot,
each relay moves randomly to a new available position, without taking
previously observed CSI into consideration. For simplicity, we do
not consider the brute force method presented earlier in Section \ref{subsec:Brute-Force}.
For reference, we also consider the performance of an oracle control
policy at the relays, where, at each time slot $t-1\in\mathbb{N}_{N_{T}-1}^{+}$,
relay $i\in\mathbb{N}_{R}^{+}$ updates its position by \textit{noncausally}
looking into the future and choosing the position ${\bf p}_{i}$,
which maximizes directly the quantity $V_{I}\left({\bf p}_{i},t\right)$,
over ${\cal C}_{i}\left({\bf p}_{i}\left(t-1\right)\right)$. Of course,
the comparison of all controlled systems is made under exactly the
same communication environment.
\begin{figure}[!t]
\vspace{-10bp}
\negmedspace{}\subfloat[\label{fig:Sim_2-1}]{\centering\includegraphics[bb=10bp 0bp 420bp 315bp,scale=0.61]{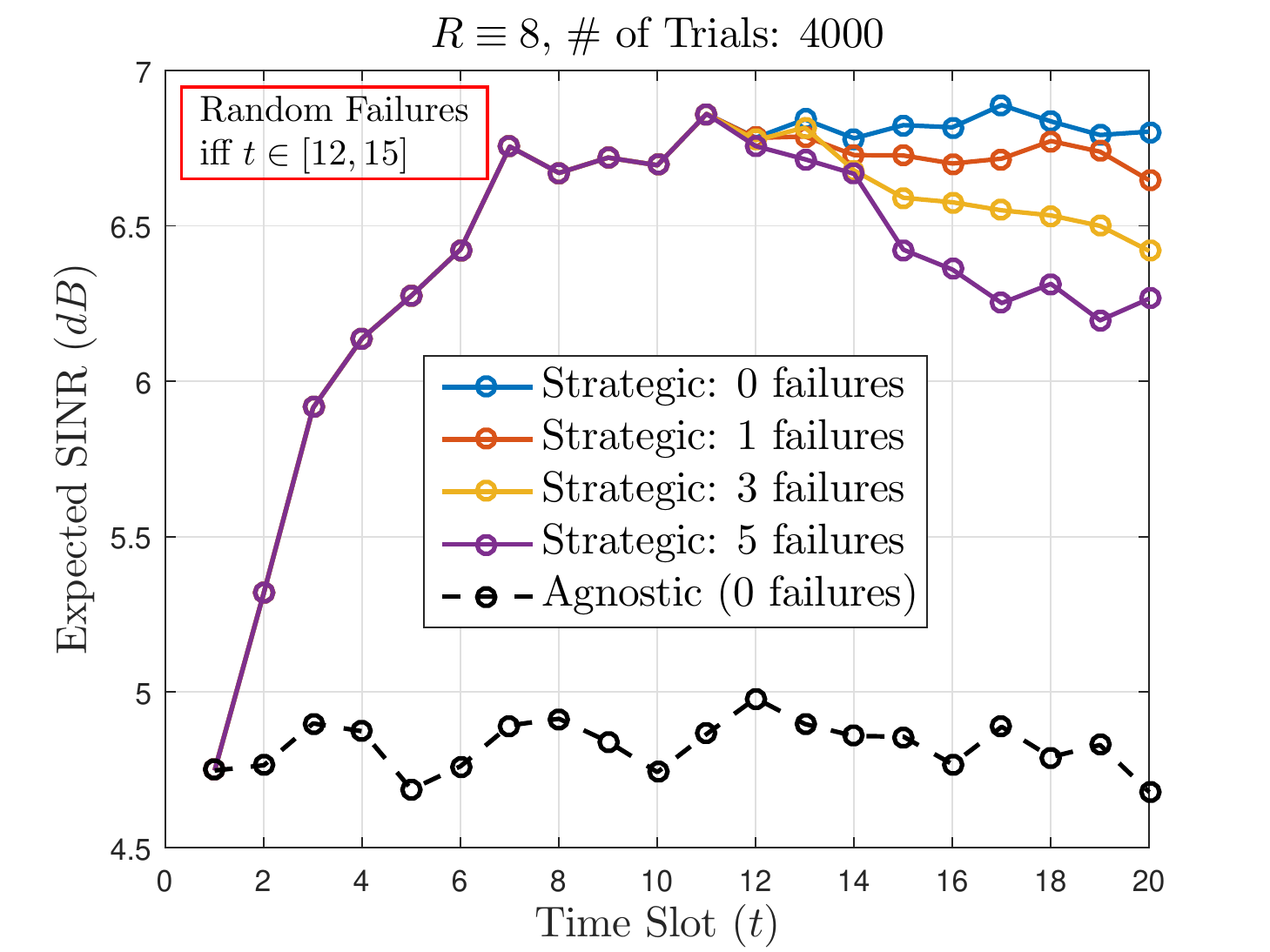}}\negmedspace{}\negmedspace{}\negmedspace{}\negmedspace{}\negmedspace{}\subfloat[\label{fig:Sim_2-2}]{\centering\includegraphics[bb=10bp 0bp 420bp 315bp,scale=0.61]{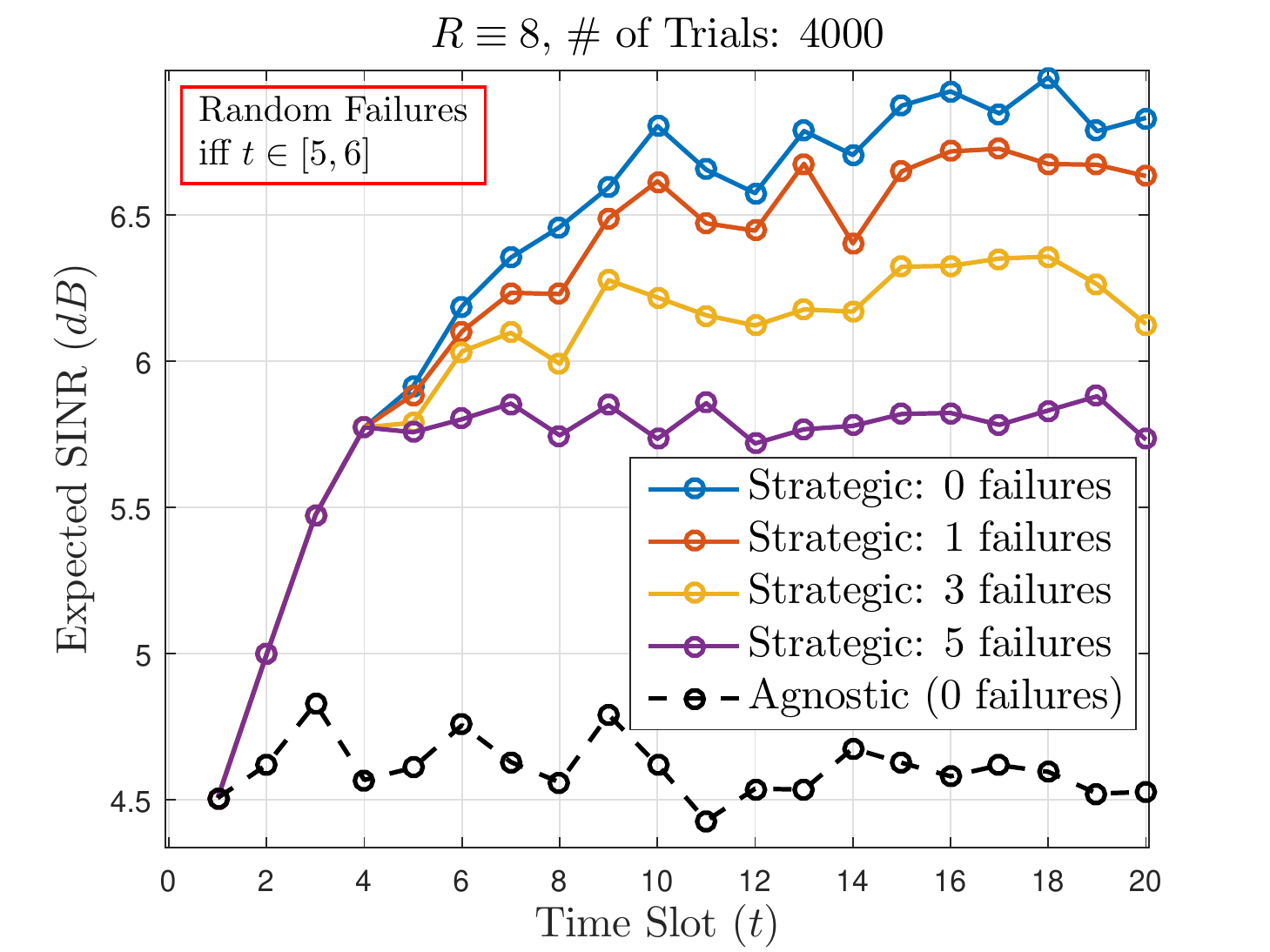}}\vspace{-5bp}
\\
\negmedspace{}\subfloat[\label{fig:Sim_2-3}]{\centering\includegraphics[bb=10bp 0bp 420bp 315bp,scale=0.61]{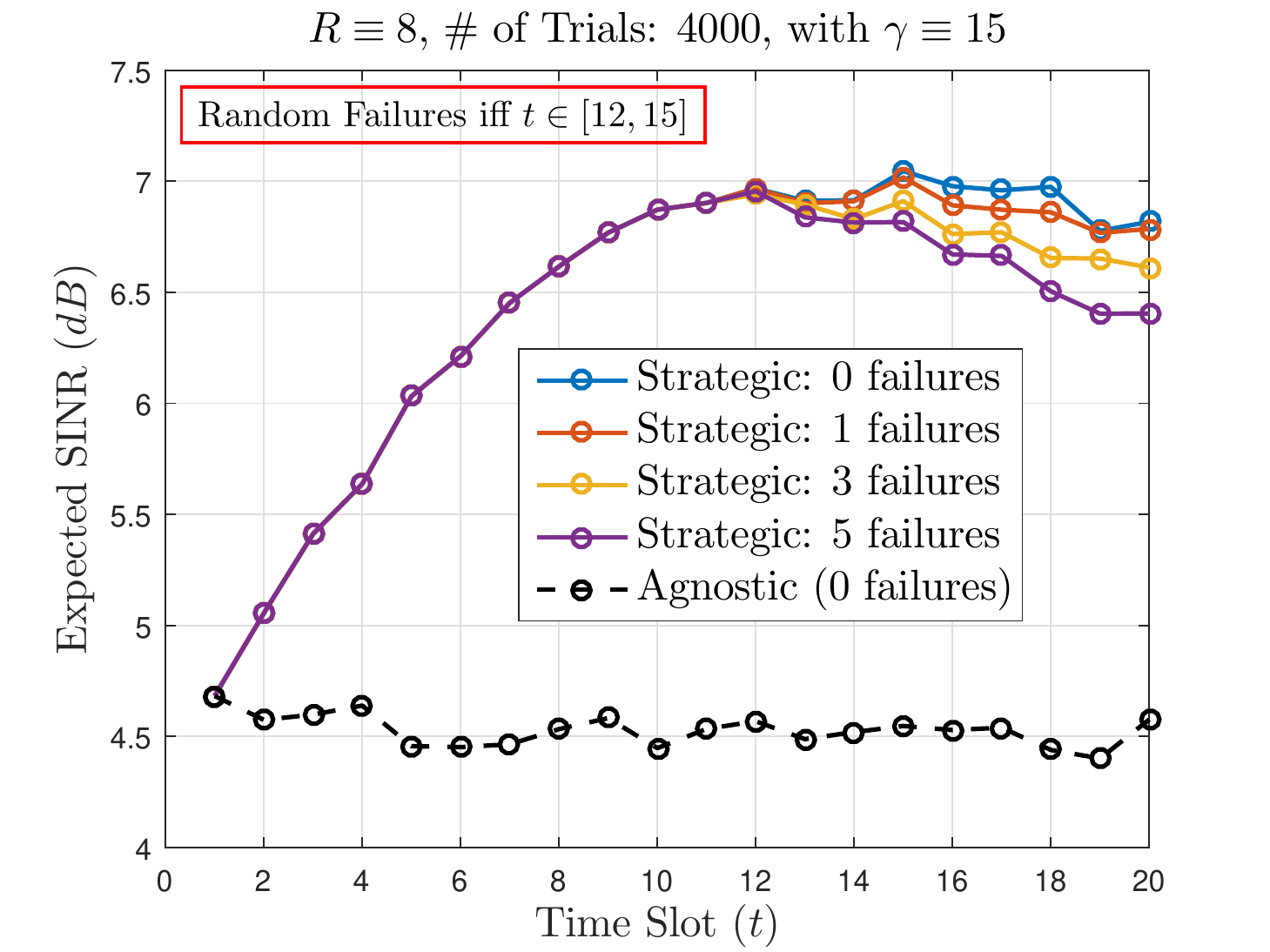}}\negmedspace{}\negmedspace{}\negmedspace{}\negmedspace{}\negmedspace{}\subfloat[\label{fig:Sim_2-4}]{\centering\includegraphics[bb=10bp 0bp 420bp 315bp,scale=0.61]{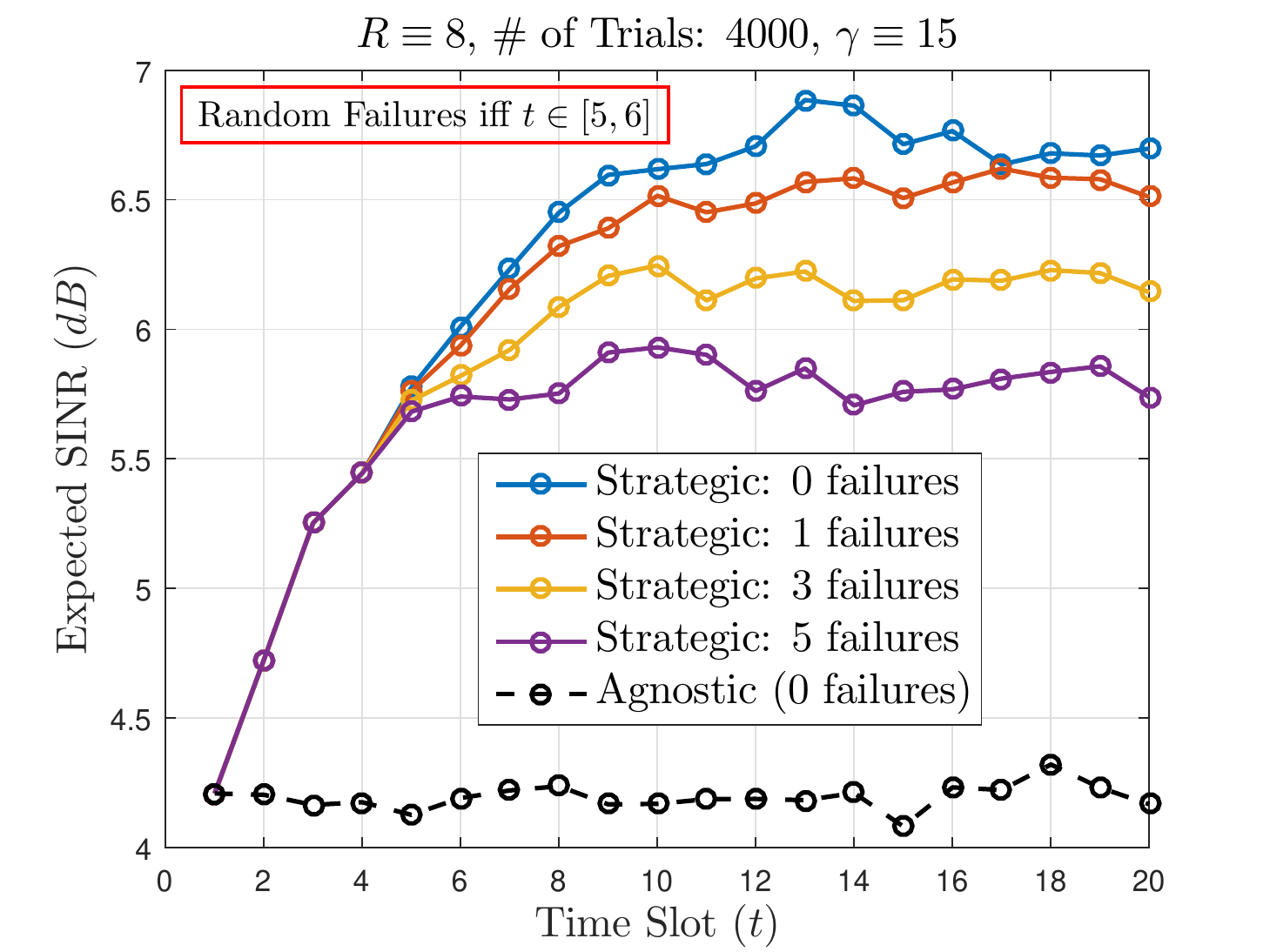}}

\caption{\label{fig:Sim_2-5}Performance of the proposed spatially controlled
system, at the presence of motion failures.}

\vspace{-10bp}
\end{figure}

Fig. \ref{fig:Sim_1} shows the expectation and standard deviation
of the achieved QoS for all controlled systems, approximated by executing
$4000$ trials of the whole experiment. As seen in the figure, there
is a clear advantage in exploiting strategically designed relay motion
control. Whereas the agnostic system maintains an average SINR of
about $4\,dB$ at all times, the system based on the proposed $2nd$
order heuristic (\ref{eq:SINR_APPROX_PROG_2}) is clearly superior,
exhibiting an increasing trend in the achieved SINR, with a gap starting
from about $0.5\,dB$ at time slot $t\equiv2$, up to $3\,dB$ at
time slots $t\equiv10,11,\ldots,40$. The $1st$ order heuristic (\ref{eq:SINR_APPROX_PROG_1})
comes second, with always slightly lower average SINR, and which also
exhibits a similar increasing trend as the $2nd$ order heuristic
(\ref{eq:SINR_APPROX_PROG_2}). Additionally, it seems to converge
to the performance achieved by (\ref{eq:SINR_APPROX_PROG_2}), across
time slots. The existence of an increasing trend in the achieved average
network QoS has already been predicted by Theorem \ref{lem:QoS_INCREASES}
\textit{for a strictly optimal policy}, and our experiments confirm
this behavior for both heuristics (\ref{eq:SINR_APPROX_PROG_1}) and
(\ref{eq:SINR_APPROX_PROG_2}), as well. This shows that both heuristics
constitute excellent approximations to the original problem (\ref{eq:SINR_DIST}).
Consequently, it is both theoretically and experimentally verified
that, although the proposed stochastic programming formulation is
essentially myopic, the resulting system performance is not, and this
is dependent on the fact that the channel exhibits non trivial temporal
statistical interactions. We should also comment on the standard deviation
of all systems, which, from Fig. \ref{fig:Sim_1}, seems somewhat
high, relative to the range of the respective average SINR. This is
\textit{exclusively} due to the wild variations of the channel, which,
in turn, are due to the effects of shadowing and multipath fading;
it is \textit{not} due to the adopted beamforming technique. This
is reasonable, since, when the channel is not \textit{actually} in
deep fade at time $t$ (an event which might happen with positive
probability), the relays, at time $t-1$, are predictively steered
to locations, which, most probably, incur higher network QoS. As clearly
shown in Fig. \ref{fig:Sim_1}, for all systems under study, including
that implementing the oracle policy, an increase in system performance
also implies a proportional increase in the respective standard deviation.

Next, we experimentally evaluate the performance of the system at
the presence of \textit{random motion failures} in the network. Hereafter,
we work with the $2nd$ order heuristic (\ref{eq:SINR_APPROX_PROG_2}),
and set $T\equiv20$. Random motion failures are modeled by choosing,
at each trial, a \textit{random sample} of a fixed number of relays
and a \textit{random} \textit{time} when the failures occur, that
is, at each time, the selected relays just stop moving; they continue
to beamform staying still, at the position each of them visited last.
Two cases are considered; in the first case, motion failures happen
if and only if $t\in\left[12,15\right]$ (Figs. \ref{fig:Sim_2-1}
and \ref{fig:Sim_2-3}), whereas, in the second case, $t\in\left[5,6\right]$
(Figs. \ref{fig:Sim_2-2} and \ref{fig:Sim_2-4}). In both cases,
zero, one, three and five relays (chosen at random, at each trial)
stop moving. Two cases for $\gamma$ are considered, $\gamma\equiv5$
(Figs. \ref{fig:Sim_2-1} and \ref{fig:Sim_2-2}) and $\gamma\equiv15$
(Figs. \ref{fig:Sim_2-3} and \ref{fig:Sim_2-4}).

Again, the results presented in Fig. \ref{fig:Sim_2-5} pleasingly
confirm our predictions implied by Theorem \ref{lem:QoS_INCREASES}
(note, however, that Theorem \ref{lem:QoS_INCREASES} does not support
\textit{randomized} motion failures; on the other hand, our simulations
are such in order to stress test the proposed system in more adverse
motion failure cases). In particular, Fig. \ref{fig:Sim_2-1} clearly
demonstrates that a larger number of motion failures induces a proportional,
relatively (depending on $\gamma$) slight decrease in performance;
this decrease, though, is smoothly evolving, and is not abrupt. This
behavior is more pronounced in Fig. \ref{fig:Sim_2-3}, where the
correlation time parameter $\gamma$ has been increased to $15$ (recall
that, in Theorem \ref{lem:QoS_INCREASES}, $\gamma$ is assumed to
be sufficiently large). We readily observe that, in this case, over
the same horizon, the operation of the system is smoother, and decrease
in performance, as well as its slope, are significantly smaller than
those in Fig. \ref{fig:Sim_2-1}, for all cases of motion failures.
Now, in Figs. \ref{fig:Sim_2-2} and \ref{fig:Sim_2-4}, when motion
failures happen early, well before the network QoS converges to its
maximal value, we observe that, although some relays might stop moving
at some point, the achieved expected network QoS continues exhibiting
its usual increasing trend. Of course, the performance of the system
converges values strictly proportional to the number of failures in
each of the cases considered. This means that the relays which continue
moving contribute positively to increasing network QoS. This has been
indeed predicted by Theorem \ref{lem:QoS_INCREASES}, as well.

\section{Conclusions}

We have considered the problem of enhancing QoS in time slotted relay
beamforming networks with one source/destination, via stochastic relay
motion control. Modeling the wireless channel as a spatiotemporal
stochastic field, we proposed a novel $2$-stage stochastic programming
formulation for predictively specifying relay positions, such that
the future expected network QoS is maximized, based on causal CSI
and under a total relay power constraint. We have shown that this
problem can be effectively approximated by a set of simple, two dimensional
subproblems, which can be distributively solved, one at each relay.
System optimality was tediously analyzed under a rigorous mathematical
framework, and our analysis resulted in the development of an extended
version of the Fundamental Lemma of Stochastic Control, which constitutes
a result of independent interest, as well. We have additionally provided
strong theoretical guarantees, characterizing the performance of the
proposed system, and showing that the average QoS achieved improves
over time. Our simulations confirmed the success of the proposed approach,
which results in relay motion control policies yielding significant
performance improvement, when compared to agnostic, randomized relay
motion.

\section{Acknowledgments}

Dionysios Kalogerias would like to kindly thank Dr. Nikolaos Chatzipanagiotis
for very fruitful discussions in the very early stages of the development
of this work, and Ioannis Manousakis and Ioannis Paraskevakos for
their very useful comments and suggestions, especially concerning
practical applicability, implementation of the proposed methods, as
well as simulation issues.

\section{Appendices}

\subsection{Appendix A: Proofs / Section \ref{sec:Spatiotemporal-Wireless-Channel} }

\subsubsection{Proof of Lemma \ref{lem:NonSingularity}}


In the following, we will rely on an \textit{incremental} construction
of $\boldsymbol{\Sigma}$. Initially, consider the matrix
\begin{equation}
\widetilde{\boldsymbol{\Sigma}}\triangleq\begin{bmatrix}\widetilde{\boldsymbol{\Sigma}}\left(1,1\right) & \widetilde{\boldsymbol{\Sigma}}\left(1,2\right) & \ldots & \widetilde{\boldsymbol{\Sigma}}\left(1,N_{T}\right)\\
\widetilde{\boldsymbol{\Sigma}}\left(2,1\right) & \widetilde{\boldsymbol{\Sigma}}\left(2,2\right) & \ldots & \widetilde{\boldsymbol{\Sigma}}\left(2,N_{T}\right)\\
\vdots & \vdots & \ddots & \vdots\\
\widetilde{\boldsymbol{\Sigma}}\left(N_{T},1\right) & \widetilde{\boldsymbol{\Sigma}}\left(N_{T},2\right) & \cdots & \widetilde{\boldsymbol{\Sigma}}\left(N_{T},N_{T}\right)
\end{bmatrix}\in\mathbb{S}^{RN_{T}},
\end{equation}
where, for each combination $\left(k,l\right)\in\mathbb{N}_{N_{T}}^{+}\times\mathbb{N}_{N_{T}}^{+}$,
$\widetilde{\boldsymbol{\Sigma}}\left(k,l\right)\in\mathbb{S}^{R}$,
with
\begin{flalign}
\widetilde{\boldsymbol{\Sigma}}\left(k,l\right)\left(i,j\right) & \triangleq\widetilde{\boldsymbol{\Sigma}}\left({\bf p}_{i}\left(k\right),{\bf p}_{j}\left(l\right)\right)\nonumber \\
 & \triangleq\eta^{2}\exp\left(-\frac{\left\Vert {\bf p}_{i}\left(k\right)-{\bf p}_{j}\left(l\right)\right\Vert _{2}}{\beta}\right),
\end{flalign}
for all $\left(i,j\right)\in\mathbb{N}_{R}^{+}\times\mathbb{N}_{R}^{+}$.
By construction, $\widetilde{\boldsymbol{\Sigma}}$ is positive semidefinite,
because the well known \textit{exponential kernel} $\widetilde{\boldsymbol{\Sigma}}:\mathbb{R}^{2}\times\mathbb{R}^{2}\rightarrow\mathbb{R}_{++}$
defined above is positive (semi)definite.

Next, define the positive definite matrix 
\begin{flalign}
{\bf K} & \triangleq\begin{bmatrix}1 & \kappa\\
\kappa & 1
\end{bmatrix},\quad\text{with}\\
\kappa & \triangleq\exp\left(-\dfrac{\left\Vert {\bf p}_{S}-{\bf p}_{D}\right\Vert _{2}}{\delta}\right)<1
\end{flalign}
and consider the \textit{Tracy-Singh} type of product of ${\bf K}$
and $\widetilde{\boldsymbol{\Sigma}}$
\begin{equation}
\widetilde{\boldsymbol{\Sigma}}_{{\bf K}}\triangleq{\bf K}\circ\widetilde{\boldsymbol{\Sigma}}\triangleq\begin{bmatrix}{\bf K}\varolessthan\widetilde{\boldsymbol{\Sigma}}\left(1,1\right) & {\bf K}\varolessthan\widetilde{\boldsymbol{\Sigma}}\left(1,2\right) & \ldots & {\bf K}\varolessthan\widetilde{\boldsymbol{\Sigma}}\left(1,N_{T}\right)\\
{\bf K}\varolessthan\widetilde{\boldsymbol{\Sigma}}\left(2,1\right) & {\bf K}\varolessthan\widetilde{\boldsymbol{\Sigma}}\left(2,2\right) & \ldots & {\bf K}\varolessthan\widetilde{\boldsymbol{\Sigma}}\left(2,N_{T}\right)\\
\vdots & \vdots & \ddots & \vdots\\
{\bf K}\varolessthan\widetilde{\boldsymbol{\Sigma}}\left(N_{T},1\right) & {\bf K}\varolessthan\widetilde{\boldsymbol{\Sigma}}\left(N_{T},2\right) & \cdots & {\bf K}\varolessthan\widetilde{\boldsymbol{\Sigma}}\left(N_{T},N_{T}\right)
\end{bmatrix}\in\mathbb{S}^{2RN_{T}},
\end{equation}
where ``$\varolessthan$'' denotes the operator of the Kronecker
product. Then, for each $\left(k,l\right)\in\mathbb{N}_{N_{T}}^{+}\times\mathbb{N}_{N_{T}}^{+}$,
we have
\begin{equation}
{\bf K}\varolessthan\widetilde{\boldsymbol{\Sigma}}\left(k,l\right)\equiv\begin{bmatrix}\widetilde{\boldsymbol{\Sigma}}\left(k,l\right) & \kappa\widetilde{\boldsymbol{\Sigma}}\left(k,l\right)\\
\kappa\widetilde{\boldsymbol{\Sigma}}\left(k,l\right) & \widetilde{\boldsymbol{\Sigma}}\left(k,l\right)
\end{bmatrix}\in\mathbb{S}^{2R}.
\end{equation}
It is easy to show that $\widetilde{\boldsymbol{\Sigma}}_{{\bf K}}$
is positive semidefinite, that is, in $\mathbb{S}_{+}^{2RN_{T}}$.
First, via a simple inductive argument, it can be shown that, for
compatible matrices ${\bf A},{\bf B},{\bf C},{\bf D}$, 
\begin{equation}
\left({\bf A}{\bf B}\right)\circ\left({\bf C}{\bf D}\right)\equiv\left({\bf A}\circ{\bf C}\right)\left({\bf B}\circ{\bf D}\right).
\end{equation}
Also, for compatible ${\bf A},{\bf B}$, it is true that $\left({\bf A}\circ{\bf B}\right)^{\boldsymbol{T}}\equiv{\bf A}^{\boldsymbol{T}}\circ{\bf B}^{\boldsymbol{T}}$.
Since ${\bf K}$ and $\widetilde{\boldsymbol{\Sigma}}$ are symmetric,
consider their spectral decompositions ${\bf K}\equiv{\bf U}_{{\bf K}}\boldsymbol{\Lambda}_{{\bf K}}{\bf U}_{{\bf K}}^{\boldsymbol{T}}$
and $\widetilde{\boldsymbol{\Sigma}}\equiv{\bf U}_{\widetilde{\boldsymbol{\Sigma}}}\boldsymbol{\Lambda}_{\widetilde{\boldsymbol{\Sigma}}}{\bf U}_{\widetilde{\boldsymbol{\Sigma}}}^{\boldsymbol{T}}$.
Given the identities stated above, we may write
\begin{flalign}
\widetilde{\boldsymbol{\Sigma}}_{{\bf K}}\equiv{\bf K}\circ\widetilde{\boldsymbol{\Sigma}} & \equiv\left({\bf U}_{{\bf K}}\boldsymbol{\Lambda}_{{\bf K}}{\bf U}_{{\bf K}}^{\boldsymbol{T}}\right)\circ\left({\bf U}_{\widetilde{\boldsymbol{\Sigma}}}\boldsymbol{\Lambda}_{\widetilde{\boldsymbol{\Sigma}}}{\bf U}_{\widetilde{\boldsymbol{\Sigma}}}^{\boldsymbol{T}}\right)\nonumber \\
 & \equiv\left({\bf U}_{{\bf K}}\circ{\bf U}_{\widetilde{\boldsymbol{\Sigma}}}\right)\left(\boldsymbol{\Lambda}_{{\bf K}}\circ\boldsymbol{\Lambda}_{\widetilde{\boldsymbol{\Sigma}}}\right)\left({\bf U}_{{\bf K}}^{\boldsymbol{T}}\circ{\bf U}_{\widetilde{\boldsymbol{\Sigma}}}^{\boldsymbol{T}}\right)\nonumber \\
 & \equiv\left({\bf U}_{{\bf K}}\circ{\bf U}_{\widetilde{\boldsymbol{\Sigma}}}\right)\left(\boldsymbol{\Lambda}_{{\bf K}}\circ\boldsymbol{\Lambda}_{\widetilde{\boldsymbol{\Sigma}}}\right)\left({\bf U}_{{\bf K}}\circ{\bf U}_{\widetilde{\boldsymbol{\Sigma}}}\right)^{\boldsymbol{T}},\label{eq:Spectral_DEC}
\end{flalign}
where $\left({\bf U}_{{\bf K}}\circ{\bf U}_{\widetilde{\boldsymbol{\Sigma}}}\right)\left({\bf U}_{{\bf K}}^{\boldsymbol{T}}\circ{\bf U}_{\widetilde{\boldsymbol{\Sigma}}}^{\boldsymbol{T}}\right)\equiv\left({\bf U}_{{\bf K}}{\bf U}_{{\bf K}}^{\boldsymbol{T}}\right)\circ\left({\bf U}_{\widetilde{\boldsymbol{\Sigma}}}{\bf U}_{\widetilde{\boldsymbol{\Sigma}}}^{\boldsymbol{T}}\right)\equiv{\bf I}_{2}\circ{\bf I}_{RN_{T}}\equiv{\bf I}_{2RN_{T}}$,
and where the matrix $\boldsymbol{\Lambda}_{{\bf K}}\circ\boldsymbol{\Lambda}_{\widetilde{\boldsymbol{\Sigma}}}$
is easily shown to be diagonal and with nonnegative elements. Thus,
since (\ref{eq:Spectral_DEC}) constitutes a valid spectral decomposition
for $\widetilde{\boldsymbol{\Sigma}}_{{\bf K}}$, it follows that
$\widetilde{\boldsymbol{\Sigma}}_{{\bf K}}\in\mathbb{S}_{+}^{2RN_{T}}$.

As a last step, let ${\bf E}\in\mathbb{S}^{N_{T}}$, such that 
\begin{equation}
{\bf E}\left(k,l\right)\triangleq\exp\left(-\dfrac{\left|k-l\right|}{\gamma}\right),
\end{equation}
for all $\left(k,l\right)\in\mathbb{N}_{N_{T}}^{+}\times\mathbb{N}_{N_{T}}^{+}$.
Again, ${\bf E}$ is positive semidefinite, because the well known
\textit{Laplacian kernel} is positive (semi)definite. Consider the
matrix
\begin{equation}
\widetilde{\boldsymbol{\Sigma}}_{{\bf E}}\triangleq\left({\bf E}\varolessthan{\bf 1}_{2R\times2R}\right)\odot\widetilde{\boldsymbol{\Sigma}}_{{\bf K}}\in\mathbb{S}^{2RN_{T}},
\end{equation}
where ``$\odot$'' denotes the operator of the Schur-Hadamard product.
Of course, since the matrix ${\bf 1}_{2R\times2R}$ is rank-$1$ and
positive semidefinite, ${\bf E}\varolessthan{\bf 1}_{2R\times2R}$
will be positive semidefinite as well. Consequently, by the Schur
Product Theorem, $\widetilde{\boldsymbol{\Sigma}}_{{\bf E}}$ will
also be positive semidefinite. Finally, observe that
\begin{equation}
\boldsymbol{\Sigma}\equiv\widetilde{\boldsymbol{\Sigma}}_{{\bf E}}+\sigma_{\xi}^{2}{\bf I}_{2RN_{T}},
\end{equation}
from where it follows that $\boldsymbol{\Sigma}\in\mathbb{S}_{++}^{2RN_{T}}$,
whenever $\sigma_{\xi}^{2}\neq0$. Our claims follow.\hfill{}\ensuremath{\blacksquare}

\subsubsection{Proof of Theorem \ref{thm:(Markov)}}

Obviously, the vector process $\boldsymbol{X}\left(t\right)$ is Gaussian
with mean zero. This is straightforward to show. Therefore, what remains
is, simply, to verify that the covariance structure of $\boldsymbol{X}\left(t\right)$
is the same as that of $\boldsymbol{C}\left(t\right)$, that is, we
need to show that
\begin{equation}
\mathbb{E}\left\{ \boldsymbol{X}\left(s\right)\boldsymbol{X}^{\boldsymbol{T}}\left(t\right)\right\} \equiv\mathbb{E}\left\{ \boldsymbol{C}\left(s\right)\boldsymbol{C}^{\boldsymbol{T}}\left(t\right)\right\} ,
\end{equation}
for all $\left(s,t\right)\in\mathbb{N}_{N_{T}}^{+}\times\mathbb{N}_{N_{T}}^{+}$.

First, consider the case where $s\equiv t$. Then, we have
\begin{flalign}
\mathbb{E}\left\{ \boldsymbol{X}\left(s\right)\boldsymbol{X}^{\boldsymbol{T}}\left(t\right)\right\}  & \equiv\mathbb{E}\left\{ \boldsymbol{X}\left(t\right)\boldsymbol{X}^{\boldsymbol{T}}\left(t\right)\right\} \nonumber \\
 & =\varphi^{2}\mathbb{E}\left\{ \boldsymbol{X}\left(t-1\right)\boldsymbol{X}^{\boldsymbol{T}}\left(t-1\right)\right\} +\left(1-\varphi^{2}\right)\widetilde{\boldsymbol{\Sigma}}_{\boldsymbol{C}}.
\end{flalign}
Observe, though, that, similarly to the scalar order-$1$ autoregressive
model, the quantity
\begin{equation}
\widetilde{\boldsymbol{\Sigma}}_{\boldsymbol{C}}\equiv\mathbb{E}\left\{ \boldsymbol{X}\left(0\right)\boldsymbol{X}^{\boldsymbol{T}}\left(0\right)\right\} 
\end{equation}
is a fixed point of the previously stated recursion for $\mathbb{E}\left\{ \boldsymbol{X}\left(t\right)\boldsymbol{X}^{\boldsymbol{T}}\left(t\right)\right\} $.
Therefore, it is true that
\begin{equation}
\mathbb{E}\left\{ \boldsymbol{X}\left(t\right)\boldsymbol{X}^{\boldsymbol{T}}\left(t\right)\right\} \equiv\widetilde{\boldsymbol{\Sigma}}_{\boldsymbol{C}}\equiv\boldsymbol{\Sigma}_{\boldsymbol{C}}\left(0\right)\equiv\mathbb{E}\left\{ \boldsymbol{C}\left(t\right)\boldsymbol{C}^{\boldsymbol{T}}\left(t\right)\right\} ,
\end{equation}
which the desired result. 

Now, consider the case where $s<t$. Then, it may be easily shown
that
\begin{equation}
\mathbb{E}\left\{ \boldsymbol{X}\left(s\right)\boldsymbol{X}^{\boldsymbol{T}}\left(t\right)\right\} \equiv\varphi^{2}\mathbb{E}\left\{ \boldsymbol{X}\left(s-1\right)\boldsymbol{X}^{\boldsymbol{T}}\left(t-1\right)\right\} +\varphi\mathbb{E}\left\{ \boldsymbol{W}\left(s\right)\boldsymbol{X}^{\boldsymbol{T}}\left(t-1\right)\right\} .\label{eq:AUTO_2}
\end{equation}
Let us consider the second term on the RHS of (\ref{eq:AUTO_2}).
Expanding the recursion, we may write
\begin{flalign}
\varphi\mathbb{E}\left\{ \boldsymbol{W}\left(s\right)\boldsymbol{X}^{\boldsymbol{T}}\left(t-1\right)\right\}  & \equiv\varphi\mathbb{E}\left\{ \boldsymbol{W}\left(s\right)\left(\varphi\boldsymbol{X}^{\boldsymbol{T}}\left(t-2\right)+\boldsymbol{W}^{\boldsymbol{T}}\left(t-1\right)\right)\right\} \nonumber \\
 & \equiv\varphi^{2}\mathbb{E}\left\{ \boldsymbol{W}\left(s\right)\boldsymbol{X}^{\boldsymbol{T}}\left(t-2\right)\right\} \nonumber \\
 & \;\:\vdots\nonumber \\
 & \equiv\varphi^{t-s}\mathbb{E}\left\{ \boldsymbol{W}\left(s\right)\boldsymbol{W}^{\boldsymbol{T}}\left(s\right)\right\} \nonumber \\
 & \equiv\varphi^{t-s}\left(1-\varphi^{2}\right)\widetilde{\boldsymbol{\Sigma}}_{\boldsymbol{C}}.
\end{flalign}
We observe that this term depends only on the lag $t-s$. Thus, it
is true that
\begin{flalign}
\mathbb{E}\left\{ \boldsymbol{X}\left(s\right)\boldsymbol{X}^{\boldsymbol{T}}\left(t\right)\right\}  & \equiv\varphi^{2}\mathbb{E}\left\{ \boldsymbol{X}\left(s-1\right)\boldsymbol{X}^{\boldsymbol{T}}\left(t-1\right)\right\} +\varphi^{t-s}\left(1-\varphi^{2}\right)\widetilde{\boldsymbol{\Sigma}}_{\boldsymbol{C}}\nonumber \\
 & =\varphi^{2\cdot2}\mathbb{E}\left\{ \boldsymbol{X}\left(s-2\right)\boldsymbol{X}^{\boldsymbol{T}}\left(t-2\right)\right\} +\varphi^{t-s}\left(1-\varphi^{2}\right)\widetilde{\boldsymbol{\Sigma}}_{\boldsymbol{C}}\left(1+\varphi^{2}\right)\nonumber \\
 & \;\:\vdots\nonumber \\
 & \equiv\varphi^{2s}\mathbb{E}\left\{ \boldsymbol{X}\left(0\right)\boldsymbol{X}^{\boldsymbol{T}}\left(t-s\right)\right\} +\varphi^{t-s}\left(1-\varphi^{2}\right)\widetilde{\boldsymbol{\Sigma}}_{\boldsymbol{C}}\sum_{i\in\mathbb{N}_{s-1}}\left(\varphi^{2}\right)^{s-1}\nonumber \\
 & =\varphi^{2s}\mathbb{E}\left\{ \boldsymbol{X}\left(0\right)\boldsymbol{X}^{\boldsymbol{T}}\left(t-s\right)\right\} +\varphi^{t-s}\left(1-\varphi^{2s}\right)\widetilde{\boldsymbol{\Sigma}}_{\boldsymbol{C}}.
\end{flalign}
Further, we may expand $\mathbb{E}\left\{ \boldsymbol{X}\left(0\right)\boldsymbol{X}^{\boldsymbol{T}}\left(t-s\right)\right\} $
in similar fashion as above, to get that
\begin{equation}
\mathbb{E}\left\{ \boldsymbol{X}\left(0\right)\boldsymbol{X}^{\boldsymbol{T}}\left(t-s\right)\right\} \equiv\varphi^{t-s}\widetilde{\boldsymbol{\Sigma}}_{\boldsymbol{C}}.
\end{equation}
Exactly the same arguments may be made for the symmetric case where
$t<s$. Therefore, it follows that
\begin{flalign}
\mathbb{E}\left\{ \boldsymbol{X}\left(s\right)\boldsymbol{X}^{\boldsymbol{T}}\left(t\right)\right\}  & \equiv\varphi^{\left|t-s\right|}\widetilde{\boldsymbol{\Sigma}}_{\boldsymbol{C}}\nonumber \\
 & \equiv\exp\left(-\dfrac{\left|t-s\right|}{\gamma}\right)\widetilde{\boldsymbol{\Sigma}}_{\boldsymbol{C}}\nonumber \\
 & \equiv\boldsymbol{\Sigma}_{\boldsymbol{C}}\left(t-s\right)
\end{flalign}
for all $\left(s,t\right)\in\mathbb{N}_{N_{T}}^{+}\times\mathbb{N}_{N_{T}}^{+}$,
and we are done.\hfill{}\ensuremath{\blacksquare}

\subsection{\label{subsec:Appendix-B:-Measurability}Appendix B: Measurability
\& The Fundamental Lemma of Stochastic Control}


In the following, aligned with the purposes of this paper, a detailed
discussion is presented, which is related to important technical issues,
arising towards the analysis and simplification of variational problems
of the form of (\ref{eq:2STAGE-1}).

At this point, it would be necessary to introduce some important concepts.
Let us first introduce the useful class of \textit{Carath\'eodory
functions} \cite{Aliprantis2006_Inf,Shapiro2009STOCH_PROG}\footnote{Instead of working with the class of Carath\'eodory functions, we
could also consider the more general class of \textit{random lower
semicontinuous functions} \cite{Shapiro2009STOCH_PROG}, which includes
the former. However, this might lead to overgeneralization and, thus,
we prefer not to do so; the class of Carath\'eodory functions will
be perfectly sufficient for our purposes.}.
\begin{defn}
\textbf{(Carath\'eodory Function \cite{Aliprantis2006_Inf,Shapiro2009STOCH_PROG})}\label{def:Caratheodory}
On $\left(\Omega,\mathscr{F}\right)$, the mapping $H:\Omega\times\mathbb{R}^{N}\rightarrow\overline{\mathbb{R}}$
is called Carath\'eodory, if and only if $H\left(\cdot,\boldsymbol{x}\right)$
is $\mathscr{F}$-measurable for all $\boldsymbol{x}\in\mathbb{R}^{N}$
and $H\left(\omega,\cdot\right)$ is continuous for all $\omega\in\Omega$.
\end{defn}

\begin{rem}
As the reader might have already observed, Carath\'eodory functions
and random fields with (everywhere) continuous sample paths are essentially
the same thing. Nevertheless, the term ``Carath\'eodory function''
is extensively used in our references \cite{Aliprantis2006_Inf,Shapiro2009STOCH_PROG,Rockafellar2009VarAn}.
This is the main reason why we still define and use the term.\hfill{}\ensuremath{\blacksquare}
\end{rem}
In the analysis that follows, we will exploit the notion of \textit{measurability}
for closed-valued multifunctions.
\begin{defn}
\textbf{(Measurable Multifunctions }\cite{Rockafellar2009VarAn,Shapiro2009STOCH_PROG}\textbf{)}\label{def:MEAS_mult}
On the measurable space $\left(\Omega,\mathscr{F}\right)$, a closed-valued
multifunction ${\cal X}:\Omega\rightrightarrows\mathbb{R}^{N}$ is
\textit{$\mathscr{F}$-}measurable if and only if, for all closed
${\cal A}\subseteq\mathbb{R}^{N}$, the preimage
\begin{equation}
{\cal X}^{-1}\left({\cal A}\right)\triangleq\left\{ \omega\in\Omega\left|{\cal X}\left(\omega\right)\bigcap{\cal A}\neq\varnothing\right.\right\} 
\end{equation}
is in $\mathscr{F}$. If $\mathscr{F}$ constitutes a Borel $\sigma$-algebra,
generated by a topology on $\Omega$, then an $\mathscr{F}$-measurable
${\cal X}$ will be equivalently called \textit{Borel measurable}.
\end{defn}
We will also make use of the concept of a \textit{closed multifunction}
(Remark 28 in \cite{Shapiro2009STOCH_PROG}, p. 365), whose definition
is also presented below, restricted to the case of Euclidean spaces,
of interest in this work. 
\begin{defn}
\textbf{(Closed Multifunction \cite{Shapiro2009STOCH_PROG})}\label{def:CLODES_mult}
A closed-valued multifunction ${\cal X}:\mathbb{R}^{M}\rightrightarrows\mathbb{R}^{N}$
(a function from $\mathbb{R}^{M}$ to closed sets in $\mathbb{R}^{N}$)
is closed if and only if, for all sequences $\left\{ \boldsymbol{x}_{k}\right\} _{k\in\mathbb{N}}$
and $\left\{ \boldsymbol{y}_{k}\right\} _{k\in\mathbb{N}}$, such
that $\boldsymbol{x}_{k}\underset{k\rightarrow\infty}{\longrightarrow}\boldsymbol{x}$,
$\boldsymbol{y}_{k}\underset{k\rightarrow\infty}{\longrightarrow}\boldsymbol{y}$
and $\boldsymbol{x}_{k}\in{\cal X}\left(\boldsymbol{y}_{k}\right)$,
for all $k\in\mathbb{N}$, it is true that $\boldsymbol{x}\in{\cal X}\left(\boldsymbol{y}\right)$.
\end{defn}

\subsubsection{\label{subsec:SP}Random Functions \& The Substitution Rule for Conditional
Expectations}

Given a random function $g\left(\omega,\boldsymbol{x}\right)$, a
sub $\sigma$-algebra $\mathscr{Y}$, another $\mathscr{Y}$-measurable
random element $X$, and as long as $\mathbb{E}\left\{ \left.g\left(\cdot,\boldsymbol{x}\right)\right|\mathscr{Y}\right\} $
exists for all $\boldsymbol{x}$ in the range of $X$, we would also
need to make extensive use of the \textit{substitution rule}
\begin{flalign}
\mathbb{E}\left\{ \left.g\left(\cdot,X\right)\right|\mathscr{Y}\right\} \left(\omega\right) & \equiv\mathbb{E}\left\{ \left.g\left(\cdot,X\left(\omega\right)\right)\right|\mathscr{Y}\right\} \left(\omega\right)\nonumber \\
 & \equiv\hspace{-1.2pt}\left.\mathbb{E}\left\{ \left.g\left(\cdot,\boldsymbol{x}\right)\right|\mathscr{Y}\right\} \left(\omega\right)\right|_{\boldsymbol{x}\equiv X\left(\omega\right)},\quad{\cal P}-a.e.,
\end{flalign}
which would allow us to evaluate conditional expectations, by essentially
fixing the quantities that are constant relative to the information
we are conditioning on, carry out the evaluation, and then let those
quantities vary in $\omega$ again. Although the substitution rule
is a concept readily taken for granted when conditional expectations
of Borel measurable functions of random elements (say, from products
of Euclidean spaces to $\mathbb{R}$) are considered, it does not
hold, in general, for arbitrary random functions. As far as our general
formulation is concerned, it is necessary to consider random functions,
whose domain is a product of a well behaved space (such as $\mathbb{R}^{N}$)
and the sample space, $\Omega$, whose structure is assumed to be
and should be arbitrary, at least in regard to the applications of
interest in this work. 

One common way to ascertain the validity of the substitution rule
is by exploiting the representation of conditional expectations via
integrals with respect to the relevant regular conditional distributions,
whenever the latter exist. But because of the arbitrary structure
of the base space $\left(\Omega,\mathscr{F},{\cal P}\right)$, regular
conditional distributions defined on points in the sample space $\Omega$
cannot be guaranteed to exist and, therefore, the substitution rule
may fail to hold. However, as we will see, the substitution rule will
be very important for establishing the Fundamental Lemma. Therefore,
we may choose to impose it as a property on the structures of $g$
and/or $X$ instead, as well as establish sufficient conditions for
this property to hold. The relevant definition follows.
\begin{defn}
\textbf{(Substitution Property ($\boldsymbol{SP}$))}\label{def:SUB}
On $\left(\Omega,\mathscr{F},{\cal P}\right)$, consider a random
element $Y:\Omega\rightarrow\mathbb{R}^{M}$, the associated sub $\sigma$-algebra
$\mathscr{Y}\triangleq\sigma\left\{ Y\right\} \subseteq\mathscr{F}$,
and a random function $g:\Omega\times\mathbb{R}^{N}\rightarrow\mathbb{R}$,
such that $\mathbb{E}\left\{ g\left(\cdot,\boldsymbol{x}\right)\right\} $
exists for all $\boldsymbol{x}\in\mathbb{R}^{N}$. Let \textit{$\mathfrak{C}_{\mathscr{Y}}$
}be any functional class, such that\footnote{Hereafter, statements of type ``$\mathbb{E}\left\{ g\left(\cdot,X\right)\right\} \text{ exists}$''
will \textit{implicitly }imply that $g\left(\cdot,X\right)$ is an
$\mathscr{F}$-measurable function. }
\begin{equation}
\mathfrak{C}_{\mathscr{Y}}\subseteq\mathfrak{I}_{\mathscr{Y}}\triangleq\left\{ X:\Omega\rightarrow\mathbb{R}^{N}\hspace{-2pt}\left|\hspace{-2pt}\hspace{-2pt}\begin{array}{c}
X^{-1}\left({\cal A}\right)\in\mathscr{Y},\text{ for all }{\cal A}\in\mathscr{B}\left(\mathbb{R}^{N}\right)\\
\mathbb{E}\left\{ g\left(\cdot,X\right)\right\} \text{ exists}
\end{array}\right.\hspace{-2pt}\hspace{-2pt}\hspace{-2pt}\right\} .
\end{equation}
We say that \textit{$g$ possesses the Substitution Property within
$\mathfrak{C}_{\mathscr{Y}}$}, or, equivalently, that\textit{ $g$
is $\boldsymbol{SP}\diamondsuit\mathfrak{C}_{\mathscr{Y}}$}, if and
only if there exists a jointly Borel measurable function $h:\mathbb{R}^{M}\times\mathbb{R}^{N}\rightarrow\overline{\mathbb{R}}$,
with $h\left(Y\left(\omega\right),\boldsymbol{x}\right)\equiv\mathbb{E}\left\{ \left.g\left(\cdot,\boldsymbol{x}\right)\right|\mathscr{Y}\right\} \left(\omega\right)$,
everywhere in $\left(\omega,\boldsymbol{x}\right)\in\Omega\times\mathbb{R}^{N}$,
such that, for any $X\in\mathfrak{C}_{\mathscr{Y}}$, it is true that
\begin{equation}
\mathbb{E}\left\{ \left.g\left(\cdot,X\right)\right|\mathscr{Y}\right\} \left(\omega\right)\equiv h\left(Y\left(\omega\right),X\left(\omega\right)\right),\label{eq:SP_Core}
\end{equation}
almost everywhere in $\omega\in\Omega$ with respect to ${\cal P}$.
\end{defn}
\begin{rem}
Observe that, in Definition \ref{def:SUB}, $h$ is required to be
the same for all $X\in\mathfrak{C}_{\mathscr{Y}}$. That is, $h$
should be determined only by the structure of $g$, relative to $\mathscr{Y}$,
regardless of the specific $X$ within $\mathfrak{C}_{\mathscr{Y}}$,
considered each time. On the other hand, it is also important to note
that the set of unity measure, where (\ref{eq:SP_Core}) is valid,
\textit{might indeed be dependent on the particular} $X$.\hfill{}\ensuremath{\blacksquare}
\end{rem}
\begin{rem}
\label{rem:DETAIL_h}Another detail of Definition \ref{def:SUB} is
that, because $\mathbb{E}\left\{ g\left(\cdot,\boldsymbol{x}\right)\right\} $
is assumed to exist for all $\boldsymbol{x}\in\mathbb{R}^{N}$, $\mathbb{E}\left\{ \left.g\left(\cdot,\boldsymbol{x}\right)\right|\mathscr{Y}\right\} $
also exists and, as an extended $\mathscr{Y}$-measurable random variable,
for every $\boldsymbol{x}\in\mathbb{R}^{N}$, there exists a Borel
measurable function $h_{\boldsymbol{x}}:\mathbb{R}^{M}\rightarrow\overline{\mathbb{R}}$,
such that
\begin{equation}
h_{\boldsymbol{x}}\left(Y\left(\omega\right)\right)\equiv\mathbb{E}\left\{ \left.g\left(\cdot,\boldsymbol{x}\right)\right|\mathscr{Y}\right\} \left(\omega\right),\quad\forall\omega\in\Omega.
\end{equation}
One may then readily define a function $h:\mathbb{R}^{M}\times\mathbb{R}^{N}\rightarrow\overline{\mathbb{R}}$,
such that $h\left(Y\left(\omega\right)\hspace{-2pt},\boldsymbol{x}\right)\hspace{-2pt}\equiv\hspace{-2pt}\mathbb{E}\left\{ \left.g\left(\cdot,\boldsymbol{x}\right)\right|\mathscr{Y}\right\} \left(\omega\right)$,
uniformly \textbf{\textit{for all points}}\textit{, $\omega$, }\textbf{\textit{of
the sample space}}\textit{, $\Omega$}. This is an extremely important
fact, in regard to the analysis that follows. Observe, however, that,
in general, $h$ will be Borel measurable \textit{only} \textit{in
its first argument}; $h$ is not guaranteed to be measurable in $\boldsymbol{x}\in\mathbb{R}^{N}$,
for each $Y\in\mathbb{R}^{M}$, let alone jointly measurable in both
its arguments.\hfill{}\ensuremath{\blacksquare}
\end{rem}
\begin{rem}
\label{rem:FULLSP_1}\textbf{(Generalized $\boldsymbol{SP}$)} Definition
\ref{def:SUB} may be reformulated in a more general setting. In particular,
$\mathscr{Y}$ may be assumed to be any arbitrary sub $\sigma$-algebra
of $\mathscr{F}$, but with the subtle difference that, in such case,
one would instead directly demand that the random function $h:\Omega\times\mathbb{R}^{N}\rightarrow\overline{\mathbb{R}}$,
with $h\left(\omega,\boldsymbol{x}\right)\equiv\mathbb{E}\left\{ \left.g\left(\cdot,\boldsymbol{x}\right)\right|\mathscr{Y}\right\} \left(\omega\right)$,
everywhere in $\left(\omega,\boldsymbol{x}\right)\in\Omega\times\mathbb{R}^{N}$,
is jointly $\mathscr{Y}\otimes\mathscr{B}\left(\mathbb{R}^{N}\right)$-measurable
and such that, for any $X\in\mathfrak{C}_{\mathscr{Y}}$ (with $\mathfrak{C}_{\mathscr{Y}}$
defined accordingly), it is true that
\begin{equation}
\mathbb{E}\left\{ \left.g\left(\cdot,X\right)\right|\mathscr{Y}\right\} \left(\omega\right)\equiv h\left(\omega,X\left(\omega\right)\right),\quad{\cal P}-a.e..\label{eq:SP_Core-1}
\end{equation}
Although such a generalized definition of the substitution property
is certainly less enlightening, it is still useful. Specifically,
this version of \textbf{$\boldsymbol{SP}$} is explicitly used in
the statement and proof of Theorem \ref{lem:QoS_INCREASES}, presented
in Section \ref{subsec:Theoretical-Guarantees}.\hfill{}\ensuremath{\blacksquare}
\end{rem}
Keeping $\left(\Omega,\mathscr{F},{\cal P}\right)$ of arbitrary structure,
we will be interested in the set of $g$'s which are $\boldsymbol{SP}\diamondsuit\mathfrak{I}_{\mathscr{Y}}$.
The next result provides a large class of such random functions, which
is sufficient for our purposes.
\begin{thm}
\textbf{\textup{(Sufficient Conditions for the }}$\boldsymbol{SP}\diamondsuit\mathfrak{I}_{\mathscr{Y}}$\textbf{\textup{)\label{thm:REP_EXP}}}
On $\left(\Omega,\mathscr{F},{\cal P}\right)$, consider a random
element $Y:\Omega\rightarrow\mathbb{R}^{M}$, the associated sub $\sigma$-algebra
$\mathscr{Y}\triangleq\sigma\left\{ Y\right\} \subseteq\mathscr{F}$,
and a random function $g:\Omega\times\mathbb{R}^{N}\rightarrow\mathbb{R}$.
Suppose that:
\begin{itemize}
\item g is dominated by a ${\cal P}$-integrable function; that is,
\begin{equation}
\exists\psi\in{\cal L}_{1}\left(\Omega,\mathscr{F},{\cal P};\mathbb{R}\right),\text{ such that }\sup_{\boldsymbol{x}\in\mathbb{R}^{N}}\left|g\left(\omega,\boldsymbol{x}\right)\right|\le\psi\left(\omega\right),\quad\forall\omega\in\Omega,
\end{equation}
\item $g$ is Carath\'eodory on $\Omega\times\mathbb{R}^{N}$, and that
\item the extended real valued function $\mathbb{E}\left\{ \left.g\left(\cdot,\boldsymbol{x}\right)\right|\mathscr{Y}\right\} $
is Carath\'eodory on $\Omega\times\mathbb{R}^{N}$.
\end{itemize}
Then, $g$ is $\boldsymbol{SP}\diamondsuit\mathfrak{I}_{\mathscr{Y}}$.
\end{thm}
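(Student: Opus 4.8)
The plan is to carry out two well separated tasks: first, to manufacture a \emph{single} jointly Borel measurable function $h:\mathbb{R}^{M}\times\mathbb{R}^{N}\rightarrow\overline{\mathbb{R}}$ that represents the chosen version of $\mathbb{E}\left\{ \left.g\left(\cdot,\boldsymbol{x}\right)\right|\mathscr{Y}\right\} $ \emph{everywhere} in $\left(\omega,\boldsymbol{x}\right)$ once composed with $Y$; and second, to verify the identity (\ref{eq:SP_Core}) for an arbitrary $X\in\mathfrak{I}_{\mathscr{Y}}$ by a countable approximation argument. A few reductions come for free from the hypotheses: the domination bound makes $g\left(\cdot,\boldsymbol{x}\right)$ integrable for every $\boldsymbol{x}$, so each $\mathbb{E}\left\{ \left.g\left(\cdot,\boldsymbol{x}\right)\right|\mathscr{Y}\right\} $ exists; moreover, since a Carath\'eodory function is jointly $\mathscr{F}\otimes\mathscr{B}\left(\mathbb{R}^{N}\right)$-measurable and $\left|g\left(\cdot,X\right)\right|\le\psi$ for any measurable $X$, the class $\mathfrak{I}_{\mathscr{Y}}$ is in fact exactly the set of all $\mathscr{Y}$-measurable $X:\Omega\rightarrow\mathbb{R}^{N}$.

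For the construction of $h$, I would fix a countable dense $D=\left\{ \boldsymbol{q}_{j}\right\} _{j\in\mathbb{N}}\subset\mathbb{R}^{N}$ (e.g.\ the dyadic rationals). For each $j$, the random variable $\mathbb{E}\left\{ \left.g\left(\cdot,\boldsymbol{q}_{j}\right)\right|\mathscr{Y}\right\} $ is $\mathscr{Y}$-measurable, so the Doob--Dynkin lemma yields a Borel $h_{j}:\mathbb{R}^{M}\rightarrow\overline{\mathbb{R}}$ with $h_{j}\left(Y\left(\omega\right)\right)\equiv\mathbb{E}\left\{ \left.g\left(\cdot,\boldsymbol{q}_{j}\right)\right|\mathscr{Y}\right\} \left(\omega\right)$ for \emph{all} $\omega\in\Omega$. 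For $m\in\mathbb{N}$ let $\phi_{m}:\mathbb{R}^{N}\rightarrow D$ be the Borel, countably valued map sending $\boldsymbol{x}$ to a fixed point of $D$ inside the half-open dyadic cube of side $2^{-m}$ that contains $\boldsymbol{x}$, so that $\phi_{m}\left(\boldsymbol{x}\right)\rightarrow\boldsymbol{x}$ for every $\boldsymbol{x}$; set $\widetilde{h}_{m}\left(\boldsymbol{y},\boldsymbol{x}\right)\triangleq h_{j\left(m,\boldsymbol{x}\right)}\left(\boldsymbol{y}\right)$ where $\boldsymbol{q}_{j\left(m,\boldsymbol{x}\right)}=\phi_{m}\left(\boldsymbol{x}\right)$, which is jointly Borel since it is constant in $\boldsymbol{x}$ on each of the countably many Borel cubes. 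Finally define $h\left(\boldsymbol{y},\boldsymbol{x}\right)\triangleq\lim_{m}\widetilde{h}_{m}\left(\boldsymbol{y},\boldsymbol{x}\right)$ on the (Borel) set where this limit exists in $\overline{\mathbb{R}}$, and $h\triangleq0$ elsewhere; then $h$ is jointly Borel, and for every $\omega$ and every $\boldsymbol{x}$ we have $\widetilde{h}_{m}\left(Y\left(\omega\right),\boldsymbol{x}\right)=\mathbb{E}\left\{ \left.g\left(\cdot,\phi_{m}\left(\boldsymbol{x}\right)\right)\right|\mathscr{Y}\right\} \left(\omega\right)\rightarrow\mathbb{E}\left\{ \left.g\left(\cdot,\boldsymbol{x}\right)\right|\mathscr{Y}\right\} \left(\omega\right)$ by the assumed continuity of the conditional expectation in its spatial argument, so $h\left(Y\left(\omega\right),\boldsymbol{x}\right)\equiv\mathbb{E}\left\{ \left.g\left(\cdot,\boldsymbol{x}\right)\right|\mathscr{Y}\right\} \left(\omega\right)$ everywhere. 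The same countable-dense-set idea, combined with conditional monotonicity $\left|\mathbb{E}\left\{ \left.g\left(\cdot,\boldsymbol{q}\right)\right|\mathscr{Y}\right\} \right|\le\mathbb{E}\left\{ \left.\psi\right|\mathscr{Y}\right\} $ (a.e.) and continuity of $h\left(Y\left(\omega\right),\cdot\right)$, gives the uniform bound $\left|h\left(Y\left(\omega\right),\boldsymbol{x}\right)\right|\le\mathbb{E}\left\{ \left.\psi\right|\mathscr{Y}\right\} \left(\omega\right)$ for a.e.\ $\omega$, simultaneously for all $\boldsymbol{x}$; this is the dominating function I will use below.

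For the substitution identity, fix $X\in\mathfrak{I}_{\mathscr{Y}}$ and approximate it by $X_{k}\triangleq\phi_{k}\circ X$, which is $\mathscr{Y}$-measurable, countably valued, with $X_{k}\left(\omega\right)\rightarrow X\left(\omega\right)$ for every $\omega$. Writing $X_{k}=\sum_{i}\boldsymbol{q}\left(k,i\right)\mathds{1}_{A_{i}^{k}}$ with $A_{i}^{k}=X^{-1}\left(Q_{i}^{k}\right)\in\mathscr{Y}$, conditional linearity/dominated convergence and pulling the $\mathscr{Y}$-measurable indicators out of the conditional expectation give, off a single ${\cal P}$-null set, $\mathbb{E}\left\{ \left.g\left(\cdot,X_{k}\right)\right|\mathscr{Y}\right\} \left(\omega\right)=\sum_{i}\mathds{1}_{A_{i}^{k}}\left(\omega\right)\mathbb{E}\left\{ \left.g\left(\cdot,\boldsymbol{q}\left(k,i\right)\right)\right|\mathscr{Y}\right\} \left(\omega\right)\equiv h\left(Y\left(\omega\right),X_{k}\left(\omega\right)\right)$ for every $k$. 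Now fix $A\in\mathscr{Y}$ and let $k\rightarrow\infty$: on one side $\int_{A}\mathbb{E}\left\{ \left.g\left(\cdot,X_{k}\right)\right|\mathscr{Y}\right\} \,\mathrm{d}{\cal P}=\int_{A}g\left(\cdot,X_{k}\right)\,\mathrm{d}{\cal P}\rightarrow\int_{A}g\left(\cdot,X\right)\,\mathrm{d}{\cal P}$ by dominated convergence (domination $\psi$, continuity of $g$ in $\boldsymbol{x}$); on the other side $\int_{A}h\left(Y\left(\cdot\right),X_{k}\left(\cdot\right)\right)\,\mathrm{d}{\cal P}\rightarrow\int_{A}h\left(Y\left(\cdot\right),X\left(\cdot\right)\right)\,\mathrm{d}{\cal P}$ by dominated convergence, using pointwise convergence of the integrand (continuity of $h\left(Y\left(\omega\right),\cdot\right)$) and the uniform dominating function $\mathbb{E}\left\{ \left.\psi\right|\mathscr{Y}\right\} $. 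Hence $\int_{A}g\left(\cdot,X\right)\,\mathrm{d}{\cal P}=\int_{A}h\left(Y\left(\cdot\right),X\left(\cdot\right)\right)\,\mathrm{d}{\cal P}$ for all $A\in\mathscr{Y}$; and since $h\left(Y\left(\cdot\right),X\left(\cdot\right)\right)$ is $\mathscr{Y}$-measurable (as $X$ factors through $Y$ by Doob--Dynkin and $h$ is jointly Borel) and integrable, it is a version of $\mathbb{E}\left\{ \left.g\left(\cdot,X\right)\right|\mathscr{Y}\right\} $, i.e.\ (\ref{eq:SP_Core}) holds.

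The main obstacle, and the only genuinely subtle point, is producing the \emph{single, jointly Borel, everywhere-defined} $h$: Doob--Dynkin by itself provides only separate Borel measurability in $\boldsymbol{y}$ for each fixed $\boldsymbol{x}$, with no control across $\boldsymbol{x}$, and it is precisely the continuity hypothesis on $\mathbb{E}\left\{ \left.g\left(\cdot,\cdot\right)\right|\mathscr{Y}\right\} $ that allows one to interpolate the countably many $h_{j}$ into a jointly Borel $h$ while keeping the representation valid at \emph{every} $\omega$ (not just almost everywhere). One must also keep all exceptional ${\cal P}$-null sets that appear (from pulling out indicators, from conditional monotonicity, from the countable sums and the approximating sequence) indexed by \emph{countable} families, so that their union is again null; beyond this bookkeeping, the remaining steps are the routine dominated-convergence manipulations sketched above.
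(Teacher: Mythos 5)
Your proof is correct, and its overall architecture matches the paper's: establish the identity for countably/finitely-valued $\mathscr{Y}$-measurable decisions, then approximate a general $X\in\mathfrak{I}_{\mathscr{Y}}$ and pass to the limit using the domination by $\psi$ together with the continuity of both $g\left(\omega,\cdot\right)$ and $\mathbb{E}\left\{ \left.g\left(\cdot,\cdot\right)\right|\mathscr{Y}\right\} \left(\omega\right)$. Two of your local choices differ genuinely from the paper's, and both are worth noting. First, the paper obtains the jointly Borel $h$ by asserting that, since $\left(\omega,\boldsymbol{x}\right)\mapsto h\left(Y\left(\omega\right),\boldsymbol{x}\right)$ is Carath\'eodory, $h$ itself is Carath\'eodory on $\mathbb{R}^{M}\times\mathbb{R}^{N}$ and then invoking joint measurability of Carath\'eodory functions; strictly speaking this only controls $h\left(\boldsymbol{y},\cdot\right)$ for $\boldsymbol{y}$ in the (possibly non-Borel) range of $Y$. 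Your explicit construction — Doob--Dynkin factorizations $h_{j}$ at a countable dense set of spatial points, interpolated through dyadic discretizations $\phi_{m}$ and a pointwise limit — manufactures a single jointly Borel $h$ that agrees with the chosen version of the conditional expectation at \emph{every} $\left(\omega,\boldsymbol{x}\right)$ on the graph of $Y$, and so closes this small gap; the price is the extra bookkeeping of the limit set and the uniform a.e.\ bound $\left|h\left(Y,\cdot\right)\right|\le\mathbb{E}\left\{ \left.\psi\right|\mathscr{Y}\right\}$, which you handle correctly via countable density plus continuity. Second, where the paper applies the conditional Dominated Convergence Theorem directly to get $\mathbb{E}\left\{ \left.g\left(\cdot,X_{n}\right)\right|\mathscr{Y}\right\} \rightarrow\mathbb{E}\left\{ \left.g\left(\cdot,X\right)\right|\mathscr{Y}\right\}$ a.e.\ and matches it against the everywhere limit of $h\left(Y,X_{n}\right)$, you instead verify the defining property of conditional expectation by integrating over arbitrary $A\in\mathscr{Y}$ and applying ordinary dominated convergence on each side; combined with $\mathscr{Y}$-measurability and integrability of $h\left(Y,X\right)$, this is an equally valid and arguably more elementary finish. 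The only point requiring a touch more care than you give it is pulling the countably many indicators out of the conditional expectation for $X_{k}=\sum_{i}\boldsymbol{q}\left(k,i\right)\mathds{1}_{A_{i}^{k}}$ (the paper's simple functions are finite sums); the partial sums are dominated by $\psi$, so conditional dominated convergence justifies the interchange, as you indicate.
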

\begin{proof}[Proof of Theorem \ref{thm:REP_EXP}]
Under the setting of the theorem, consider any $\mathscr{Y}$-measurable
random element $X:\Omega\rightarrow\mathbb{R}^{N}$, for which $\mathbb{E}\left\{ g\left(\cdot,X\right)\right\} $
exists. Then, $\mathbb{E}\left\{ \left.g\left(\cdot,X\right)\right|\mathscr{Y}\right\} $
exists. Also, by domination of $g$ by $\psi$, for all $\boldsymbol{x}\in\mathbb{R}^{N}$,
$\mathbb{E}\left\{ \left.g\left(\cdot,\boldsymbol{x}\right)\right|\mathscr{Y}\right\} $
exists and constitutes a ${\cal P}$-integrable, $\mathscr{Y}$-measurable
random variable. By Remark \ref{rem:DETAIL_h}, we know that
\begin{equation}
\mathbb{E}\left\{ \left.g\left(\cdot,\boldsymbol{x}\right)\right|\mathscr{Y}\right\} \left(\omega\right)\equiv h\left(Y\left(\omega\right),\boldsymbol{x}\right),\quad\forall\left(\omega,\boldsymbol{x}\right)\in\Omega\times\mathbb{R}^{N},
\end{equation}
where $h:\mathbb{R}^{M}\times\mathbb{R}^{N}\rightarrow\overline{\mathbb{R}}$
is Borel measurable \textit{in its first argument}. However, since
$\mathbb{E}\left\{ \left.g\left(\cdot,\boldsymbol{x}\right)\right|\mathscr{Y}\right\} \left(\omega\right)$\linebreak{}
$\equiv h\left(Y\left(\omega\right),\boldsymbol{x}\right)$ is Carath\'eodory
on $\Omega\times\mathbb{R}^{N}$, $h$ is Carath\'eodory on $\mathbb{R}^{M}\times\mathbb{R}^{N}$,
as well. Thus, $h$ will be jointly $\mathscr{B}\left(\mathbb{R}^{M}\right)\otimes\mathscr{B}\left(\mathbb{R}^{N}\right)$-measurable
(Lemma 4.51 in \cite{Aliprantis2006_Inf}, along with the fact that
$\overline{\mathbb{R}}$ is metrizable).

We claim that, actually, $h$ is such that 
\begin{equation}
\mathbb{E}\left\{ \left.g\left(\cdot,X\right)\right|\mathscr{Y}\right\} \equiv h\left(Y,X\right),\quad{\cal P}-a.e..
\end{equation}
Employing a common technique, the result will be proven in steps,
starting from indicators and building up to arbitrary measurable functions,
as far as $X$ is concerned. Before embarking with the core of the
proof, note that, for any $\boldsymbol{x}_{1}$ and $\boldsymbol{x}_{2}$
in $\mathbb{R}^{N}$ and any ${\cal A}\in\mathscr{F}$, the sum $g\left(\cdot,\boldsymbol{x}_{1}\right)\mathds{1}_{{\cal A}}+g\left(\cdot,\boldsymbol{x}_{2}\right)\mathds{1}_{{\cal A}^{c}}$
is always well defined, and $\mathbb{E}\left\{ g\left(\cdot,\boldsymbol{x}_{1}\right)\mathds{1}_{{\cal A}}\right\} $
and $\mathbb{E}\left\{ g\left(\cdot,\boldsymbol{x}_{2}\right)\mathds{1}_{{\cal A}^{c}}\right\} $
both exist and are finite by domination. This implies that $\mathbb{E}\left\{ g\left(\cdot,\boldsymbol{x}_{1}\right)\mathds{1}_{{\cal A}}\right\} +\mathbb{E}\left\{ g\left(\cdot,\boldsymbol{x}_{2}\right)\mathds{1}_{{\cal A}^{c}}\right\} $
is always well-defined, which in turn implies the validity of the
additivity properties (Theorem 1.6.3 and Theorem 5.5.2 in \cite{Ash2000Probability})
\begin{flalign}
\mathbb{E}\left\{ g\left(\cdot,\boldsymbol{x}_{1}\right)\mathds{1}_{{\cal A}}+g\left(\cdot,\boldsymbol{x}_{2}\right)\mathds{1}_{{\cal A}^{c}}\right\}  & \equiv\mathbb{E}\left\{ g\left(\cdot,\boldsymbol{x}_{1}\right)\mathds{1}_{{\cal A}}\right\} +\mathbb{E}\left\{ g\left(\cdot,\boldsymbol{x}_{2}\right)\mathds{1}_{{\cal A}^{c}}\right\} \in\mathbb{R},\quad\text{and}\\
\mathbb{E}\left\{ \left.g\left(\cdot,\boldsymbol{x}_{1}\right)\mathds{1}_{{\cal A}}+g\left(\cdot,\boldsymbol{x}_{2}\right)\mathds{1}_{{\cal A}^{c}}\right|\mathscr{Y}\right\}  & \equiv\mathbb{E}\left\{ \left.g\left(\cdot,\boldsymbol{x}_{1}\right)\mathds{1}_{{\cal A}}\right|\mathscr{Y}\right\} +\mathbb{E}\left\{ \left.g\left(\cdot,\boldsymbol{x}_{2}\right)\mathds{1}_{{\cal A}^{c}}\right|\mathscr{Y}\right\} ,{\cal P}-a.e..
\end{flalign}
Hence, under our setting, any such manipulation is technically justified.

Suppose first that $X\left(\omega\right)\equiv\widetilde{\boldsymbol{x}}\mathds{1}_{{\cal A}}\left(\omega\right)$,
for some $\widetilde{\boldsymbol{x}}\in\mathbb{R}^{N}$ and some ${\cal A}\in\mathscr{Y}$.
Then, by (\cite{Ash2000Probability}, Theorem 5.5.11 \& Comment 5.5.12),
it is true that
\begin{flalign}
\mathbb{E}\left\{ \left.g\left(\cdot,X\right)\right|\mathscr{Y}\right\}  & \equiv\mathbb{E}\left\{ \left.g\left(\cdot,\widetilde{\boldsymbol{x}}\right)\mathds{1}_{{\cal A}}+g\left(\cdot,{\bf 0}\right)\mathds{1}_{{\cal A}^{c}}\right|\mathscr{Y}\right\} \nonumber \\
 & \equiv\mathbb{E}\left\{ \left.g\left(\cdot,\widetilde{\boldsymbol{x}}\right)\mathds{1}_{{\cal A}}\right|\mathscr{Y}\right\} +\mathbb{E}\left\{ \left.g\left(\cdot,{\bf 0}\right)\mathds{1}_{{\cal A}^{c}}\right|\mathscr{Y}\right\} \nonumber \\
 & \equiv\mathds{1}_{{\cal A}}\mathbb{E}\left\{ \left.g\left(\cdot,\widetilde{\boldsymbol{x}}\right)\right|\mathscr{Y}\right\} +\mathds{1}_{{\cal A}^{c}}\mathbb{E}\left\{ \left.g\left(\cdot,{\bf 0}\right)\right|\mathscr{Y}\right\} \nonumber \\
 & \equiv\mathds{1}_{{\cal A}}h\left(Y,\widetilde{\boldsymbol{x}}\right)+\mathds{1}_{{\cal A}^{c}}h\left(Y,{\bf 0}\right)\nonumber \\
 & \equiv h\left(Y,\widetilde{\boldsymbol{x}}\mathds{1}_{{\cal A}}\right)\nonumber \\
 & \equiv h\left(Y,X\right),\quad{\cal P}-a.e.,
\end{flalign}
proving the claim for indicators.

Consider now simple functions of the form
\begin{equation}
X\left(\omega\right)\equiv\sum_{i\in\mathbb{N}_{I}^{+}}\widetilde{\boldsymbol{x}}_{i}\mathds{1}_{{\cal A}_{i}}\left(\omega\right),\label{eq:Simple}
\end{equation}
where $\widetilde{\boldsymbol{x}}_{i}\in\mathbb{R}^{N},$ ${\cal A}_{i}\in\mathscr{Y}$,
for all $i\in\mathbb{N}_{I}^{+}$, with ${\cal A}_{i}\bigcap{\cal A}_{j}\equiv\varnothing$,
for $i\neq j$ and $\bigcup_{i\in\mathbb{N}_{I}^{+}}{\cal A}_{i}\equiv\Omega$.
Then, we again have
\begin{flalign}
\mathbb{E}\left\{ \left.g\left(\cdot,X\right)\right|\mathscr{Y}\right\}  & \equiv\mathbb{E}\left\{ \left.\sum_{i\in\mathbb{N}_{I}^{+}}g\left(\cdot,\widetilde{\boldsymbol{x}}_{i}\right)\mathds{1}_{{\cal A}_{i}}\right|\mathscr{Y}\right\} \nonumber \\
 & \equiv\sum_{i\in\mathbb{N}_{I}^{+}}\mathbb{E}\left\{ \left.g\left(\cdot,\widetilde{\boldsymbol{x}}_{i}\right)\mathds{1}_{{\cal A}_{i}}\right|\mathscr{Y}\right\} \nonumber \\
 & \equiv\sum_{i\in\mathbb{N}_{I}^{+}}\mathds{1}_{{\cal A}_{i}}\mathbb{E}\left\{ \left.g\left(\cdot,\widetilde{\boldsymbol{x}}_{i}\right)\right|\mathscr{Y}\right\} \nonumber \\
 & \equiv\sum_{i\in\mathbb{N}_{I}^{+}}\mathds{1}_{{\cal A}_{i}}h\left(Y,\widetilde{\boldsymbol{x}}_{i}\right)\nonumber \\
 & \equiv h\left(Y,\sum_{i\in\mathbb{N}_{I}^{+}}\widetilde{\boldsymbol{x}}_{i}\mathds{1}_{{\cal A}_{i}}\right)\nonumber \\
 & \equiv h\left(Y,X\right),\quad{\cal P}-a.e.,
\end{flalign}
and the proved is claimed for simple functions.

To show that our claims are true for any arbitrary random function
$g$, we take advantage of the continuity of both $h$ and $g$ in
$\boldsymbol{x}$. First, we know that $h$ is Carath\'eodory, which
means that, for every $\omega\in\Omega$, if any sequence $\left\{ \boldsymbol{x}_{n}\in\mathbb{R}^{N}\right\} _{n\in\mathbb{N}}$
is such that $\boldsymbol{x}_{n}\underset{n\rightarrow\infty}{\longrightarrow}\boldsymbol{x}$
(for arbitrary $\boldsymbol{x}\in\mathbb{R}^{N}$), it is true that

\begin{equation}
h\left(Y\left(\omega\right),\boldsymbol{x}_{n}\right)\equiv\mathbb{E}\left\{ \left.g\left(\cdot,\boldsymbol{x}_{n}\right)\right|\mathscr{Y}\right\} \left(\omega\right)\underset{n\rightarrow\infty}{\longrightarrow}\mathbb{E}\left\{ \left.g\left(\cdot,\boldsymbol{x}\right)\right|\mathscr{Y}\right\} \left(\omega\right)\equiv h\left(Y\left(\omega\right),\boldsymbol{x}\right).\label{eq:hhhhhh}
\end{equation}
Second, we know that $g$ is Carath\'eodory as well, also implying
that, for every $\omega\in\Omega$, if any sequence $\left\{ \boldsymbol{x}_{n}\in\mathbb{R}^{N}\right\} _{n\in\mathbb{N}}$
is such that $\boldsymbol{x}_{n}\underset{n\rightarrow\infty}{\longrightarrow}\boldsymbol{x}$,
it is true that

\begin{equation}
g\left(\omega,\boldsymbol{x}_{n}\right)\underset{n\rightarrow\infty}{\longrightarrow}g\left(\omega,\boldsymbol{x}\right).
\end{equation}
Next, let $\left\{ X_{n}:\Omega\rightarrow\mathbb{R}^{N}\right\} _{n\in\mathbb{N}}$
be a sequence of simple Borel functions, such that, for all $\omega\in\Omega$,
\begin{equation}
X_{n}\left(\omega\right)\underset{n\rightarrow\infty}{\longrightarrow}X\left(\omega\right).
\end{equation}
Note that such a sequence always exists (see Theorem 1.5.5 (b) in
\cite{Ash2000Probability}). Consequently, for each $\omega\in\Omega$,
we may write (note that $g$ is $\mathscr{F}\otimes\mathscr{B}\left(\mathbb{R}^{N}\right)$-measurable;
see (\cite{Aliprantis2006_Inf}, Lemma 4.51))
\begin{equation}
g\left(\omega,X_{n}\left(\omega\right)\right)\underset{n\rightarrow\infty}{\longrightarrow}g\left(\omega,X\left(\omega\right)\right),
\end{equation}
that is, the sequence $\left\{ g\left(\cdot,X_{n}\right)\right\} _{n\in\mathbb{N}}$
converges to $g\left(\cdot,X\right)$, everywhere in $\Omega$.

Now, let us try to apply the Dominated Convergence Theorem for conditional
expectations (Theorem 5.5.5 in \cite{Ash2000Probability}) to the
aforementioned sequence of functions. Of course, we have to show that
all members of the sequence $\left\{ g\left(\cdot,X_{n}\right)\right\} _{n\in\mathbb{N}}$
are dominated by another integrable function, uniformly in $n\in\mathbb{N}$.
By assumption, there exists an integrable function $\psi:\Omega\rightarrow\mathbb{R}$,
such that
\begin{equation}
\left|g\left(\omega,\boldsymbol{x}\right)\right|\le\psi\left(\omega\right),\quad\forall\left(\omega,\boldsymbol{x}\right)\in\Omega\times\mathbb{R}^{N}.
\end{equation}
In particular, it must also be true that
\begin{equation}
\left|g\left(\omega,X_{n}\left(\omega\right)\right)\right|\le\psi\left(\omega\right),\quad\forall\left(\omega,n\right)\in\Omega\times\mathbb{N},
\end{equation}
verifying the domination requirement. Thus, Dominated Convergence
implies the existence of an event $\Omega_{\Pi_{1}}\subseteq\Omega$,
with ${\cal P}\left(\Omega_{\Pi_{1}}\right)\equiv1$, such that, for
all $\omega\in\Omega_{\Pi_{1}}$,
\begin{equation}
\mathbb{E}\left\{ \left.g\left(\cdot,X_{n}\right)\right|\mathscr{Y}\right\} \left(\omega\right)\underset{n\rightarrow\infty}{\longrightarrow}\mathbb{E}\left\{ \left.g\left(\cdot,X\right)\right|\mathscr{Y}\right\} \left(\omega\right).\label{eq:PROP_1}
\end{equation}
Also, for every $\omega\in\Omega\bigcap\Omega_{\Pi_{1}}\equiv\Omega_{\Pi_{1}}$,
(\ref{eq:hhhhhh}) yields
\begin{equation}
h\left(Y\left(\omega\right),X_{n}\left(\omega\right)\right)\underset{n\rightarrow\infty}{\longrightarrow}h\left(Y\left(\omega\right),X\left(\omega\right)\right).\label{eq:PROP_2}
\end{equation}
However, by what we have shown above, because the sequence $\left\{ X_{n}\right\} _{n\in\mathbb{N}}$
consists of simple functions, then, for every $n\in\mathbb{N}$, there
exists $\Omega_{\Pi^{n}}\subseteq\Omega$, with ${\cal P}\left(\Omega_{\Pi^{n}}\right)\equiv1$,
such that, for all $\omega\in\Omega_{\Pi^{n}}$,
\begin{equation}
\mathbb{E}\left\{ \left.g\left(\cdot,X_{n}\right)\right|\mathscr{Y}\right\} \left(\omega\right)\equiv h\left(Y\left(\omega\right),X_{n}\left(\omega\right)\right).
\end{equation}
Since $\mathbb{N}$ is countable, there exists a ``global'' event
$\Omega_{\Pi_{2}}\subseteq\Omega$, with ${\cal P}\left(\Omega_{\Pi_{2}}\right)\equiv1$,
such that, for all $\omega\in\Omega_{\Pi_{2}}$,
\begin{equation}
\mathbb{E}\left\{ \left.g\left(\cdot,X_{n}\right)\right|\mathscr{Y}\right\} \left(\omega\right)\equiv h\left(Y\left(\omega\right),X_{n}\left(\omega\right)\right),\quad\forall n\in\mathbb{N}.\label{eq:PROP_3}
\end{equation}
Now define the event $\Omega_{\Pi_{3}}\triangleq\Omega_{\Pi_{1}}\bigcap\Omega_{\Pi_{2}}$.
Of course, ${\cal P}\left(\Omega_{\Pi_{3}}\right)\equiv1$. Then,
for every $\omega\in\Omega_{\Pi_{3}}$, (\ref{eq:PROP_1}), (\ref{eq:PROP_2})
and (\ref{eq:PROP_3}) all hold simultaneously. Therefore, for every
$\omega\in\Omega_{\Pi_{3}}$, it is true that (say)
\begin{flalign}
h\left(Y\left(\omega\right),X_{n}\left(\omega\right)\right) & \underset{n\rightarrow\infty}{\longrightarrow}\mathbb{E}\left\{ \left.g\left(\cdot,X\right)\right|\mathscr{Y}\right\} \left(\omega\right)\quad\text{and}\\
h\left(Y\left(\omega\right),X_{n}\left(\omega\right)\right) & \underset{n\rightarrow\infty}{\longrightarrow}h\left(Y\left(\omega\right),X\left(\omega\right)\right),
\end{flalign}
which immediately yields
\begin{equation}
\mathbb{E}\left\{ \left.g\left(\cdot,X\right)\right|\mathscr{Y}\right\} \left(\omega\right)\equiv h\left(Y\left(\omega\right),X\left(\omega\right)\right),\quad{\cal P}-a.e.,
\end{equation}
showing that $g$ is \textit{$\boldsymbol{SP}\diamondsuit\mathfrak{I}_{\mathscr{Y}}$.}
\end{proof}
\begin{rem}
We would like to note that the assumptions of Theorem \ref{thm:REP_EXP}
can be significantly weakened, guaranteeing the validity of the substitution
rule for vastly discontinuous random functions, including, for instance,
cases with random discontinuities, or random jumps. This extended
analysis, though, is out of the scope of the paper and will be presented
elsewhere.\hfill{}\ensuremath{\blacksquare}
\end{rem}

\subsubsection{\label{subsec:Base-Form}A Base Form of the Lemma}

We will first state a base, very versatile version of the Fundamental
Lemma, treating a general class of problems, which includes the particular
stochastic problem of interest, (\ref{eq:2STAGE-1}), as a subcase.
\begin{lem}
\textbf{\textup{(Fundamental Lemma / Base Version)\label{lem:FUND_Lemma}}}
On $\left(\Omega,\mathscr{F},{\cal P}\right)$, consider a random
element $Y:\Omega\rightarrow\mathbb{R}^{M}$, the sub $\sigma$-algebra
$\mathscr{Y}\triangleq\sigma\left\{ Y\right\} \subseteq\mathscr{F}$,
a random function $g:\Omega\times\mathbb{R}^{N}\rightarrow\mathbb{R}$,
such that $\mathbb{E}\left\{ g\left(\cdot,\boldsymbol{x}\right)\right\} $
exists for all $\boldsymbol{x}\in\mathbb{R}^{N}$, a Borel measurable
closed-valued multifunction ${\cal X}:\mathbb{R}^{N}\rightrightarrows\mathbb{R}^{N}$,
with $\mathrm{dom}\left({\cal X}\right)\equiv\mathbb{R}^{N}$, as
well as another $\mathscr{Y}$-measurable random element $Z_{Y}:\Omega\rightarrow\mathbb{R}^{N}$,
with $Z_{Y}\left(\omega\right)\equiv{\cal Z}\left(Y\left(\omega\right)\right)$,
for all $\omega\in\Omega$, for some Borel ${\cal Z}:\mathbb{R}^{M}\rightarrow\mathbb{R}^{N}$.
Consider also the decision set
\begin{equation}
{\cal F}_{{\cal X}\left(Z_{Y}\right)}^{\mathscr{Y}}\triangleq\left\{ X:\Omega\rightarrow\mathbb{R}^{N}\hspace{-2pt}\left|\hspace{-2pt}\hspace{-2pt}\begin{array}{c}
X\left(\omega\right)\in{\cal X}\left(Z_{Y}\left(\omega\right)\right),\text{ }a.e.\text{ in }\omega\in\Omega\\
X^{-1}\left({\cal A}\right)\in\mathscr{Y},\text{ for all }{\cal A}\in\mathscr{B}\left(\mathbb{R}^{N}\right)
\end{array}\right.\hspace{-2pt}\hspace{-2pt}\hspace{-2pt}\right\} ,
\end{equation}
containing all $\mathscr{Y}$-measurable selections of ${\cal X}\left(Z_{Y}\right)$.
Then, ${\cal F}_{{\cal X}\left(Z_{Y}\right)}^{\mathscr{Y}}$ is nonempty.
Suppose that:
\begin{itemize}
\item $\mathbb{E}\left\{ g\left(\cdot,X\right)\right\} $ exists for all
$X\in{\cal F}_{{\cal X}\left(Z_{Y}\right)}^{\mathscr{Y}}$, with $\inf_{X\in{\cal F}_{{\cal X}\left(Z_{Y}\right)}^{\mathscr{Y}}}\mathbb{E}\left\{ g\left(\cdot,X\right)\right\} <+\infty$,
and that
\item $g$ is $\boldsymbol{SP}\diamondsuit{\cal F}_{{\cal X}\left(Z_{Y}\right)}^{\mathscr{Y}}$.
\end{itemize}
\noindent Then, if $\overline{\mathscr{Y}}$ denotes the completion
of $\mathscr{Y}$ relative to the restriction $\left.{\cal P}\right|_{\mathscr{Y}}$,
then the optimal value function $\inf_{\boldsymbol{x}\in{\cal X}\left(Z_{Y}\right)}\mathbb{E}\left\{ \left.g\left(\cdot,\boldsymbol{x}\right)\right|\mathscr{Y}\right\} \triangleq\vartheta$
is $\overline{\mathscr{Y}}$-measurable and it is true that
\begin{equation}
\inf_{X\in{\cal F}_{{\cal X}\left(Z_{Y}\right)}^{\mathscr{Y}}}\mathbb{E}\left\{ g\left(\cdot,X\right)\right\} \equiv\mathbb{E}\left\{ \inf_{\boldsymbol{x}\in{\cal X}\left(Z_{Y}\right)}\mathbb{E}\left\{ \left.g\left(\cdot,\boldsymbol{x}\right)\right|\mathscr{Y}\right\} \right\} \equiv\mathbb{E}\left\{ \vartheta\right\} .\label{eq:Lemma}
\end{equation}
In other words, variational minimization over ${\cal F}_{{\cal X}\left(Z_{Y}\right)}^{\mathscr{Y}}$
is exchangeable by pointwise (over constants) minimization over the
random multifunction ${\cal X}\left(Z_{Y}\right)$, relative to $\mathscr{Y}$.
\end{lem}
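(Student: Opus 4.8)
The plan is to prove the two inequalities underlying \eqref{eq:Lemma} separately, after disposing of the measurability preliminaries. Nonemptiness of ${\cal F}_{{\cal X}\left(Z_{Y}\right)}^{\mathscr{Y}}$ is immediate: since ${\cal X}$ is Borel measurable and closed-valued with $\mathrm{dom}\left({\cal X}\right)\equiv\mathbb{R}^{N}$, and $Z_{Y}\equiv{\cal Z}\left(Y\right)$ is $\mathscr{Y}$-measurable, the multifunction $\omega\mapsto{\cal X}\left(Z_{Y}\left(\omega\right)\right)$ is $\mathscr{Y}$-measurable with nonempty closed values, so the Kuratowski--Ryll-Nardzewski selection theorem \cite{Rockafellar2009VarAn,Shapiro2009STOCH_PROG} furnishes a $\mathscr{Y}$-measurable selection $X_{0}\in{\cal F}_{{\cal X}\left(Z_{Y}\right)}^{\mathscr{Y}}$. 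For the measurability of $\vartheta$, I would use the jointly Borel function $h:\mathbb{R}^{M}\times\mathbb{R}^{N}\rightarrow\overline{\mathbb{R}}$ supplied by the hypothesis $g\in\boldsymbol{SP}\diamondsuit{\cal F}_{{\cal X}\left(Z_{Y}\right)}^{\mathscr{Y}}$ (Definition \ref{def:SUB}), which satisfies $h\left(Y\left(\omega\right),\boldsymbol{x}\right)\equiv\mathbb{E}\left\{ \left.g\left(\cdot,\boldsymbol{x}\right)\right|\mathscr{Y}\right\} \left(\omega\right)$ everywhere. Writing $\Psi\triangleq{\cal X}\circ{\cal Z}:\mathbb{R}^{M}\rightrightarrows\mathbb{R}^{N}$ (Borel measurable, closed-valued, full domain) and $\psi\left(\boldsymbol{y}\right)\triangleq\inf_{\boldsymbol{x}\in\Psi\left(\boldsymbol{y}\right)}h\left(\boldsymbol{y},\boldsymbol{x}\right)$, one has $\left\{ \boldsymbol{y}:\psi\left(\boldsymbol{y}\right)<c\right\} \equiv\mathrm{proj}_{\mathbb{R}^{M}}\left\{ \left(\boldsymbol{y},\boldsymbol{x}\right):\boldsymbol{x}\in\Psi\left(\boldsymbol{y}\right),\,h\left(\boldsymbol{y},\boldsymbol{x}\right)<c\right\} $, a projection of a Borel set, hence analytic and therefore universally measurable; thus $\psi$ is universally measurable, and $\vartheta\equiv\psi\circ Y$ is $\overline{\mathscr{Y}}$-measurable, since preimages under $Y$ of sets in the completion of $\mathscr{B}\left(\mathbb{R}^{M}\right)$ relative to the law $Y_{*}\left(\left.{\cal P}\right|_{\mathscr{Y}}\right)$ lie in $\overline{\mathscr{Y}}$.

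The easy inequality $\inf_{X}\mathbb{E}\left\{ g\left(\cdot,X\right)\right\} \ge\mathbb{E}\left\{ \vartheta\right\} $ then proceeds as follows. With $X_{0}$ as above, the assumption that $\mathbb{E}\left\{ g\left(\cdot,X_{0}\right)\right\} $ exists and is $<+\infty$ makes $g\left(\cdot,X_{0}\right)^{+}$ integrable, hence $\vartheta\le h\left(Y,X_{0}\right)\equiv\mathbb{E}\left\{ \left.g\left(\cdot,X_{0}\right)\right|\mathscr{Y}\right\} $ has integrable positive part, so $\mathbb{E}\left\{ \vartheta\right\} $ is well defined in $\left[-\infty,+\infty\right)$ and $\vartheta<+\infty$ $a.e.$ For arbitrary $X\in{\cal F}_{{\cal X}\left(Z_{Y}\right)}^{\mathscr{Y}}$, the substitution rule (valid because $g\in\boldsymbol{SP}\diamondsuit{\cal F}_{{\cal X}\left(Z_{Y}\right)}^{\mathscr{Y}}$) gives $\mathbb{E}\left\{ \left.g\left(\cdot,X\right)\right|\mathscr{Y}\right\} \left(\omega\right)\equiv h\left(Y\left(\omega\right),X\left(\omega\right)\right)\ge\inf_{\boldsymbol{x}\in{\cal X}\left(Z_{Y}\left(\omega\right)\right)}h\left(Y\left(\omega\right),\boldsymbol{x}\right)\equiv\vartheta\left(\omega\right)$, $a.e.$ (using $X\left(\omega\right)\in{\cal X}\left(Z_{Y}\left(\omega\right)\right)$ $a.e.$); applying expectations via the tower property yields $\mathbb{E}\left\{ g\left(\cdot,X\right)\right\} \ge\mathbb{E}\left\{ \vartheta\right\} $, and taking the infimum over $X$ closes this direction.

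The substantive part is the reverse inequality, for which I would build $\varepsilon$-optimal measurable selections. For $\varepsilon>0$ consider $M_{\varepsilon}\left(\omega\right)\triangleq\left\{ \boldsymbol{x}\in{\cal X}\left(Z_{Y}\left(\omega\right)\right):h\left(Y\left(\omega\right),\boldsymbol{x}\right)\le\max\left\{ \vartheta\left(\omega\right),-1/\varepsilon\right\} +\varepsilon\right\} $; its values are nonempty for ${\cal P}$-$a.e.$ $\omega$ (wherever $\vartheta\left(\omega\right)<+\infty$, which holds $a.e.$) by the definition of $\vartheta$ and since ${\cal X}$ is nonempty-valued, and its graph lies in $\overline{\mathscr{Y}}\otimes\mathscr{B}\left(\mathbb{R}^{N}\right)$, being the intersection of the $\mathscr{Y}\otimes\mathscr{B}\left(\mathbb{R}^{N}\right)$-measurable graph of $\omega\mapsto{\cal X}\left(Z_{Y}\left(\omega\right)\right)$ with the sublevel set of the $\overline{\mathscr{Y}}\otimes\mathscr{B}\left(\mathbb{R}^{N}\right)$-measurable map $\left(\omega,\boldsymbol{x}\right)\mapsto h\left(Y\left(\omega\right),\boldsymbol{x}\right)-\max\left\{ \vartheta\left(\omega\right),-1/\varepsilon\right\} $. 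The von Neumann--Aumann measurable selection theorem, on the complete space $\left(\Omega,\overline{\mathscr{Y}}\right)$, yields a $\overline{\mathscr{Y}}$-measurable selection $X_{\varepsilon}$ of $M_{\varepsilon}$ (set $X_{\varepsilon}\triangleq X_{0}$ on the remaining null set); replacing $X_{\varepsilon}$ on a ${\cal P}$-null set by a $\mathscr{Y}$-measurable function produces $\widetilde{X}_{\varepsilon}\in{\cal F}_{{\cal X}\left(Z_{Y}\right)}^{\mathscr{Y}}$ with $\widetilde{X}_{\varepsilon}\equiv X_{\varepsilon}$ $a.e.$ Since $\widetilde{X}_{\varepsilon}\in{\cal F}_{{\cal X}\left(Z_{Y}\right)}^{\mathscr{Y}}$, $\mathbb{E}\left\{ g\left(\cdot,\widetilde{X}_{\varepsilon}\right)\right\} $ exists and the substitution rule gives $\mathbb{E}\left\{ g\left(\cdot,\widetilde{X}_{\varepsilon}\right)\right\} \equiv\mathbb{E}\left\{ h\left(Y,\widetilde{X}_{\varepsilon}\right)\right\} \le\mathbb{E}\left\{ \max\left\{ \vartheta,-1/\varepsilon\right\} \right\} +\varepsilon$. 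Letting $\varepsilon\downarrow0$ and invoking monotone convergence (legitimate since $\max\left\{ \vartheta,-1/\varepsilon\right\} \downarrow\vartheta$ with integrable positive part) gives $\inf_{X}\mathbb{E}\left\{ g\left(\cdot,X\right)\right\} \le\mathbb{E}\left\{ \vartheta\right\} $. Combining with the first inequality yields \eqref{eq:Lemma}, the middle member being $\mathbb{E}\left\{ \vartheta\right\} $ by the definition of $\vartheta$.

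I expect the main obstacle to be exactly this last construction: producing a selection that is simultaneously $\varepsilon$-optimal, feasible, and $\mathscr{Y}$-measurable, under nothing more than joint Borel measurability of $g$ and $h$ (no continuity) --- which forces use of the graph-based von Neumann--Aumann selection theorem together with careful bookkeeping of the passage from universal/$\overline{\mathscr{Y}}$-measurability back to $\mathscr{Y}$-measurability via completeness, and of the well-definedness of all extended-real expectations along the way.
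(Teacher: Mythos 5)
Your proposal is correct, and its overall architecture matches the paper's: establish nonemptiness of ${\cal F}_{{\cal X}\left(Z_{Y}\right)}^{\mathscr{Y}}$ by a selection theorem for the measurable composed multifunction, prove $\overline{\mathscr{Y}}$-measurability of $\vartheta$ via analytic/universally measurable value functions, get the easy inequality from $h\left(Y,X\right)\ge\vartheta$ plus the tower property, and close the reverse inequality with $\varepsilon$-optimal measurable selectors of the truncated sublevel multifunction $\left\{ \boldsymbol{x}:h\le\max\left\{ \vartheta,-n\right\} +\varepsilon\right\} $ followed by extended monotone convergence. The genuine divergence is in how the $\varepsilon$-optimal selector is manufactured. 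The paper descends to the image Borel space $\left(\mathbb{R}^{M},\mathscr{B}\left(\mathbb{R}^{M}\right),{\cal P}_{Y}\right)$, first replaces the universally measurable value function $\xi$ by a Borel version $\widetilde{\xi}$ agreeing ${\cal P}_{Y}$-a.e.\ (its Step 6), so that the relevant set $\Pi_{{\cal X}_{{\cal Z}}}^{\varepsilon,n}$ is genuinely Borel, and then invokes Mackey's Borel selection theorem to obtain a Borel selector $\mathsf{S}_{n}^{\varepsilon}:\mathbb{R}^{M}\rightarrow\mathbb{R}^{N}$ off a ${\cal P}_{Y}$-null set; composing with $Y$ yields a selector that is $\sigma\left\{ Y\right\} $-measurable everywhere by construction. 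You instead stay on $\Omega$, let the graph of $M_{\varepsilon}$ live in $\overline{\mathscr{Y}}\otimes\mathscr{B}\left(\mathbb{R}^{N}\right)$, apply the von Neumann--Aumann theorem on the completed space, and only afterwards repair $\mathscr{Y}$-measurability by an a.e.\ modification (legitimate because membership in ${\cal F}_{{\cal X}\left(Z_{Y}\right)}^{\mathscr{Y}}$ only requires $X\left(\omega\right)\in{\cal X}\left(Z_{Y}\left(\omega\right)\right)$ a.e., and the $\boldsymbol{SP}$ identity is itself only a.e.). This buys you the elimination of the paper's Step 6 and of the explicit bookkeeping of the exceptional Borel sets ${\cal R}_{\xi}$ and ${\cal R}_{\Pi}^{c}$, at the price of invoking a somewhat heavier selection theorem; the paper's route produces a selector that is an exact Borel functional of $Y$, which is aesthetically closer to the ``policy as a function of the observations'' interpretation it wants. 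One small inaccuracy worth fixing: the hypothesis $\inf_{X}\mathbb{E}\left\{ g\left(\cdot,X\right)\right\} <+\infty$ does not guarantee $\mathbb{E}\left\{ g\left(\cdot,X_{0}\right)\right\} <+\infty$ for the particular selection $X_{0}$ produced by the selection theorem; you should instead pick some $X_{F}\in{\cal F}_{{\cal X}\left(Z_{Y}\right)}^{\mathscr{Y}}$ witnessing finiteness of the infimum (as the paper does in its Step 4) to conclude that $\vartheta$ has integrable positive part and $\mathbb{E}\left\{ \vartheta\right\} $ is well defined in $\left[-\infty,+\infty\right)$.
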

\begin{rem}
Note that, in the statement of Lemma \ref{lem:FUND_Lemma}, assuming
that the infimum of $\mathbb{E}\left\{ g\left(\cdot,X\right)\right\} $
over ${\cal F}_{{\cal X}\left(Z_{Y}\right)}^{\mathscr{Y}}$ is less
than $+\infty$ is \textit{equivalent }to assuming the existence of
an $X$ in ${\cal F}_{{\cal X}\left(Z_{Y}\right)}^{\mathscr{Y}}$,
such that $\mathbb{E}\left\{ g\left(\cdot,X\right)\right\} $ is less
than $+\infty$.\hfill{}\ensuremath{\blacksquare}
\end{rem}
Before embarking with the proof of Lemma \ref{lem:FUND_Lemma}, it
would be necessary to state an old, fundamental selection theorem,
due to Mackey \cite{Mackey1957Borel}.
\begin{thm}
\textbf{\textup{(Borel Measurable Selections }}\textup{\cite{Mackey1957Borel}}\textbf{\textup{)\label{thm:Selections}}}
Let $\left({\cal S}_{1},\mathscr{B}\left({\cal S}_{1}\right)\right)$
and $\left({\cal S}_{2},\mathscr{B}\left({\cal S}_{2}\right)\right)$
be Borel spaces and let $\left({\cal S}_{2},\mathscr{B}\left({\cal S}_{2}\right)\right)$
be standard. Let $\mu:\mathscr{B}\left({\cal S}_{1}\right)\rightarrow\left[0,\infty\right]$
be a standard measure on $\left({\cal S}_{1},\mathscr{B}\left({\cal S}_{1}\right)\right)$.
Suppose that ${\cal A}\in\mathscr{B}\left({\cal S}_{1}\right)\otimes\mathscr{B}\left({\cal S}_{2}\right)$,
such that, for each $y\in{\cal S}_{1}$, there exists $x_{y}\in{\cal S}_{2}$,
so that $\left(y,x_{y}\right)\in{\cal A}$. Then, there exists a Borel
subset ${\cal O}\in\mathscr{B}\left({\cal S}_{1}\right)$ with $\mu\left({\cal O}\right)\equiv0$,
as well as a Borel measurable function $\phi:{\cal S}_{1}\rightarrow{\cal S}_{2}$,
such that $\left(y,\phi\left(y\right)\right)\in{\cal A}$, for all
$y\in{\cal S}_{1}\setminus{\cal O}$.
\end{thm}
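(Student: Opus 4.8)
The plan is to build the selection in two stages --- first an abstractly measurable uniformization of ${\cal A}$, then its correction to a genuinely Borel map off a $\mu$-null set --- after reducing everything to concrete Polish models. Since $\left({\cal S}_{2},\mathscr{B}\left({\cal S}_{2}\right)\right)$ is standard, fix a Borel isomorphism identifying ${\cal S}_{2}$ with a Borel subset of the Baire space $\mathbb{N}^{\mathbb{N}}$, and use the standardness hypothesis on $\mu$ to identify $\left({\cal S}_{1},\mathscr{B}\left({\cal S}_{1}\right),\mu\right)$, modulo $\mu$-null sets, with a standard Borel space carrying a $\sigma$-finite Borel measure (Lebesgue--Rokhlin). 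Under these identifications ${\cal A}$ is a Borel subset of ${\cal S}_{1}\times\mathbb{N}^{\mathbb{N}}$ every vertical section of which, ${\cal A}_{y}\triangleq\left\{x\in\mathbb{N}^{\mathbb{N}}:\left(y,x\right)\in{\cal A}\right\}$, is nonempty --- this is precisely the hypothesis that $\pi_{{\cal S}_{1}}\left({\cal A}\right)$, an analytic set, equals ${\cal S}_{1}$ --- and we must produce a Borel $\phi$ with $\phi\left(y\right)\in{\cal A}_{y}$ for all $y$ outside a $\mu$-null Borel set.

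The heart is a Jankov--von Neumann style uniformization. Unfolding ${\cal A}$ in the usual way (as the projection of a closed set with singleton sections, the auxiliary Baire factor being absorbed into $\mathbb{N}^{\mathbb{N}}$ and removed by a final projection), we may assume ${\cal A}$ is closed in ${\cal S}_{1}\times\mathbb{N}^{\mathbb{N}}$ with each ${\cal A}_{y}$ closed and nonempty. For $s\in\mathbb{N}^{<\mathbb{N}}$ let $\left[s\right]\triangleq\left\{x\in\mathbb{N}^{\mathbb{N}}:x\text{ extends }s\right\}$; define $\psi\left(y\right)$ recursively by taking $\psi\left(y\right)\left(n\right)$ to be the least $k\in\mathbb{N}$ for which ${\cal A}_{y}$ still meets $\left[\left(\psi\left(y\right)\left(0\right),\ldots,\psi\left(y\right)\left(n-1\right),k\right)\right]$. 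Nonemptiness of ${\cal A}_{y}$ keeps the recursion alive; and because ${\cal A}_{y}$ is closed while the selected clopen sets are nested with singleton intersection $\left\{\psi\left(y\right)\right\}$, the point $\psi\left(y\right)$ lands in ${\cal A}_{y}$. Each individual choice is governed by a condition of the form $y\in\pi_{{\cal S}_{1}}\bigl({\cal A}\cap\left({\cal S}_{1}\times\left[s\right]\right)\bigr)$, which is analytic; hence $\psi$ is measurable with respect to the $\sigma$-algebra generated by the analytic subsets of ${\cal S}_{1}$. (One may equally well invoke the Jankov--von Neumann uniformization theorem directly, for analytic ${\cal A}$, as a black box.)

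To finish, recall the second classical input: every analytic set is universally measurable, hence lies in $\overline{\mathscr{B}\left({\cal S}_{1}\right)}^{\mu}$, so $\psi:{\cal S}_{1}\rightarrow{\cal S}_{2}$ is $\mu$-measurable. Pick a countable family $\left\{B_{j}\right\}_{j\in\mathbb{N}}$ of Borel subsets of ${\cal S}_{2}$ generating $\mathscr{B}\left({\cal S}_{2}\right)$; for each $j$ there is a Borel set $C_{j}\subseteq{\cal S}_{1}$ and a Borel $\mu$-null set $N_{j}$ with $\psi^{-1}\left(B_{j}\right)\,\triangle\,C_{j}\subseteq N_{j}$. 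Put ${\cal O}\triangleq\bigcup_{j}N_{j}$, a Borel set with $\mu\left({\cal O}\right)\equiv0$. On ${\cal S}_{1}\setminus{\cal O}$ we have $\psi^{-1}\left(B_{j}\right)\setminus{\cal O}\equiv C_{j}\setminus{\cal O}$ for every $j$, so the restriction of $\psi$ to ${\cal S}_{1}\setminus{\cal O}$ is Borel; extend it by any fixed point of ${\cal S}_{2}$ over ${\cal O}$ to get a Borel map $\phi:{\cal S}_{1}\rightarrow{\cal S}_{2}$ agreeing with $\psi$ off ${\cal O}$. Then $\left(y,\phi\left(y\right)\right)\equiv\left(y,\psi\left(y\right)\right)\in{\cal A}$ for all $y\in{\cal S}_{1}\setminus{\cal O}$; pulling back through the mod-null identifications of the first paragraph (and absorbing their null sets into ${\cal O}$) yields the theorem.

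The main obstacle is conceptual, not computational: the recursion for $\psi$ is intrinsically confined to the analytic $\sigma$-algebra, and no Borel uniformization exists in general, which is exactly why the conclusion can only guarantee a Borel selection \emph{after excising a $\mu$-null set}. The substance of the argument therefore rests on two standard but genuinely nontrivial facts of descriptive set theory --- the Jankov--von Neumann measurable uniformization theorem and the universal measurability of analytic ($\boldsymbol{\Sigma}_{1}^{1}$) sets --- which I would cite (e.g.\ Kechris, \emph{Classical Descriptive Set Theory}, or Srivastava, \emph{A Course on Borel Sets}) rather than rederive from the sieve/tree machinery. A secondary, purely bookkeeping hurdle is making the reduction of the abstract ``standard measure space'' ${\cal S}_{1}$ to a concrete Polish model precise, so that ``analytic'' and ``universally measurable'' carry literal meaning; this is handled by the isomorphism theorem for standard (Lebesgue--Rokhlin) measure spaces.
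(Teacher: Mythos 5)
The paper does not prove this statement at all: Theorem \ref{thm:Selections} is imported verbatim from Mackey's 1957 article as a black box (the surrounding remark even declines to define ``standard Borel space'' and ``standard measure,'' referring the reader to the source). So there is no in-paper proof to compare against; the only meaningful question is whether your argument is a correct proof of the cited result, and it is. Your route is the standard descriptive-set-theoretic one: reduce to concrete Polish models (Borel-isomorph ${\cal S}_{2}$ into $\mathbb{N}^{\mathbb{N}}$; use the definition of a standard measure to discard a Borel $\mu$-null set of ${\cal S}_{1}$ so that what remains is standard), produce a $\sigma(\boldsymbol{\Sigma}_{1}^{1})$-measurable selector via the Jankov--von Neumann leftmost-branch construction on an unfolded closed set, invoke universal measurability of analytic sets to get $\mu$-measurability, and then repair the selector to a genuinely Borel map off a Borel null set by comparing preimages of a countable generating family. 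All the delicate points are handled correctly: the leftmost branch lies in the closed section because the approximants converge to it; the generating-family argument does yield Borel measurability of the restriction since the sets with Borel preimage form a $\sigma$-algebra; and the various excised null sets are absorbed into ${\cal O}$ at the end. This is essentially how the theorem is proved in Mackey's paper and in the modern references you name, so there is nothing to flag beyond the observation that you have supplied a proof the paper deliberately omits.
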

\begin{rem}
Theorem \ref{thm:Selections} refers to the concepts of a \textit{Borel
space, }a\textit{ standard Borel space} and a \textit{standard measure}.
These are employed as structural assumptions, in order for the conclusions
of the theorem to hold true. In this paper, except for the base probability
space $\left(\Omega,\mathscr{F},{\cal P}\right)$, whose structure
may be arbitrary, all other spaces and measures considered will satisfy
those assumptions by default. We thus choose not to present the respective
definitions; instead, the interested reader is referred to the original
article, \cite{Mackey1957Borel}.\hfill{}\ensuremath{\blacksquare}
\end{rem}
\noindent We are now ready to prove Lemma \ref{lem:FUND_Lemma}, as
follows.
\begin{proof}[Proof of Lemma \ref{lem:FUND_Lemma}]
As usual with such results, the proof will rely on showing a double
sided inequality \cite{Speyer2008STOCHASTIC,Astrom1970CONTROL,Bertsekas1978Stochastic,Bertsekas_Vol_2,DynamicProg_1971_FUND,Rockafellar2009VarAn}.
There is one major difficulty, though, in the optimization setting
considered, because all infima may be potentially unattainable, within
the respective decision sets. However, it is immediately evident that,
because $g$ is assumed to be \textit{$\boldsymbol{SP}\diamondsuit{\cal F}_{{\cal X}\left(Z_{Y}\right)}^{\mathscr{Y}}$},
and via a simple application of the tower property, it will suffice
to show that
\begin{equation}
\inf_{X\in{\cal F}_{{\cal X}\left(Z_{Y}\right)}^{\mathscr{Y}}}\mathbb{E}\left\{ h\left(Y,X\right)\right\} \equiv\mathbb{E}\left\{ \inf_{\boldsymbol{x}\in{\cal X}\left(Z_{Y}\right)}h\left(Y,\boldsymbol{x}\right)\right\} .
\end{equation}
This is because it is true that, for any $\mathscr{Y}$-measurable
selection of ${\cal X}\left(Z_{Y}\right)$, say $X:\Omega\rightarrow\mathbb{R}^{N}$,
for which $\mathbb{E}\left\{ g\left(\cdot,X\right)\right\} $ exists,
\begin{align}
\mathbb{E}\left\{ \left.g\left(\cdot,X\right)\right|\mathscr{Y}\right\} \left(\omega\right) & \equiv\left.h\left(Y\left(\omega\right),\boldsymbol{x}\right)\right|_{\boldsymbol{x}=X\left(\omega\right)},\quad\forall\omega\in\Omega_{\Pi_{X}},
\end{align}
where the event $\Omega_{\Pi_{X}}\in\mathscr{F}$ is such that ${\cal P}\left(\Omega_{\Pi_{X}}\right)\equiv1$
and $h$ is jointly Borel, satisfying
\begin{equation}
\mathbb{E}\left\{ \left.g\left(\cdot,\boldsymbol{x}\right)\right|\mathscr{Y}\right\} \equiv h\left(Y\left(\omega\right),\boldsymbol{x}\right),
\end{equation}
\textit{everywhere} in $\left(\omega,\boldsymbol{x}\right)\in\Omega\times\mathbb{R}^{N}$. 

For the sake of clarity in the exposition, we will break the proof
into a number of discrete subsections, providing a tractable roadmap
to the final result.

\medskip{}

\noindent \textbf{Step 1.} \textit{${\cal F}_{{\cal X}\left(Z_{Y}\right)}^{\mathscr{Y}}$
is nonempty.}

\noindent \rule[0.5ex]{0.2\columnwidth}{0.5pt}

It suffices to show that there exists at least one $\mathscr{Y}$-measurable
selection of ${\cal X}\left(Z_{Y}\right)$, that is, a $\mathscr{Y}$-measurable
random variable, say $X:\Omega\rightarrow\mathbb{R}^{N}$, such that
$X\left(\omega\right)\in{\cal X}\left(Z_{Y}\left(\omega\right)\right)$,
for all $\omega$ in the domain of ${\cal X}\left(Z_{Y}\right)$.

We first show that the composite multifunction ${\cal X}\left(Z_{Y}\left(\cdot\right)\right):\Omega\rightrightarrows\mathbb{R}^{N}$
is $\mathscr{Y}$-measurable. Recall from Definition \ref{def:MEAS_mult}
that it suffices to show that
\begin{equation}
{\cal X}Z_{Y}^{-1}\left({\cal A}\right)\triangleq\left\{ \omega\in\Omega\left|{\cal X}\left(Z_{Y}\left(\omega\right)\right)\bigcap{\cal A}\neq\varnothing\right.\right\} \in\mathscr{Y},
\end{equation}
for every closed ${\cal A}\subseteq\mathbb{R}^{N}$. Since the closed-valued
multifunction ${\cal X}$ is Borel measurable, it is true that ${\cal X}^{-1}\left({\cal A}\right)\in\mathscr{B}\left(\mathbb{R}^{N}\right)$,
for all closed ${\cal A}\subseteq\mathbb{R}^{N}$. We also know that
$Z_{Y}$ is $\mathscr{Y}$-measurable, or that $Z_{Y}^{-1}\left({\cal B}\right)\in\mathscr{Y}$,
for all ${\cal B}\in\mathscr{B}\left(\mathbb{R}^{N}\right)$. Setting
${\cal B}\equiv{\cal X}^{-1}\left({\cal A}\right)\in\mathscr{B}\left(\mathbb{R}^{N}\right)$,
for any arbitrary closed ${\cal A}\subseteq\mathbb{R}^{N}$, it is
true that
\begin{flalign}
\mathscr{Y}\ni Z_{Y}^{-1}\left({\cal X}^{-1}\left({\cal A}\right)\right) & \equiv\left\{ \omega\in\Omega\left|Z_{Y}\left(\omega\right)\in{\cal X}^{-1}\left({\cal A}\right)\right.\right\} \nonumber \\
 & \equiv\left\{ \omega\in\Omega\left|{\cal X}\left(Z_{Y}\left(\omega\right)\right)\bigcap{\cal A}\neq\varnothing\right.\right\} \nonumber \\
 & \equiv{\cal X}Z_{Y}^{-1}\left({\cal A}\right),
\end{flalign}
and, thus, the composition ${\cal X}\left(Z_{Y}\left(\cdot\right)\right)$
is $\mathscr{Y}$-measurable, or, in other words, measurable on the
measurable (sub)space $\left(\Omega,\mathscr{Y}\right)$.

Now, since the closed-valued multifunction ${\cal X}\left(Z_{Y}\right)$
is measurable on $\left(\Omega,\mathscr{Y}\right)$, it admits a \textit{Castaing
Representation} (Theorem 14.5 in \cite{Rockafellar2009VarAn} \& Theorem
7.34 in \cite{Shapiro2009STOCH_PROG}). Therefore, there exists at
least one $\mathscr{Y}$-measurable selection of ${\cal X}\left(Z_{Y}\right)$,
which means that ${\cal F}_{{\cal X}\left(Z_{Y}\right)}^{\mathscr{Y}}$
contains at least one element.\hfill{}\ensuremath{\bigstar}

\medskip{}

\noindent \textbf{Step 2.} \textit{$\vartheta$ is $\overline{\mathscr{Y}}$-measurable.}

\noindent \rule[0.5ex]{0.2\columnwidth}{0.5pt}

To show the validity of this statement, we first demonstrate that,
for any chosen $h:\mathbb{R}^{M}\times\mathbb{R}^{N}\rightarrow\overline{\mathbb{R}}$,
as in Definition \ref{def:SUB}, the function $\xi:\mathbb{R}^{M}\rightarrow\overline{\mathbb{R}}$,
defined as
\begin{flalign}
\xi\left(\boldsymbol{y}\right) & \triangleq\inf_{\boldsymbol{x}\in{\cal X}\left({\cal Z}\left(\boldsymbol{y}\right)\right)}h\left(\boldsymbol{y},\boldsymbol{x}\right),\quad\forall\boldsymbol{y}\in\mathbb{R}^{M},\label{eq:Pointwise_1-2}
\end{flalign}
is measurable relative to $\overline{\mathscr{B}}\hspace{-2pt}\left(\mathbb{R}^{M}\right)$,
the completion of $\mathscr{B}\hspace{-2pt}\left(\mathbb{R}^{M}\right)$
relative to the pushforward ${\cal P}_{Y}$. This follows easily from
the following facts. First, the \textit{graph} of the measurable multifunction
${\cal X}\left({\cal Z}\left(\cdot\right)\right)$ is itself measurable
and in $\mathscr{B}\left(\mathbb{R}^{M}\right)\otimes\mathscr{B}\left(\mathbb{R}^{N}\right)$
(Theorem 14.8 in \cite{Rockafellar2009VarAn}), and, therefore, \textit{analytic}
(Appendix A.2 in \cite{Bertsekas_Vol_2}). Second, $h$ is jointly
Borel measurable and, therefore, a\textit{ lower semianalytic} function
(Appendix A.2 in \cite{Bertsekas_Vol_2}). As a result, (\cite{Bertsekas1978Stochastic},
Proposition 7.47) implies that $\xi$ is also lower semianalytic,
and, consequently, \textit{universally measurable} (Appendix A.2 in
\cite{Bertsekas_Vol_2}). Being universally measurable, $\xi$ is
also measurable relative to $\overline{\mathscr{B}}\hspace{-2pt}\left(\mathbb{R}^{M}\right)$,
thus proving our claim. We also rely on the definitions of both $\overline{\mathscr{Y}}$
and $\overline{\mathscr{B}}\left(\mathbb{R}^{M}\right)$, stated as
(Theorem 1.9 in \cite{Folland2013_RealVar})
\begin{flalign}
{\cal B}\in\overline{\mathscr{Y}} & \iff{\cal B}\equiv{\cal C}\bigcup{\cal D}\left.\vphantom{\mathscr{B}\left(\mathbb{R}^{M}\right)}\right|{\cal C}\in\mathscr{Y}\text{ and }{\cal D\subseteq{\cal O}\in\mathscr{Y}},\text{ with }\left.{\cal P}\right|_{\mathscr{Y}}\left({\cal O}\right)\equiv0\quad\text{and}\\
{\cal B}\in\overline{\mathscr{B}}\left(\mathbb{R}^{M}\right) & \iff{\cal B}\equiv{\cal C}\bigcup{\cal D}\left.\vphantom{\mathscr{B}\left(\mathbb{R}^{M}\right)}\right|{\cal C}\in\mathscr{B}\left(\mathbb{R}^{M}\right)\text{ and }{\cal D\subseteq{\cal O}}\in\mathscr{B}\left(\mathbb{R}^{M}\right),\text{ with }{\cal P}_{Y}\left({\cal O}\right)\equiv0.\label{eq:COMPLETE_2}
\end{flalign}

Now, specifically, to show that $\vartheta$ is measurable relative
to $\overline{\mathscr{Y}}$, it suffices to show that, for every
Borel ${\cal A}\in\mathscr{B}\left(\overline{\mathbb{R}}\right)$,
\begin{equation}
\vartheta^{-1}\left({\cal A}\right)\triangleq\left\{ \left.\omega\in\Omega\right|\vartheta\left(\omega\right)\in{\cal A}\right\} \in\overline{\mathscr{Y}}.\label{eq:COMPLETE_1}
\end{equation}
Recall, that, by definition of $\xi$, it is true that $\xi\left(Y\left(\omega\right)\right)\equiv\vartheta\left(\omega\right)$,
for all $\omega\in\Omega$. Then, for every ${\cal A}\in\mathscr{B}\left(\overline{\mathbb{R}}\right)$,
we may write
\begin{flalign}
\vartheta^{-1}\left({\cal A}\right) & \equiv\xi Y^{-1}\left({\cal A}\right)\nonumber \\
 & \equiv\left\{ \left.\omega\in\Omega\right|\xi\left(Y\left(\omega\right)\right)\in{\cal A}\right\} \nonumber \\
 & \equiv\left\{ \left.\omega\in\Omega\right|Y\left(\omega\right)\in\xi^{-1}\left({\cal A}\right)\right\} \nonumber \\
 & \triangleq Y^{-1}\left(\xi^{-1}\left({\cal A}\right)\right).
\end{flalign}
But $\xi^{-1}\left({\cal A}\right)\in\overline{\mathscr{B}}\left(\mathbb{R}^{M}\right)$,
which, by (\ref{eq:COMPLETE_2}), equivalently means that $\xi^{-1}\left({\cal A}\right)\equiv{\cal G}_{{\cal A}}\bigcup{\cal H}_{{\cal A}}$,
for some ${\cal G}_{{\cal A}}\in\mathscr{B}\left(\mathbb{R}^{M}\right)$
and some ${\cal H}_{{\cal A}}\subseteq{\cal E}_{{\cal A}}\in\mathscr{B}\left(\mathbb{R}^{M}\right)$,
with ${\cal P}_{Y}\left({\cal E}_{{\cal A}}\right)\equiv0$. Thus,
we may further express any ${\cal A}$-preimage of $\vartheta$ as
\begin{align}
\vartheta^{-1}\left({\cal A}\right) & \equiv Y^{-1}\left({\cal G}_{{\cal A}}\bigcup{\cal H}_{{\cal A}}\right)\nonumber \\
 & \equiv Y^{-1}\left({\cal G}_{{\cal A}}\right)\bigcup Y^{-1}\left({\cal H}_{{\cal A}}\right).\label{eq:COMPLETE_3}
\end{align}
Now, because ${\cal G}_{{\cal A}}$ is Borel and $Y$ is a random
element, it is true that $Y^{-1}\left({\cal G}_{{\cal A}}\right)\in\mathscr{Y}$.
On the other hand, ${\cal H}_{{\cal A}}\subseteq{\cal E}_{{\cal A}}$,
which implies that $Y^{-1}\left({\cal H}_{{\cal A}}\right)\subseteq Y^{-1}\left({\cal E}_{{\cal A}}\right)$,
where
\begin{equation}
\left.{\cal P}\right|_{\mathscr{Y}}\left(Y^{-1}\left({\cal E}_{{\cal A}}\right)\right)\equiv{\cal P}_{Y}\left({\cal E}_{{\cal A}}\right)\equiv0.
\end{equation}
Therefore, we have shown that, for every ${\cal A}\in\mathscr{B}\left(\overline{\mathbb{R}}\right)$,
$\vartheta^{-1}\left({\cal A}\right)$ may always be written as a
union of an element in $\mathscr{Y}$ and some subset of a $\left.{\cal P}\right|_{\mathscr{Y}}$-null
set, also in $\mathscr{Y}$. Enough said.\hfill{}\ensuremath{\bigstar}

\medskip{}

\noindent \textbf{Step 3.} \textit{For every $X\in{\cal F}_{{\cal X}\left(Z_{Y}\right)}^{\mathscr{Y}}$,
it is true that $h\left(Y,X\right)\ge\inf_{\boldsymbol{x}\in{\cal X}\left(Z_{Y}\right)}h\left(Y,\boldsymbol{x}\right)\equiv\vartheta.$}

\noindent \rule[0.5ex]{0.2\columnwidth}{0.5pt}

For each $\omega\in\Omega$ (which also determines $Y$), we may write
\begin{align}
\vartheta\left(\omega\right) & \equiv\inf_{\boldsymbol{x}\in{\cal X}\left({\cal Z}\left(Y\left(\omega\right)\right)\right)}h\left(Y\left(\omega\right),\boldsymbol{x}\right)\nonumber \\
 & \equiv\inf_{{\cal M}\left(Y\left(\omega\right)\right)\in{\cal X}\left(Z_{Y}\left(\omega\right)\right)}h\left(Y\left(\omega\right),{\cal M}\left(Y\left(\omega\right)\right)\right),
\end{align}
where ${\cal M}:\mathbb{R}^{M}\rightarrow\mathbb{R}^{N}$ is of arbitrary
nature. Therefore, $\vartheta$ may be equivalently regarded as the
result of infimizing $h$ over the set of all, \textit{measurable
or not}, functionals of $Y$, which are also selections of ${\cal X}\left(Z_{Y}\right)$.
This set, of course, includes ${\cal F}_{{\cal X}\left(Z_{Y}\right)}^{\mathscr{Y}}$.
Now, choose an $X\equiv{\cal M}_{X}\left(Y\right)\in{\cal F}_{{\cal X}\left(Z_{Y}\right)}^{\mathscr{Y}}$,
as above, for some Borel measurable ${\cal M}_{X}:\mathbb{R}^{M}\rightarrow\mathbb{R}^{N}$.
Then, it must be true that
\begin{equation}
\vartheta\left(\omega\right)\le h\left(Y\left(\omega\right),{\cal M}_{X}\left(Y\left(\omega\right)\right)\right)\equiv h\left(Y\left(\omega\right),X\left(\omega\right)\right),
\end{equation}
everywhere in $\omega\in\Omega$.\hfill{}\ensuremath{\bigstar}

\medskip{}

\noindent \textbf{Step 4.} \textit{It is also true that
\begin{equation}
\inf_{X\in{\cal F}_{{\cal X}\left(Z_{Y}\right)}^{\mathscr{Y}}}\mathbb{E}\left\{ h\left(Y,X\right)\right\} \ge\mathbb{E}\left\{ \inf_{\boldsymbol{x}\in{\cal X}\left(Z_{Y}\right)}h\left(Y,\boldsymbol{x}\right)\right\} .
\end{equation}
}\rule[0.5ex]{0.2\columnwidth}{0.5pt}

From \textbf{Step 3}, we know that, for every $X\in{\cal F}_{{\cal X}\left(Z_{Y}\right)}^{\mathscr{Y}}$,
we have
\begin{equation}
h\left(Y,X\right)\ge\vartheta.\label{eq:UNFORGIVEN}
\end{equation}
At this point, we exploit measurability of $\vartheta$, proved in
\textbf{Step 2}. Since, by assumption, 
\begin{equation}
\inf_{X\in{\cal F}_{{\cal X}\left(Z_{Y}\right)}^{\mathscr{Y}}}\mathbb{E}\left\{ h\left(Y,X\right)\right\} \equiv\inf_{X\in{\cal F}_{{\cal X}\left(Z_{Y}\right)}^{\mathscr{Y}}}\mathbb{E}\left\{ g\left(\cdot,X\right)\right\} <+\infty,
\end{equation}
it follows that there exists $X_{F}\in{\cal F}_{{\cal X}\left(Z_{Y}\right)}^{\mathscr{Y}}$,
such that $\mathbb{E}\left\{ h\left(Y,X_{F}\right)\right\} <+\infty$
(recall that the integral $\mathbb{E}\left\{ g\left(\cdot,X_{F}\right)\right\} $
exists anyway, also by assumption). Since (\ref{eq:UNFORGIVEN}) holds
for every $X\in{\cal F}_{{\cal X}\left(Z_{Y}\right)}^{\mathscr{Y}}$,
it also holds for $X_{F}\in{\cal F}_{{\cal X}\left(Z_{Y}\right)}^{\mathscr{Y}}$
and, consequently, the integral of $\vartheta$ exists, with $\mathbb{E}\left\{ \vartheta\right\} <+\infty$.
Then, we may take expectations on both sides of (\ref{eq:UNFORGIVEN})
(Theorem 1.5.9 (b) in \cite{Ash2000Probability}), yielding
\begin{equation}
\mathbb{E}\left\{ h\left(Y,X\right)\right\} \ge\mathbb{E}\left\{ \vartheta\right\} ,\quad\forall X\in{\cal F}_{{\cal X}\left(Z_{Y}\right)}^{\mathscr{Y}}.
\end{equation}
Infimizing additionally both sides over $X\in{\cal F}_{{\cal X}\left(Z_{Y}\right)}^{\mathscr{Y}}$,
we obtain the desired inequality.

We may also observe that, if $\inf_{X\in{\cal F}_{{\cal X}\left(Z_{Y}\right)}^{\mathscr{Y}}}\mathbb{E}\left\{ h\left(Y,X\right)\right\} \equiv-\infty$,
then 
\begin{equation}
\inf_{X\in{\cal F}_{{\cal X}\left(Z_{Y}\right)}^{\mathscr{Y}}}\mathbb{E}\left\{ h\left(Y,X\right)\right\} \equiv\mathbb{E}\left\{ \vartheta\right\} \equiv-\infty,
\end{equation}
and the conclusion of Lemma \ref{lem:FUND_Lemma} holds immediately.
Therefore, in the following, we may assume that $\inf_{X\in{\cal F}_{{\cal X}\left(Z_{Y}\right)}^{\mathscr{Y}}}\mathbb{E}\left\{ h\left(Y,X\right)\right\} >-\infty$.\hfill{}\ensuremath{\bigstar}

\medskip{}

\noindent \textbf{Step 5.}\textit{ For every $\varepsilon>0$, $n\in\mathbb{N}$
and every $\boldsymbol{y}\in\mathbb{R}^{M}$, there exists $\boldsymbol{x}\equiv\boldsymbol{x}_{\boldsymbol{y}}\in{\cal X}\left({\cal Z}\left(\boldsymbol{y}\right)\right)$,
such that
\begin{equation}
h\left(\boldsymbol{y},\boldsymbol{x}_{\boldsymbol{y}}\right)\le\max\left\{ \xi\left(\boldsymbol{y}\right),-n\right\} +\varepsilon.\label{eq:E_OPT_POINT}
\end{equation}
}\rule[0.5ex]{0.2\columnwidth}{0.5pt}

This simple fact may be shown by contradiction; replacing the universal
with existential quantifiers and vice versa in the above statement,
suppose that there exists $\varepsilon>0$ , $n\in\mathbb{N}$, and
$\boldsymbol{y}\in\mathbb{R}^{M}$ such that, for all $\boldsymbol{x}\in{\cal X}\left({\cal Z}\left(\boldsymbol{y}\right)\right)$,
$h\left(\boldsymbol{y},\boldsymbol{x}\right)>\max\left\{ \xi\left(\boldsymbol{y}\right),-n\right\} +\varepsilon$.
There are two cases: \textbf{1)} $\xi\left(\boldsymbol{y}\right)>-\infty$.
In this case, $\max\left\{ \xi\left(\boldsymbol{y}\right),-n\right\} \ge\xi\left(\boldsymbol{y}\right)$,
which would imply that, for all $\boldsymbol{x}\in{\cal X}\left({\cal Z}\left(\boldsymbol{y}\right)\right)$,
\begin{equation}
h\left(\boldsymbol{y},\boldsymbol{x}\right)>\xi\left(\boldsymbol{y}\right)+\varepsilon,
\end{equation}
contradicting the fact that $\xi\left(\boldsymbol{y}\right)$ is the
infimum (the greatest lower bound) of $h\left(\boldsymbol{y},\boldsymbol{x}\right)$
over ${\cal X}\left({\cal Z}\left(\boldsymbol{y}\right)\right)$,
since $\varepsilon>0$. \textbf{2)} $\xi\left(\boldsymbol{y}\right)\equiv-\infty$.
Here, $\max\left\{ \xi\left(\boldsymbol{y}\right),-n\right\} \equiv-n$,
and, for all $\boldsymbol{x}\in{\cal X}_{{\cal Z}}\left(\boldsymbol{y}\right)$,
we would write 
\begin{equation}
h\left(\boldsymbol{y},\boldsymbol{x}\right)>-n+\varepsilon\in\mathbb{R},
\end{equation}
which, again, contradicts the fact that $-\infty\equiv\xi\left(\boldsymbol{y}\right)$
is the infimum of $h\left(\boldsymbol{y},\boldsymbol{x}\right)$ over
${\cal X}\left({\cal Z}\left(\boldsymbol{y}\right)\right)$. Therefore,
in both cases, we are led to a contradiction, implying that the statement
preceding and including (\ref{eq:E_OPT_POINT}) is true. The idea
of using the maximum operator, so that $\xi\left(\boldsymbol{y}\right)$
may be allowed to take the value $-\infty$, is credited to and borrowed
from (\cite{Rockafellar2009VarAn}, proof of Theorem 14.60).\hfill{}\ensuremath{\bigstar}\medskip{}

\noindent \textbf{Step 6.} \textit{There exists a Borel measurable
function $\widetilde{\xi}:\mathbb{R}^{M}\rightarrow\overline{\mathbb{R}}$,
such that
\begin{equation}
\widetilde{\xi}\left(\boldsymbol{y}\right)\equiv\xi\left(\boldsymbol{y}\right),\quad\forall\boldsymbol{y}\in\overline{{\cal R}}_{\xi}\supseteq{\cal R}_{\xi},
\end{equation}
where ${\cal R}_{\xi}\in\mathscr{B}\left(\mathbb{R}^{M}\right)$ is
such that ${\cal P}_{Y}\left({\cal R}_{\xi}\right)\equiv1$, and $\overline{{\cal R}}_{\xi}\in\overline{\mathscr{B}}\left(\mathbb{R}^{M}\right)$
is such that $\overline{{\cal P}}_{Y}\left(\overline{{\cal R}}_{\xi}\right)\equiv1$,
where $\overline{{\cal P}}_{Y}$ denotes the completion of the pushforward
${\cal P}_{Y}$.}

\noindent \rule[0.5ex]{0.2\columnwidth}{0.5pt}

From (\cite{Folland2013_RealVar}, Proposition 2.12), we know that,
since \textit{$\xi$ }is $\overline{\mathscr{B}}\left(\mathbb{R}^{M}\right)$-measurable,
there exists a $\mathscr{B}\left(\mathbb{R}^{M}\right)$-measurable
function $\widetilde{\xi}:\mathbb{R}^{M}\rightarrow\overline{\mathbb{R}}$,
such that
\begin{equation}
\widetilde{\xi}\left(\boldsymbol{y}\right)\equiv\xi\left(\boldsymbol{y}\right),\quad\forall\boldsymbol{y}\in\overline{{\cal R}}_{\xi},
\end{equation}
where $\overline{{\cal R}}_{\xi}$ is an event in $\overline{\mathscr{B}}\left(\mathbb{R}^{M}\right)$,
such that $\overline{{\cal P}}_{Y}\left(\overline{{\cal R}}_{\xi}\right)\equiv1$.
However, from \textbf{Step 2} (see (\ref{eq:COMPLETE_2})), we know
that $\overline{{\cal R}}_{\xi}\equiv{\cal R}_{\xi}\bigcup\overline{{\cal R}}_{\xi}^{E}$,
where ${\cal R}_{\xi}\in\mathscr{B}\left(\mathbb{R}^{M}\right)$ and
$\overline{{\cal P}}_{Y}\left(\overline{{\cal R}}_{\xi}^{E}\right)\equiv0$.
Then, it may be easily shown that $\overline{{\cal P}}_{Y}\left(\overline{{\cal R}}_{\xi}\right)\equiv\overline{{\cal P}}_{Y}\left({\cal R}_{\xi}\right)\equiv1$
and, since $\overline{{\cal P}}_{Y}$ and ${\cal P}_{Y}$ agree on
the elements of $\mathscr{B}\left(\mathbb{R}^{M}\right)$, ${\cal P}_{Y}\left({\cal R}_{\xi}\right)\equiv1$,
as well.\hfill{}\ensuremath{\bigstar}

\medskip{}

\noindent \textbf{Step 7.} \textit{There exists a $\left({\cal P},\varepsilon,n\right)$-optimal
selector $X_{n}^{\varepsilon}\in{\cal F}_{{\cal X}\left(Z_{Y}\right)}^{\mathscr{Y}}$:
For every $\varepsilon>0$ and for every $n\in\mathbb{N}$, there
exists $X_{n}^{\varepsilon}\in{\cal F}_{{\cal X}\left(Z_{Y}\right)}^{\mathscr{Y}}$,
such that
\begin{equation}
h\left(Y,X_{n}^{\varepsilon}\right)\le\max\left\{ \inf_{\boldsymbol{x}\in{\cal X}\left(Z_{Y}\right)}h\left(Y,\boldsymbol{x}\right),-n\right\} +\varepsilon,\quad{\cal P}-a.e..\label{eq:EPSILON_CORE}
\end{equation}
}\rule[0.5ex]{0.2\columnwidth}{0.5pt}

This is the most crucial property of the problem that needs to be
established, in order to reach to the final conclusions of Lemma \ref{lem:FUND_Lemma}.
In this step, we make use of Theorem \ref{thm:Selections}. Because
Theorem \ref{thm:Selections} works on Borel spaces, in the following,
it will be necessary to work directly on the state space of the random
element $Y$, equipped with its Borel $\sigma$-algebra, and the pushforward
${\cal P}_{Y}$. In the following, we will also make use of the results
proved in \textbf{Step 5 }and \textbf{Step 6}.

Recall the definition of $\xi$ in the statement of Lemma \ref{lem:FUND_Lemma}.
We may readily show that the multifunction ${\cal X}\left({\cal Z}\left(\cdot\right)\right)$
is $\mathscr{B}\left(\mathbb{R}^{M}\right)$-measurable. This may
be shown in exactly the same way as in \textbf{Step 1}, exploiting
the hypotheses that the multifunction ${\cal X}$ and the function
${\cal Z}$ are both Borel measurable. Borel measurability of ${\cal X}\left({\cal Z}\left(\cdot\right)\right)$
will be exploited shortly.

Compare the result of \textbf{Step 5} with what we would like to prove
here; the statement preceding and including (\ref{eq:E_OPT_POINT})
is not enough for our purposes; what we would actually like is to
be able to generate a \textit{selector}, that is, a function of $\boldsymbol{y}$
such that (\ref{eq:E_OPT_POINT}) would hold at least almost everywhere
with respect to ${\cal P}_{Y}$. This is why we need Theorem \ref{thm:Selections}.
The idea of using Theorem \ref{thm:Selections} into this context
is credited to and borrowed from \cite{Strauch1966Negative}.

From \textbf{Step 2} and \textbf{Step 6}, we know that $\xi$ is $\overline{\mathscr{B}}\left(\mathbb{R}^{M}\right)$-measurable
and that there exists a Borel measurable function $\widetilde{\xi}:\mathbb{R}^{M}\rightarrow\overline{\mathbb{R}}$,
such that $\widetilde{\xi}\left(\boldsymbol{y}\right)\equiv\xi\left(\boldsymbol{y}\right),$
everywhere in $\boldsymbol{y}\in{\cal R}_{\xi}$, where ${\cal R}_{\xi}\in\mathscr{B}\left(\mathbb{R}^{M}\right)$
is such that ${\cal P}_{Y}\left({\cal R}_{\xi}\right)\equiv\overline{{\cal P}}_{Y}\left({\cal R}_{\xi}\right)\equiv1$.
Then, it follows that 
\begin{equation}
\widetilde{\xi}\left(\boldsymbol{y}\right)\equiv\inf_{\boldsymbol{x}\in{\cal X}\left({\cal Z}\left(\boldsymbol{y}\right)\right)}h\left(\boldsymbol{y},\boldsymbol{x}\right),
\end{equation}
for all $\boldsymbol{y}\in{\cal R}_{\xi}$.

Define, for brevity, ${\cal X}_{{\cal Z}}\left(\boldsymbol{y}\right)\triangleq{\cal X}\left({\cal Z}\left(\boldsymbol{y}\right)\right)$,
for all $\boldsymbol{y}\in\mathbb{R}^{M}$. Towards the application
of Theorem \ref{thm:Selections}, fix any $\varepsilon>0$ and any
$n\in\mathbb{N}$ and consider the set
\begin{equation}
\Pi_{{\cal X}_{{\cal Z}}}^{\varepsilon,n}\equiv\left\{ \left(\boldsymbol{y},\boldsymbol{x}\right)\in\mathbb{R}^{M}\times\mathbb{R}^{N}\left|\begin{array}{cc}
\begin{array}{c}
\boldsymbol{x}\in{\cal X}\left({\cal Z}\left(\boldsymbol{y}\right)\right)\\
h\left(\boldsymbol{y},\boldsymbol{x}\right)\le\max\left\{ \widetilde{\xi}\left(\boldsymbol{y}\right),-n\right\} +\varepsilon
\end{array}, & \text{if }\boldsymbol{y}\in{\cal R}_{\xi}\\
\boldsymbol{x}\in{\cal X}\left({\cal Z}\left(\boldsymbol{y}\right)\right), & \text{if }\boldsymbol{y}\in{\cal R}_{\xi}^{c}
\end{array}\right.\right\} .
\end{equation}
We will show that $\Pi_{\varepsilon}^{n}$ constitutes a measurable
set in $\mathscr{B}\left(\mathbb{R}^{M}\right)\otimes\mathscr{B}\left(\mathbb{R}^{N}\right)$.
Observe that $\Pi_{{\cal X}_{{\cal Z}}}^{\varepsilon,n}\equiv\Pi_{{\cal X}_{{\cal Z}}}\bigcap\left(\Pi^{\varepsilon,n}\bigcup\Pi_{rem}\right)$,
where we define
\begin{align}
\Pi_{{\cal X}_{{\cal Z}}} & \triangleq\left\{ \left.\left(\boldsymbol{y},\boldsymbol{x}\right)\in\mathbb{R}^{M}\times\mathbb{R}^{N}\right|\boldsymbol{x}\in{\cal X}\left({\cal Z}\left(\boldsymbol{y}\right)\right)\right\} ,\\
\Pi^{\varepsilon,n} & \triangleq\left\{ \left(\boldsymbol{y},\boldsymbol{x}\right)\in\mathbb{R}^{M}\times\mathbb{R}^{N}\left|\boldsymbol{y}\in{\cal R}_{\xi},h\left(\boldsymbol{y},\boldsymbol{x}\right)\le\max\left\{ \widetilde{\xi}\left(\boldsymbol{y}\right),-n\right\} +\varepsilon\right.\right\} \quad\text{and}\\
\Pi_{rem} & \triangleq\left\{ \left.\left(\boldsymbol{y},\boldsymbol{x}\right)\in\mathbb{R}^{M}\times\mathbb{R}^{N}\right|\boldsymbol{y}\in{\cal R}_{\xi}^{c}\right\} .
\end{align}
Clearly, it suffices to show that both $\Pi_{{\cal X}_{{\cal Z}}}$
and $\Pi^{\varepsilon,n}$ are in $\mathscr{B}\left(\mathbb{R}^{M}\right)\otimes\mathscr{B}\left(\mathbb{R}^{N}\right)$.
First, the set $\Pi_{{\cal X}_{{\cal Z}}}$ is the \textit{graph}
of the multifunction ${\cal X}_{{\cal Z}}$, and, because ${\cal X}_{{\cal Z}}$
is measurable, it follows from (\cite{Rockafellar2009VarAn}, Theorem
14.8) that $\Pi_{{\cal X}_{{\cal Z}}}\in\mathscr{B}\left(\mathbb{R}^{M}\right)\otimes\mathscr{B}\left(\mathbb{R}^{N}\right)$.
Second, because $g$ is \textit{$\boldsymbol{SP}\diamondsuit{\cal F}_{{\cal X}\left(Z_{Y}\right)}^{\mathscr{Y}}$},
$h$ is jointly Borel measurable. Additionally, ${\cal R}_{\xi}$
is Borel and $\widetilde{\xi}$ is Borel as well. Consequently, $\Pi^{\varepsilon,n}$
can be written as the intersection of two measurable sets, implying
that it is in $\mathscr{B}\left(\mathbb{R}^{M}\right)\otimes\mathscr{B}\left(\mathbb{R}^{N}\right)$,
as well. And third, $\Pi_{rem}\in\mathscr{B}\left(\mathbb{R}^{M}\right)\otimes\mathscr{B}\left(\mathbb{R}^{N}\right)$,
since ${\cal R}_{\xi}^{c}$ is Borel, as a complement of a Borel set.
Therefore, $\Pi_{{\cal X}_{{\cal Z}}}^{\varepsilon,n}\in\mathscr{B}\left(\mathbb{R}^{M}\right)\otimes\mathscr{B}\left(\mathbb{R}^{N}\right)$.

Now, we have to verify the selection property, set as a requirement
in the statement of Theorem \ref{thm:Selections}. Indeed, for every
$\boldsymbol{y}\in{\cal R}_{\xi}$, there exists $\boldsymbol{x}_{\boldsymbol{y}}\in{\cal X}\left({\cal Z}\left(\boldsymbol{y}\right)\right)$,
such that (\ref{eq:E_OPT_POINT}) holds, where $\xi\left(\boldsymbol{y}\right)\equiv\widetilde{\xi}\left(\boldsymbol{y}\right)$
(see \textbf{Step 6 }and above), while, for every $\boldsymbol{y}\in{\cal R}_{\xi}^{c}$,
any $\boldsymbol{x}_{\boldsymbol{y}}\in{\cal X}\left({\cal Z}\left(\boldsymbol{y}\right)\right)$
will do. Thus, for every $\boldsymbol{y}\in\mathbb{R}^{M}$, there
exists $\boldsymbol{x}_{\boldsymbol{y}}\in\mathbb{R}^{N}$, such that
$\left(\boldsymbol{y},\boldsymbol{x}_{\boldsymbol{y}}\right)\in\Pi_{{\cal X}_{{\cal Z}}}^{\varepsilon,n}$.
As a result, Theorem \ref{thm:Selections} applies and implies that
there exists a Borel subset ${\cal R}_{\Pi_{{\cal X}_{{\cal Z}}}^{\varepsilon,n}}^{c}$
of ${\cal P}_{Y}$-measure $0$, as well as a Borel measurable selector
$\mathsf{S}_{n}^{\varepsilon}:\mathbb{R}^{M}\rightarrow\mathbb{R}^{N}$,
such that, $\left(\boldsymbol{y},\mathsf{S}_{n}^{\varepsilon}\left(\boldsymbol{y}\right)\right)\in\Pi_{{\cal X}_{{\cal Z}}}^{\varepsilon,n}$,
for all $\boldsymbol{y}\in{\cal R}_{\Pi_{{\cal X}_{{\cal Z}}}^{\varepsilon,n}}$.
In other words, the Borel measurable selector $\mathsf{S}_{n}^{\varepsilon}$
is such that
\begin{equation}
\begin{array}{cc}
\mathsf{S}_{n}^{\varepsilon}\left(\boldsymbol{y}\right)\in{\cal X}\left({\cal Z}\left(\boldsymbol{y}\right)\right)\quad\text{and}\\
h\left(\boldsymbol{y},\mathsf{S}_{n}^{\varepsilon}\left(\boldsymbol{y}\right)\right)\le\max\left\{ \widetilde{\xi}\left(\boldsymbol{y}\right),-n\right\} +\varepsilon, & \quad\forall\boldsymbol{y}\in{\cal R}_{\xi}\bigcap{\cal R}_{\Pi_{{\cal X}_{{\cal Z}}}^{\varepsilon,n}}\triangleq{\cal R}_{\Pi_{{\cal X}_{{\cal Z}}}^{\varepsilon,n}}^{\xi},
\end{array}\label{eq:EPSILON_1}
\end{equation}
where, of course, ${\cal P}_{Y}\left({\cal R}_{\xi}\bigcap{\cal R}_{\Pi_{{\cal X}_{{\cal Z}}}^{\varepsilon,n}}\right)\equiv1$.
Additionally, (\ref{eq:EPSILON_1}) must be true at $\boldsymbol{y}=Y\left(\omega\right)$,
as long as $\omega$ is such that the values of $Y$ are restricted
to ${\cal R}_{\Pi_{{\cal X}_{{\cal Z}}}^{\varepsilon,n}}^{\xi}$.
Equivalently, we demand that
\begin{equation}
\omega\in\left\{ \left.\omega\in\Omega\right|Y\left(\omega\right)\in{\cal R}_{\Pi_{{\cal X}_{{\cal Z}}}^{\varepsilon,n}}^{\xi}\right\} \equiv Y^{-1}\left({\cal R}_{\Pi_{{\cal X}_{{\cal Z}}}^{\varepsilon,n}}^{\xi}\right)\triangleq\Omega_{n}^{\varepsilon}.
\end{equation}
But ${\cal R}_{\Pi_{{\cal X}_{{\cal Z}}}^{\varepsilon,n}}^{\xi}\in\mathscr{B}\left(\mathbb{R}^{M}\right)$
and $Y$ is a random element and, hence, a measurable function for
$\Omega$ to $\mathbb{R}^{M}$. This means that $\Omega_{n}^{\varepsilon}\in\mathscr{Y}$
and we are allowed to write
\begin{flalign}
{\cal P}\left(\Omega_{n}^{\varepsilon}\right) & \equiv\int_{Y^{-1}\left({\cal R}_{\Pi_{{\cal X}_{{\cal Z}}}^{\varepsilon,n}}^{\xi}\right)}{\cal P}\left(\mathrm{d}\omega\right)\nonumber \\
 & =\int_{\left\{ \left.\boldsymbol{y}\in\mathbb{R}^{M}\right|\boldsymbol{y}\in{\cal R}_{\Pi_{{\cal X}_{{\cal Z}}}^{\varepsilon,n}}^{\xi}\right\} }{\cal P}_{Y}\left(\mathrm{d}\boldsymbol{y}\right)\nonumber \\
 & \equiv{\cal P}_{Y}\left({\cal R}_{\Pi_{{\cal X}_{{\cal Z}}}^{\varepsilon,n}}^{\xi}\right)\equiv1.
\end{flalign}
Therefore, we may pull (\ref{eq:EPSILON_1}) back to the base space,
and restate it as
\begin{equation}
\begin{array}{cc}
\mathsf{S}_{n}^{\varepsilon}\left(Y\left(\omega\right)\right)\in{\cal X}\left(Z_{Y}\left(\omega\right)\right)\quad\text{and}\\
h\left(Y\left(\omega\right),\mathsf{S}_{n}^{\varepsilon}\left(Y\left(\omega\right)\right)\right)\le\max\left\{ \xi\left(Y\left(\omega\right)\right),-n\right\} +\varepsilon, & \quad\forall\omega\in\Omega_{n}^{\varepsilon},
\end{array}
\end{equation}
where $\Omega_{n}^{\varepsilon}\subseteq\Omega$ is an event, such
that ${\cal P}\left(\Omega_{n}^{\varepsilon}\right)\equiv1$. Then,
by construction, $\mathsf{S}_{n}^{\varepsilon}\left(Y\right)\in{\cal F}_{{\cal X}\left(Z_{Y}\right)}^{\mathscr{Y}}$.
As a result, for any choice of $\varepsilon>0$ and $n\in\mathbb{N}$,
the selector $X_{n}^{\varepsilon}\triangleq\mathsf{S}_{n}^{\varepsilon}\left(Y\right)\in{\cal F}_{{\cal X}\left(Z_{Y}\right)}^{\mathscr{Y}}$
is such that
\begin{equation}
\mathbb{E}\left\{ \left.g\left(\cdot,X_{n}^{\varepsilon}\right)\right|\mathscr{Y}\right\} \left(\omega\right)\le\max\left\{ \vartheta\left(\omega\right),-n\right\} +\varepsilon,\quad{\cal P}-a.e..
\end{equation}
We are done.\hfill{}\ensuremath{\bigstar}

\medskip{}

\noindent \textbf{Step 8.} \textit{It is true that}
\begin{equation}
\inf_{X\in{\cal F}_{{\cal X}\left(Z_{Y}\right)}^{\mathscr{Y}}}\mathbb{E}\left\{ h\left(Y,X\right)\right\} \le\mathbb{E}\left\{ \inf_{\boldsymbol{x}\in{\cal X}\left(Z_{Y}\right)}h\left(Y,\boldsymbol{x}\right)\right\} .
\end{equation}
\rule[0.5ex]{0.2\columnwidth}{0.5pt}

Define the sequence of random variables $\left\{ \varpi_{n}:\Omega\rightarrow\overline{\mathbb{R}}\right\} _{n\in\mathbb{N}}$
as (see the RHS of (\ref{eq:EPSILON_CORE}))
\begin{equation}
\varpi_{n}\left(\omega\right)\triangleq\max\left\{ \vartheta\left(\omega\right),-n\right\} ,\quad\forall\left(\omega,n\right)\in\Omega\times\mathbb{N}.
\end{equation}
Also, recall that $\mathbb{E}\left\{ \vartheta\right\} <+\infty$.
Additionally, observe that
\begin{equation}
\varpi_{n}\left(\omega\right)\le\max\left\{ \vartheta\left(\omega\right),0\right\} \ge0,\quad\forall\left(\omega,n\right)\in\Omega\times\mathbb{N},
\end{equation}
where it is easy to show that $\mathbb{E}\left\{ \max\left\{ \vartheta,0\right\} \right\} <+\infty$.
Thus, all members of $\left\{ \varpi_{n}\right\} _{n\in\mathbb{N}}$
are bounded by an integrable random variable, everywhere in $\omega$
and uniformly in $n$, whereas it is trivial that, for every $\omega\in\Omega$,
$\varpi_{n}\left(\omega\right)\underset{n\rightarrow\infty}{\searrow}\vartheta\left(\omega\right).$

Consider now the result of \textbf{Step 7}, where we showed that,
for every $\varepsilon>0$ and for every $n\in\mathbb{N}$, there
exists a selector $X_{n}^{\varepsilon}\in{\cal F}_{{\cal X}\left(Z_{Y}\right)}^{\mathscr{Y}}$,
such that
\begin{equation}
h\left(Y,X_{n}^{\varepsilon}\right)\le\varpi_{n}+\varepsilon,\quad{\cal P}-a.e..
\end{equation}
We can then take expectations on both sides (note that all involved
integrals exist), to obtain
\begin{equation}
\mathbb{E}\left\{ h\left(Y,X_{n}^{\varepsilon}\right)\right\} \le\mathbb{E}\left\{ \varpi_{n}\right\} +\varepsilon.
\end{equation}
Since $X_{n}^{\varepsilon}\in{\cal F}_{{\cal X}\left(Z_{Y}\right)}^{\mathscr{Y}}$,
it also follows that
\begin{equation}
\inf_{X\in{\cal F}_{{\cal X}\left(Z_{Y}\right)}^{\mathscr{Y}}}\mathbb{E}\left\{ h\left(Y,X\right)\right\} \le\mathbb{E}\left\{ \varpi_{n}\right\} +\varepsilon.\label{eq:LAST?}
\end{equation}
It is also easy to see that $\varpi_{n}$ fulfills the requirements
of the Extended Monotone Convergence Theorem (\cite{Ash2000Probability},
Theorem 1.6.7 (b)). Therefore, we may pass to the limit on both sides
of (\ref{eq:LAST?}) as $n\rightarrow\infty$, yielding
\begin{equation}
\inf_{X\in{\cal F}_{{\cal X}\left(Z_{Y}\right)}^{\mathscr{Y}}}\mathbb{E}\left\{ h\left(Y,X\right)\right\} \le\mathbb{E}\left\{ \vartheta\right\} +\varepsilon.
\end{equation}
But $\varepsilon>0$ is arbitrary.\hfill{}\ensuremath{\bigstar}

Finally, just combine the statements of \textbf{Step 4} and \textbf{Step
8}, and the result follows, completing the proof of Lemma \ref{lem:FUND_Lemma}.
\end{proof}

\begin{rem}
Obviously, Lemma \ref{lem:FUND_Lemma} holds also for maximization
problems as well, by defining $g\equiv-f$, for some random function
$f:\Omega\times\mathbb{R}^{N}\rightarrow\mathbb{R}$, under the corresponding
setting and assumptions. Note that, in this case, we have to assume
that $\sup_{X\in{\cal F}_{{\cal X}\left(Z_{Y}\right)}^{\mathscr{Y}}}\mathbb{E}\left\{ f\left(\cdot,X\right)\right\} >-\infty$.\hfill{}\ensuremath{\blacksquare}
\end{rem}
\begin{rem}
Lemma \ref{lem:FUND_Lemma} may be considered a useful variation of
Theorem 14.60 in \cite{Rockafellar2009VarAn}, in the following sense.
First, it is specialized for conditional expectations of random functions,
which are additionally $\boldsymbol{SP}\diamondsuit{\cal F}_{{\cal X}\left(Z_{Y}\right)}^{\mathscr{Y}}$,
in the context of stochastic control. The latter property allows these
conditional expectations to be expressed as (Borel) random functions
themselves. This is in contrast to (\cite{Rockafellar2009VarAn},
Theorem 14.60), where it is assumed that the random function, whose
role is played by the respective conditional expectation in Lemma
\ref{lem:FUND_Lemma}, is somehow provided apriori. Second, Lemma
\ref{lem:FUND_Lemma} extends (\cite{Rockafellar2009VarAn}, Theorem
14.60), in the sense that the decision set ${\cal F}_{{\cal X}\left(Z_{Y}\right)}^{\mathscr{Y}}$
confines any solution to the respective optimization problem to be
a $\mathscr{Y}$-measurable selection of a closed-valued measurable
multifunction, while at the same time, apart from natural (and important)
measurability requirements, no continuity assumptions are imposed
on the structure of the random function induced by the respective
conditional expectation; only the validity of the substitution property
is required. In (\cite{Rockafellar2009VarAn}, Theorem 14.60), on
the other hand, it is respectively assumed that the involved random
function is a normal integrand, or, in other words, that it is random
lower semicontinuous.\hfill{}\ensuremath{\blacksquare}
\end{rem}
\begin{rem}
In Lemma \ref{lem:FUND_Lemma}, variational optimization is performed
over some subset of functions\textit{ measurable relative to $\mathscr{Y}\equiv\sigma\left\{ Y\right\} $},
where $Y$ is some given random element. Although we do not pursue
such an approach here, it would most probably be possible to develop
a more general version of Lemma \ref{lem:FUND_Lemma}, where the decision
set would be appropriately extended to include $\overline{\mathscr{Y}}$-measurable
random elements, as well. In such case, the definition of the substitution
property could be extended under the framework of lower semianalytic
functions and universal measurability, and would allow the development
of arguments showing existence of everywhere $\varepsilon$-optimal
and potentially everywhere optimal policies (decisions), in the spirit
of \cite{Bertsekas1978Stochastic,Bertsekas_Vol_2}.\hfill{}\ensuremath{\blacksquare}
\end{rem}

\subsubsection{\label{subsec:Partial_MIN}Guaranteeing the Existence of Measurable
Optimal Controls}

Although Lemma \ref{lem:FUND_Lemma} constitutes a very useful result,
which enables the simplification of a stochastic variational problem,
by essentially replacing it by an at least structurally simpler, pointwise
optimization problem, it does not provide insight on the existence
of a common optimal solution, \textit{within the respective decision
sets}.

On the one hand, it is easy to observe that, similarly to (\cite{Rockafellar2009VarAn},
Theorem 14.60), if there exists an optimal selection $X^{*}\in{\cal F}_{{\cal X}\left(Z_{Y}\right)}^{\mathscr{Y}}$,
such that
\begin{equation}
X^{*}\in\underset{\boldsymbol{x}\in{\cal X}\left(Z_{Y}\right)}{\mathrm{arg\,min}}\,\mathbb{E}\left\{ \left.g\left(\cdot,\boldsymbol{x}\right)\right|\mathscr{Y}\right\} \neq\varnothing,\quad{\cal P}-a.e.,\label{eq:OPTIMAL_POINT}
\end{equation}
and Lemma \ref{lem:FUND_Lemma} applies, then, exploiting the fact
that $g$ is $\boldsymbol{SP}\diamondsuit{\cal F}_{{\cal X}\left(Z_{Y}\right)}^{\mathscr{Y}}$,
we may write
\begin{flalign}
\inf_{X\in{\cal F}_{{\cal X}\left(Z_{Y}\right)}^{\mathscr{Y}}}\mathbb{E}\left\{ g\left(\cdot,X\right)\right\}  & \equiv\mathbb{E}\left\{ \vartheta\right\} \equiv\mathbb{E}\left\{ \xi\left(Y\right)\right\} \nonumber \\
 & =\mathbb{E}\left\{ h\left(Y,X^{*}\right)\right\} \nonumber \\
 & =\mathbb{E}\left\{ \mathbb{E}\left\{ \left.g\left(\cdot,X^{*}\right)\right|\mathscr{Y}\right\} \right\} \nonumber \\
 & \equiv\mathbb{E}\left\{ g\left(\cdot,X^{*}\right)\right\} ,
\end{flalign}
implying that the infimum of $\mathbb{E}\left\{ g\left(\cdot,X\right)\right\} $
over $X\in{\cal F}_{{\cal X}\left(Z_{Y}\right)}^{\mathscr{Y}}$ is
attained by $X^{*}$; therefore, $X^{*}$ is also an optimal solution
to the respective variational problem. \textit{Conversely}, if $X^{*}$
attains the infimum of $\mathbb{E}\left\{ g\left(\cdot,X\right)\right\} $
over $X\in{\cal F}_{{\cal X}\left(Z_{Y}\right)}^{\mathscr{Y}}$ \textit{and}
the infimum is greater than $-\infty$, then both $\mathbb{E}\left\{ g\left(\cdot,X^{*}\right)\right\} $
and $\mathbb{E}\left\{ \vartheta\right\} $ are finite, which also
implies that $\mathbb{E}\left\{ \left.g\left(\cdot,X^{*}\right)\right|\mathscr{Y}\right\} $
and $\vartheta$ are finite ${\cal P}-a.e.$. As a result, and recalling
\textbf{Step 3} in the proof of Lemma \ref{lem:FUND_Lemma}, we have
\begin{flalign}
\mathbb{E}\left\{ \mathbb{E}\left\{ \left.g\left(\cdot,X^{*}\right)\right|\mathscr{Y}\right\} -\vartheta\right\}  & \equiv0\quad\text{and}\\
\mathbb{E}\left\{ \left.g\left(\cdot,X^{*}\right)\right|\mathscr{Y}\right\} -\vartheta & \ge0,\quad{\cal P}-a.e..
\end{flalign}
and, consequently, $\vartheta\equiv\mathbb{E}\left\{ \left.g\left(\cdot,X^{*}\right)\right|\mathscr{Y}\right\} $,
${\cal P}-a.e.$.

Unfortunately, it is not possible to guarantee existence of such an
$X^{*}\in{\cal F}_{{\cal X}\left(Z_{Y}\right)}^{\mathscr{Y}}$, in
general. However, at least for the purposes of this paper, it is both
reasonable and desirable to demand the existence of an optimal solution
$X^{*}$, satisfying (\ref{eq:OPTIMAL_POINT}) (in our spatially controlled
beamforming problem, we \textit{need} to make a \textit{feasible}
decision on the position of the relays at the next time slot). Additionally,
such an optimal solution, if it exists, will not be available in closed
form, and, consequently, it will be impossible to verify measurability
directly. Therefore, we have to be able to show both existence and
measurability of $X^{*}$ \textit{indirectly}, and specifically, by
imposing constraints on the structure of the stochastic optimization
problem under consideration. One way to do this, \textit{emphasizing
on our spatially controlled beamforming problem formulation}, is to
restrict our attention to pointwise optimization problems involving
Carath\'eodory objectives, over closed-valued multifunctions, which
are additionally \textit{closed} -see Definition \ref{def:CLODES_mult}.

Focusing on Carath\'eodory functions is not particularly restrictive,
since it is already clear that, in order to guarantee the validity
of the substitution rule (the $\boldsymbol{SP}$ Property), similar
continuity assumptions would have to be imposed on both random functions
$g$ and $\mathbb{E}\left\{ \left.g\left(\cdot,\cdot\right)\right|\mathscr{Y}\right\} \equiv h$,
as Theorem \ref{thm:REP_EXP} suggests. At the same time, restricting
our attention to optimizing Carath\'eodory functions over measurable
multifunctions, measurability of optimal values and optimal decisions
is preserved, as the next theorem suggests.
\begin{thm}
\textbf{\textup{(Measurability under Partial Minimization)}} \label{thm:Measurability-Preservation}
On the base subspace $\left(\Omega,\mathscr{Y},\left.{\cal P}\right|_{\mathscr{Y}}\right)$,
where $\mathscr{Y}\subseteq\mathscr{F}$, let the random function
$H:\Omega\times\mathbb{R}^{N}\rightarrow\overline{\mathbb{R}}$ be
Carath\'eodory, and consider another random element $Z:\Omega\rightarrow\mathbb{R}^{N}$,
as well as any compact-valued multifunction ${\cal X}:\mathbb{R}^{N}\rightrightarrows\mathbb{R}^{N}$,
with $\mathrm{dom}\left({\cal X}\right)\equiv\mathbb{R}^{N}$, which
is also \textbf{closed}. Additionally, define $H^{*}:\Omega\rightarrow\overline{\mathbb{R}}$
as the optimal value to the optimization problem
\begin{equation}
\begin{array}{rl}
\underset{\boldsymbol{x}}{\mathrm{minimize}} & H\left(\omega,\boldsymbol{x}\right)\\
\mathrm{subject\,to} & \boldsymbol{x}\in{\cal X}\left(Z\left(\omega\right)\right)
\end{array},\quad\forall\omega\in\Omega.\label{eq:PROG_1}
\end{equation}
Then, $H^{*}$ is $\mathscr{Y}$-measurable and attained for at least
one $\mathscr{Y}$-measurable minimizer $X^{*}:\Omega\rightarrow\mathbb{R}^{N}$.
If the minimizer $X^{*}$ is unique, then it has to be $\mathscr{Y}$-measurable.
\end{thm}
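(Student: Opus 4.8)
The plan is to split Theorem \ref{thm:Measurability-Preservation} into two essentially independent parts. First I would show that the \emph{feasible-set correspondence} $\Phi\left(\omega\right)\triangleq{\cal X}\left(Z\left(\omega\right)\right)$ is a nonempty, compact-valued, $\mathscr{Y}$-measurable multifunction. Then, since $H$ is Carath\'eodory and $\Phi$ is measurable with nonempty compact values, I would invoke a measurable version of Berge's maximum theorem (the \emph{measurable maximum theorem}) to obtain, in a single step, $\mathscr{Y}$-measurability of the optimal value $H^{*}$, attainment of the infimum for every $\omega$, and $\mathscr{Y}$-measurability of the entire solution correspondence $M\left(\omega\right)$; a measurable selection theorem then yields the asserted $\mathscr{Y}$-measurable minimizer $X^{*}$, and the uniqueness case will follow at once, because a singleton-valued measurable correspondence is nothing but a measurable function.

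For the first part I would mirror \textbf{Step 1} of the proof of Lemma \ref{lem:FUND_Lemma}. Since ${\cal X}$ is a closed multifunction (Definition \ref{def:CLODES_mult}) with compact values, its graph is a closed subset of $\mathbb{R}^{N}\times\mathbb{R}^{N}$; exhausting $\mathbb{R}^{N}$ by closed balls, for any closed ${\cal A}\subseteq\mathbb{R}^{N}$ the preimage ${\cal X}^{-1}\left({\cal A}\right)$ is a countable union of continuous images of compact sets, hence $F_{\sigma}$, hence Borel -- that is, ${\cal X}$ is Borel measurable in the sense of Definition \ref{def:MEAS_mult} (see also \cite{Shapiro2009STOCH_PROG}, Remark 28; \cite{Rockafellar2009VarAn}, Theorem 14.8; \cite{Aliprantis2006_Inf}, Chapter 18). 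Consequently, for every closed ${\cal A}\subseteq\mathbb{R}^{N}$, $\Phi^{-1}\left({\cal A}\right)\equiv Z^{-1}\left({\cal X}^{-1}\left({\cal A}\right)\right)\in\mathscr{Y}$, because $Z$ is a random element on $\left(\Omega,\mathscr{Y}\right)$. Nonemptiness of $\Phi$ follows from $\mathrm{dom}\left({\cal X}\right)\equiv\mathbb{R}^{N}$, and compact-valuedness is inherited from ${\cal X}$. For closed-valued correspondences into $\mathbb{R}^{N}$, this notion of measurability coincides with weak measurability (measurability of preimages of open sets), which is exactly the hypothesis under which the measurable maximum theorem is stated.

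For the second part, recall that a Carath\'eodory $H$ (Definition \ref{def:Caratheodory}) is jointly $\mathscr{Y}\otimes\mathscr{B}\left(\mathbb{R}^{N}\right)$-measurable (\cite{Aliprantis2006_Inf}, Lemma 4.51), and $H\left(\omega,\cdot\right)$ is continuous on the compact set $\Phi\left(\omega\right)$, so the infimum in \eqref{eq:PROG_1} is attained for each $\omega\in\Omega$. The measurable maximum theorem (\cite{Aliprantis2006_Inf}, Theorem 18.19) then yields that $H^{*}$ is $\mathscr{Y}$-measurable and that the solution correspondence $M\left(\omega\right)\triangleq\left\{ \boldsymbol{x}\in\Phi\left(\omega\right)\left|H\left(\omega,\boldsymbol{x}\right)\equiv H^{*}\left(\omega\right)\right.\right\} $ is nonempty, compact-valued and $\mathscr{Y}$-measurable. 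A measurable selection theorem (Kuratowski--Ryll-Nardzewski, or the Castaing representation already invoked in the proof of Lemma \ref{lem:FUND_Lemma}) produces a $\mathscr{Y}$-measurable selector $X^{*}$ of $M$ -- the desired measurable minimizer. If the minimizer is unique for every $\omega$, then $M$ is singleton-valued; since a singleton-valued measurable correspondence is exactly a measurable function, $X^{*}\equiv M$ must be $\mathscr{Y}$-measurable. Note that, unlike the optimal value $\vartheta$ in Lemma \ref{lem:FUND_Lemma}, no passage to the completion $\overline{\mathscr{Y}}$ is needed here, precisely because we have restricted attention to Carath\'eodory objectives and closed (hence outer semicontinuous) feasible-set multifunctions.

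The only delicate point is bookkeeping around hypotheses: one must make sure that ``closed multifunction'' in the sense of Definition \ref{def:CLODES_mult} together with compactness of the values really does deliver Borel measurability of ${\cal X}$ on the \emph{non-complete} space $\left(\mathbb{R}^{N},\mathscr{B}\left(\mathbb{R}^{N}\right)\right)$ (closed graph alone does not suffice without the compact-valuedness, which is what makes the $F_{\sigma}$-exhaustion work), and that the extended-real range of $H$ does not disturb the maximum and selection theorems (it does not, since $\overline{\mathbb{R}}$ is compact metrizable). Everything else reduces to the cited results.
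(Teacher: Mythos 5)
Your proposal is correct and follows essentially the same route as the paper's proof: establish $\mathscr{Y}$-measurability of the composed feasible-set multifunction ${\cal X}\left(Z\left(\cdot\right)\right)$ (closedness plus compact values giving Borel measurability of ${\cal X}$, then composition with the random element $Z$ exactly as in Step 1 of the proof of Lemma \ref{lem:FUND_Lemma}), and then appeal to a standard measurability-under-partial-minimization result for Carath\'eodory objectives over measurable compact-valued multifunctions. The only difference is cosmetic — you cite the measurable maximum theorem of \cite{Aliprantis2006_Inf} where the paper cites \cite{Shapiro2009STOCH_PROG} and \cite{Rockafellar2009VarAn}, and you spell out the $F_{\sigma}$ argument and the solution correspondence explicitly — so no further comparison is needed.
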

\begin{proof}[Proof of Theorem \ref{thm:Measurability-Preservation}]
From (\cite{Shapiro2009STOCH_PROG}, pp. 365 - 367 and/or \cite{Rockafellar2009VarAn},
Example 14.32 \& Theorem 14.37), we may immediately deduce that $H^{*}$
is $\mathscr{Y}$-measurable and attained for at least one $\mathscr{Y}$-measurable
minimizer $X^{*}$, as long as the compact (therefore closed, as well)-valued
multifunction ${\cal X}\left(Z\left(\cdot\right)\right):\Omega\rightrightarrows\mathbb{R}^{N}$
is measurable relative to $\mathscr{Y}$. In order to show that the
composition ${\cal X}\left(Z\left(\cdot\right)\right)$ is $\mathscr{Y}$-measurable,
we use the assumption that the compact-valued multifunction ${\cal X}:\mathbb{R}^{N}\rightrightarrows\mathbb{R}^{N}$
is closed and, therefore, Borel measurable (Remark 28 in \cite{Shapiro2009STOCH_PROG},
p. 365). Then, $\mathscr{Y}$-measurability of ${\cal X}\left(Z\left(\cdot\right)\right)$
follows by the same arguments as in \textbf{Step 1}, in the proof
of Lemma \ref{lem:FUND_Lemma}.
\end{proof}
\begin{rem}
It would be important to mention that if one replaces $\mathbb{R}^{N}$
with any compact (say) subset ${\cal H}\subset\mathbb{R}^{N}$ in
the statement of Theorem \ref{thm:Measurability-Preservation}, then
the result continues to hold as is. No modification is necessary.
In our spatially controlled beamforming problem, this compact set
${\cal H}$ is specifically identified either with the hypercubic
region ${\cal S}^{R}$, or with some compact subset of it.\hfill{}\ensuremath{\blacksquare}
\end{rem}

\subsubsection{\label{subsec:FINAL}Fusion \& Derivation of Conditions C1-C6}

Finally, combining Theorem \ref{thm:REP_EXP}, Lemma \ref{lem:FUND_Lemma}
and Theorem \ref{thm:Measurability-Preservation}, we may directly
formulate the following constrained version of the Fundamental Lemma,
which is of central importance regarding the special class of stochastic
problems considered in this work and, in particular, (\ref{eq:2STAGE-1}).
\begin{lem}
\textbf{\textup{(Fundamental Lemma / Fused Version)\label{lem:FUND_Lemma_FINAL}}}
On $\left(\Omega,\mathscr{F},{\cal P}\right)$, consider a random
element $Y:\Omega\rightarrow\mathbb{R}^{M}$, the sub $\sigma$-algebra
$\mathscr{Y}\triangleq\sigma\left\{ Y\right\} \subseteq\mathscr{F}$,
a random function $g:\Omega\times\mathbb{R}^{N}\rightarrow\mathbb{R}$,
such that $\mathbb{E}\left\{ g\left(\cdot,\boldsymbol{x}\right)\right\} $
exists for all $\boldsymbol{x}\in\mathbb{R}^{N}$, a multifunction
${\cal X}:\mathbb{R}^{N}\rightrightarrows\mathbb{R}^{N}$, with $\mathrm{dom}\left({\cal X}\right)\equiv\mathbb{R}^{N}$
and , as well as another function $Z_{Y}:\Omega\rightarrow\mathbb{R}^{N}$.
Assume that:
\begin{description}
\item [{C1.}] ${\cal X}$ is compact-valued and closed, and that
\item [{C2.}] $Z_{Y}$ is a $\mathscr{Y}$-measurable random element.
\end{description}
Consider also the nonempty decision set ${\cal F}_{{\cal X}\left(Z_{Y}\right)}^{\mathscr{Y}}$.
Additionally, suppose that:
\begin{description}
\item [{C3.}] $\mathbb{E}\left\{ g\left(\cdot,X\right)\right\} $ exists
for all $X\in{\cal F}_{{\cal X}\left(Z_{Y}\right)}^{\mathscr{Y}}$,
with $\inf_{X\in{\cal F}_{{\cal X}\left(Z_{Y}\right)}^{\mathscr{Y}}}\mathbb{E}\left\{ g\left(\cdot,X\right)\right\} <+\infty$,
\item [{C4.}] $g$ is dominated by a ${\cal P}$-integrable function, uniformly
in $\boldsymbol{x}\in\mathbb{R}^{N}$,
\item [{C5.}] $g$ is Carath\'eodory on $\Omega\times\mathbb{R}^{N}$,
and that
\item [{C6.}] $\mathbb{E}\left\{ \left.g\left(\cdot,\boldsymbol{x}\right)\right|\mathscr{Y}\right\} \equiv h\left(Y,\boldsymbol{x}\right)$
is Carath\'eodory on $\Omega\times\mathbb{R}^{N}$.
\end{description}
\noindent Then, the optimal value function $\inf_{\boldsymbol{x}\in{\cal X}\left(Z_{Y}\right)}\mathbb{E}\left\{ \left.g\left(\cdot,\boldsymbol{x}\right)\right|\mathscr{Y}\right\} \triangleq\vartheta$
is $\mathscr{Y}$-measurable, and it is true that
\begin{equation}
\inf_{X\in{\cal F}_{{\cal X}\left(Z_{Y}\right)}^{\mathscr{Y}}}\mathbb{E}\left\{ g\left(\cdot,X\right)\right\} \equiv\mathbb{E}\left\{ \vartheta\right\} \equiv\mathbb{E}\left\{ g\left(\cdot,X^{*}\right)\right\} ,\label{eq:Lemma-FUSED}
\end{equation}
for at least one $X^{*}\in{\cal F}_{{\cal X}\left(Z_{Y}\right)}^{\mathscr{Y}}$
, such that $X^{*}\left(\omega\right)\in\mathrm{arg\,min}_{\boldsymbol{x}\in{\cal X}\left(Z_{Y}\right)}\,\mathbb{E}\left\{ \left.g\left(\cdot,\boldsymbol{x}\right)\right|\mathscr{Y}\right\} \left(\omega\right)$,
everywhere in $\omega\in\Omega$. If there is only one minimizer attaining
$\vartheta$, then it has to be $\mathscr{Y}$-measurable.
\end{lem}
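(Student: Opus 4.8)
The plan is to obtain Lemma \ref{lem:FUND_Lemma_FINAL} as a pure \emph{fusion} of the three preceding results — Theorem \ref{thm:REP_EXP}, Lemma \ref{lem:FUND_Lemma}, and Theorem \ref{thm:Measurability-Preservation} — by checking that conditions \textbf{C1}--\textbf{C6} supply exactly the hypotheses each of them requires, and then stitching the conclusions together. No genuinely new argument is needed; the content is a compatibility audit.

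First I would invoke Theorem \ref{thm:REP_EXP}: conditions \textbf{C4}, \textbf{C5}, \textbf{C6} are precisely its three bullet hypotheses (domination of $g$ by a ${\cal P}$-integrable function; $g$ Carath\'eodory on $\Omega\times\mathbb{R}^{N}$; $\mathbb{E}\{g(\cdot,\boldsymbol{x})\mid\mathscr{Y}\}$ Carath\'eodory on $\Omega\times\mathbb{R}^{N}$), so $g$ is $\boldsymbol{SP}\diamondsuit\mathfrak{I}_{\mathscr{Y}}$. Since, by construction, ${\cal F}_{{\cal X}(Z_Y)}^{\mathscr{Y}}\subseteq\mathfrak{I}_{\mathscr{Y}}$ — every $\mathscr{Y}$-measurable selection of ${\cal X}(Z_Y)$ is $\mathscr{Y}$-measurable, and \textbf{C3} guarantees $\mathbb{E}\{g(\cdot,X)\}$ exists for each such $X$ — it follows at once that $g$ is $\boldsymbol{SP}\diamondsuit{\cal F}_{{\cal X}(Z_Y)}^{\mathscr{Y}}$, which is the substitution hypothesis of Lemma \ref{lem:FUND_Lemma}. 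The one point worth spelling out is that the jointly Borel $h$ of Definition \ref{def:SUB} coincides, everywhere in $(\omega,\boldsymbol{x})\in\Omega\times\mathbb{R}^{N}$, with $\mathbb{E}\{g(\cdot,\boldsymbol{x})\mid\mathscr{Y}\}(\omega)$; hence \textbf{C6} is literally the assertion that $h$, viewed on $\mathbb{R}^{M}\times\mathbb{R}^{N}$, is Carath\'eodory — a fact I will reuse in the third step.

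Next, with \textbf{C1} (so ${\cal X}$ is a closed, compact-valued multifunction, hence Borel measurable by Remark 28 of \cite{Shapiro2009STOCH_PROG} / Definition \ref{def:CLODES_mult}), \textbf{C2} together with the standard factorization $\mathscr{Y}=\sigma\{Y\}$ (which produces a Borel ${\cal Z}$ with $Z_Y={\cal Z}(Y)$, as required by Lemma \ref{lem:FUND_Lemma}), \textbf{C3}, and the $\boldsymbol{SP}$ property just established, Lemma \ref{lem:FUND_Lemma} applies verbatim: ${\cal F}_{{\cal X}(Z_Y)}^{\mathscr{Y}}$ is nonempty, $\vartheta\triangleq\inf_{\boldsymbol{x}\in{\cal X}(Z_Y)}\mathbb{E}\{g(\cdot,\boldsymbol{x})\mid\mathscr{Y}\}$ is $\overline{\mathscr{Y}}$-measurable, and $\inf_{X\in{\cal F}_{{\cal X}(Z_Y)}^{\mathscr{Y}}}\mathbb{E}\{g(\cdot,X)\}\equiv\mathbb{E}\{\vartheta\}$. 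Separately, I would apply Theorem \ref{thm:Measurability-Preservation} on the subspace $(\Omega,\mathscr{Y},\left.{\cal P}\right|_{\mathscr{Y}})$ with $H\equiv h$ (Carath\'eodory by \textbf{C6}), $Z\equiv Z_Y$ ($\mathscr{Y}$-measurable by \textbf{C2}), and the closed, compact-valued ${\cal X}$ (by \textbf{C1}); its conclusion is that the optimal value of $\min_{\boldsymbol{x}\in{\cal X}(Z_Y(\omega))}h(Y(\omega),\boldsymbol{x})$ — which \emph{is} $\vartheta$, by the substitution identity of Step 1 — is in fact $\mathscr{Y}$-measurable (the sharper statement, superseding the $\overline{\mathscr{Y}}$-measurability from Lemma \ref{lem:FUND_Lemma}), attained at some $\mathscr{Y}$-measurable minimizer $X^{*}$, with uniqueness forcing $X^{*}$ to be $\mathscr{Y}$-measurable. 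By construction $X^{*}$ is a $\mathscr{Y}$-measurable selection of ${\cal X}(Z_Y)$, so $X^{*}\in{\cal F}_{{\cal X}(Z_Y)}^{\mathscr{Y}}$, and $X^{*}(\omega)\in\mathrm{arg\,min}_{\boldsymbol{x}\in{\cal X}(Z_Y)}\mathbb{E}\{g(\cdot,\boldsymbol{x})\mid\mathscr{Y}\}(\omega)$ everywhere in $\omega$.

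Finally I would close the loop exactly as in the discussion opening Section \ref{subsec:Partial_MIN}: since $X^{*}$ attains $\vartheta$ pointwise and $g$ is $\boldsymbol{SP}\diamondsuit{\cal F}_{{\cal X}(Z_Y)}^{\mathscr{Y}}$, the tower property gives $\mathbb{E}\{g(\cdot,X^{*})\}\equiv\mathbb{E}\{\mathbb{E}\{g(\cdot,X^{*})\mid\mathscr{Y}\}\}\equiv\mathbb{E}\{h(Y,X^{*})\}\equiv\mathbb{E}\{\vartheta\}$, which combined with the Lemma \ref{lem:FUND_Lemma} identity yields (\ref{eq:Lemma-FUSED}). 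I do not anticipate a real obstacle — it is all bookkeeping — but the one spot that demands care is the compatibility check that the minimizer handed back by Theorem \ref{thm:Measurability-Preservation} (which a priori only "sees" $h$ and ${\cal X}$) is admissible for, and optimal in, the original variational problem over ${\cal F}_{{\cal X}(Z_Y)}^{\mathscr{Y}}$; this is precisely where the identification $h=\mathbb{E}\{g(\cdot,\cdot)\mid\mathscr{Y}\}$ coming from the $\boldsymbol{SP}$ property, pinned down in the first step, does the decisive work.
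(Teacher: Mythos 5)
Your proposal is correct and follows essentially the same route as the paper's proof: establish $\boldsymbol{SP}\diamondsuit\mathfrak{I}_{\mathscr{Y}}$ from \textbf{C4}--\textbf{C6} via Theorem \ref{thm:REP_EXP}, restrict to ${\cal F}_{{\cal X}(Z_Y)}^{\mathscr{Y}}$, apply Lemma \ref{lem:FUND_Lemma} under \textbf{C1}--\textbf{C3}, obtain the measurable minimizer from Theorem \ref{thm:Measurability-Preservation}, and close with the substitution identity and the tower property. Your explicit remark that \textbf{C2} yields the Borel factorization $Z_Y={\cal Z}(Y)$ needed by Lemma \ref{lem:FUND_Lemma} is a detail the paper leaves implicit, but the argument is otherwise identical.
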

\begin{proof}[Proof of Lemma \ref{lem:FUND_Lemma_FINAL}]
We just carefully combine Theorem \ref{thm:REP_EXP}, Lemma \ref{lem:FUND_Lemma}
and Theorem \ref{thm:Measurability-Preservation}. First, if conditions
\textbf{C4-C6} are satisfied, then, from Theorem \ref{thm:REP_EXP},
it follows that $g$ is $\boldsymbol{SP}\diamondsuit\mathfrak{I}_{\mathscr{Y}}$.
Then, since ${\cal F}_{{\cal X}\left(Z_{Y}\right)}^{\mathscr{Y}}\subseteq\mathfrak{I}_{\mathscr{Y}}$,
$g$ is $\boldsymbol{SP}\diamondsuit{\cal F}_{{\cal X}\left(Z_{Y}\right)}^{\mathscr{Y}}$,
as well. Consequently, with \textbf{C1-C3 }being true, all assumptions
of Lemma \ref{lem:FUND_Lemma} are satisfied, and the first equivalence
of (\ref{eq:Lemma-FUSED}) from the left is true. Additionally, from
Theorem \ref{thm:Measurability-Preservation}, it easily follows that
the optimal value $\vartheta$ is $\mathscr{Y}$-measurable, attained
by an at least one $\mathscr{Y}$-measurable $X^{*}$, which, of course,
constitutes a selection of ${\cal X}\left({\cal Z}\left(Y\right)\right)\equiv{\cal X}\left(Z_{Y}\right)$,
or, equivalently, $X^{*}\in{\cal F}_{{\cal X}\left(Z_{Y}\right)}^{\mathscr{Y}}$.
Then, because $g$ is $\boldsymbol{SP}\diamondsuit{\cal F}_{{\cal X}\left(Z_{Y}\right)}^{\mathscr{Y}},$
we may write
\begin{flalign}
\vartheta & \equiv\inf_{\boldsymbol{x}\in{\cal X}\left(Z_{Y}\right)}\mathbb{E}\left\{ \left.g\left(\cdot,\boldsymbol{x}\right)\right|\mathscr{Y}\right\} \nonumber \\
 & \equiv\inf_{\boldsymbol{x}\in{\cal X}\left(Z_{Y}\right)}h\left(Y,\boldsymbol{x}\right)\nonumber \\
 & \equiv h\left(Y,X^{*}\right)\nonumber \\
 & \equiv\mathbb{E}\left\{ \left.g\left(\cdot,X^{*}\right)\right|\mathscr{Y}\right\} ,\quad{\cal P}-a.e.,
\end{flalign}
which yields the equivalence $\mathbb{E}\left\{ \vartheta\right\} \equiv\mathbb{E}\left\{ g\left(\cdot,X^{*}\right)\right\} $.
The proof is complete.
\end{proof}
\begin{rem}
Note that, because, in Lemma \ref{lem:FUND_Lemma_FINAL}, $X^{*}\left(\omega\right)\in{\cal X}\left(Z_{Y}\left(\omega\right)\right)$,
\textit{everywhere} in $\omega\in\Omega$, it is true that $X^{*}$
is actually a minimizer of the slightly more constrained problem of
infimizing $\mathbb{E}\left\{ g\left(\cdot,X\right)\right\} $ over
the set of \textit{precisely} all $\mathscr{Y}$-measurable selections
of ${\cal X}\left(Z_{Y}\right)$. Denoting this decision set as ${\cal F}_{{\cal X}\left(Z_{Y}\right)}^{\mathscr{Y},E}\subseteq{\cal F}_{{\cal X}\left(Z_{Y}\right)}^{\mathscr{Y}}$,
the aforementioned statement is true since, simply,
\begin{flalign}
\inf_{X\in{\cal F}_{{\cal X}\left(Z_{Y}\right)}^{\mathscr{Y},E}}\mathbb{E}\left\{ g\left(\cdot,X\right)\right\}  & \ge\inf_{X\in{\cal F}_{{\cal X}\left(Z_{Y}\right)}^{\mathscr{Y}}}\mathbb{E}\left\{ g\left(\cdot,X\right)\right\} \equiv\mathbb{E}\left\{ g\left(\cdot,X^{*}\right)\right\} \\
\implies\inf_{X\in{\cal F}_{{\cal X}\left(Z_{Y}\right)}^{\mathscr{Y},E}}\mathbb{E}\left\{ g\left(\cdot,X\right)\right\}  & \equiv\mathbb{E}\left\{ g\left(\cdot,X^{*}\right)\right\} .
\end{flalign}
where we have used the fact that $X^{*}\in{\cal F}_{{\cal X}\left(Z_{Y}\right)}^{\mathscr{Y},E}$.
This type of decision set is considered, for simplicity, in (\ref{eq:2STAGE-1}),
which corresponds to the original formulation of the spatially controlled
beamforming problem.\hfill{}\ensuremath{\blacksquare}
\end{rem}
Lemma \ref{lem:FUND_Lemma_FINAL} is of major importance, as it directly
provides us with conditions \textbf{C1-C6}, which, being relatively
easily verifiable, at least for our spatially controlled beamforming
setting, ensure strict theoretical consistency of the methods developed
in this paper. At this point, our discussion concerning the Fundamental
Lemma has been concluded.\hfill{}\ensuremath{\blacksquare}

\subsection{Appendix C: Proofs / Section \ref{sec:MobRelBeam}}

\subsubsection{Proof of Theorem \ref{lem:C1C4_SAT}}


Since, in the following, we are going to verify conditions \textbf{C1-C6}
of Lemma \ref{lem:FUND_Lemma_FINAL} in Section \ref{subsec:FINAL}
(Appendix B) for the $2$-stage problem (\ref{eq:2STAGE-SINR-2}),
it will be useful to first match it to the setting of Lemma \ref{lem:FUND_Lemma_FINAL},
term-by-term. Table \ref{tab:Variable-matching} shows how the components
of (\ref{eq:2STAGE-SINR-2}) are matched to the respective components
of the optimization problem considered in Lemma \ref{lem:FUND_Lemma_FINAL}.
For the rest of the proof, we consider this variable matching automatic.

Keep $t\in\mathbb{N}_{N_{T}}^{2}$ \textit{fixed}. As in the statement
of Theorem \ref{lem:C1C4_SAT}, suppose that, at time slot $t-1\in\mathbb{N}_{N_{T}-1}^{+}$,
${\bf p}^{o}\left(t-1\right)\equiv{\bf p}^{o}\left(\omega,t-1\right)$
is measurable relative to $\mathscr{C}\left({\cal T}_{t-2}\right)$.
Then, condition \textbf{C2 }is automatically verified.\hfill{}\ensuremath{\bigstar}

Next, let us verify \textbf{C1}. For this, we will simply show directly
that closed-valued translated multifunctions, in the sense of Definition
\ref{def:(Translated-Multifunctions)}, are also closed. Given two
\textit{closed} sets ${\cal H}\subset\mathbb{R}^{N}$, ${\cal A}\subseteq\mathbb{R}^{N}$
and a fixed reference $\boldsymbol{h}\in{\cal H}$, let ${\cal D}:\mathbb{R}^{N}\rightrightarrows\mathbb{R}^{N}$
be $\left({\cal H},\boldsymbol{h}\right)$-translated in ${\cal A}$
and consider any two arbitrary sequences 
\begin{equation}
\left\{ \boldsymbol{x}_{k}\in{\cal A}\right\} _{k\in\mathbb{N}}\quad\text{and}\quad\left\{ \boldsymbol{y}_{k}\in{\cal A}-\boldsymbol{h}\right\} _{k\in\mathbb{N}},
\end{equation}
such that $\boldsymbol{x}_{k}\underset{k\rightarrow\infty}{\longrightarrow}\boldsymbol{x}$,
$\boldsymbol{y}_{k}\underset{k\rightarrow\infty}{\longrightarrow}\boldsymbol{y}$
and $\boldsymbol{x}_{k}\in{\cal D}\left(\boldsymbol{y}_{k}\right)$,
for all $k\in\mathbb{N}$. By Definition \ref{def:(Translated-Multifunctions)},
$\boldsymbol{x}_{k}\in{\cal D}\left(\boldsymbol{y}_{k}\right)$ if
and only if $\boldsymbol{x}_{k}-\boldsymbol{y}_{k}\in{\cal H}$, for
all $k\in\mathbb{N}$. But $\boldsymbol{x}_{k}-\boldsymbol{y}_{k}\underset{k\rightarrow\infty}{\longrightarrow}\boldsymbol{x}-\boldsymbol{y}$
and ${\cal H}$ is closed. Therefore, it is true that $\boldsymbol{x}-\boldsymbol{y}\in{\cal H}$,
as well, showing that ${\cal D}$ is closed. By Assumption \ref{assu:AS_(TransMultifunctions)},
${\cal C}:\mathbb{R}^{2R}\rightrightarrows\mathbb{R}^{2R}$ is the
$\left({\cal G},{\bf 0}\right)$-translated multifunction in ${\cal S}^{R}$,
for some compact and, hence, closed, ${\cal G}\subset{\cal S}^{R}$.
Consequently, the restriction of ${\cal C}$ in ${\cal S}^{R}$ is
closed and \textbf{C3} is verified.\hfill{}\ensuremath{\bigstar}

Condition \textbf{C5 }is also easily verified; it suffices to show
that both functions $\left|f\left(\cdot,\cdot,t\right)\right|^{2}$
and $\left|g\left(\cdot,\cdot,t\right)\right|^{2}$ are Carath\'eodory
on $\Omega\times{\cal S}$, or, in other words, that the fields $\left|f\left({\bf p},t\right)\right|^{2}$
and $\left|g\left({\bf p},t\right)\right|^{2}$ are everywhere sample
path continuous. Indeed, if this holds, $V_{I}\left(\cdot,\cdot,t\right)$
will be Carath\'eodory, as a continuous functional of $\left|f\left(\cdot,\cdot,t\right)\right|^{2}$
and $\left|g\left(\cdot,\cdot,t\right)\right|^{2}$, and since
\begin{equation}
V\left(\left[{\bf p}_{1}^{\boldsymbol{T}}\,\ldots\,{\bf p}_{R}^{\boldsymbol{T}}\right]^{\boldsymbol{T}},t\right)\equiv\sum_{i\in\mathbb{N}_{R}^{+}}V_{I}\left({\bf p}_{i},t\right),
\end{equation}
it readily follows that $V\left(\cdot,\cdot,t\right)$ is Carath\'eodory
on $\Omega\times{\cal S}^{R}$. In order to show (everywhere) sample
path continuity of $\left|f\left({\bf p},t\right)\right|^{2}$ (respectively
$\left|g\left({\bf p},t\right)\right|^{2}$) on ${\cal S}$, we may
utilize (\ref{eq:CONVERTER}). As a result, sample path continuity
of $\left|f\left({\bf p},t\right)\right|^{2}$ is equivalent to sample
path continuity of
\begin{equation}
F\left({\bf p},t\right)\equiv\alpha_{S}\left({\bf p}\right)\ell+\sigma_{S}\left({\bf p},t\right)+\xi_{S}\left({\bf p},t\right),\quad\forall{\bf p}\in{\cal S}.
\end{equation}
Of course, $\alpha_{S}$ is a continuous function of ${\bf p}$. As
long as the fields $\sigma_{S}\left({\bf p},t\right)$ and $\xi_{S}\left({\bf p},t\right)$
are concerned, these are also sample path continuous; see Section
\ref{subsec:TECHNICAL_1}. Enough said.
\begin{table}
\centering%
\begin{tabular}{|>{\centering}m{0.25\paperwidth}|>{\centering}m{0.35\paperwidth}|}
\hline 
Problem of Lemma \ref{lem:FUND_Lemma_FINAL} & $2$-Stage Problem (\ref{eq:2STAGE-SINR-2}) \tabularnewline
\hline 
\hline 
Random element $Y:\Omega\rightarrow\mathbb{R}^{M}$ & \textit{All} relay positions \textit{and} channel observations, \\
up to (current) time slot $t-1$ \tabularnewline
\hline 
$\sigma$-Algebra $\mathscr{Y}\triangleq\sigma\left\{ Y\right\} $ & $\sigma$-Algebra $\mathscr{C}\left({\cal T}_{t-1}\right)$, jointly
generated\\
by the above random vector\tabularnewline
\hline 
Random Function $g:\Omega\times\mathbb{R}^{N}\rightarrow\mathbb{R}$ & Optimal value of the second-stage problem,\\
 $V\left(\cdot,\cdot,t-1\right):\Omega\times{\cal S}^{R}\rightarrow\mathbb{R}_{++}$\tabularnewline
\hline 
Multifunction ${\cal X}:\mathbb{R}^{N}\rightrightarrows\mathbb{R}^{N}$, 

with $\mathrm{dom}\left({\cal X}\right)\equiv\mathbb{R}^{N}$ & Spatially feasible motion region\\
${\cal C}:{\cal S}^{R}\rightrightarrows{\cal S}^{R}$, with $\mathrm{dom}\left({\cal C}\right)\equiv{\cal S}^{R}$\tabularnewline
\hline 
Function $Z_{Y}:\Omega\rightarrow\mathbb{R}^{N}$ & \textit{Selected} motion policy at time slot $t-2$,\\
 ${\bf p}^{o}\left(\cdot,t-1\right):\Omega\rightarrow{\cal S}^{R}$\tabularnewline
\hline 
Decision set ${\cal F}_{{\cal X}\left(Z_{Y}\right)}^{\mathscr{Y}}$ & Decision set ${\cal D}_{t}$\\
(precisely matched with ${\cal F}_{{\cal X}\left(Z_{Y}\right)}^{\mathscr{Y},E}$)\tabularnewline
\hline 
\end{tabular}

\caption{\label{tab:Variable-matching}Variable matching for (\ref{eq:2STAGE-SINR-2})
and the respective problem considered in Lemma \ref{lem:FUND_Lemma_FINAL}.}
\end{table}
\hfill{}\ensuremath{\bigstar}

We continue with \textbf{C3}. Since we already know that $V\left(\cdot,\cdot,t\right)$
is Carath\'eodory, it follows from (\cite{Aliprantis2006_Inf}, Lemma
4.51) that $V\left(\cdot,\cdot,t\right)$ is also jointly measurable
relative to $\mathscr{F}\otimes\mathscr{B}\left({\cal S}^{R}\right)$.
Next, let ${\bf p}\left(t\right)\equiv{\bf p}\left(\omega,t\right)\in{\cal S}^{R}$
be \textit{any} random element, measurable with respect to $\mathscr{C}\left({\cal T}_{t-1}\right)$
and, thus, $\mathscr{F}$, too. Then, from (\cite{Aliprantis2006_Inf},
Lemma 4.49), we know that the pair $\left({\bf p}\left(t,\omega\right),\left(t,\omega\right)\right)$
is also $\mathscr{F}$-measurable. Consequently, $\left|V\left(\cdot,{\bf p}\left(\cdot,t\right),t\right)\right|^{2}$
must be $\mathscr{F}$-measurable, as a composition of measurable
functions. Additionally, $V\left(\cdot,\cdot,t\right)$ is, by definition,
nonnegative. Thus, its expectation exists (Corollary 1.6.4 in \cite{Ash2000Probability}),
and we are done.\hfill{}\ensuremath{\bigstar}

Conditions \textbf{C4 }and \textbf{C6} need slightly more work, in
order to be established. To verify \textbf{C4}, we have to show existence
of a function in ${\cal L}_{1}\left(\Omega,\mathscr{F},{\cal P};\mathbb{R}\right)$,
which dominates $V\left(\cdot,\cdot,t\right)$, \textit{uniformly}
in ${\bf p}\in{\cal S}^{R}$. Everywhere in $\Omega$, again using
(\ref{eq:CONVERTER}), and with $\varsigma\triangleq\log\left(10\right)/10$
for brevity, we may write
\begin{flalign}
V\left(\left[{\bf p}_{1}^{\boldsymbol{T}}\,\ldots\,{\bf p}_{R}^{\boldsymbol{T}}\right]^{\boldsymbol{T}},t\right) & \equiv\sum_{i\in\mathbb{N}_{R}^{+}}\dfrac{P_{c}P_{0}\left|f\left({\bf p}_{i},t\right)\right|^{2}\left|g\left({\bf p}_{i},t\right)\right|^{2}}{P_{0}\sigma_{D}^{2}\left|f\left({\bf p}_{i},t\right)\right|^{2}+P_{c}\sigma^{2}\left|g\left({\bf p}_{i},t\right)\right|^{2}+\sigma^{2}\sigma_{D}^{2}}\nonumber \\
 & \le\dfrac{P_{0}}{\sigma^{2}}\sum_{i\in\mathbb{N}_{R}^{+}}\left|f\left({\bf p}_{i},t\right)\right|^{2}\nonumber \\
 & \le\dfrac{P_{0}}{\sigma^{2}}\sum_{i\in\mathbb{N}_{R}^{+}}\sup_{{\bf p}_{i}\in{\cal S}}\left|f\left({\bf p}_{i},t\right)\right|^{2}\nonumber \\
 & \equiv\dfrac{10^{\rho/10}P_{0}R}{\sigma^{2}}\sup_{{\bf p}\in{\cal S}}\exp\left(\varsigma F\left({\bf p},t\right)\right)\nonumber \\
 & \equiv\dfrac{10^{\rho/10}P_{0}R}{\sigma^{2}}\exp\hspace{-2pt}\left(\hspace{-2pt}{\displaystyle \varsigma\sup_{{\bf p}\in{\cal S}}F\left({\bf p},t\right)\hspace{-2pt}}\right)\nonumber \\
 & \equiv\dfrac{10^{\rho/10}P_{0}R}{\sigma^{2}}\exp\hspace{-2pt}\left(\hspace{-2pt}{\displaystyle \varsigma\sup_{{\bf p}\in{\cal S}}\alpha_{S}\left({\bf p}\right)\ell+\sigma_{S}\left({\bf p},t\right)+\xi_{S}\left({\bf p},t\right)\hspace{-2pt}}\right)\nonumber \\
 & \triangleq\dfrac{10^{\rho/10}P_{0}R}{\sigma^{2}}\exp\hspace{-2pt}\left({\displaystyle \hspace{-2pt}\varsigma\sup_{{\bf p}\in{\cal S}}\alpha_{S}\left({\bf p}\right)\ell+\chi_{S}\left({\bf p},t\right)\hspace{-2pt}}\right)\nonumber \\
 & \le\dfrac{10^{\rho/10}P_{0}R}{\sigma^{2}}\exp\hspace{-2pt}\left(\hspace{-2pt}{\displaystyle \varsigma\ell\sup_{{\bf p}\in{\cal S}}\alpha_{S}\left({\bf p}\right)}\hspace{-2pt}\right)\exp\hspace{-2pt}\left(\hspace{-2pt}{\displaystyle \varsigma\sup_{{\bf p}\in{\cal S}}\chi_{S}\left({\bf p},t\right)}\hspace{-2pt}\right)\nonumber \\
 & \triangleq\varphi\left(\omega,t\right)>0,\quad\forall\omega\in\Omega.
\end{flalign}
Due to the fact that $\alpha_{S}$ is continuous in ${\bf p}\in{\cal S}$
and that ${\cal S}$ is compact, the Extreme Value Theorem implies
that the deterministic term $\sup_{{\bf p}\in{\cal S}}\alpha_{S}\left({\bf p}\right)$
is finite. Consequently, it suffices to show that
\begin{equation}
\mathbb{E}\left\{ \exp\hspace{-2pt}\left(\hspace{-2pt}{\displaystyle \varsigma\sup_{{\bf p}\in{\cal S}}\chi_{S}\left({\bf p},t\right)\hspace{-2pt}}\right)\hspace{-2pt}\right\} <+\infty,\label{eq:Borell_1}
\end{equation}
provided, of course, that the expectation is meaningfully defined.
For this to happen, it suffices that the function $\sup_{{\bf p}\in{\cal S}}\chi_{S}\left({\bf p},t\right)$
is a well defined random variable. Since both $\sigma_{S}\left({\bf p},t\right)$
and $\xi_{S}\left({\bf p},t\right)$ are sample path continuous, it
follows that the sum field $\sigma_{S}\left({\bf p},t\right)+\xi_{S}\left({\bf p},t\right)$
is sample path continuous. It is then relatively easy to see that
$\sup_{{\bf p}\in{\cal S}}\chi_{S}\left({\bf p},t\right)$ is a measurable
function. See, for instance, Theorem \ref{thm:Measurability-Preservation},
or \cite{Adler2009_Random}. Additionally, the Extreme Value Theorem
again implies that $\sup_{{\bf p}\in{\cal S}}\chi_{S}\left({\bf p},t\right)$
is finite \textit{everywhere} on $\Omega$, which in turn means that
the field $\chi_{S}\left({\bf p},t\right)$ is \textit{at least} almost
everywhere bounded on the compact set ${\cal S}$.

Now, in order to prove that (\ref{eq:Borell_1}) is indeed true, we
will invoke a well known result from the theory of concentration of
measure, the \textit{Borell-TIS Inequality}, which now follows.
\begin{thm}
\textbf{\textup{(Borell-TIS Inequality \cite{Adler2009_Random}) \label{thm:Borell_TIS}}}Let
$X\left(\boldsymbol{s}\right)$, $\boldsymbol{s}\in\mathbb{R}^{N}$,
be a real-valued, zero-mean, Gaussian random field, ${\cal P}$-almost
everywhere bounded on a compact subset ${\cal K}\subset\mathbb{R}^{N}$.
Then, it is true that
\begin{flalign}
\mathbb{E}\left\{ \sup_{\boldsymbol{s}\in{\cal K}}X\left(\boldsymbol{s}\right)\right\}  & <+\infty\quad\text{and}\\
{\cal P}\left(\sup_{\boldsymbol{s}\in{\cal K}}X\left(\boldsymbol{s}\right)-\mathbb{E}\left\{ \sup_{\boldsymbol{s}\in{\cal K}}X\left(\boldsymbol{s}\right)\right\} >u\right) & \le\exp\left(\hspace{-2pt}-\dfrac{u^{2}}{{\displaystyle 2\sup_{\boldsymbol{s}\in{\cal K}}\mathbb{E}\left\{ X^{2}\left(\boldsymbol{s}\right)\right\} }}\right),
\end{flalign}
for all $u>0$.
\end{thm}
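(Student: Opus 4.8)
The plan is to prove the Borell-TIS inequality by the classical three-step route: reduce the supremum over the uncountable compact set ${\cal K}$ to a supremum over a countable dense set, reduce the latter to a finite-dimensional Gaussian concentration estimate, and then pass to the limit. Concretely, since ${\cal K}\subset\mathbb{R}^{N}$ is compact, hence separable, and since Gaussian suprema are understood in the separable sense (in the setting where this inequality is invoked in this paper the field is even sample path continuous, so any countable dense subset works), I would fix a countable dense set $\left\{\boldsymbol{s}_{i}\right\}_{i\in\mathbb{N}^{+}}\subseteq{\cal K}$ with $\sup_{i}X\left(\boldsymbol{s}_{i}\right)\equiv\sup_{\boldsymbol{s}\in{\cal K}}X\left(\boldsymbol{s}\right)\triangleq\left\Vert X\right\Vert_{\cal K}$, ${\cal P}-a.e.$. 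Writing $M_{n}\triangleq\max_{i\in\mathbb{N}_{n}^{+}}X\left(\boldsymbol{s}_{i}\right)$, we then have $M_{n}\uparrow\left\Vert X\right\Vert_{\cal K}$, ${\cal P}-a.e.$, with $\left\Vert X\right\Vert_{\cal K}<+\infty$ almost everywhere by the assumed $a.e.$-boundedness. We may also assume $\sigma_{\cal K}^{2}\triangleq\sup_{\boldsymbol{s}\in{\cal K}}\mathbb{E}\left\{X^{2}\left(\boldsymbol{s}\right)\right\}>0$, since $\sigma_{\cal K}^{2}\equiv0$ forces $X\equiv0$ $a.e.$ and the claim is trivial.

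Second, I would establish the finite-dimensional bound for each $M_{n}$. The vector $\boldsymbol{\xi}_{n}\triangleq\left[X\left(\boldsymbol{s}_{1}\right)\,\ldots\,X\left(\boldsymbol{s}_{n}\right)\right]^{\boldsymbol{T}}$ is zero-mean Gaussian with some covariance $\boldsymbol{\Sigma}_{n}\in\mathbb{S}_{+}^{n}$, so it is equal in law to $\boldsymbol{\Sigma}_{n}^{1/2}\boldsymbol{G}$ with $\boldsymbol{G}\sim{\cal N}\left(\boldsymbol{0},\boldsymbol{I}_{n}\right)$. Hence $M_{n}$ has the law of $f_{n}\left(\boldsymbol{G}\right)$, where $f_{n}\left(\boldsymbol{x}\right)\triangleq\max_{i}\langle\boldsymbol{a}_{i},\boldsymbol{x}\rangle$ and $\boldsymbol{a}_{i}^{\boldsymbol{T}}$ is the $i$-th row of $\boldsymbol{\Sigma}_{n}^{1/2}$. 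Each affine piece $\boldsymbol{x}\mapsto\langle\boldsymbol{a}_{i},\boldsymbol{x}\rangle$ is Lipschitz with constant $\left\Vert\boldsymbol{a}_{i}\right\Vert_{2}$, and $\left\Vert\boldsymbol{a}_{i}\right\Vert_{2}^{2}\equiv\left(\boldsymbol{\Sigma}_{n}\right)_{ii}\equiv\mathbb{E}\left\{X^{2}\left(\boldsymbol{s}_{i}\right)\right\}\le\sigma_{\cal K}^{2}$; since a pointwise maximum of $L$-Lipschitz functions is $L$-Lipschitz, $f_{n}$ is $\sigma_{\cal K}$-Lipschitz on $\mathbb{R}^{n}$ in the Euclidean norm. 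The Gaussian concentration inequality for Lipschitz functions of a standard Gaussian vector — which I would take as a cited external result, or otherwise derive from the Gaussian logarithmic Sobolev inequality via the Herbst argument, or from Borell's Gaussian isoperimetric inequality — then yields ${\cal P}\left(M_{n}-\mathbb{E}\left\{M_{n}\right\}>u\right)\le\exp\left(-u^{2}/\left(2\sigma_{\cal K}^{2}\right)\right)$ for all $u>0$ and all $n$, together with the symmetric two-sided version around a median.

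Third, I would upgrade $a.e.$-finiteness of $\left\Vert X\right\Vert_{\cal K}$ to integrability and then take $n\to\infty$. From ${\cal P}\left(\left\Vert X\right\Vert_{\cal K}<+\infty\right)\equiv1$ there is a finite $m$ with ${\cal P}\left(\left\Vert X\right\Vert_{\cal K}\le m\right)\ge1/2$, hence ${\cal P}\left(M_{n}\le m\right)\ge1/2$ and a median $m_{n}$ of $M_{n}$ satisfies $m_{n}\le m$ for every $n$. Integrating the two-sided tail bound around the median gives $\mathbb{E}\left\{\left|M_{n}-m_{n}\right|\right\}\le c\,\sigma_{\cal K}$ for a universal constant $c$, whence $\mathbb{E}\left\{M_{n}\right\}\le m+c\,\sigma_{\cal K}$ uniformly in $n$. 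By Monotone Convergence, $\mathbb{E}\left\{\left\Vert X\right\Vert_{\cal K}\right\}\equiv\lim_{n}\mathbb{E}\left\{M_{n}\right\}\le m+c\,\sigma_{\cal K}<+\infty$, which is the first assertion. For the tail bound, fix $u>0$; since $\mathbb{E}\left\{M_{n}\right\}\le\mathbb{E}\left\{\left\Vert X\right\Vert_{\cal K}\right\}$, the event $\left\{M_{n}>\mathbb{E}\left\{\left\Vert X\right\Vert_{\cal K}\right\}+u\right\}$ is contained in $\left\{M_{n}>\mathbb{E}\left\{M_{n}\right\}+u\right\}$, so ${\cal P}\left(M_{n}>\mathbb{E}\left\{\left\Vert X\right\Vert_{\cal K}\right\}+u\right)\le\exp\left(-u^{2}/\left(2\sigma_{\cal K}^{2}\right)\right)$; as $M_{n}\uparrow\left\Vert X\right\Vert_{\cal K}$ and $\mathbb{E}\left\{\left\Vert X\right\Vert_{\cal K}\right\}+u$ is a constant, continuity of ${\cal P}$ from below gives ${\cal P}\left(\left\Vert X\right\Vert_{\cal K}-\mathbb{E}\left\{\left\Vert X\right\Vert_{\cal K}\right\}>u\right)\le\exp\left(-u^{2}/\left(2\sigma_{\cal K}^{2}\right)\right)$, completing the proof.

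The main obstacle is the $L^{1}$-bound $\mathbb{E}\left\{\left\Vert X\right\Vert_{\cal K}\right\}<+\infty$: the concentration estimate is vacuous without an a priori finite centering constant, and bootstrapping from mere almost-everywhere finiteness of the supremum to integrability is the genuinely delicate point, handled above via the median argument. A secondary, more foundational issue is the base case itself — Gaussian concentration for Lipschitz functions — which is a substantial theorem on its own; depending on how self-contained one wishes the argument to be, one either cites it or inserts the log-Sobolev/Herbst derivation. The remaining ingredients (reduction to a countable separating set, the Lipschitz-constant computation, and the limiting step) are routine.
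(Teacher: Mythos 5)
This theorem is not proved in the paper at all: it is quoted verbatim as a known external result, with the citation to \cite{Adler2009_Random} serving as the proof. So there is nothing in the paper to compare your argument against step by step; the relevant question is only whether your sketch is a sound proof of the cited result, and it is. Your three-stage route (reduction of the supremum to a countable dense subset of the compact set ${\cal K}$, finite-dimensional Gaussian concentration for the Lipschitz function $\boldsymbol{x}\mapsto\max_{i}\left\langle \boldsymbol{a}_{i},\boldsymbol{x}\right\rangle $ of a standard Gaussian vector with Lipschitz constant $\sup_{\boldsymbol{s}\in{\cal K}}\left(\mathbb{E}\left\{ X^{2}\left(\boldsymbol{s}\right)\right\} \right)^{1/2}$, and the median-based bootstrap from almost-everywhere finiteness of the supremum to finiteness of its expectation followed by a monotone passage to the limit) is exactly the standard textbook proof found in the cited reference. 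You also correctly flag the two genuinely delicate points: the need for separability of the field for the supremum to be measurable and to coincide with the countable supremum (harmless here, since the paper only ever applies the theorem to everywhere sample-path continuous fields on the compact set ${\cal S}$), and the fact that the concentration estimate is vacuous until the centering constant $\mathbb{E}\left\{ \sup_{\boldsymbol{s}\in{\cal K}}X\left(\boldsymbol{s}\right)\right\} $ is shown to be finite, which is precisely what the uniform bound $\mathbb{E}\left\{ M_{n}\right\} \le m+c\,\sigma_{{\cal K}}$ delivers. The only caveat worth recording is that your argument is complete modulo the base case, Gaussian concentration for Lipschitz functions of a standard Gaussian vector, which is itself a nontrivial theorem that you would either cite or derive via the Herbst/log-Sobolev or isoperimetric route; since the paper treats the entire Borell-TIS inequality as citable, treating its one-dimensional engine the same way is entirely consistent.
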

As highlighted in (\cite{Adler2009_Random}, page 50), an immediate
consequence of the Borell-TIS Inequality is that, under the setting
of Theorem \ref{thm:Borell_TIS}, we may further assert that
\begin{equation}
{\cal P}\left(\sup_{\boldsymbol{s}\in{\cal K}}X\left(\boldsymbol{s}\right)>u\right)\le\exp\left(-\dfrac{\left(u-\mathbb{E}\left\{ {\displaystyle \sup_{\boldsymbol{s}\in{\cal K}}X\left(\boldsymbol{s}\right)}\right\} \right)^{2}}{{\displaystyle 2\sup_{\boldsymbol{s}\in{\cal K}}\mathbb{E}\left\{ X^{2}\left(\boldsymbol{s}\right)\right\} }}\right),\label{eq:Borell_2}
\end{equation}
for all $u>\mathbb{E}\left\{ \sup_{\boldsymbol{s}\in{\cal K}}X\left(\boldsymbol{s}\right)\right\} $.

To show (\ref{eq:Borell_1}), we exploit the Borell-TIS Inequality
and follow a procedure similar to (\cite{Adler2009_Random}, Theorem
2.1.2). First, from the discussion above, we readily see that the
field $\chi_{S}\left({\bf p},t\right)$ does satisfy the assumptions
Theorem \ref{thm:Borell_TIS}. Also, because $\chi_{S}\left({\bf p},t\right)$
is the sum of two independent fields, it is true that 
\begin{equation}
\mathbb{E}\left\{ \chi_{S}^{2}\left({\bf p},t\right)\right\} \equiv\eta^{2}+\sigma_{\xi}^{2}.
\end{equation}
 As a result, Theorem \ref{thm:Borell_TIS} implies that $\mathbb{E}\left\{ \sup_{{\bf p}\in{\cal S}}\chi_{S}\left({\bf p},t\right)\right\} $
is finite and we may safely write
\begin{flalign}
\mathbb{E}\left\{ \exp\hspace{-2pt}\left(\hspace{-2pt}{\displaystyle \varsigma\sup_{{\bf p}\in{\cal S}}\chi_{S}\left({\bf p},t\right)\hspace{-2pt}}\right)\hspace{-2pt}\right\}  & \equiv{\scaleint{7ex}}_{\!\!\!\!\!0}^{\,\infty}{\cal P}\left(\hspace{-2pt}\exp\hspace{-2pt}\left(\hspace{-2pt}{\displaystyle \varsigma\sup_{{\bf p}\in{\cal S}}\chi_{S}\left({\bf p},t\right)\hspace{-2pt}}\right)>x\hspace{-2pt}\right)\hspace{-2pt}\text{d}x\nonumber \\
 & \equiv{\scaleint{7ex}}_{\!\!\!\!\!0}^{\,\infty}{\cal P}\left({\displaystyle \sup_{{\bf p}\in{\cal S}}\chi_{S}\left({\bf p},t\right)}>\dfrac{\log\left(x\right)}{\varsigma}\right)\hspace{-2pt}\text{d}x.\label{eq:Borell_3}
\end{flalign}
In order to exploit (\ref{eq:Borell_2}), it must hold that
\begin{equation}
\dfrac{\log\left(x\right)}{\varsigma}>\mathbb{E}\left\{ \sup_{{\bf p}\in{\cal S}}\chi_{S}\left({\bf p},t\right)\right\} \Leftrightarrow x>\exp\left(\varsigma\mathbb{E}\left\{ \sup_{{\bf p}\in{\cal S}}\chi_{S}\left({\bf p},t\right)\right\} \right)>0.
\end{equation}
Therefore, we may break (\ref{eq:Borell_3}) into two parts and bound
from above, namely,
\begin{flalign}
 & \hspace{-2pt}\hspace{-2pt}\hspace{-2pt}\hspace{-2pt}\hspace{-2pt}\hspace{-2pt}\hspace{-2pt}\hspace{-2pt}\hspace{-2pt}\hspace{-2pt}\hspace{-2pt}\hspace{-2pt}\mathbb{E}\left\{ \exp\hspace{-2pt}\left(\hspace{-2pt}{\displaystyle \varsigma\sup_{{\bf p}\in{\cal S}}\chi_{S}\left({\bf p},t\right)\hspace{-2pt}}\right)\hspace{-2pt}\right\} \nonumber \\
 & \equiv{\scaleint{7ex}}_{\!\!\!\!\!0}^{\,\exp\left(\varsigma\mathbb{E}\left\{ \sup_{{\bf p}\in{\cal S}}\chi_{S}\left({\bf p},t\right)\right\} \right)}{\cal P}\left({\displaystyle \sup_{{\bf p}\in{\cal S}}\chi_{S}\left({\bf p},t\right)}>\dfrac{\log\left(x\right)}{\varsigma}\right)\hspace{-2pt}\text{d}x\nonumber \\
 & \quad\quad\quad+{\scaleint{7ex}}_{\!\!\!\!\!\exp\left(\varsigma\mathbb{E}\left\{ \sup_{{\bf p}\in{\cal S}}\chi_{S}\left({\bf p},t\right)\right\} \right)}^{\,\infty}{\cal P}\left({\displaystyle \sup_{{\bf p}\in{\cal S}}\chi_{S}\left({\bf p},t\right)}>\dfrac{\log\left(x\right)}{\varsigma}\right)\hspace{-2pt}\text{d}x\nonumber \\
 & \le{\scaleint{7ex}}_{\!\!\!\!\!0}^{\,\exp\left(\varsigma\mathbb{E}\left\{ \sup_{{\bf p}\in{\cal S}}\chi_{S}\left({\bf p},t\right)\right\} \right)}\hspace{-2pt}\text{d}x\nonumber \\
 & \quad\quad\quad+{\scaleint{15.5ex}}_{\!\!\!\!\!\!\!\exp\left(\varsigma\mathbb{E}\left\{ \sup_{{\bf p}\in{\cal S}}\chi_{S}\left({\bf p},t\right)\right\} \right)}^{\,\infty}\exp\hspace{-2pt}\left(\hspace{-2pt}-\dfrac{\left(\dfrac{\log\left(x\right)}{\varsigma}-\mathbb{E}\left\{ {\displaystyle \sup_{{\bf p}\in{\cal S}}\chi_{S}\left({\bf p},t\right)}\right\} \right)^{2}}{{\displaystyle 2\left(\eta^{2}+\sigma_{\xi}^{2}\right)}}\right)\hspace{-2pt}\text{d}x\nonumber \\
 & \le\exp\hspace{-2pt}\left(\hspace{-2pt}\varsigma\mathbb{E}\left\{ \sup_{{\bf p}\in{\cal S}}\chi_{S}\left({\bf p},t\right)\hspace{-2pt}\right\} \right)\nonumber \\
 & \quad\quad\quad+\varsigma{\scaleint{15.5ex}}_{\!\!\!\!\!\!\!\mathbb{E}\left\{ \sup_{{\bf p}\in{\cal S}}\chi_{S}\left({\bf p},t\right)\right\} }^{\,\infty}\exp\hspace{-2pt}\left(\varsigma u\right)\exp\hspace{-2pt}\left(\hspace{-2pt}-\dfrac{\left(u-\mathbb{E}\left\{ {\displaystyle \sup_{{\bf p}\in{\cal S}}\chi_{S}\left({\bf p},t\right)}\right\} \right)^{2}}{{\displaystyle 2\left(\eta^{2}+\sigma_{\xi}^{2}\right)}}\right)\hspace{-2pt}\text{d}u.\label{eq:Borell_4}
\end{flalign}
Since both terms on the RHS of (\ref{eq:Borell_4}) are finite, (\ref{eq:Borell_1})
is indeed satisfied. Consequently, it is true that
\begin{equation}
\mathbb{E}\left\{ \varphi\left(\cdot,t\right)\right\} <+\infty\Leftrightarrow\varphi\left(\cdot,t\right)\in{\cal L}_{1}\left(\Omega,\mathscr{F},{\cal P};\mathbb{R}\right).
\end{equation}
Enough said; \textbf{C4 }is now verified.\hfill{}\ensuremath{\bigstar}

Moving on to \textbf{C6}, the goal here is to show that, for each
fixed $t\in\mathbb{N}_{N_{T}}^{2}$, the well defined random function
$H:\Omega\times{\cal S}^{R}\rightarrow\overline{\mathbb{R}}$, defined
as
\begin{equation}
H\left(\omega,{\bf p}\right)\triangleq\mathbb{E}\left\{ V\left({\bf p},t\right)\left|\mathscr{C}\left({\cal T}_{t-1}\right)\right.\right\} \left(\omega\right),
\end{equation}
is Carath\'eodory. Observe, though, that we may write
\begin{flalign}
H\left(\omega,{\bf p}\right) & \equiv{\displaystyle \sum_{i\in\mathbb{N}_{R}^{+}}H_{I}\left(\omega,{\bf p}_{i}\right)},
\end{flalign}
where the random function $H_{I}:\Omega\times{\cal S}\rightarrow\mathbb{R}$
is defined as 
\begin{equation}
H_{I}\left(\omega,{\bf p}\right)\triangleq\mathbb{E}\left\{ \left.\dfrac{P_{c}P_{0}\left|f\left({\bf p},t\right)\right|^{2}\left|g\left({\bf p},t\right)\right|^{2}}{P_{0}\sigma_{D}^{2}\left|f\left({\bf p},t\right)\right|^{2}+P_{c}\sigma^{2}\left|g\left({\bf p},t\right)\right|^{2}+\sigma^{2}\sigma_{D}^{2}}\right|\mathscr{C}\left({\cal T}_{t-1}\right)\right\} \left(\omega\right).
\end{equation}
Because a finite sum of Carath\'eodory functions (in this case, in
different variables) is obviously Carath\'eodory, it suffices to
show that $H_{I}$ is Carath\'eodory. 

First, it is easy to see that $H_{I}\left(\cdot,{\bf p}\right)$ constitutes
a well defined conditional expectation of a nonnegative random variable,
for all ${\bf p}\in{\cal S}$. Therefore, what remains is to show
that $H_{I}\left(\omega,\cdot\right)$ is continuous on ${\cal S}$,
everywhere with respect to $\omega\in\Omega$. For this, we will rely
on the sequential definition of continuity and the explicit representation
of $H_{I}$ as an integral with respect to the Lebesgue measure, which
exploits the form of the projective system of finite dimensional distributions
of $\left|f\left({\bf p},t\right)\right|^{2}$ and $\left|g\left({\bf p},t\right)\right|^{2}$.
In particular, because of the trick (\ref{eq:CONVERTER}), it is easy
to show that $H_{I}$ can be equivalently expressed as the Lebesgue
integral
\begin{equation}
H_{I}\left(\omega,{\bf p}\right)=\int_{\mathbb{R}^{2}}r\left(\boldsymbol{x}\right){\cal N}\left(\boldsymbol{x};\boldsymbol{\mu}_{2}\left(\omega,{\bf p}\right),\boldsymbol{\Sigma}_{2}\left(\omega,{\bf p}\right)\right)\text{d}\boldsymbol{x},
\end{equation}
where the continuous function $r:\mathbb{R}^{2}\rightarrow\mathbb{R}_{++}$
is defined as (recall that $\varsigma\equiv\log\left(10\right)/10$)
\begin{equation}
r\left(\boldsymbol{x}\right)\equiv r\left(x_{1},x_{2}\right)\triangleq\dfrac{P_{c}P_{0}10^{\rho/10}\left[\exp\left(x_{1}+x_{2}\right)\right]^{\varsigma}}{P_{0}\sigma_{D}^{2}\left[\exp\left(x_{1}\right)\right]^{\varsigma}+P_{c}\sigma^{2}\left[\exp\left(x_{2}\right)\right]^{\varsigma}+10^{-\rho/10}\sigma^{2}\sigma_{D}^{2}},
\end{equation}
for all $\boldsymbol{x}\equiv\left(x_{1},x_{2}\right)\in\mathbb{R}^{2}$,
and ${\cal N}:\mathbb{R}^{2}\times{\cal S}\times\Omega\rightarrow\mathbb{R}_{++}$,
corresponds to the \textit{jointly Gaussian} conditional density of
$F\left({\bf p},t\right)$ and $G\left({\bf p},t\right)$, relative
to $\mathscr{C}\left({\cal T}_{t-1}\right)$, with mean $\boldsymbol{\mu}_{2}:\left(\omega,{\bf p}\right)\rightarrow\mathbb{R}^{2\times1}$
and covariance $\boldsymbol{\Sigma}_{2}:\left(\omega,{\bf p}\right)\rightarrow\mathbb{S}_{++}^{2\times2}$
explicitly depending on $\omega$ and ${\bf p}$ as 
\begin{flalign}
\boldsymbol{\mu}_{2}\left(\omega,{\bf p}\right) & \equiv\boldsymbol{\mu}_{2}\left(\mathscr{C}\left({\cal T}_{t-1}\right)\left(\omega\right);{\bf p}\right)\quad\text{and}\\
\boldsymbol{\Sigma}_{2}\left(\omega,{\bf p}\right) & \equiv\boldsymbol{\Sigma}_{2}\left(\mathscr{C}\left({\cal T}_{t-1}\right)\left(\omega\right);{\bf p}\right),\quad\forall\left(\omega,{\bf p}\right)\in\Omega\times{\cal S}.
\end{flalign}
Via a simple change of variables, we may reexpress $H_{I}\left(\omega,{\bf p}\right)$
as
\begin{equation}
H_{I}\left(\omega,{\bf p}\right)\equiv\int_{\mathbb{R}^{2}}r\left(\boldsymbol{x}+\boldsymbol{\mu}_{2}\left(\omega,{\bf p}\right)\right){\cal N}\left(\boldsymbol{x};{\bf 0},\boldsymbol{\Sigma}_{2}\left(\omega,{\bf p}\right)\right)\text{d}\boldsymbol{x}.
\end{equation}

It is straightforward to verify that both $\boldsymbol{\mu}_{2}\left(\omega,\cdot\right)$
and $\boldsymbol{\Sigma}_{2}\left(\omega,\cdot\right)$ are continuous
functions in ${\bf p}\in{\cal S}$, for all $\omega\in\Omega$. This
is due to the fact that all functions involving ${\bf p}$ in the
wireless channel model introduced in Section \ref{sec:Spatiotemporal-Wireless-Channel}
are trivially continuous in this variable. Equivalently, we may assert
that the whole integrand $r\left(\boldsymbol{x}+\boldsymbol{\mu}_{2}\left(\omega,\cdot\right)\right){\cal N}\left(\boldsymbol{x};{\bf 0},\boldsymbol{\Sigma}_{2}\left(\omega,\cdot\right)\right)$
is a continuous function, for all pairs $\left(\omega,\boldsymbol{x}\right)\in\Omega\times\mathbb{R}^{2}$.
Next, fix $\omega\in\Omega$, and for \textit{arbitrary} ${\bf p}\in{\cal S}$,
consider \textit{any} sequence $\left\{ {\bf p}_{k}\in{\cal S}\right\} _{k\in\mathbb{N}}$,
such that ${\bf p}_{k}\underset{k\rightarrow\infty}{\longrightarrow}{\bf p}$.
Then, $H_{I}\left(\omega,\cdot\right)$ is continuous if and only
if $H_{I}\left(\omega,{\bf p}_{k}\right)\underset{k\rightarrow\infty}{\longrightarrow}H_{I}\left(\omega,{\bf p}\right)$.
We will show this via a simple application of the Dominated Convergence
Theorem. Emphasizing the dependence on ${\bf p}$ as a superscript
for the sake of clarity, we can write
\begin{align}
r\left(\boldsymbol{x}+\boldsymbol{\mu}_{2}^{{\bf p}}\right){\cal N}\left(\boldsymbol{x};{\bf 0},\boldsymbol{\Sigma}_{2}^{{\bf p}}\right) & \hspace{-2pt}\equiv\hspace{-2pt}r\left(\boldsymbol{x}+\boldsymbol{\mu}_{2}^{{\bf p}}\right)\dfrac{\exp\left(-\dfrac{1}{2}\boldsymbol{x}^{\boldsymbol{T}}\left[\boldsymbol{\Sigma}_{2}^{{\bf p}}\right]^{-1}\boldsymbol{x}\right)}{2\pi\sqrt{\det\left(\boldsymbol{\Sigma}_{2}^{{\bf p}}\right)}}\nonumber \\
 & \hspace{-2pt}\le\hspace{-2pt}r\left(\boldsymbol{x}+\boldsymbol{\mu}_{2}^{{\bf p}}\right)\dfrac{\exp\left(-\dfrac{1}{2}\lambda_{min}\left(\left[\boldsymbol{\Sigma}_{2}^{{\bf p}}\right]^{-1}\right)\left\Vert \boldsymbol{x}\right\Vert _{2}^{2}\right)}{2\pi\sqrt{\det\left(\boldsymbol{\Sigma}_{2}^{{\bf p}}\right)}}\nonumber \\
 & \hspace{-2pt}\equiv\hspace{-2pt}r\left(\boldsymbol{x}+\boldsymbol{\mu}_{2}^{{\bf p}}\right)\dfrac{\exp\left(-\dfrac{\left\Vert \boldsymbol{x}\right\Vert _{2}^{2}}{2\lambda_{max}\left(\boldsymbol{\Sigma}_{2}^{{\bf p}}\right)}\right)}{2\pi\sqrt{\det\left(\boldsymbol{\Sigma}_{2}^{{\bf p}}\right)}}\nonumber \\
 & \hspace{-2pt}\le\hspace{-2pt}\dfrac{P_{0}10^{\rho/10}}{\sigma^{2}}\left[\exp\hspace{-2pt}\left(x_{1}+\boldsymbol{\mu}_{2}^{{\bf p}}\left(1\right)\right)\right]^{\varsigma}\dfrac{\exp\left(-\dfrac{\left\Vert \boldsymbol{x}\right\Vert _{2}^{2}}{2\lambda_{max}\left(\boldsymbol{\Sigma}_{2}^{{\bf p}}\right)}\right)}{2\pi\sqrt{\det\left(\boldsymbol{\Sigma}_{2}^{{\bf p}}\right)}}\nonumber \\
 & \hspace{-2pt}\le\hspace{-2pt}\dfrac{P_{0}10^{\rho/10}}{\sigma^{2}}\left[\exp\hspace{-2pt}\left(x_{1}+\sup_{{\bf p}\in{\cal S}}\boldsymbol{\mu}_{2}^{{\bf p}}\left(1\right)\right)\right]^{\varsigma}\dfrac{\exp\left(-\dfrac{\left\Vert \boldsymbol{x}\right\Vert _{2}^{2}}{2{\displaystyle \sup_{{\bf p}\in{\cal S}}\lambda_{max}\left(\boldsymbol{\Sigma}_{2}^{{\bf p}}\right)}}\right)}{2\pi\sqrt{{\displaystyle \inf_{{\bf p}\in{\cal S}}\det\left(\boldsymbol{\Sigma}_{2}^{{\bf p}}\right)}}}\nonumber \\
 & \triangleq\dfrac{P_{0}10^{\rho/10}}{\sigma^{2}}\left[\exp\hspace{-2pt}\left(x_{1}+p_{1}\right)\right]^{\varsigma}\dfrac{\exp\left(-\dfrac{\left\Vert \boldsymbol{x}\right\Vert _{2}^{2}}{2p_{2}}\right)}{2\pi\sqrt{p_{3}}}\nonumber \\
 & \triangleq\psi\left(\omega,\boldsymbol{x}\right),\label{eq:INTEGRABLE}
\end{align}
where, due to the continuity of $\boldsymbol{\mu}_{2}\left(\omega,\cdot\right)$
and $\boldsymbol{\Sigma}_{2}\left(\omega,\cdot\right)$, the continuity
of the maximum eigenvalue and determinant operators, the fact that
${\cal S}$ is compact, and the power of the Extreme Value Theorem,
all extrema involved are finite and, of course, independent of ${\bf p}$.
It is now easy to verify that the RHS of (\ref{eq:INTEGRABLE}) is
integrable. Indeed, by Fubini's Theorem (Theorem 2.6.4 in \cite{Ash2000Probability})
\begin{flalign}
\int_{\mathbb{R}^{2}}\psi\left(\omega,\boldsymbol{x}\right)\text{d}\boldsymbol{x} & \hspace{-2pt}=\hspace{-2pt}\dfrac{P_{0}10^{\rho/10}}{\sigma^{2}}\dfrac{\exp\hspace{-2pt}\left(\varsigma p_{1}\right)}{\sqrt{p_{3}}}{\scaleint{6ex}}_{\!\!\!\!\mathbb{R}^{2}}\exp\hspace{-2pt}\left(\varsigma x_{1}\right)\dfrac{1}{2\pi}\exp\left(-\dfrac{\left\Vert \boldsymbol{x}\right\Vert _{2}^{2}}{2p_{2}}\right)\text{d}\boldsymbol{x}\nonumber \\
 & \hspace{-2pt}\equiv\hspace{-2pt}\dfrac{P_{0}10^{\rho/10}}{\sigma^{2}}\dfrac{\exp\hspace{-2pt}\left(\varsigma p_{1}\right)\hspace{-2pt}p_{2}}{\sqrt{p_{3}}}{\scaleint{6ex}}_{\!\!\!\!\mathbb{R}^{2}}\exp\hspace{-2pt}\left(\varsigma x_{1}\right)\dfrac{1}{2\pi p_{2}}\exp\hspace{-2pt}\left(\hspace{-2pt}-\dfrac{\left\Vert \boldsymbol{x}\right\Vert _{2}^{2}}{2p_{2}}\hspace{-2pt}\right)\hspace{-2pt}\text{d}\boldsymbol{x}\nonumber \\
 & \hspace{-2pt}=\hspace{-2pt}\dfrac{P_{0}10^{\rho/10}}{\sigma^{2}}\dfrac{\exp\hspace{-2pt}\left(\varsigma p_{1}\right)\hspace{-2pt}p_{2}}{\sqrt{p_{3}}}{\scaleint{6ex}}_{\!\!\!\!\mathbb{R}}\exp\hspace{-2pt}\left(\varsigma x_{1}\right)\dfrac{1}{\sqrt{2\pi p_{2}}}\exp\hspace{-2pt}\left(\hspace{-2pt}-\dfrac{x_{1}^{2}}{2p_{2}}\hspace{-2pt}\right)\hspace{-2pt}\text{d}x_{1}\nonumber \\
 & \hspace{-2pt}=\hspace{-2pt}\dfrac{P_{0}10^{\rho/10}}{\sigma^{2}}\dfrac{\exp\hspace{-2pt}\left(\varsigma p_{1}\left(\omega\right)\right)\hspace{-2pt}p_{2}\left(\omega\right)}{\sqrt{p_{3}\left(\omega\right)}}\exp\left(\dfrac{p_{2}\left(\omega\right)}{2}\varsigma^{2}\right)<+\infty,\quad\omega\in\Omega.
\end{flalign}
That is,
\begin{equation}
\psi\left(\omega,\cdot\right)\in{\cal L}_{1}\left(\mathbb{R}^{2},\mathscr{B}\left(\mathbb{R}^{2}\right),{\cal L};\mathbb{R}\right),\quad\omega\in\Omega,
\end{equation}
where ${\cal L}$ denotes the Lebesgue measure. We can now call Dominated
Convergence; since, for each $\boldsymbol{x}\in\mathbb{R}^{2}$ (and
each $\omega\in\Omega$),
\begin{equation}
r\left(\boldsymbol{x}+\boldsymbol{\mu}_{2}\left(\omega,{\bf p}_{k}\right)\right){\cal N}\left(\boldsymbol{x};{\bf 0},\boldsymbol{\Sigma}_{2}\left(\omega,{\bf p}_{k}\right)\right)\underset{k\rightarrow\infty}{\longrightarrow}r\left(\boldsymbol{x}+\boldsymbol{\mu}_{2}\left(\omega,{\bf p}\right)\right){\cal N}\left(\boldsymbol{x};{\bf 0},\boldsymbol{\Sigma}_{2}\left(\omega,{\bf p}\right)\right)
\end{equation}
and all members of this sequence are dominated by the integrable function
$\psi\left(\omega,\cdot\right)$, it is true that
\begin{multline}
H_{I}\left(\omega,{\bf p}_{k}\right)\hspace{-2pt}\equiv\hspace{-2pt}\int_{\mathbb{R}^{2}}r\left(\boldsymbol{x}+\boldsymbol{\mu}_{2}\left(\omega,{\bf p}_{k}\right)\right){\cal N}\left(\boldsymbol{x};{\bf 0},\boldsymbol{\Sigma}_{2}\left(\omega,{\bf p}_{k}\right)\right)\text{d}\boldsymbol{x}\\
\underset{k\rightarrow\infty}{\longrightarrow}\int_{\mathbb{R}^{2}}r\left(\boldsymbol{x}+\boldsymbol{\mu}_{2}\left(\omega,{\bf p}\right)\right){\cal N}\left(\boldsymbol{x};{\bf 0},\boldsymbol{\Sigma}_{2}\left(\omega,{\bf p}\right)\right)\text{d}\boldsymbol{x}\equiv\hspace{-1.5pt}H_{I}\left(\omega,{\bf p}\right).
\end{multline}
But $\left\{ {\bf p}_{k}\right\} _{k\in\mathbb{N}}$ and ${\bf p}$
are arbitrary, showing that $H_{I}\left(\omega,\cdot\right)$ is continuous,
for each fixed $\omega\in\Omega$. Hence, $H_{I}$ is Carath\'eodory
on $\Omega\times{\cal S}$.\hfill{}\ensuremath{\bigstar}

The proof to the second part of Theorem \ref{lem:C1C4_SAT} follows
easily by direct application of the Fundamental Lemma (Lemma \ref{lem:FUND_Lemma_FINAL};
also see Table \ref{tab:Variable-matching}).\hfill{}\ensuremath{\blacksquare}

\subsubsection{Proof of Lemma \ref{thm:(Big-Expectations)}}

In the notation of the statement of the lemma, the joint conditional
distribution of $\left[F\left({\bf p},t\right)\,G\left({\bf p},t\right)\right]^{\boldsymbol{T}}$
relative to the $\sigma$-algebra $\mathscr{C}\left({\cal T}_{t-1}\right)$
can be readily shown to be Gaussian with mean $\boldsymbol{\mu}_{\left.t\right|t-1}^{F,G}\hspace{-2pt}\left({\bf p}\right)$
and covariance $\boldsymbol{\Sigma}_{\left.t\right|t-1}^{F,G}\hspace{-2pt}\left({\bf p}\right)$,
for all $\left({\bf p},t\right)\in{\cal S}\times\mathbb{N}_{N_{T}}^{2}$.
This is due to the fact that, in Section \ref{sec:Spatiotemporal-Wireless-Channel},
we have implicitly assumed that the channel fields $F\left({\bf p},t\right)$
and $G\left({\bf p},t\right)$ are jointly Gaussian. It is then a
typical exercise (possibly somewhat tedious though) to show that the
functions $\boldsymbol{\mu}_{\left.t\right|t-1}^{F,G}$ and $\boldsymbol{\Sigma}_{\left.t\right|t-1}^{F,G}$
are of the form asserted in the statement of the lemma. Regarding
the proof for (\ref{eq:BOX_1}), observe that we can write
\begin{flalign}
 & \hspace{-2pt}\hspace{-2pt}\hspace{-2pt}\hspace{-2pt}\hspace{-2pt}\hspace{-2pt}\hspace{-2pt}\hspace{-2pt}\hspace{-2pt}\hspace{-2pt}\hspace{-2pt}\hspace{-2pt}\hspace{-2pt}\hspace{-2pt}\hspace{-2pt}\hspace{-2pt}\hspace{-2pt}\hspace{-2pt}\mathbb{E}\left\{ \left.\left|f\left({\bf p},t\right)\right|^{m}\left|g\left({\bf p},t\right)\right|^{n}\right|\mathscr{C}\left({\cal T}_{t-1}\right)\right\} \nonumber \\
 & \equiv10^{\left(m+n\right)\rho/20}\mathbb{E}\left\{ \left.\exp\left(\dfrac{\log\left(10\right)}{20}\left(mF\left({\bf p},t\right)+nG\left({\bf p},t\right)\right)\right)\right|\mathscr{C}\left({\cal T}_{t-1}\right)\right\} \nonumber \\
 & \equiv10^{\left(m+n\right)\rho/20}\mathbb{E}\left\{ \left.\exp\left(\dfrac{\log\left(10\right)}{20}\left[m\,n\right]\left[F\left({\bf p},t\right)\,G\left({\bf p},t\right)\right]^{\boldsymbol{T}}\right)\right|\mathscr{C}\left({\cal T}_{t-1}\right)\right\} ,
\end{flalign}
with the conditional expectation on the RHS being nothing else than
the conditional moment generating function of the conditionally jointly
Gaussian random vector $\left[F\left({\bf p},t\right)\,G\left({\bf p},t\right)\right]^{\boldsymbol{T}}$
at each ${\bf p}$ and $t$, evaluated at the point $\left(\log\left(10\right)/20\right)\left[m\,n\right]^{\boldsymbol{T}}$,
for any choice of $\left(m,n\right)\in\mathbb{Z}\times\mathbb{Z}$.
Recalling the special form of the moment generating function for Gaussian
random vectors, the result readily follows.\hfill{}\ensuremath{\blacksquare}

\subsubsection{Proof of Theorem \ref{lem:WELL_Behaved}}

It will suffice to show that both objectives of (\ref{eq:SINR_APPROX_PROG_1})
and (\ref{eq:SINR_APPROX_PROG_2}) are Carath\'eodory in $\Omega\times{\cal S}$.
But this statement may be easily shown by analytically expressing
both (\ref{eq:SINR_APPROX_PROG_1}) and (\ref{eq:SINR_APPROX_PROG_2})
using Lemma \ref{thm:(Big-Expectations)}. Now, since both objectives
of (\ref{eq:SINR_APPROX_PROG_1}) and (\ref{eq:SINR_APPROX_PROG_2})
are Carath\'eodory, we may invoke Theorem \ref{thm:Measurability-Preservation}
(Appendix B), in an inductive fashion, for each $t\in\mathbb{N}_{N_{T}}^{2}$,
guaranteeing the existence of at least one $\mathscr{C}\left({\cal T}_{t-1}\right)$-measurable
decision for either (\ref{eq:SINR_APPROX_PROG_1}), or (\ref{eq:SINR_APPROX_PROG_2}),
say $\widetilde{{\bf p}}^{*}\left(t\right)$, which solves the optimization
problem considered, for all $\omega\in\Omega$. Proceeding inductively
gives the result.\hfill{}\ensuremath{\blacksquare}

\subsubsection{Proof of Theorem \ref{lem:QoS_INCREASES}}

By assumption, $V\left({\bf p},t\right)$ is $\mathbf{L.MD.G}\diamondsuit\left(\mathscr{\mathscr{H}}_{t},\mu\right)$,
implying, for every $t\in\mathbb{N}_{N_{T}}^{+}$, the existence of
an event $\Omega_{t}\subseteq\Omega$, satisfying ${\cal P}\left(\Omega_{t}\right)\equiv1$,
such that, for every ${\bf p}\in{\cal S}^{R}$,
\begin{equation}
\mu\mathbb{E}\left\{ \left.V\left({\bf p},t-1\right)\right|\mathscr{\mathscr{H}}_{t-1}\right\} \left(\omega\right)\equiv\mathbb{E}\left\{ \left.V\left({\bf p},t\right)\right|\mathscr{\mathscr{H}}_{t-1}\right\} \left(\omega\right),\quad\forall\omega\in\Omega_{t}.\label{eq:PCID_1}
\end{equation}

Fix $t\in\mathbb{N}_{N_{T}}^{2}$. Consider any admissible policy
${\bf p}^{o}\left(t\right)$ at $t$, implemented at $t$ and decided
at $t-1\in\mathbb{N}_{N_{T}-1}^{+}$. By our assumptions, $V\left(\cdot,\cdot,t\right)$
is $\boldsymbol{SP}\diamondsuit\mathfrak{C}_{\mathscr{H}_{t}}$. Additionally,
because ${\bf p}^{o}\left(t\right)$ is admissible, it will be measurable
relative to the limit $\sigma$-algebra $\mathscr{P}_{t}^{\uparrow}$
and, hence, measurable relative to $\mathscr{\mathscr{H}}_{t}$. Thus,
there exists an event $\Omega_{t}^{{\bf p}^{o}}\subseteq\Omega$,
with ${\cal P}\left(\Omega_{t}^{{\bf p}^{o}}\right)\equiv1$, such
that, for every $\omega\in\Omega_{t}^{{\bf p}^{o}}$,
\begin{flalign}
\mathbb{E}\left\{ \left.V\left({\bf p}^{o}\left(t\right),t\right)\right|\mathscr{\mathscr{H}}_{t}\right\} \left(\omega\right) & \equiv\left.\mathbb{E}\left\{ \left.V\left({\bf p},t\right)\right|\mathscr{\mathscr{H}}_{t}\right\} \left(\omega\right)\right|_{{\bf p}={\bf p}^{o}\left(\omega,t\right)}\nonumber \\
 & \equiv h_{t}\left(\omega,{\bf p}^{o}\left(\omega,t\right)\right),\label{eq:Gen_SP_1}
\end{flalign}
where the extended real-valued random function $h_{t}:\Omega\times{\cal S}^{R}\rightarrow\overline{\mathbb{R}}$
is jointly $\mathscr{H}_{t}\otimes\mathscr{B}\left({\cal S}^{R}\right)$-measurable,
with $h_{t}\left(\omega,{\bf p}\right)\equiv\mathbb{E}\left\{ \left.V\left({\bf p},t\right)\right|\mathscr{\mathscr{H}}_{t}\right\} \left(\omega\right)$,
everywhere in $\left(\omega,{\bf p}\right)\in\Omega\times{\cal S}^{R}$. 

Also by our assumptions, $V\left(\cdot,\cdot,t\right)$ is $\boldsymbol{SP}\diamondsuit\mathfrak{C}_{\mathscr{H}_{t-1}}$,
as well. Similarly to the arguments made above, if ${\bf p}^{o}\left(t\right)$
is assumed to be measurable relative to the limit $\sigma$-algebra
$\mathscr{P}_{t-1}^{\uparrow}$, or, in other words, admissible at
$t-1$, then it will also be measurable relative to $\mathscr{\mathscr{H}}_{t-1}$.
Therefore, there exists an event $\Omega_{t^{-}}^{{\bf p}^{o}}\subseteq\Omega$,
with ${\cal P}\left(\Omega_{t^{-}}^{{\bf p}^{o}}\right)\equiv1$,
such that, for every $\omega\in\Omega_{t^{-}}^{{\bf p}^{o}}$,
\begin{flalign}
\mathbb{E}\left\{ \left.V\left({\bf p}^{o}\left(t\right),t\right)\right|\mathscr{\mathscr{H}}_{t-1}\right\} \left(\omega\right) & \equiv\left.\mathbb{E}\left\{ \left.V\left({\bf p},t\right)\right|\mathscr{\mathscr{H}}_{t-1}\right\} \left(\omega\right)\right|_{{\bf p}={\bf p}^{o}\left(\omega,t\right)}\nonumber \\
 & \equiv h_{t^{-}}\left(\omega,{\bf p}^{o}\left(\omega,t\right)\right),\label{eq:Gen_SP_2}
\end{flalign}
where the random function $h_{t^{-}}:\Omega\times{\cal S}^{R}\rightarrow\overline{\mathbb{R}}$
is jointly $\mathscr{H}_{t-1}\otimes\mathscr{B}\left({\cal S}^{R}\right)$-measurable,
with $h_{t^{-}}\left(\omega,{\bf p}\right)\equiv\mathbb{E}\left\{ \left.V\left({\bf p},t\right)\right|\mathscr{\mathscr{H}}_{t-1}\right\} \left(\omega\right)$,
everywhere in $\left(\omega,{\bf p}\right)\in\Omega\times{\cal S}^{R}$.
Note that, by construction, ${\bf p}^{o}\left(t\right)$ will also
be admissible at time $t$ and, therefore, measurable relative to
and $\mathscr{\mathscr{H}}_{t}$, as well.

Now, we combine the arguments made above. Keep $t\in\mathbb{N}_{N_{T}}^{2}$
fixed. At time slot $t-2\in\mathbb{N}_{N_{T}-2}$, let ${\bf p}^{o}\left(t-1\right)\equiv{\bf p}^{o}\left(\omega,t-1\right)$
be a $\mathscr{C}\left({\cal T}_{t-2}\right)$-measurable admissible
policy (recall that \textbf{C1-C6} are satisfied by assumption; also
recall that, if $t\equiv2$, $\mathscr{C}\left({\cal T}_{t-2}\right)\equiv\mathscr{C}\left({\cal T}_{0}\right)$
is the trivial $\sigma$-algebra). At the \textit{next} time slot
$t-1\in\mathbb{N}_{N_{T}-1}^{+}$, let us choose ${\bf p}^{o}\left(t\right)\equiv{\bf p}^{o}\left(\omega,t-1\right)$;
in this case, ${\bf p}^{o}\left(t\right)$ will also be $\mathscr{C}\left({\cal T}_{t-2}\right)$-measurable
and result in the \textit{same final position for the relays at time
slot} $t\in\mathbb{N}_{N_{T}}^{2}$. As a result, the relays just
stay still. Under these circumstances, at time slot $t-1\in\mathbb{N}_{N_{T}-1}^{+}$,
the expected network QoS will be $\mathbb{E}\left\{ V\left({\bf p}^{o}\left(t-1\right),t-1\right)\right\} $,
whereas, at the next time slot $t\in\mathbb{N}_{N_{T}}^{2}$, it will
be $\mathbb{E}\left\{ V\left({\bf p}^{o}\left(t-1\right),t\right)\right\} $.
Exploiting (\ref{eq:PCID_1}), we may write
\begin{align}
\mu h_{t-1}\left(\omega,{\bf p}\right) & \equiv h_{t^{-}}\left(\omega,{\bf p}\right),\quad\forall\left(\omega,{\bf p}\right)\in\Omega_{t}\bigcap\Omega_{t-1}^{{\bf p}^{o}}\bigcap\Omega_{t^{-}}^{{\bf p}^{o}}\times{\cal S}^{R},
\end{align}
where, obviously, ${\cal {\cal P}}\left(\Omega_{t}\bigcap\Omega_{t-1}^{{\bf p}^{o}}\bigcap\Omega_{t^{-}}^{{\bf p}^{o}}\right)\equiv1$.
Consequently, it will be true that
\begin{equation}
\mu h_{t-1}\left(\omega,{\bf p}^{o}\left(\omega,t-1\right)\right)\equiv h_{t^{-}}\left(\omega,{\bf p}^{o}\left(\omega,t-1\right)\right),\quad\forall\omega\in\Omega_{t}\bigcap\Omega_{t-1}^{{\bf p}^{o}}\bigcap\Omega_{t^{-}}^{{\bf p}^{o}}.
\end{equation}
From (\ref{eq:Gen_SP_1}) and (\ref{eq:Gen_SP_2}), it is also true
that
\begin{equation}
\mu\mathbb{E}\left\{ \left.V\left({\bf p}^{o}\left(t-1\right),t-1\right)\right|\mathscr{\mathscr{H}}_{t-1}\right\} \left(\omega\right)\equiv\mathbb{E}\left\{ \left.V\left({\bf p}^{o}\left(t-1\right),t\right)\right|\mathscr{\mathscr{H}}_{t-1}\right\} \left(\omega\right),
\end{equation}
almost everywhere with respect to ${\cal P}$. This, of course, implies
that
\begin{equation}
\mu\mathbb{E}\left\{ V\left({\bf p}^{o}\left(t-1\right),t-1\right)\right\} \equiv\mathbb{E}\left\{ V\left({\bf p}^{o}\left(t-1\right),t\right)\right\} ,\label{eq:no_decrease}
\end{equation}
and for all $t\in\mathbb{N}_{N_{T}}^{2}$, since $t$ was arbitrary.

Since (\ref{eq:no_decrease}) holds for all admissible policies decided
at time slot $t-2\in\mathbb{N}_{N_{T}-2}$, it will also hold for
the respective optimal policy, that is,
\begin{equation}
\mu\mathbb{E}\left\{ V\left({\bf p}^{*}\left(t-1\right),t-1\right)\right\} \equiv\mathbb{E}\left\{ V\left({\bf p}^{*}\left(t-1\right),t\right)\right\} ,\quad\forall t\in\mathbb{N}_{N_{T}}^{2}.
\end{equation}
Next, as discussed above, the choice ${\bf p}^{o}\left(t\right)\equiv{\bf p}^{*}\left(\omega,t-1\right)$
constitutes an admissible policy decided at time slot $t-1\in\mathbb{N}_{N_{T}-1}^{+}$;
it suffices to see that ${\bf p}^{*}\left(\omega,t-1\right)\in{\cal C}\left({\bf p}^{*}\left(\omega,t-1\right)\right)$,
by definition of our initial $2$-stage problem, because ``staying
still'' is always a feasible decision for the relays. Consequently,
because the optimal policy ${\bf p}^{*}\left(t\right)$ results in
the highest network QoS, \textit{among all admissible policies}, it
will be true that
\begin{equation}
\mu\mathbb{E}\left\{ V\left({\bf p}^{*}\left(t-1\right),t-1\right)\right\} \le\mathbb{E}\left\{ V\left({\bf p}^{*}\left(t\right),t\right)\right\} ,\quad\forall t\in\mathbb{N}_{N_{T}}^{2},
\end{equation}
completing the proof of Theorem \ref{lem:QoS_INCREASES}.\hfill{}\ensuremath{\blacksquare}

\bibliographystyle{IEEEbib}
\bibliography{IEEEabrv}

\end{document}